\newtheorem{thm}{Theorem}[section]
\newtheorem{prop}{Proposition}[section]
\newtheorem{defi}{Definition}[section]
\newtheorem{lem}{Lemma}[section]
\newtheorem{cor}{Corollary}[section]
\newtheorem{rem}{Remark}[section]
\theoremstyle{notation}
\newcommand{\R}{\mathbb{R}}
\numberwithin{equation}{section}
\newcommand{\N}{\mathbb{N}}
\newcommand{\eps}{\epsilon}
\newcommand{\wto}{\rightharpoonup}
\makeatletter \@addtoreset{equation}{section} \makeatother
\newcounter{const}
\author[T. Gou]{Tianxiang Gou}
\address[T. Gou]{%
\centerline{School of Mathematics and Statistics, Xi’an Jiaotong University,}
\centerline{710049, Xi’an, Shaanxi, China}}
\subjclass[2010]{Primary: 35Q55; Secondary: 35J20, 35B40.}
\keywords{Normalized solutions; Standing waves; Orbital stability; Inhomogeneous Nonlinear Schr\"odinger equations.}
\email{tianxiang.gou@xjtu.edu.cn}
\title[Inhomogeneous NLS]{Standing waves with prescribed $L^2$-norm to nonlinear Schr\"odinger equations with combined inhomogeneous nonlinearities}
\thanks{{\it Statements and Declarations}. The author declares that there are no conflict of interests.}
\thanks{{\it Data Availability Statement}. Data sharing not applicable to this article as no datasets were generated or analysed during the current study.}
\thanks{{\it Acknowledgments}. The work was supported by the National Natural Science Foundation of China (No. 12101483) and the Postdoctoral Science Foundation of China (No.2021M702620). The author thanks Prof. Vladimir Georgiev for helpful discussions. The author also thanks Dr. Alex H. Ardila for providing him with the references \cite{Luo, MZZ}.}
\begin{document}

\begin{abstract} 
In this paper, we are concerned with solutions to the following nonlinear Schr\"odinger equation with combined inhomogeneous nonlinearities,
$$ 
-\Delta u + \lambda u= \mu |x|^{-b}|u|^{q-2} u + |x|^{-b}|u|^{p-2} u \quad \mbox{in} \,\, \R^N,
$$
under the $L^2$-norm constraint
$$
\int_{\R^N} |u|^2 \, dx=c>0,
$$
where $N \geq 1$, $\mu =\pm 1$, $2<q<p<{2(N-b)}/{(N-2)^+}$, $0<b<\min\{2, N\}$ and the parameter $\lambda \in \R$ appearing as Lagrange multiplier is unknown. In the mass subcritical case, we establish the compactness of any minimizing sequence to the minimization problem given by the underlying energy functional restricted on the constraint. As a consequence of the compactness of any minimizing sequence, orbital stability of minimizers is derived. In the mass critical and supercritical cases, we investigate the existence, radial symmetry and orbital instability of solutions. Meanwhile, we consider the existence, radial symmetry and algebraical decay of ground states to the corresponding zero mass equation with defocusing perturbation. In addition, dynamical behaviors of solutions to the Cauchy problem for the associated dispersive equation are discussed. 
\end{abstract}

\maketitle


\section{Introduction}

In this paper, we consider the existence and dynamical behaviors of solutions to the following nonlinear Schr\"odinger equation with combined inhomogeneous nonlinearities,
\begin{align} \label{equ}
-\Delta u + \lambda u= \mu |x|^{-b}|u|^{q-2} u + |x|^{-b}|u|^{p-2} u \quad \mbox{in} \,\, \R^N,
\end{align}
under the $L^2$-norm constraint
\begin{align} \label{mass}
\int_{\R^N} |u|^2 \, dx=c>0,
\end{align}
where $N \geq 1$, $\mu =\pm 1$, $2<q<p<{2(N-b)}/{(N-2)^+}$, $0<b<\min\{2, N\}$ and the parameter $\lambda \in \R$ appearing as Lagrange multiplier is unknown. The problem under consideration arises in the study of standing waves to the following time-dependent nonlinear Schr\"odinger equation,
\begin{align} \label{equt}
\textnormal{i} \partial_t \psi +\Delta \psi +\mu |x|^{-b}|\psi|^{q-2} \psi + |x|^{-b}|\psi|^{p-2} \psi=0 \quad \mbox{in} \,\, \R\times \R^N.
\end{align}
{Equation \eqref{equt} arises in various physical contexts, for example in nonlinear optics as well as in the description of nonlinear waves such as the propagation of laser beams, water waves at the free surface of an ideal fluid and plasma waves, we refer the readers to \cite{BPVT, KA} and references therein for further interpretations.} Here a standing wave to \eqref{equt} is a solution of the form
$$
\psi(t, x)=e^{\textnormal{i} \lambda t} u(x), \quad \lambda \in \R.
$$
It is clear that a standing wave $\psi$ solves \eqref{equt} if and only if $u$ solves \eqref{equ}.

In view of Lemma \ref{localwp}, we know that the mass of any solution to the Cauchy problem for \eqref{equt} is conserved along time, i.e. for any $t \in [0, T)$,
$$
\int_{\R^N} |\psi(t, \cdot)|^2 \,dx=\int_{\R^N} |\psi(0, \cdot)|^2 \,dx.
$$
In physics, the mass has important significance, which is often used to represent the power supply in nonlinear optics or the total number of atoms in Bose-Einstein condensation, see for example \cite{EGBB, F, M}. As a consequence, from physical point of views, it is interesting to consider solutions to \eqref{equt} with prescribed mass. This naturally leads to the study of solutions to \eqref{equ}-\eqref{mass} for $c>0$ given. In this scenario, solutions to \eqref{equ}-\eqref{mass} are referred to as normalized solutions and the parameter $\lambda \in \R$ appearing as Lagrange multiplier related to the constraint is unknown. Indeed, solutions to \eqref{equ}-\eqref{mass} correspond to critical points of the underlying energy functional $E$ restricted on $S(c)$, where
$$
E(u):=\frac 12 \int_{\R^N} |\nabla u|^2 \,dx -\frac{\mu}{q} \int_{\R^N} |x|^{-b}|u|^q \,dx -\frac 1 p \int_{\R^N} |x|^{-b} |u|^p \, dx
$$
and
$$
S(c):=\left\{ u\in H^1(\R^N) : \int_{\R^N} |u|^2 \, dx =c\right\}.
$$
Moreover, from mathematical perspectives, the study of normalized solutions turns out to be also meaningful, because it paves the way to better understand dynamical properties of stationary solutions to \eqref{equt}. For these reasons, we shall focus on the survey of solutions to \eqref{equ}-\eqref{mass} in this paper.

When $\mu=0$, following the seminar works of Kening and Merle \cite{KM1, KM2, Me}, dynamical behaviors of solutions to the Cauchy problem for \eqref{equt} have been extensively considered during recent years, see for instance \cite{AC, C1, CC1, CC2, DBF, DK1, DK2, D1, D2, D3, F, FG1, FG2, GS, Luo, LWW, RS} and references therein. When $\mu \neq 0$, the problem is new and the study of that is open up to now. In this situation, \eqref{equt} is no longer invariant under the following scaling,
$$
\psi_{\tau}(t, x):=\tau^{\frac{2-b}{p-2}} \psi(\tau^2 t, \tau x), \quad \tau>0.
$$ 
In comparison with the unperturbed case $\mu=0$, the presence of the lower order term $\mu |\psi|^{p-2} \psi$ will play an important role in the forthcoming discussion, which strongly impacts the geometry structure of the energy functional $E$ restricted on $S(c)$ and brings about diverse phenomena. Indeed, the study carried out in the current paper is somehow reminiscent of the one for the problem with combined power-type nonlinearities, see \cite{AIKN, CMZ, FH, FO, GZ, LMR, MXZ, MZZ, TVZ, X} concerning dynamical behaviors of solutions to the Cauchy problem for \eqref{equt} with $b=0$ and \cite{BFG, JL, S1, S2} concerning the existence and properties of ground states to \eqref{equ}-\eqref{mass} with $b=0$. 
Although, when $b>0$, then there exists the compact embedding (see Lemma \ref{cembedding}). This actually helps to derive the compactness of Palais-Smale sequences and the existence of solutions. However, the study of symmetry and decay of solutions becomes complex, which requires different ingredients and careful treatments, because of the presence of the term $|x|^{-b}$ for $b>0$. It is worth mentioning that the parameter $b>0$ has significant and nontrivial effects in the whole discussions. 

For the case when the energy functional restricted on the $L^2$-norm constraint is bounded from below, the study of normalized solutions is often carried out by introducing a global minimization problem. In this case, the existence and orbital stability of the solutions can be derived by using well-known Lions concentration compactness principle in \cite{Li1, Li2}, see for example \cite{AB, CCW, CDSS, CP, G, Gou, GJ2, NW1, NW2, NW3} and references therein. For the case when the energy functional restricted on the $L^2$-norm constraint is unbounded from below, then it is impossible to introduce a global minimization problem to investigate normalized solutions, which enables the study of normalized solutions become hard. In this case, the solutions often correspond to local minimizers or saddle type critical points of the energy functional restricted on the constraint. Starting from the early work due to Jeanjean \cite{Je}, the study of the solutions in this direction has received much attention during recent years, see for example \cite{BMRV, BJS, BS1, BS2, BV, BZZ, BJ, BJT, BCGJ, CJ, GJ1, GZ, JS, NTV2, PPVV, PG}.

\subsection*{Statement of main results} To address our main results, we first introduce a scaling of $u \in S(c)$ as $u_t(x)=t^{N/2}u(tx)$ for $x \in \R^N$ and $t>0$. Such a scaling will be adopted frequently throughout the discussion of the paper. By direct calculations, it is not difficult to get that $\|u_t\|_2=\|u\|_2$ and 
\begin{align} \label{scaling1}
E(u_t)=\frac{t^2}{2} \int_{\R^N} |\nabla u|^2 \,dx-\frac{\mu t^{\frac{N}{2}(q-2)+b}}{q} \int_{\R^N}|x|^{-b}|u|^q \, dx -\frac{t^{\frac{N}{2}(p-2)+b}}{p} \int_{\R^N}|x|^{-b}|u|^p \, dx.
\end{align}
{In the sense of Gagliardo-Nirenberg's inequality \eqref{GN}, $p=2+2(2-b)/N$ is referred to as the mass critical case, $p<2+2(2-b)/N$ and $p>2+2(2-b)/N$ are referred to as the mass subcritical and supercritical cases, respectively.} 

First, we consider the mass subcritical case $2<q<p<2+2(2-b)/N$. In this case, applying the well-known Gagliardo-Nirenberg's inequality \eqref{GN}, we find that $E$ restricted on $S(c)$ is bounded from below for any $c>0$. Then we are able to introduce the following minimization problem, for any $c>0$,
\begin{align} \label{gmin}
m(c):={\min_{u \in S(c)}} E(u).
\end{align}

As a direct consequence of \eqref{scaling1} and Gagliardo-Nirenberg’s inequality \eqref{GN}, 
we first obtain some properties of the function $c \mapsto m(c)$ for $c>0$.

\begin{prop} \label{prop11}
Let ${N \geq 1}$, $2<q<p<2+2(2-b)/N$, $0<b<\min\{2, N\}$ and $\mu=\pm 1$, then the following assertions hold true.
\begin{itemize}
\item[$(\textnormal{i})$] $-\infty<m(c) \leq 0$ for any $c>0$ and $m(c_1+c_2) \leq m(c_1)+m(c_2)$ for any $c_1, c_2 \geq 0$. In particular, the function $c \mapsto m(c)$ is nonincreasing on $(0, \infty)$.
\item[$(\textnormal{ii})$] The function $c \mapsto m(c)$ is continuous for any $c>0$.
\item[$(\textnormal{iii})$] $m(c) \to 0$ as $c \to 0^+$ and $m(c) \to -\infty$ as $c \to \infty$.
\end{itemize} 
\end{prop}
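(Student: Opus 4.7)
For part~(i), the lower bound $m(c) > -\infty$ will follow from the weighted Gagliardo-Nirenberg inequality \eqref{GN}, which in the mass subcritical regime gives $\int_{\R^N}|x|^{-b}|u|^s\,dx \leq C_s \|\nabla u\|_2^{s\theta_s} \|u\|_2^{s(1-\theta_s)}$ with $s\theta_s = N(s-2)/2 + b < 2$ for $s = q, p$. Substituting into $E(u)$, the quadratic term $\tfrac{1}{2}\|\nabla u\|_2^2$ dominates the two subquadratic contributions in $\|\nabla u\|_2$, so $E$ is uniformly bounded below on $S(c)$. The upper bound $m(c) \leq 0$ will come from the scaling $u_t(x) = t^{N/2}u(tx) \in S(c)$ and \eqref{scaling1}: the three exponents $2$, $\alpha_q := N(q-2)/2+b$, and $\alpha_p := N(p-2)/2+b$ are all strictly positive, so $E(u_t) \to 0$ as $t \to 0^+$, giving $m(c) \leq 0$.

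For the subadditivity $m(c_1+c_2) \leq m(c_1) + m(c_2)$ I will prove the stronger intermediate statement that $c \mapsto m(c)/c$ is nonincreasing on $(0,\infty)$. Given $0 < c_1 < c$, set $\theta := c/c_1 > 1$ and, for $u \in S(c_1)$, define $v := \sqrt{\theta}\,u \in S(c)$. The kinetic parts of $E(v)$ and $\theta E(u)$ cancel, leaving
$$E(v) - \theta E(u) = -\mu\,\frac{\theta^{q/2}-\theta}{q}\int_{\R^N}|x|^{-b}|u|^q\,dx - \frac{\theta^{p/2}-\theta}{p}\int_{\R^N}|x|^{-b}|u|^p\,dx.$$
For $\mu = +1$ both $(\theta^{q/2}-\theta), (\theta^{p/2}-\theta) \geq 0$, hence $E(v) \leq \theta E(u)$ for every $u$, and taking the infimum yields $m(c) \leq \theta m(c_1)$. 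For $\mu = -1$, if $m(c_1) = 0$ then $m(c) \leq 0 = \theta m(c_1)$ from the upper bound just shown; if $m(c_1) < 0$, I will select a minimizing sequence $\{u_n\} \subset S(c_1)$ with $E(u_n) < 0$ eventually, which rearranges to $q^{-1}\!\int|x|^{-b}|u_n|^q\,dx \leq p^{-1}\!\int|x|^{-b}|u_n|^p\,dx$, and substitution into the display gives $E(v_n) \leq \theta E(u_n)$; passing to the limit then yields $m(c) \leq \theta m(c_1)$. Thus $m/c$ is nonincreasing, and subadditivity follows immediately by summing $c_i \cdot m(c_1+c_2)/(c_1+c_2) \leq m(c_i)$ for $i = 1, 2$; the monotonicity of $m$ itself is then a consequence of subadditivity together with $m \leq 0$.

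Part~(ii) will be handled by the same mass rescaling $u \mapsto \sqrt{c'/c}\,u$. For upper semi-continuity at $c$, fixing a near-minimizer $u \in S(c)$ and observing $E(\sqrt{c_n/c}\,u) \to E(u)$ as $c_n \to c$ yields $\limsup_n m(c_n) \leq m(c)$. For lower semi-continuity, picking near-minimizers $u_n \in S(c_n)$, uniform boundedness of $\|\nabla u_n\|_2$ and of the two nonlinear integrals (from part~(i) together with the boundedness of $m(c_n)$) gives $E(\sqrt{c/c_n}\,u_n) - E(u_n) \to 0$, hence $\liminf_n m(c_n) \geq m(c)$. For part~(iii), the lower bound of (i) has $c$-dependent coefficients $c^{s(1-\theta_s)/2}$ with $\theta_s < 1$, vanishing as $c \to 0^+$, so $m(c) \to 0$. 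For the opposite limit, fixing any $u_0 \in S(1)$ and evaluating at $\sqrt{c}\,u_0 \in S(c)$, the term of highest $c$-power, namely $-c^{p/2} p^{-1}\!\int|x|^{-b}|u_0|^p\,dx$, dominates and forces $m(c) \to -\infty$.

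The main obstacle is the subadditivity step for $\mu = -1$: the clean scaling inequality $E(v) \leq \theta E(u)$ no longer holds for arbitrary $u$ because the defocusing $q$-term contributes with the wrong sign, so one must exploit the strict negativity $E(u) < 0$ of near-minimizers when $m(c_1) < 0$, and treat the degenerate case $m(c_1) = 0$ separately via the already-established upper bound $m \leq 0$.
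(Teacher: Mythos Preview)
Your proposal is correct. The main departure from the paper is in the subadditivity step of part~(i): the paper picks near-minimizers $u_1\in S(c_1)\cap C_0^\infty$ and $u_2\in S(c_2)\cap C_0^\infty$, separates their supports by translation, and uses $E(u_1+u_2)=E(u_1)+E(u_2)$; you instead establish the stronger fact that $c\mapsto m(c)/c$ is nonincreasing via the mass rescaling $v=\sqrt{\theta}\,u$, and deduce subadditivity from that. Your route is actually better adapted to this problem, since the weight $|x|^{-b}$ makes $E$ \emph{not} translation invariant, so the disjoint-support argument requires more care than the paper indicates (translating $u_2$ far away sends its nonlinear terms to zero rather than preserving $E(u_2)$). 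Your handling of the defocusing case $\mu=-1$ by splitting into $m(c_1)=0$ versus $m(c_1)<0$ and exploiting $E(u_n)<0$ to control the sign of the $q$-term is exactly what is needed. For part~(iii), your test function $\sqrt{c}\,u_0\in S(c)$ to obtain $m(c)\to-\infty$ is simpler than the paper's two-parameter dilation $u_c(x)=c^{\alpha}u(c^{\beta}x)$, though both succeed. The remaining steps (lower and upper bounds via Gagliardo--Nirenberg and the scaling $u_t$, continuity via mass rescaling, and $m(c)\to 0$ as $c\to 0^+$) agree with the paper in spirit and detail.
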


In the mass subcritical case, one of interesting topics is to discuss the compactness of any minimizing sequence to \eqref{gmin}, which lays a foundation for the forthcoming study of orbital stability of minimizers to \eqref{gmin}.

\begin{thm} \label{thm1}
Let ${N \geq 1}$, $2<q<p<2+2(2-b)/N$ and $0<b<\min\{2, N\}$.
\begin{itemize}
\item[$(\textnormal{i})$] If $\mu =1$, then $m(c)<0$ for any $c>0$. Moreover, any minimizing sequence to \eqref{gmin} is compact in $H^1(\R^N)$ up to translations for any $c>0$.
\item[$(\textnormal{ii})$]  If $\mu=-1$, then there exist two constants $\hat{c}_0>\tilde{c}_0>0$ depending only on $N,p,q$ and $b$ such that $m(c)=0$ for any $0<c<\tilde{c}_0$ and $m(c) <0$ for any $c>\hat{c}_0$. Moreover, any minimizing sequence to \eqref{gmin} is compact in $H^1(\R^N)$ up to translations for any $c>\hat{c}_0$.
\end{itemize}
\end{thm}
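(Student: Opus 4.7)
The strict negativity $m(c)<0$ is immediate from \eqref{scaling1}: for any fixed $u\in S(c)$ and the dilation $u_t=t^{N/2}u(t\cdot)\in S(c)$, both exponents $\tfrac{N}{2}(q-2)+b$ and $\tfrac{N}{2}(p-2)+b$ are strictly less than $2$ in the mass subcritical range, so the two negative nonlinear terms dominate the $t^2$ kinetic piece as $t\to 0^+$ and $E(u_t)<0$. For the compactness assertion let $\{u_n\}\subset S(c)$ be minimizing; mass subcriticality and Gagliardo--Nirenberg yield an $H^1$ bound, so $u_n\wto u$ in $H^1(\R^N)$ along a subsequence. The compact embedding of Lemma~\ref{cembedding} upgrades this to $\nint|x|^{-b}|u_n|^r\to\nint|x|^{-b}|u|^r$ for $r\in\{p,q\}$, and a Brezis--Lieb identity on $\|\nabla\cdot\|_2^2$ then gives
\[
E(u_n)=E(u)+\tfrac12\|\nabla v_n\|_2^2+o(1),\qquad v_n:=u_n-u\wto 0\ \text{in}\ H^1.
\]
If $u=0$ the right-hand side is $\ge 0$, contradicting $m(c)<0$. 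Otherwise $c':=\|u\|_2^2\in(0,c]$ and combining $E(u)\ge m(c')$ with the monotonicity from Proposition~\ref{prop11} forces $m(c)=m(c')$ and $\|\nabla v_n\|_2\to 0$. To exclude $c'<c$ I would prove strict monotonicity of $m$ on the sublevel set $\{m<0\}$: applied to a minimizing sequence $w_n\in S(c')$, the scaling $w_n\mapsto\lambda w_n\in S(c)$ with $\lambda=\sqrt{c/c'}>1$ yields $E(\lambda w_n)=\lambda^2 E(w_n)-K_n$ with
\[
K_n=\tfrac{\lambda^q-\lambda^2}{q}\nint|x|^{-b}|w_n|^q+\tfrac{\lambda^p-\lambda^2}{p}\nint|x|^{-b}|w_n|^p>0.
\]
Because $m(c')<0$ prevents both weighted integrals from simultaneously collapsing (else $E(w_n)\to\tfrac12\liminf\|\nabla w_n\|_2^2\ge 0$), $\liminf K_n>0$ and hence $m(c)<\lambda^2 m(c')\le m(c')$, contradicting $m(c)=m(c')$. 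Therefore $c'=c$, and weak $H^1$ convergence together with convergence of masses and gradient norms gives strong $H^1$ convergence. The ``up to translations'' is essentially vacuous because the weight $|x|^{-b}$ localizes every bounded $H^1$-sequence through the compact embedding.

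\textbf{Part (ii), $\mu=-1$.} The bound $m(c)\le 0$ again follows from $E(u_t)\to 0$ as $t\to 0^+$. For the flat range $m(c)=0$ I would show $E(u)\ge 0$ for every $u\in S(c)$ when $c$ is small. Using the mass-critical exponent $r:=2+2(2-b)/N$ (so Gagliardo--Nirenberg produces $\nint|x|^{-b}|u|^r\le C_r\|\nabla u\|_2^2\,c^{(r-2)/2}$) together with H\"older interpolation $\nint|x|^{-b}|u|^p\le(\nint|x|^{-b}|u|^q)^{1-\alpha}(\nint|x|^{-b}|u|^r)^{\alpha}$ with $\alpha=(p-q)/(r-q)\in(0,1)$, followed by a weighted Young inequality, one obtains
\[
\tfrac{1}{p}\nint|x|^{-b}|u|^p\le \eta_1(c)\cdot\tfrac{1}{q}\nint|x|^{-b}|u|^q+\eta_2(c)\cdot\tfrac12\|\nabla u\|_2^2,
\]
with $\eta_1(c),\eta_2(c)\to 0$ as $c\to 0^+$. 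For $c<\tilde c_0$ both multipliers are $<1$ and $E(u)\ge 0$, hence $m(c)=0$. On the other side, $m(c)<0$ for $c$ large follows from
\[
E(\sqrt{c}\,u_0)=\tfrac{c}{2}\|\nabla u_0\|_2^2+\tfrac{c^{q/2}}{q}\nint|x|^{-b}|u_0|^q-\tfrac{c^{p/2}}{p}\nint|x|^{-b}|u_0|^p\to-\infty
\]
for any fixed $u_0\in S(1)$ as $c\to\infty$, since $p>q>2$; this produces $\hat c_0$. The compactness argument for $c>\hat c_0$ is structurally identical to Part~(i): the Brezis--Lieb decomposition retains the same form since the extra positive $q$-integral along $v_n$ also vanishes by the compact embedding, and strict monotonicity of $m$ on $\{m<0\}$ is re-derived by the same $\lambda$-scaling, now exploiting that $E(u_n)<0$ forces $\nint|x|^{-b}|u_n|^p>(p/q)\nint|x|^{-b}|u_n|^q$ to yield a strictly positive gap.

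\textbf{Main obstacle.} The hard step is the strict monotonicity of $m$ on $\{m<0\}$ used to exclude the ``mass-loss'' scenario $c'<c$ at the weak limit. Even with the compact embedding transferring every weighted nonlinear integral to $u$, a priori the remainder $v_n$ could carry positive $L^2$-mass while its gradient and nonlinear energy both vanish, and the argument stalls. The resolution is the $L^2$-non-preserving scaling $u\mapsto\lambda u$ (rather than the dilation $u_t$): its kinetic piece scales strictly slower than both nonlinear pieces when $\lambda>1$, and the hypothesis $m(c')<0$ along a minimizing sequence is precisely what keeps the weighted integrals bounded away from zero, producing the needed quantitative gap.
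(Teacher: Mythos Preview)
Your proof is correct and takes a genuinely different route from the paper's.

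The paper runs the full Lions concentration--compactness machinery: it writes out the vanishing and dichotomy alternatives, excludes vanishing via the splitting in \eqref{v1}--\eqref{v2} together with Lions' vanishing lemma, and excludes dichotomy by proving the strict subadditivity $m(\theta c)<\theta m(c)$. You instead exploit Lemma~\ref{cembedding} directly: once $\{u_n\}$ is bounded in $H^1$, the compact embedding transfers both weighted nonlinear integrals to the weak limit without any translation, and a Brezis--Lieb splitting on the gradient reduces everything to excluding $c':=\|u\|_2^2<c$. This is shorter and explains transparently why the ``up to translations'' clause is idle when $b>0$.

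The other substantive difference is in part~(ii). To obtain strict monotonicity of $m$ on the negative region for $\mu=-1$, the paper introduces the mass-increasing dilation
\[
u_\theta(x)=\theta^{\frac{2-b}{2(2-b)-N(p-2)}}u\bigl(\theta^{\frac{p-2}{2(2-b)-N(p-2)}}x\bigr)
\]
and then runs a second-derivative convexity argument on $f_u(\theta)=E(u_\theta)-\theta E(u)$ to force $f_u(\theta)<0$ for $\theta>1$. Your argument stays with the simpler rescaling $u\mapsto \lambda u$, extracting the quantitative gap from the observation that $E(w_n)\to m(c')<0$ along a minimizing sequence forces $\tfrac1pA_p^{(n)}-\tfrac1qA_q^{(n)}$ to stay uniformly positive, whence $K_n\ge(\lambda^q-\lambda^2)\bigl(\tfrac1pA_p^{(n)}-\tfrac1qA_q^{(n)}\bigr)+\tfrac{\lambda^p-\lambda^q}{p}A_p^{(n)}$ is bounded below by a positive constant. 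This is more elementary; the paper's dilation has the conceptual advantage of leaving the quantity $\tfrac12\|\nabla u\|_2^2-\tfrac1p\int|x|^{-b}|u|^p$ grouped (the combination that \eqref{minv} isolates), but that structure is not needed here.

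Your derivation of $m(c)=0$ for small $c$ matches the paper's use of Lemma~\ref{inequ} with the mass-critical exponent $r=2+2(2-b)/N$; the key point is that after interpolation the factor $c^{\alpha(2-b)/N}$ multiplies the \emph{entire} Young split, so both $\eta_1(c)$ and $\eta_2(c)$ vanish as $c\to0^+$. For $m(c)<0$ at large mass, the paper takes the more elaborate route through \eqref{minv}--\eqref{min11} with the explicit optimizer $Q_{p,b}$, whereas your $\sqrt{c}\,u_0$ trick is entirely sufficient since only existence of $\hat c_0$ is asserted.
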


Second, we study the mass critical case $2<q<p=2+2(2-b)/N$. In this case, our first result reads as follows.

\begin{thm} \label{thm2}
Let ${N \geq 1}$, $2<q<p=2+2(2-b)/N$ and $0<b<\min\{2, N\}$.
\begin{itemize}
\item[$(\textnormal{i})$] If $\mu=1$, then $m(c)<0$ for any $0<c<c_1$ and $m(c)=-\infty$ for any $c \geq c_1$. Moreover, any minimizing sequence to \eqref{gmin} is compact in $H^1(\R^N)$ up to translations for any $0<c<c_1$, where $c_1>0$ is defined by
$$
c_1:=\left(\frac{N+2-b}{NC_{N,b}}\right)^{\frac{N}{2-b}}
$$
and $C_{N, b}>0$ is the optimal constant in \eqref{GN} with $p=2+2(2-b)/N$.
\item[$(\textnormal{ii})$] If $\mu=-1$, then $m(c)=0$ for any $0<c \leq c_1$ and $m(c)=-\infty$ for any $c>c_1$. Moreover, there exists no solutions to \eqref{equ}-\eqref{mass} for any $0<c \leq c_1$.
\end{itemize}
\end{thm}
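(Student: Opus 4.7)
My plan is to exploit the sharp Gagliardo--Nirenberg inequality \eqref{GN} at the critical exponent, $\int_{\R^N}|x|^{-b}|u|^p\,dx\le C_{N,b}\|\nabla u\|_2^2\|u\|_2^{p-2}$, for which $c_1$ is precisely the threshold mass (that is, $(2/p)C_{N,b}c_1^{(p-2)/2}=1$), together with the $L^2$-preserving scaling $u_t(x)=t^{N/2}u(tx)$ of \eqref{scaling1}. Write $A(u):=\tfrac{1}{2}\|\nabla u\|_2^2$ and $K_r(u):=\tfrac{1}{r}\int_{\R^N}|x|^{-b}|u|^r\,dx$ for $r\in\{q,p\}$, and set $\gamma_q:=\tfrac{N}{2}(q-2)+b\in(0,2)$ while $\gamma_p=2$. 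Then \eqref{scaling1} collapses to
\[
E(u_t)=t^2\bigl(A(u)-K_p(u)\bigr)-\mu\,t^{\gamma_q}K_q(u),
\]
and these two ingredients will suffice to read off $m(c)$ in every regime.

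In case (i), for $c<c_1$ the GN bound $(2/p)C_{N,b}c^{(p-2)/2}<1$ makes the gradient term in $E$ coercive on $S(c)$, and testing with $u_t$ as $t\to 0^+$ forces $E(u_t)<0$, so $-\infty<m(c)<0$. For $c\ge c_1$, taking $v=(c/c_1)^{1/2}U$ with $U$ a mass-$c_1$ extremizer of \eqref{GN} gives $A(v)-K_p(v)\le 0$; then $E(v_t)\to-\infty$ as $t\to\infty$ (both terms negative), so $m(c)=-\infty$. In case (ii), for $c\le c_1$ GN gives $E\ge 0$ on $S(c)$ while $E(u_t)\to 0$ as $t\to 0^+$, hence $m(c)=0$; for $c>c_1$ the same $v$ makes $t^2(A-K_p)$ dominate $+t^{\gamma_q}K_q$ (since $\gamma_q<2$), forcing $m(c)=-\infty$. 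Non-existence in case (ii) for $c\le c_1$ follows by combining the Nehari identity (multiply \eqref{equ} by $u$) with the Pohozaev identity (multiply by $x\cdot\nabla u$ and use $\nabla\cdot(|x|^{-b}x)=(N-b)|x|^{-b}$) and eliminating $\lambda$:
\[
\|\nabla u\|_2^2+\tfrac{\gamma_q}{q}\int_{\R^N}|x|^{-b}|u|^q\,dx=\tfrac{2}{p}\int_{\R^N}|x|^{-b}|u|^p\,dx\le\tfrac{2}{p}C_{N,b}c^{(p-2)/2}\|\nabla u\|_2^2\le\|\nabla u\|_2^2;
\]
since $\gamma_q>0$, this forces $\int_{\R^N}|x|^{-b}|u|^q\,dx=0$, a contradiction.

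For the compactness claim in (i) with $c<c_1$, let $\{u_n\}\subset S(c)$ be minimizing; the coercivity above gives boundedness in $H^1$. Pass to $u_n\wto u$; Lemma~\ref{cembedding} yields $K_r(u_n)\to K_r(u)$ for $r=q,p$. If $u\equiv 0$ then $E(u_n)=A(u_n)+o(1)\ge o(1)$, contradicting $m(c)<0$. Set $c':=\|u\|_2^2\in(0,c]$; weak lower semicontinuity together with convergence of $K_r$ gives $m(c')\le E(u)\le m(c)$, and the main obstacle is ruling out $c'<c$. I will do this by establishing strict monotonicity of $m$ on $(0,c_1]$: for $\theta:=c/c'>1$, set $w(x):=v(x/\theta^{1/N})\in S(\theta c')=S(c)$. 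A change of variables gives $A(w)=\theta^{1-2/N}A(v)$ and $K_r(w)=\theta^{1-b/N}K_r(v)$, so
\[
E(w)-E(v)=(\theta^{1-2/N}-1)A(v)-(\theta^{1-b/N}-1)\bigl(K_q(v)+K_p(v)\bigr).
\]
Since $b<2$ forces $\theta^{1-b/N}>\theta^{1-2/N}$ with $\theta^{1-b/N}-1>0$, and since $E(v)\le m(c')/2<0$ forces $K_q(v)+K_p(v)-A(v)\ge|m(c')|/2$, a short rearrangement gives $E(w)-E(v)\le-(\theta^{1-b/N}-1)|m(c')|/2$, uniformly over such $v\in S(c')$. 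Hence $m(c)<m(c')$, contradicting $m(c')\le m(c)$ and forcing $c'=c$. Then $\|u_n\|_2^2\to\|u\|_2^2$ with weak $L^2$ convergence gives $u_n\to u$ in $L^2$, and $E(u_n)\to m(c)=E(u)$ together with $K_r(u_n)\to K_r(u)$ forces $\|\nabla u_n\|_2\to\|\nabla u\|_2$, upgrading to strong $H^1$ convergence.
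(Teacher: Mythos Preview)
Your proof is correct. For the values of $m(c)$ in the various regimes and for the non-existence in~(ii), your argument coincides with the paper's: both use the sharp Gagliardo--Nirenberg inequality at the mass-critical exponent, the scaling $u_t$ from~\eqref{scaling1}, the extremizer (the paper writes it as $w=c^{1/2}Q_{p,b}/\|Q_{p,b}\|_2$), and the Pohozaev identity $Q(u)=0$ (the paper's Lemma~\ref{ph}).

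The compactness argument, however, takes a genuinely different route. The paper invokes Lions's concentration compactness principle, referring back to the proof of Theorem~\ref{thm1}: one rules out vanishing and dichotomy, the latter via the strict subadditivity $m(\theta c)<\theta m(c)$ obtained from the amplitude scaling $u\mapsto\theta^{1/2}u$. You instead exploit the compact embedding of Lemma~\ref{cembedding} directly: weak $H^1$ convergence already forces $K_q(u_n)\to K_q(u)$ and $K_p(u_n)\to K_p(u)$, so the only possible loss is mass, and you exclude $c'<c$ by proving strict monotonicity $m(c)<m(c')$ through the dilation $v\mapsto v(\cdot/\theta^{1/N})$. Your key estimate can be written cleanly as
\[
E(w)-E(v)=(\theta^{1-2/N}-1)A(v)-(\theta^{1-b/N}-1)\bigl(K_q(v)+K_p(v)\bigr)=(\theta^{1-b/N}-1)E(v)+(\theta^{1-2/N}-\theta^{1-b/N})A(v)\le(\theta^{1-b/N}-1)E(v),
\]
which, since $b<2$ and $A\ge 0$, immediately gives $m(c)\le(\theta^{1-b/N})m(c')<m(c')$; this covers all $N\ge 1$ regardless of the sign of $\theta^{1-2/N}-1$. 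Your approach is more elementary (no Lions machinery) and in fact yields compactness \emph{without} translations, which is natural here since the energy is not translation invariant due to $|x|^{-b}$. The paper's route, on the other hand, fits into the standard concentration-compactness template and reuses verbatim the subadditivity computation from Theorem~\ref{thm1}.
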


In Theorems \ref{thm1} and \ref{thm2}, the compactness of any minimizing sequence to \eqref{gmin} is achieved by adapting the Lions concentration compactness principle in \cite{Li1, Li2}.


Third, we investigate the mass supercritical case $p>2+2(2-b)/N$. In this case, applying \eqref{scaling1}, we can easily conclude that $E$ restricted on $S(c)$ becomes unbounded from below. Indeed, by \eqref{scaling1}, there holds that $E(u_t) \to -\infty$ as $t \to \infty$ for any $u \in S(c)$ and $c>0$. As a result, it is unlikely to make use of \eqref{gmin} to establish the existence of solutions to \eqref{equ}-\eqref{mass}. For this reason, we introduce another minimization problem with the help of so-called Pohozaev manifold $P(c)$ defined by
$$
P(c):=\{u \in S(c) : Q(u)=0\},
$$
where
\begin{align*}
Q(u):&=\frac{d}{dt}E(u_t)\mid_{t=1}\\
&=\int_{\R^N} |\nabla u|^2 \,dx-\frac{\mu\left(N(q-2)+2b\right)}{2q}\int_{\R^N}|x|^{-b}|u|^q \, dx-\frac{N(p-2)+2b}{2p}\int_{\R^N}|x|^{-b}|u|^p \, dx.
\end{align*}
Here $Q(u)=0$ is the Pohozaev identity related to \eqref{equ}-\eqref{mass}, see Lemma \ref{ph}.
\medskip

In the mass supercritical case, our first aim is to investigate the existence of solutions to \eqref{equ}-\eqref{mass} with focusing perturbation $\mu=1$.

\begin{thm} \label{thm3}
Let ${N \geq 1}$, $2<q<2+2(2-b)/N<p$, $0<b<\min\{2, N\}$ and $\mu=1$, then there exists a constant $c_2>0$ such that \eqref{equ}-\eqref{mass} admits two positive, radially symmetric and decreasing solutions for any $0<c<c_2$, one of which is an interior local minimizer with negative energy and the other is a mountain pass type solution with positive energy, where $c_2>0$ is defined by
\begin{align*}
c_2:= \left(\frac{2p\left(2(2-b)-N(q-2)\right)}{C_{N,p, b}N\left(N(p-2)+2b\right)(p-q)}\right)^{\frac{2(2-b)-N(q-2)}{(p-q)(2-b)}}\left(\frac{q \left(N(p-2)-2(2-b)\right)}{2C_{N, q, b} N(p-q)} \right)^{\frac{N(p-2)-2(b-2)}{(p-q)(2-b)}}
\end{align*}
and $C_{N,p,b}>0$ is the optimal constant in \eqref{GN}.
\end{thm}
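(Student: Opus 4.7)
The plan is to follow the now-classical ``two-critical-point'' scheme for combined nonlinearities (local minimizer with negative energy plus a mountain pass above zero), as developed for the unweighted case by Soave and others cited in the introduction, while exploiting the crucial advantage that, for $b > 0$, the weighted embedding of Lemma \ref{cembedding} is compact. First I would use \eqref{GN} to estimate, for $u \in S(c)$,
\[ E(u) \geq \frac{1}{2}\|\nabla u\|_2^2 - \frac{C_{N,q,b}}{q}\, c^{(q-\gamma_q)/2}\|\nabla u\|_2^{\gamma_q} - \frac{C_{N,p,b}}{p}\, c^{(p-\gamma_p)/2}\|\nabla u\|_2^{\gamma_p} =: g_c(\|\nabla u\|_2), \]
with $\gamma_q = (N(q-2)+2b)/2 < 2 < \gamma_p = (N(p-2)+2b)/2$. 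Because $\gamma_q < 2$ and $\gamma_p > 2$, the one-variable function $g_c$ is negative near $0$ and near $\infty$ and has a unique local maximum in between; an elementary optimization shows that this maximum is strictly positive precisely when $c < c_2$, and balancing the three terms at the critical threshold produces the explicit value of $c_2$ in the statement. I then fix $0 < R_1 < R_2$ with $g_c > 0$ on $(R_1, R_2)$ and $g_c(R_1)=g_c(R_2)=0$.

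For the interior local minimizer, define $V_c := \{u \in S(c) : \|\nabla u\|_2 < R_1\}$ and $m_-(c) := \inf_{V_c} E$. The scaling \eqref{scaling1} with small $t$ immediately gives $u_t \in V_c$ and $E(u_t) < 0$, so $m_-(c) < 0 \leq \inf_{\partial V_c} E$ and any minimizing sequence must stay inside $V_c$. By Schwarz rearrangement (using that $|x|^{-b}$ is radially decreasing, so Hardy-Littlewood enlarges both weighted nonlinearities while Polya-Szego decreases the Dirichlet term), I may assume the sequence is radially symmetric and radially decreasing. Boundedness in $H^1$ together with the compact embedding of Lemma \ref{cembedding} gives strong convergence in the weighted $L^q$ and $L^p$ spaces; combined with weak lower semicontinuity and the fact that the limit still satisfies $\|\nabla u\|_2 < R_1$, a Brezis-Lieb / Lagrange multiplier argument forces strong $H^1$ convergence, mass conservation, and attainment of $m_-(c)$. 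Positivity follows by replacing $u$ by $|u|$ and applying the strong maximum principle to the resulting Euler-Lagrange equation.

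For the mountain pass solution, the geometry is immediate: the path $t \mapsto (u_-)_t$ connects $u_- \in V_c$ (with $E(u_-) < 0$) to a function with $\|\nabla\cdot\|_2 > R_2$ and $E < 0$, by \eqref{scaling1}; every such path must cross the barrier $\{R_1 \leq \|\nabla \cdot\|_2 \leq R_2\}$ on which $E \geq g_c > 0$. This gives a mountain pass value $M(c) \geq \max_{[R_1,R_2]} g_c > 0$. To obtain a Palais-Smale sequence at level $M(c)$ that is $H^1$-bounded, I would work with Jeanjean's augmented functional $\tilde E(s,u) := E(u_{e^s})$ on $\R \times H^1_{\mathrm{rad}}(\R^N)$; this automatically produces a sequence $\{u_n\}$ with $E(u_n) \to M(c)$, $(E|_{S(c)})'(u_n) \to 0$, and $Q(u_n) \to 0$, and the combination $E - \tfrac{2}{\gamma_p} Q$ gives uniform $H^1$-boundedness thanks to $\gamma_q < 2 < \gamma_p$. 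The compact embedding of Lemma \ref{cembedding} then upgrades weak convergence to strong convergence in the weighted nonlinear terms, while the Pohozaev identity together with $q,p > 2$ shows the Lagrange multiplier is strictly positive. A Schwarz rearrangement and maximum principle again yield a positive, radial, radially decreasing solution $u_+ \in S(c)$ with $E(u_+) = M(c) > 0$.

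The main obstacle is the compactness of the mountain pass Palais-Smale sequence: at positive energy one cannot exclude bubbling purely from $H^1$-boundedness, and in the usual unweighted problem this requires delicate concentration-compactness. In the present weighted setting, working with radial functions reduces the issue to ruling out mass escaping to infinity, and it is precisely here that $b > 0$ is decisive, since the weight $|x|^{-b}$ vanishes at infinity and, via Lemma \ref{cembedding}, makes the embedding into $L^s(|x|^{-b}\,dx)$ compact; the bookkeeping linking $E(u_n) \to M(c)$, $Q(u_n) \to 0$, and strict positivity of the Lagrange multiplier to exclude the trivial weak limit is the most technical part of the argument.
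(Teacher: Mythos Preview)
Your overall strategy is sound and would prove the theorem, but it differs from the paper's argument in two substantive technical choices, and there is one small inaccuracy.

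\textbf{Mountain pass mechanism.} For the second solution the paper does \emph{not} use Jeanjean's augmented functional. Instead it decomposes the Pohozaev manifold as $P(c)=P_+(c)\cup P_0(c)\cup P_-(c)$, shows $P_0(c)=\emptyset$ for small $c$ (Lemma~\ref{cod}), proves that for each $u\in S(c)$ the fiber map $t\mapsto E(u_t)$ has exactly two critical points (Lemma~\ref{unique1}), and then applies Ghoussoub's min-max principle with homotopy-stable families (Lemmas~\ref{ps}--\ref{pss1}) to produce a Palais--Smale sequence lying \emph{inside} $P_-(c)$. Coercivity on $P_-(c)$ (Lemma~\ref{coercive1}) then gives boundedness. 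Your Jeanjean route gives the same output (a bounded PS sequence with $Q(u_n)\to0$) by a different bookkeeping; both then finish identically via the compact embedding of Lemma~\ref{cembedding} and the sign of the Lagrange multiplier (Lemma~\ref{la}). The paper's approach buys a clean variational characterisation $\sigma_-(c)=\inf_{P_-(c)}E$ and the identity in Proposition~\ref{prop1}; your approach is lighter on manifold structure but still requires the fiber analysis implicitly.

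\textbf{Radial symmetry.} You invoke Schwarz rearrangement for both solutions. For the local minimizer that is fine, but for the mountain pass solution rearrangement does not obviously send critical points to critical points at the \emph{same} level; one needs either to run the min-max over symmetric-decreasing paths from the outset, or to argue a posteriori. The paper avoids this entirely by proving Proposition~\ref{radial} (moving planes for positive solutions of \eqref{equ111} with $\lambda>0$) and applying it to each solution once positivity and $\lambda>0$ are known. For $\mu=1$ both routes work (the paper notes this in the remark after Theorem~\ref{thm41}), but your write-up should either tighten the rearrangement step or simply cite moving planes.

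\textbf{The constant.} The optimization of your one-variable function $g_c$ does \emph{not} produce the stated $c_2$; it produces the larger constant $\hat c_2$ of Lemma~\ref{locmp}. The paper's $c_2$ arises instead from the coercivity estimate on $P_-(c)$ in Lemma~\ref{coercive1}, and one has $c_2<\min\{\tilde c_2,\hat c_2\}$. This is harmless for the existence statement (your threshold is at least as good), but your sentence ``balancing the three terms \dots\ produces the explicit value of $c_2$'' is not quite right.
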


In order to establish Theorem \ref{thm3}, we shall decompose the manifold $P(c)$ into disjoint union, i.e. $P(c)=P_+(c) \cup P_0(c) \cup P_-(c)$, where
$$
P_+(c):=\{u \in P(c) :  \Psi(u)>0\},
$$ 
$$
P_0(c):=\{u \in P(c) :  \Psi(u)=0\},
$$ 
$$
P_-(c):=\{u \in P(c) :  \Psi(u)<0\}
$$
and 
\begin{align*}
\Psi(u):=\frac{d^2}{dt^2} E(u_t) \mid_{t=1}&=\int_{\R^N}|\nabla u|^2 \,dx-\frac{\left(N(q-2)+2b\right)\left(N(q-2)+2(b-1)\right)}{4q} \int_{\R^N}|x|^{-b}|u|^q \, dx\\
&\quad -\frac{\left(N(p-2)+2b\right)\left(N(p-2)+2(b-1)\right)} {4p}\int_{\R^N}|x|^{-b}|u|^p \, dx.
\end{align*}
To seek for the first solution, we introduce a local minimization problem for $0<c<c_2$, which is indeed defined by
\begin{align} \label{localmin1} 
M(c):=\inf_{u \in V_{\rho}(c)} E(u),
\end{align}
where $V_{\rho}(c):=\{u \in S(c) : \|\nabla u\| \leq \rho\}$ for some proper $\rho>0$. Then, by using the Lions concentration compactness principle, we can derive the compactness of any minimizing sequence to \eqref{localmin1}. This gives rise to existence of the solution. To detect the existence of the second solution, the key argument is to prove that there exists a Palais-Smale sequence belonging to $P_-(c)$ for $E$ restricted on $S(c)$ at the level $\sigma_-(c)$ for $0<c<c_2$, where
\begin{align*}
\sigma_-(c):=\inf_{u\in P_-(c)}E(u).
\end{align*}
Then, by inferring that $E$ restricted on $P(c)$ is coercive and the associated Lagrange multiplier is positive, we are able to obtain the existence of the solution. To further reveal the radially symmetric and decreasing property of two solutions, we shall take advantage of Proposition \ref{radial}, whose proof relies essentially on the moving plane method. Proposition \ref{radial} states that any positive solution to \eqref{equ} for $\lambda>0$ and $\mu=\pm 1$ is radially symmetric and decreasing in the radial direction.
\begin{rem}
Under the assumptions of Theorem \ref{thm3}, in view of Proposition \ref{prop1}, then there holds that, for any $0<c<c_2$,
$$
M(c)=\inf_{u \in P(c)} E(u).
$$
This shows that ${M(c)}$ is the ground state energy.
\end{rem}

Furthermore, we establish the following results.

\begin{thm} \label{thm4}
Let ${N \geq 1}$, $2<q=2+2(2-b)/N<p$, $0<b<\min\{2, N\}$ and $\mu=1$, then \eqref{equ}-\eqref{mass} has a positive, radially symmetric and decreasing ground state for any $0<c<c_1$, where $c_1>0$ is the constant given in Theorem \ref{thm2}.
\end{thm}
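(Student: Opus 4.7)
The plan is to adapt the mountain pass strategy used for the second solution in Theorem \ref{thm3}, accounting for the fact that here $q=2+2(2-b)/N$ is mass critical rather than subcritical. The threshold $c<c_1$ enters because the Gagliardo--Nirenberg inequality with mass critical exponent reads $\int_{\R^N}|x|^{-b}|u|^q\,dx \leq C_{N,q,b}\|u\|_2^{q-2}\|\nabla u\|_2^2$, and $c_1$ is precisely the value past which $\frac{2C_{N,q,b}}{q}c^{(2-b)/N}$ would reach $1$; for $c<c_1$ the kinetic term strictly dominates the focusing $q$-term throughout the argument, which is exactly what makes the loss of a subcritical term in $q$ harmless.

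First I would analyze the fiber map $\phi_u(t) := E(u_t)$ from \eqref{scaling1}. Using $N(q-2)+2b = 4$ together with $Q(u)=0$, a short calculation gives, for every $u\in P(c)$,
$$
\Psi(u) \;=\; -\,\frac{\bigl(N(p-2)+2b\bigr)\bigl(N(p-2)-2(2-b)\bigr)}{4p}\int_{\R^N}|x|^{-b}|u|^p\,dx \;<\;0,
$$
since $p>2+2(2-b)/N$. Hence $P(c)=P_-(c)$, each fiber map $\phi_u$ has a unique critical point $t(u)>0$ which is a strict global maximum, and the hypothesis $c<c_1$ ensures $\phi_u(t)>0$ for small $t>0$, while $\phi_u(t)\to-\infty$ as $t\to\infty$. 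This yields mountain pass geometry on $S(c)$ and identifies $\sigma(c):=\inf_{u \in P(c)}E(u)>0$ as the candidate ground state level.

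Second, a standard minimax/augmented functional construction on $\R\times H^1(\R^N)$ in the spirit of Jeanjean produces a Palais--Smale sequence $\{u_n\}\subset S(c)$ for $E|_{S(c)}$ at level $\sigma(c)$ with $Q(u_n)\to 0$. Restricting from the outset to $H^1_{\mathrm{rad}}(\R^N)$, where by Schwarz rearrangement (the weight $|x|^{-b}$ is radially decreasing) the infimum coincides with $\sigma(c)$, coercivity of $E$ on $P(c)$ — itself a consequence of $c<c_1$ and supercriticality of $p$ — yields boundedness of $\{u_n\}$ in $H^1(\R^N)$. The compact embedding due to the weight $|x|^{-b}$ with $b>0$, namely Lemma \ref{cembedding}, then passes to the limit in the weighted nonlinear terms. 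The main obstacle is excluding the vanishing scenario $u_n \wto 0$: if it occurred, both weighted integrals would vanish in the limit, and together with $Q(u_n)\to 0$ this would force $\|\nabla u_n\|_2\to 0$, contradicting $\sigma(c)>0$. Once $u\not\equiv 0$ is secured, passage to the limit in the Euler--Lagrange equation produces a Lagrange multiplier $\lambda\in\R$, and combining the Pohozaev identity of Lemma \ref{ph} with $Q(u)=0$ and the observation that the $p$-contribution dominates forces $\lambda>0$.

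Finally, replacing $u$ by $|u|$ preserves $\|u\|_2^2$, $E$ and $Q$, so the strong maximum principle delivers a positive solution at the same energy; then Proposition \ref{radial}, proved via the moving plane method, yields that any positive solution with $\lambda>0$ is radially symmetric and strictly decreasing in $|x|$. That this solution realizes $\sigma(c)$, i.e.\ is a ground state in the sense of $E(u)=\inf_{P(c)}E$, follows from the preceding identification of $\sigma(c)$ with the mountain pass level.
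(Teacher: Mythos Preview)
Your proposal is correct and follows essentially the same route as the paper. The paper's proof invokes Lemmas \ref{unique2}, \ref{coercive2}, \ref{pss2} and then mimics Lemma \ref{exist2}: it shows the fiber map $t\mapsto E(u_t)$ has a unique maximum (so $P(c)=P_-(c)$), establishes $\sigma(c)>0$ and coercivity on $P(c)$ for $c<c_1$, constructs a Palais--Smale sequence $\{u_n\}\subset P(c)$, and concludes via the compact embedding of Lemma \ref{cembedding}, Lemma \ref{la} for $\lambda>0$, and Proposition \ref{radial} for radial symmetry.

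Two minor technical differences are worth noting. First, the paper obtains the Palais--Smale sequence through Ghoussoub's homotopy stable family framework (Lemma \ref{ps}) rather than the Jeanjean augmented functional, and this construction directly yields $(u_n)^-=o_n(1)$, so nonnegativity of the limit is automatic; your replacement of $u$ by $|u|$ at the end works but requires the observation that $P(c)$ is a natural constraint (which follows from $P_0(c)=\emptyset$). Second, your restriction to $H^1_{\mathrm{rad}}(\R^N)$ is unnecessary: Lemma \ref{cembedding} gives compact embedding of the full $H^1(\R^N)$ into $L^r(\R^N,|x|^{-b}\,dx)$ thanks to the singular weight, so no symmetry restriction is needed to recover compactness. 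Neither difference affects the validity of your argument.
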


To prove Theorem \ref{thm4}, we shall introduce the following minimization problem, for any $0<c<c_1$,
\begin{align}\label{min31}
\sigma(c):=\inf_{u \in P(c)} E(u).
\end{align}
Then, arguing as the proof of the existence of the second solution to \eqref{equ}-\eqref{mass} under the assumptions of Theorem \ref{thm3}, we are able to establish the existence result. The radial symmetry of the solution is a direct consequence of Proposition \ref{radial}.


\begin{thm} \label{thm41}
Let ${N \geq 1}$, $2<2+2(2-b)/N<q<p$, $0<b<\min\{2, N\}$ and $\mu=1$, then \eqref{equ}-\eqref{mass} has a positive, radially symmetric and decreasing ground state for any $c>0$.
\end{thm}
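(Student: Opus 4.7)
The plan is to mirror the strategy of the proof of Theorem \ref{thm4}, working entirely on the Pohozaev manifold and showing that
\[
\sigma(c) := \inf_{u \in P(c)} E(u)
\]
is attained by a positive radial function for every $c > 0$. First I would study the geometry of the fiber map $f_u(t) := E(u_t)$. From \eqref{scaling1} and $t f_u'(t) = Q(u_t)$, together with the doubly mass-supercritical hypothesis $N(q-2)/2 + b > 2$ and $N(p-2)/2 + b > 2$, one sees $f_u(t) \to 0^+$ as $t \to 0^+$, $f_u(t) \to -\infty$ as $t \to \infty$, and a monotonicity computation on the equation $f_u'(t) = 0$ (after dividing by $t$) yields a unique positive maximizer $t_u$. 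This gives $P(c) \neq \emptyset$ together with the projection $v \mapsto v_{t_v}$ onto $P(c)$; a parallel computation with $\Psi$ using $N(q-2)+2(b-1) > 2$ shows $P(c) = P_-(c)$.

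To verify $\sigma(c) > 0$, Gagliardo-Nirenberg's inequality \eqref{GN} applied on $P(c)$ delivers $\|\nabla u\|_2^2 \leq C\|\nabla u\|_2^{\alpha_q} + C\|\nabla u\|_2^{\alpha_p}$ with $\alpha_q, \alpha_p > 2$, forcing a uniform lower bound on $\|\nabla u\|_2$ for $u \in P(c)$. The linear combination $E(u) - \eta^{-1} Q(u)$ with $\eta := (N(q-2)+2b)/2 > 2$ annihilates the $q$-term, retains a positive multiple of $\|\nabla u\|_2^2$ and a positive multiple of $\int_{\R^N} |x|^{-b}|u|^p \, dx$, from which $E \geq \delta > 0$ on $P(c)$.

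I would then produce a Palais-Smale sequence $\{u_n\} \subset S(c)$ for $E|_{S(c)}$ at level $\sigma(c)$ additionally satisfying $Q(u_n) \to 0$, via Jeanjean's device: applying Ekeland's variational principle to the auxiliary functional $(s, u) \mapsto E(u_{e^s})$ on $\R \times S(c)$ equipped with a product metric. The identity displayed above gives the uniform $H^1$-bound. Extracting $u_n \wto u$, Lemma \ref{cembedding} yields strong convergence of both weighted integrals $\int_{\R^N} |x|^{-b}|u_n|^q \, dx$ and $\int_{\R^N} |x|^{-b}|u_n|^p \, dx$, and the combination $\sigma(c) > 0$ with $Q(u_n) \to 0$ forces these integrals to stay away from zero, so $u \not\equiv 0$. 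Convergence of the associated Lagrange multipliers $\lambda_n \to \lambda$ together with $Q(u) = 0$ for the weak limit yields
\[
\lambda c = \frac{2(N-b)-(N-2)q}{2q} \int_{\R^N} |x|^{-b}|u|^q \, dx + \frac{2(N-b)-(N-2)p}{2p} \int_{\R^N} |x|^{-b}|u|^p \, dx > 0,
\]
where both coefficients are positive by the upper bound $p < 2(N-b)/(N-2)^+$. A Brezis-Lieb type argument, using strong convergence of the nonlinear terms and the equation satisfied by $u_n$ in the limit, upgrades weak to strong $H^1$-convergence, so $u \in P(c)$ attains $\sigma(c)$. Replacing $u$ by $|u|$ and applying the strong maximum principle yields positivity, and Proposition \ref{radial} with $\lambda > 0$ delivers radial symmetry and monotonicity.

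The main technical obstacle I expect is producing the PS sequence at precisely the level $\sigma(c)$ with the auxiliary condition $Q(u_n) \to 0$; the adaptation of Jeanjean's argument to the inhomogeneous setting must be verified, although no essentially new ingredient should be required. Once such a sequence is available, the compact embedding of Lemma \ref{cembedding} eliminates the usual noncompactness obstruction inherent to problems on $\R^N$, making the rest of the analysis essentially parallel to the proof of Theorem \ref{thm4}.
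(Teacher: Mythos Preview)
Your proposal is correct and follows the same overall architecture as the paper: work on the Pohozaev manifold $P(c)$, show $\sigma(c)>0$ and coercivity of $E|_{P(c)}$, produce a constrained Palais--Smale sequence, and conclude via the compact embedding of Lemma~\ref{cembedding}, Lemma~\ref{la} for $\lambda>0$, and Proposition~\ref{radial}.

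The one genuine methodological difference concerns how the Palais--Smale sequence is obtained. You propose Jeanjean's device, i.e.\ Ekeland's principle applied to the augmented functional $(s,u)\mapsto E(u_{e^s})$ on $\R\times S(c)$, yielding a sequence with $Q(u_n)\to 0$. The paper instead uses Ghoussoub's homotopy-stable-family framework (Lemma~\ref{ps} via \cite[Theorem~3.2]{Gh}), which produces a Palais--Smale sequence lying \emph{exactly} on $P(c)$ (and with $(u_n)^-\to 0$, facilitating the positivity step). Both routes are standard and lead to the same conclusion here; the paper's choice gives $Q(u_n)=0$ rather than $o_n(1)$, which slightly streamlines the boundedness and nontriviality arguments, while your route is arguably more self-contained since it avoids importing the abstract minimax machinery. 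A second minor variation: to exhibit $\sigma(c)>0$ you eliminate the $q$-term via $E-\eta^{-1}Q$ with $\eta=(N(q-2)+2b)/2$, whereas the paper eliminates the gradient term via $E-\tfrac12 Q$ (see \eqref{c3}); both linear combinations work in the doubly supercritical regime.
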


\begin{rem}
In fact, verifying the radially symmetric and decreasing property of ground states to \eqref{equ}-\eqref{mass} with focusing perturbation, one can alternatively take advantage of the symmetric-decreasing rearrangement arguments in \cite{LL} and \cite[Theorem 1.1]{BZ}. However, such arguments are not applicable for the case with defocusing perturbation. In this case, Proposition \ref{radial} comes into play.
\end{rem}

Regarding the existence of solutions to \eqref{equ}-\eqref{mass} with defocusing perturbation $\mu=-1$, our main results read as follows.

\begin{thm} \label{thm5} 
Let ${N \geq 1}$, $2<q<p$, $p>2+2(2-b)/N$, $0<b<\min\{2, N\}$ and $\mu=-1$, then there exists a constant $c_3>0$ such that \eqref{equ}-\eqref{mass} has a positive, radially symmetric and decreasing ground state for any $0<c<c_3$, , where $c_3>0$ is defined by
$$
c_3:=\left(\frac{q\left(N(p-2)-2(2-b)\right)}{2C_{N,q,b} \left(N(p-q)(N-b)\right)}\right)^{\frac{N(p-2)-2(2-b)}{(p-q)(2-b)}}\left(\frac{2p}{C_{N,p,b}\left(N(p-2)+2b\right)}\right)^{\frac{2(2-b)-N(q-2)}{(p-q)(2-b)}}.
$$
\end{thm}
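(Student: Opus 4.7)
The plan is to realize the ground state as the minimizer of $E$ over the Pohozaev manifold, i.e.\ to study
$$\sigma(c) := \inf_{u \in P(c)} E(u),$$
and to show that this infimum is attained whenever $0 < c < c_3$.

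First, I would analyze the fiber map $\varphi_u(t) := E(u_t)$ on $S(c)$. With $\mu = -1$ it reads
$$\varphi_u(t) = \frac{t^2}{2}\int_{\R^N}|\nabla u|^2\,dx + \frac{t^{\alpha_q}}{q}\int_{\R^N}|x|^{-b}|u|^q\,dx - \frac{t^{\alpha_p}}{p}\int_{\R^N}|x|^{-b}|u|^p\,dx,$$
where $\alpha_r := N(r-2)/2 + b$. Since $\alpha_p > 2$, the $p$-term dominates as $t \to \infty$, so $\varphi_u(t) \to -\infty$, while near $t = 0$ the two positive terms dominate and $\varphi_u(t) > 0$. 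Multiplying $\varphi_u'(t) = 0$ by $t^{1-\alpha_p}$ yields a strictly decreasing function of $t$, so $\varphi_u$ attains its positive maximum at a unique $t_u > 0$; this shows $P(c) \neq \emptyset$ and forces $\Psi(u_{t_u}) < 0$, i.e.\ $P(c) = P_-(c)$.

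Second, I would show $\sigma(c) > 0$ and $H^1$-boundedness of minimizing sequences. Using $Q(u) = 0$ to eliminate $\int_{\R^N}|x|^{-b}|u|^p\,dx$ from $E(u)$, one obtains on $P(c)$
$$E(u) = \frac{\alpha_p - 2}{2\alpha_p}\int_{\R^N}|\nabla u|^2\,dx + \frac{\alpha_p - \alpha_q}{q\alpha_p}\int_{\R^N}|x|^{-b}|u|^q\,dx,$$
a sum of two strictly positive terms (since $\alpha_p > 2$ and $\alpha_p > \alpha_q$). Together with the bound $\|\nabla u\|_2 \geq K(c) > 0$ on $P(c)$, coming from plugging the sharp Gagliardo-Nirenberg inequality into $Q(u) = 0$ and using $\alpha_p > 2$, this yields $\sigma(c) > 0$ and $H^1$-boundedness of every sequence in $P(c)$ of bounded energy.

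Third, compactness. Let $(u_n) \subset P(c)$ with $E(u_n) \to \sigma(c)$. By Lemma \ref{cembedding}, along a subsequence $u_n \wto u$ in $H^1(\R^N)$, $u_n \to u$ strongly in $L^q(|x|^{-b}\,dx)$ and in $L^p(|x|^{-b}\,dx)$, and $u_n \to u$ a.e. Setting $v_n := u_n - u$, a Brezis-Lieb splitting of $Q(u_n) = 0$ gives $\|\nabla v_n\|_2^2 \to -Q(u) \geq 0$, so $Q(u) \leq 0$; the analogous splitting of $E(u_n)$ yields $\sigma(c) = E(u) - \tfrac{1}{2}Q(u)$. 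The case $u = 0$ would force $\sigma(c) = 0$, contradicting Step 2; the case $Q(u) < 0$ is excluded by combining the fiber analysis of Step 1 applied to $u$ (which produces $u_{t^*} \in P(\|u\|_2^2)$ with $E(u_{t^*}) \geq E(u)$) with the monotonicity of $c' \mapsto \sigma(c')$ on $(0, c]$, whose strict version is exactly what the Gagliardo-Nirenberg/Pohozaev estimates make available below the threshold $c_3$. Hence $Q(u) = 0$ and $\|u\|_2^2 = c$, so $u \in P(c)$ attains $\sigma(c)$; together with $\|\nabla v_n\|_2 \to 0$ and the $L^2$-norm identity, $u_n \to u$ strongly in $H^1$. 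The minimizer $u$ then solves \eqref{equ} for some $\lambda \in \R$, and combining $Q(u) = 0$ with the Pohozaev identity of Lemma \ref{ph} forces $\lambda > 0$. Since $E(|u|) = E(u)$ and $|u| \in P(c)$, one may assume $u \geq 0$, and the strong maximum principle then upgrades this to $u > 0$; radial symmetry and monotonicity in the radial direction follow from Proposition \ref{radial}. The main obstacle is Step 3: the weight $|x|^{-b}$ yields compactness only in the weighted $L^q$ and $L^p$ spaces but not in $L^2$, so ruling out mass loss under weak $H^1$ convergence is delicate, and it is precisely this that makes the size restriction $c < c_3$ enter the argument.
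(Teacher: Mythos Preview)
Your Steps 1--2 match the paper's Lemmas~\ref{unique4} and~\ref{coercive3}, but Step 3 has a genuine gap and you misidentify where $c_3$ enters. Working with a bare minimizing sequence, you cannot rule out $Q(u)<0$ or $\|u\|_2^2<c$ by appealing to ``strict monotonicity of $\sigma(c')$'': the paper only proves that $c'\mapsto\sigma(c')$ is nonincreasing (Proposition~\ref{nonincreasing}), and nothing about the specific constant $c_3$ makes it strictly decreasing. Even if you obtain $Q(u)=0$, the compact embedding of Lemma~\ref{cembedding} is only in the weighted $L^q,L^p$ spaces, so mass can still leak and $\|u\|_2^2=c$ does not follow. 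Moreover, your claim that ``$Q(u)=0$ with the Pohozaev identity forces $\lambda>0$'' is false for $\mu=-1$: combining the Nehari and Pohozaev identities yields $\lambda\|u\|_2^2$ as a \emph{difference} of two positive quantities, whose sign is not automatic. This is exactly where the restriction $c<c_3$ is actually used (Lemma~\ref{la1}).

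The paper avoids both issues by constructing, via the Ghoussoub minimax framework (as in Lemma~\ref{ps}), a Palais--Smale sequence $\{u_n\}\subset P(c)$ for $E$ restricted to $S(c)$ at level $\sigma(c)$ (Lemma~\ref{pss4}), rather than a mere minimizing sequence. Such a sequence satisfies an approximate equation, so its nontrivial weak limit $u$ solves \eqref{equ} for some $\lambda$. Lemma~\ref{la1} then uses $0<c<c_3$ to conclude $\lambda>0$; testing the equation for $u_n$ and for $u$ against themselves and using Lemma~\ref{cembedding} forces $\|\nabla u_n\|_2^2+\lambda\|u_n\|_2^2\to\|\nabla u\|_2^2+\lambda\|u\|_2^2$, which together with weak lower semicontinuity and $\lambda>0$ yields strong $H^1$ convergence and in particular $u\in S(c)$. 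Positivity and radial symmetry then follow from the maximum principle and Proposition~\ref{radial}, as you indicate.
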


\begin{rem}
The smallness restriction on the mass $c$ appearing in Theorem \ref{thm5} is only use to guarantee that the associated Lagrange multiplier is positive, see Lemma \ref{la1}. This is distinctive with the case for $\mu=1$, where the sign of Lagrange multiplier is positive for any $c>0$, see Lemma \ref{la}.
\end{rem}

In the mass critical case with $\mu=-1$, from the assertion $(\textnormal{ii})$ of Theorem \ref{thm2}, we realize that there exists no solutions to \eqref{equ}-\eqref{mass} for any $0<c \leq c_1$. One may wander if there exist solutions to \eqref{equ}-\eqref{mass} for $c>c_1$. In fact, there do exist solutions to \eqref{equ}-\eqref{mass} for some $c>c_1$.

\begin{thm} \label{thm6}
Let ${N \geq 1}$, $2<q<p=2+2(2-b)/N$, $0<b<\min\{2, N\}$ and $\mu=-1$, then there exists a constant $c_1^*>c_1>0$ such that \eqref{equ}-\eqref{mass} has a positive, radially symmetric and decreasing ground state for any $c_1<c<c_1^*$, where $c_1^*>0$ is defined by
$$
c_1^*:=\left(\frac{p\left(2(q-b)-N(q-2)\right)}{2C_{N,b}(p-q)(N-b)}\right)^{\frac{N}{2-b}}.
$$
\end{thm}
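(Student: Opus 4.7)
The plan is to argue in complete analogy with the proof of Theorem \ref{thm5}, using the Pohozaev manifold $P(c)$ in place of a global energy minimization (which is unavailable: for $c>c_1$ the energy $E|_{S(c)}$ is unbounded below, as one sees by taking $u\in S(c)$ with $K(u):=\tfrac12\|\nabla u\|_2^2-\tfrac1p\int|x|^{-b}|u|^p\,dx<0$ and letting $t\to\infty$ in \eqref{scaling1}). Accordingly I would set
$$
\sigma(c):=\inf_{u\in P(c)}E(u),\qquad c_1<c<c_1^*,
$$
and note that substituting the identity $Q(u)=0$ into $E(u)$ collapses $E$ on $P(c)$ to the positive quantity $\tfrac{2(2-b)-N(q-2)}{4q}\int|x|^{-b}|u|^q\,dx$; in particular $E>0$ on $P(c)$ and $\sigma(c)\geq 0$.

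First I would analyze the fibering $t\mapsto E(u_t)$: because $p$ is mass-critical, the kinetic and $p$-terms both scale as $t^2$ while the repulsive $q$-term scales as $t^{\alpha}$ with $\alpha=N(q-2)/2+b<2$, so when $K(u)<0$ the function $E(u_t)$ admits a unique maximizer $t_u>0$ and $u_{t_u}\in P(c)$. Testing with (a rescaling of) a near-extremizer of \eqref{GN} at the mass-critical exponent shows $P(c)\ne\emptyset$ and yields $\sigma(c)<\infty$; a parallel test-function construction gives $\sigma(c)>0$.

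Next let $\{u_n\}\subset P(c)$ be a minimizing sequence. The closed form for $E$ on $P(c)$ gives that $\int|x|^{-b}|u_n|^q\,dx$ is bounded; combining this with $Q(u_n)=0$, the Gagliardo--Nirenberg inequality for the critical $p$-term, and the explicit value of $c_1^*$ yields a uniform bound for $\|\nabla u_n\|_2$. Since $\mu=-1$ makes the $q$-term repulsive, Schwarz symmetrization is not directly available (cf.\ the remark after Theorem \ref{thm41}), so I would first establish existence without any symmetry assumption and recover the radial structure afterwards. The compact embedding of Lemma \ref{cembedding} makes the weighted nonlinear terms pass to the limit strongly along a subsequence, while Lions' concentration--compactness applied after a suitable translation---exactly as in the proofs of Theorems \ref{thm1}, \ref{thm3}, \ref{thm5}---together with $\sigma(c)>0$ rules out vanishing. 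Weak $H^1$-convergence and passage to the Euler--Lagrange equation then produce a solution $u_0$ of \eqref{equ} with multiplier $\lambda$; the Pohozaev--Nehari comparison used in Lemma \ref{la1} for Theorem \ref{thm5} gives $\lambda>0$ precisely because $c<c_1^*$, which upgrades the convergence to strong $H^1$-convergence and identifies $u_0$ as a minimizer. Replacing $u_0$ by $|u_0|$ and applying the strong maximum principle gives positivity, and Proposition \ref{radial} then yields radial symmetry and monotone decrease.

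The main obstacle will be the uniform $H^1$-bound on the minimizing sequence: in the mass-critical regime Gagliardo--Nirenberg alone does not prevent $\|\nabla u_n\|_2\to\infty$, and one has to use the repulsive $q$-term in tandem with the quantitative threshold $c<c_1^*$ to close the estimate. The very same threshold reappears---not coincidentally---in the positivity of the Lagrange multiplier, which is the second delicate point and the precise reason why the existence window is bounded above.
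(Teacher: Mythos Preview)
Your overall strategy mirrors the paper's, but the coercivity step contains a genuine gap. At the mass-critical exponent $p=2+2(2-b)/N$, the Gagliardo--Nirenberg inequality \eqref{GN} reads $\int|x|^{-b}|u|^p\,dx\le C_{N,b}c^{(2-b)/N}\|\nabla u\|_2^2$, so substituting into $Q(u)=0$ yields
\[
\Bigl(\tfrac{2C_{N,b}}{p}c^{(2-b)/N}-1\Bigr)\|\nabla u\|_2^2\ \ge\ \tfrac{N(q-2)+2b}{2q}\int|x|^{-b}|u|^q\,dx,
\]
which for $c>c_1$ is a \emph{lower} bound on $\|\nabla u\|_2^2$, not an upper bound; nothing here prevents $\|\nabla u_n\|_2\to\infty$ along a minimizing sequence, and the threshold $c_1^*$ plays no role in this computation. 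The paper's Lemma~\ref{coercive5} obtains coercivity by a different mechanism: it uses the interpolation inequality of Lemma~\ref{inequ} with $r=2_b^*$ to bound $\int|x|^{-b}|u|^p\,dx$ by $C\bigl(\int|x|^{-b}|u|^q\,dx\bigr)^{1-\theta}\|\nabla u\|_2^{2\theta(N-b)/(N-2)}$ with $\theta(N-b)/(N-2)<1$, then absorbs the sublinear gradient factor via Young. This works for every $c>c_1$, independently of $c_1^*$; the constant $c_1^*$ enters only in Lemma~\ref{la3} to guarantee $\lambda>0$, exactly as you say at the end, but not in the boundedness step.

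A second, smaller gap: you pass directly from a minimizing sequence on $P(c)$ to ``the Euler--Lagrange equation'', but a minimizer on the Pohozaev manifold is not automatically a constrained critical point of $E$ on $S(c)$. The paper handles this by constructing a Palais--Smale sequence $\{u_n\}\subset P(c)$ for $E|_{\mathcal{S}(c)}$ via the homotopy-stable-family machinery (Lemma~\ref{pss5}, adapting Lemmas~\ref{ps}--\ref{pss1}); this is what produces \eqref{equ11}-type approximate equations with Lagrange multipliers $\lambda_n$, after which compactness and Lemma~\ref{la3} close the argument as you outline. Your final steps (positivity via the maximum principle, radial symmetry via Proposition~\ref{radial}) are correct and match the paper.
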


The proofs of Theorems \ref{thm5} and \ref{thm6} can be completed by taking advantage of the similar spirit as the one of Theorem \ref{thm4} but with nontrivial calculations.

Next we investigate some properties of the function $c \mapsto \sigma(c)$ for $c>0$. In this respect, our main results are addressed as follows. Here we first define a constant $c_0>0$ by 
\begin{align*}
c_0:=\left\{
\begin{aligned}
0, &\quad \mbox{if} \,\, q>2+2(2-b)/N \,\, \mbox{and}\,\, \mu=1\,\, \mbox{or}\,\, p>2+2(2-b)/N\,\, \mbox{and}\,\,\mu=-1, \\ 
c_1, &\quad \mbox{if} \,\, p=2+2(2-b)/N \,\, \mbox{and}\,\, \mu=-1.
\end{aligned}
\right.
\end{align*}

\begin{prop} \label{prop2}
Let ${N \geq 1}$, $2<q<p<{2(N-b)}/{(N-2)^+}$ and $0<b<\min\{2, N\}$.
\begin{itemize}
\item [$(\textnormal{i})$] If $q>2+2(2-b)/N$ and $\mu=1$ or $p \geq 2+2(2-b)/N$ and $\mu=-1$, then the function $c \mapsto \sigma(c)$ is nonincreasing on $(c_0, \infty)$. Moreover, the function $c \mapsto \sigma(c)$ is continuous for any $c>c_0$. 
\item [$(\textnormal{ii})$] If $q>2+2(2-b)/N$ and $\mu=1$ or $p \geq 2+2(2-b)/N$ and $\mu=-1$, then $\sigma(c) \to \infty$ as $c \to (c_0)^+$.
\item [$(\textnormal{iii})$]  If $q>2+2(2-b)/N$ and $\mu=1$, then $\sigma(c) \to 0$ as $c \to \infty$.
\item [$(\textnormal{iv})$] If $p>2+2(2-b)/N$ and $\mu=-1$, then there exists a constant $\sigma_0>0$ such that $\sigma(c) \to \sigma_0$ as $c \to \infty$. In addition, if $q \geq 2+2(2-b)/N$ and $ N=3$ or $q>2+2(2-b)/N$ and $N=4$, then $\sigma(c) > \sigma_0$ for any $c>0$. In addition, if $2<q<2+2(2-b)/N$ and $ N=3$ or $2<q \leq 2+2(2-b)/N$ and $N=4$ or $q>2$ and $N \geq 5$, then there exist a constant $c_{\infty}>0$ such that $\sigma(c) =\sigma_0$ for any $c \geq c_{\infty}$, {where $\sigma_0>0$ is the ground state energy of solutions to \eqref{zequ00}.}
\item [$(\textnormal{v})$] If $p=2+2(2-b)/N$, $\mu=-1$ and $N \geq 3$, then there exists a constant $c_{\infty}>c_1$ such that $\sigma(c) =\sigma_0$ for any $c \geq c_{\infty}$.
\end{itemize}
\end{prop}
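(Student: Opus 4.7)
The plan is to build on the mountain-pass identification
\begin{equation*}
\sigma(c) = \inf_{u \in S(c)} \max_{t>0} E(u_t),
\end{equation*}
which follows under each of the hypotheses from a study of the fibre map $t \mapsto E(u_t)$ using \eqref{scaling1}: in each regime this map has a unique strict positive maximiser $t^{*}(u)$ satisfying $u_{t^{*}(u)} \in P(c)$. All five assertions will then be obtained by combining this characterisation with dilation arguments, the Gagliardo--Nirenberg inequality, and comparison with the zero mass equation where appropriate.

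For (i), given $c_0 < c < c'$ and $u \in P(c)$ almost minimising, I would set $v(x) := u(\rho x)$ with $\rho := (c/c')^{1/N} < 1$, so $v \in S(c')$, and substitute into \eqref{scaling1} to compare $\max_{t>0} E(v_t)$ with $E(u) = \max_{t>0} E(u_t)$; the key observation is that under the dilation the nonlinear integrals inflate faster than the kinetic term, and mass supercriticality forces $\max_{t>0} E(v_t) \le E(u) + o(1)$ as $c' \to c$, with strict inequality for $c' > c$ fixed. Sending the error to zero yields monotonicity, and the same bound read symmetrically gives continuity. For (ii), combining $Q(u) = 0$ with Gagliardo--Nirenberg produces an inequality of the form
\begin{equation*}
\|\nabla u\|_2^2 \le K_q \, c^{\alpha_q} \|\nabla u\|_2^{\beta_q} + K_p \, c^{\alpha_p} \|\nabla u\|_2^{\beta_p},
\end{equation*}
in which $\beta_p > 2$ in the supercritical regime (and a prefactor vanishes as $c \to c_1^+$ in the critical case (v)), forcing $\|\nabla u\|_2 \to \infty$ as $c \to c_0^+$. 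Using $Q(u) = 0$ to eliminate one nonlinear integral in $E(u)$ then yields $E(u) \ge c_{*} \|\nabla u\|_2^2 - o(1)$, so $\sigma(c) \to \infty$.

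For (iii), I would test with any fixed $\varphi \in C_c^\infty(\R^N)$ rescaled to mass $c$ and compute the maximum of $t \mapsto E(\varphi_{c,t})$ explicitly; since $\mu = 1$ and both exponents are mass supercritical and focusing, an elementary optimisation shows this maximum tends to $0$ as $c \to \infty$. For (iv) and (v), the limit value $\sigma_0 > 0$ is the ground state energy of the zero mass equation \eqref{zequ00}, whose existence and algebraic decay of any ground state $U_0$ are established separately in the paper. The lower bound $\liminf_{c \to \infty} \sigma(c) \ge \sigma_0$ follows by taking a near-minimiser $u_c \in P(c)$, noting that the associated Lagrange multiplier $\lambda_c$ tends to $0$ as $c \to \infty$ (parallel to Lemma \ref{la1}), passing to a weak limit that solves the zero mass equation, and concluding via a Brezis--Lieb decomposition. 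The dichotomy in (iv) between strict inequality $\sigma(c) > \sigma_0$ (in the low-dimensional / large-$q$ regime) and attainment $\sigma(c_\infty) = \sigma_0$ (in the opposite regime) hinges on whether $U_0 \in L^2(\R^N)$, which is controlled by the decay result and matches precisely the listed dimension--exponent conditions: when $U_0 \in L^2$, a truncated dilation of $U_0$ produces a competitor in $S(c_\infty) \cap P(c_\infty)$ with energy $\sigma_0$, extending to all $c \ge c_\infty$ by the monotonicity already proved; when $U_0 \notin L^2$, a compactness argument rules out a minimising sequence escaping to the zero mass profile and forces $\sigma(c) > \sigma_0$. Case (v) uses the same template with the mass-critical $p$-scaling replacing the supercritical one.

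The main obstacle is the strict inequality $\sigma(c) > \sigma_0$ in part (iv) when $U_0 \notin L^2(\R^N)$: this requires a refined compactness analysis ensuring that a minimising sequence cannot split off a zero-mass profile, and rests essentially on the sharp algebraic decay estimates for solutions to \eqref{equ} with $\lambda > 0$ proved elsewhere in the paper, together with a careful bookkeeping of which dimension--exponent configurations lead to finite versus infinite $L^2$-mass of $U_0$.
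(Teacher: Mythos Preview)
Your overall architecture is sound and matches the paper's: the fibering characterisation $\sigma(c)=\inf_{u\in S(c)}\max_{t>0}E(u_t)$, the use of $Q(u)=0$ combined with Gagliardo--Nirenberg for (ii) and (iii), and the comparison with the zero-mass ground state $U_0$ for (iv)--(v). Three points deserve correction.

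\textbf{Monotonicity in (i).} Your spatial dilation $v(x)=u(\rho x)$ with $\rho=(c/c')^{1/N}$ does not obviously work when $\mu=-1$. Under this dilation both nonlinear integrals pick up the factor $\rho^{b-N}$, which is indeed larger than the kinetic factor $\rho^{2-N}$; but for $\mu=-1$ the $q$-term is \emph{defocusing}, so its inflation pushes $\max_t E(v_t)$ \emph{up}, not down, and it is not clear that the focusing $p$-term wins uniformly. The paper avoids this by a cutoff-and-bump construction: truncate $u$ to compact support via $u^\delta(x)=u(x)\chi(\delta x)$, then add a disjointly supported bump $\tilde v^\delta$ of the correct extra mass whose gradient is made small by the scaling $(\tilde v^\delta)_t$ with $t\to 0^+$, so that $\max_{s>0}E(((\tilde v^\delta)_t)_s)\le \varepsilon/4$. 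This works uniformly in the sign of $\mu$.

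\textbf{Lower bound $\sigma(c)\ge\sigma_0$ in (iv).} Your route via $\lambda_c\to 0$, weak limits and Brezis--Lieb is unnecessary. The paper simply observes that $P(c)\subset\{u\in X\setminus\{0\}:Q(u)=0\}$, whence $\sigma(c)\ge\sigma_0$ immediately from the definition of $\sigma_0$. The convergence $\sigma(c)\to\sigma_0$ then follows from this trivial lower bound together with the upper bound obtained by testing with a truncation $u_R=U_0\chi_R$ of the zero-mass ground state (whose mass tends to $\infty$ precisely when $U_0\notin L^2$).

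\textbf{A factual slip.} You write that the strict inequality $\sigma(c)>\sigma_0$ ``rests essentially on the sharp algebraic decay estimates for solutions to \eqref{equ} with $\lambda>0$''. This is backwards: solutions to \eqref{equ} with $\lambda>0$ decay \emph{exponentially}. The algebraic decay is for ground states of the \emph{zero-mass} equation \eqref{zequ00} (Proposition~\ref{zestimate}), and it is precisely this decay that determines whether $U_0\in L^2(\R^N)$. The paper's argument for strict inequality is: if $\sigma(c)=\sigma_0$ were attained, the minimiser $u\in H^1(\R^N)$ would be a zero-mass ground state, hence by Proposition~\ref{zestimate} would satisfy $u\notin L^2$ in the stated dimension/exponent regimes, a contradiction.
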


To discuss the asymptotical behaviors of the function $c \mapsto \sigma(c)$ as $c \to \infty$ for the case $\mu=-1$, we need to investiagte the following zero mass equation,
\begin{align} \label{zequ00}
-\Delta u + |x|^{-b}|u|^{q-2} u = |x|^{-b}|u|^{p-2} u \quad \mbox{in} \,\, \R^N.
\end{align}
Here the natural Sobolve space $X$ related to \eqref{zequ00} is defined by the completion of $C_0^{\infty}(\R^N)$ under the norm
$$
\|u\|_X:=\left(\int_{\R^N}|\nabla u|^2 \,dx\right)^{\frac 12} + \left(\int_{\R^N}|x|^{-b}|u|^q \, dx\right)^{\frac 1 q}.
$$
Actually, the quantity $\sigma_0>0$ arising in Proposition \ref{prop2} is the ground state energy of solutions to \eqref{zequ00}. The proofs of the assertions $(\textnormal{v})$, $(\textnormal{vi})$ and $(\textnormal{vii})$ mainly benefit from the decay estimates of solutions to \eqref{zequ00} presented in the proposition below.

\begin{prop}\label{zestimate}
Let $N \geq 3$, $2<q<p<{2(N-b)}/{(N-2)^+}$ and $0<b<2$, then there exists a positive, radially symmetric and decreasing ground state $u \in X$ to \eqref{zequ00}. Moreover,  $u \in C(\R^N) \cap C^2(\R^N\backslash\{0\})$ and there holds that
\begin{align*} 
u(x) \underset{|x| \to \infty}{\sim}\left\{
\begin{aligned}
&|x|^{-\alpha} \quad &\mbox{if }\,\, q\neq (2N-2-b)/(N-2),\\
&|x|^{2-N} \left(\ln |x|\right)^{\frac{2-N}{2-b}}\quad  &\mbox{if }\,\, q = (2N-2-b)/(N-2),
\end{aligned}
\right.
\end{align*}
where $\alpha:=\max\{(2-b)/(q-2), N-2\}>0$.
\end{prop}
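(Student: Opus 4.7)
The plan is to split the proof into existence with radial symmetry, regularity, and pointwise decay; the decay analysis is the main obstacle.

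For existence of a positive radial ground state, I would work in the radial subspace $X_{\mathrm{rad}}$ of $X$ and minimize
\[I(u):=\tfrac{1}{2}\|\nabla u\|_2^2+\tfrac{1}{q}\int_{\R^N}|x|^{-b}|u|^q\,dx-\tfrac{1}{p}\int_{\R^N}|x|^{-b}|u|^p\,dx\]
on the Pohozaev manifold
\[\mathcal{M}:=\bigl\{u\in X_{\mathrm{rad}}\setminus\{0\}:\tfrac{N-2}{2}\|\nabla u\|_2^2+\tfrac{N-b}{q}\textstyle\int_{\R^N}|x|^{-b}|u|^q\,dx=\tfrac{N-b}{p}\textstyle\int_{\R^N}|x|^{-b}|u|^p\,dx\bigr\}\]
associated with the dilation $u\mapsto u(\cdot/t)$. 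Using this constraint one checks directly that $I|_{\mathcal M}=\tfrac{2-b}{2(N-b)}\|\nabla u\|_2^2$, which makes the problem coercive in the Dirichlet norm. Compactness of minimizing sequences follows from a weighted Strauss-type compact embedding of $X_{\mathrm{rad}}$ into $L^r(|x|^{-b}\,dx)$ for $r$ in the subcritical range $(2,2(N-b)/(N-2))$. Positivity comes from replacing $u$ with $|u|$, and the principle of symmetric criticality then gives a weak solution to \eqref{zequ00}. Radial monotonicity is produced by a moving-plane argument adapted to the weighted equation in the style of Proposition \ref{radial}, which requires positivity and $u\to 0$ at infinity (supplied via bootstrap from the regularity and decay analysis below).

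Regularity is routine. Away from $0$ the weight $|x|^{-b}$ is smooth, and subcritical Hardy--Sobolev growth combined with classical elliptic bootstrap yields $u\in C^2(\R^N\setminus\{0\})$. Near the origin, Moser iteration adapted to the weight $|x|^{-b}$ (an $A_1$-Muckenhoupt weight since $b<N$) produces $u\in L^\infty_{\mathrm{loc}}$, whence the right-hand side lies in $L^s_{\mathrm{loc}}$ for some $s\in(N/2,N/b)$ (using $b<2$), and Morrey estimates deliver continuity at $0$.

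The core of the argument is the pointwise asymptotics. Once $u$ is radial, positive, decreasing and satisfies $u(r)\to 0$ as $r\to\infty$, the radial equation reads
\[-u''-\tfrac{N-1}{r}u'+r^{-b}u^{q-1}=r^{-b}u^{p-1}=o(r^{-b}u^{q-1})\quad\text{as }r\to\infty.\]
The ansatz $u\sim r^{-\alpha}$ makes the Laplacian contribute $\alpha(N-2-\alpha)r^{-\alpha-2}$ and the defocusing term contribute $r^{-b-\alpha(q-1)}$; matching exponents yields $\alpha=(2-b)/(q-2)$ whenever this value exceeds the harmonic rate $N-2$, and $\alpha=N-2$ otherwise, so $\alpha=\max\{(2-b)/(q-2),N-2\}$ as claimed. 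I would upgrade this heuristic to two-sided bounds by constructing explicit sub- and super-solutions $c_\pm r^{-\alpha}$ on exterior domains $\{|x|>R\}$ and applying a comparison principle, exploiting that $u^{p-q}\to 0$ so the $p$-term acts as a subordinate perturbation of coefficients. The hardest piece is the resonant case $q=(2N-2-b)/(N-2)$, where pure power barriers cancel at leading order: inserting $u\sim r^{2-N}(\ln r)^{\gamma}$ produces the sub-leading Laplacian contribution $\gamma(N-2)(\ln r)^{\gamma-1}r^{-N}$ balanced against the nonlinear term $(\ln r)^{\gamma(q-1)}r^{-N}$, forcing $\gamma(q-2)=-1$, i.e.\ $\gamma=(2-N)/(2-b)$---precisely the exponent in the statement. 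Turning this into rigorous two-sided bounds requires logarithmically corrected barriers and careful tracking of sub-leading error terms, and is where the real technical work lies.
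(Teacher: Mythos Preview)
Your overall plan is reasonable, but the heart of the decay argument---the two-sided barrier step---has a real gap. Take the generic case $\alpha=(2-b)/(q-2)>N-2$ and set $w=cr^{-\alpha}$. A direct computation gives
\[
-\Delta w + r^{-b}w^{q-1} - r^{-b}w^{p-1}
=\bigl[c\,\alpha(\alpha+2-N)+c^{q-1}\bigr]r^{-\alpha-2}-c^{p-1}r^{-\alpha-2-\alpha(p-q)},
\]
and since $\alpha>N-2$ both terms in the bracket are positive while the last term decays faster. Hence for every $c>0$ the left side is eventually \emph{positive}: $cr^{-\alpha}$ is always a super-solution and never a sub-solution on exterior domains. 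The same obstruction occurs when $\alpha=N-2$ (there $-\Delta w=0$ and the positive $q$-term again dominates the $p$-term). So your ``explicit sub- and super-solutions $c_\pm r^{-\alpha}$'' programme delivers only the upper bound; the matching lower bound $u(r)\gtrsim r^{-\alpha}$ cannot come from a pure-power barrier. The paper handles this by a genuinely different route: after proving $u\le Cr^{-\alpha}$ via an ODE energy identity (showing $r^b f(r)$ is monotone, where $f=\tfrac12u_r^2-\tfrac1q r^{-b}u^q+\tfrac1p r^{-b}u^p$), it sets $v(r)=r^\alpha u(r)$, rewrites the radial equation for $v$, performs explicit changes of variable ($t=e^s$ type), and shows the transformed solution is eventually monotone with a strictly positive limit. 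That monotonicity-and-limit argument is what supplies the sharp lower bound, and it is not replaceable by your comparison scheme without substantial modification (e.g.\ barriers with lower-order corrections, which you did not propose outside the resonant case).

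Two smaller points on the existence part. First, your dilation-based Pohozaev constraint gives $I|_{\mathcal M}=\tfrac{2-b}{2(N-b)}\|\nabla u\|_2^2$, which is coercive only in the Dirichlet piece of the $X$-norm; you still need to bound $\int|x|^{-b}|u|^q$ along minimizing sequences (this follows from the constraint plus the interpolation $\|u\|_{L^p(|x|^{-b})}\lesssim\|u\|_{L^q(|x|^{-b})}^{1-\theta}\|\nabla u\|_2^\theta$, but you should say so). The paper instead uses the Pohozaev functional $Q$ built from the scaling $u\mapsto t^{N/2}u(t\cdot)$, which yields an energy identity coercive in the \emph{full} $X$-norm. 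Second, minimizing in $X_{\mathrm{rad}}$ and invoking symmetric criticality gives a radial critical point, not a priori a ground state in $X$; the paper minimizes over all of $X$ and then uses polarization/symmetric-decreasing rearrangement (not moving planes, which it explicitly flags as problematic for the zero-mass equation since the Yukawa kernel of Proposition~\ref{radial} is unavailable) to show the minimizer is radially decreasing.
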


To show the radially symmetric and decreasing property of ground states to \eqref{zequ}, we shall adopt the polarization arguments developed in \cite{BS} as well as \cite[Theorem 1.1]{BZ}, because the moving plane method seems unavailable in the current situation.

\begin{rem}
As a simple application of Proposition \ref{prop2}, we can obtain that, if $2<q<2+2(2-b)/N$ and $ N=3$ or $2<q \leq 2+2(2-b)/N$ and $N=4$ or $q>2$ and $N \geq 5$, then any ground state to \eqref{zequ00} belongs to $L^2(\R^N)$. 
\end{rem}

\begin{rem}
It follows from Proposition \ref{zestimate} that ground states to \eqref{zequ} only admit algebraical decay at infinity. This is in contrary to the case for $\lambda>0$, where ground states to \eqref{equ} indeed enjoy exponentially decay at infinity.
\end{rem}

Finally, we turn to investigate dynamical behaviors of solutions to the Cauchy problem for \eqref{equt}. As a direct consequence of the compactness of any minimizing sequence and Lemma \ref{localwp}, we first obtain orbital stability of the set of (local) minimizers in the following sense.

\begin{defi}\label{def1}
We say that a set $\mathcal{G} \subset H^1(\R^N)$ is orbitally stable, if for any $\eps>0$ there exists a constant $\delta>0$ such that if $\psi_0 \in H^1(\R^N)$ satisfies $\inf_{u \in \mathcal{G}} \| \psi_0-u\| \leq \delta$, then 
$$
\sup_{t \geq 0}\inf_{u \in \mathcal{G}} \| \psi(t)-u\| \leq \eps,
$$ 
where $\psi(t) \in C([0, \infty); H^1(\R^N))$ is the solution to the Cauchy problem for \eqref{equt} with initial datum $\psi_0$.
\end{defi}

\begin{thm} \label{thmstable}
Under the assumptions of Theorem \ref{thm1} or Theorem \ref{thm3}, then the set of minimizers obtained in Theorem \ref{thm1} or the set of local minimizers obtained in Theorem \ref{thm3} is orbitally stable.
\end{thm}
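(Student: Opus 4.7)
The plan is to follow the classical Cazenave--Lions strategy, leveraging the compactness of minimizing sequences already obtained in Theorems \ref{thm1} and \ref{thm3} together with the conservation laws and local well-posedness supplied by Lemma \ref{localwp}. I will argue by contradiction. Suppose $\mathcal{G}$ is not orbitally stable, so there exist $\eps_0>0$, initial data $\psi_{0,n} \in H^1(\R^N)$, minimizers $u_n \in \mathcal{G}$ and times $t_n\ge 0$ with $\|\psi_{0,n}-u_n\|_{H^1}\to 0$ yet $\inf_{u\in\mathcal{G}}\|\psi_n(t_n)-u\|_{H^1}\ge \eps_0$, where $\psi_n$ denotes the solution to \eqref{equt} emanating from $\psi_{0,n}$. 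Global existence on $[0,\infty)$ for $\psi_n$ in the cases at hand follows from Lemma \ref{localwp} combined with the $H^1$-boundedness deduced from mass and energy conservation (in the subcritical case of Theorem \ref{thm1}, $E$ is coercive on $S(c)$; in the setting of Theorem \ref{thm3}, the initial data lie in the bounded set $V_\rho(c)$, see below).

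First I would extract the minimizing sequence. By continuity of $\|\cdot\|_2^2$ and of $E$ on $H^1(\R^N)$, and using $\psi_{0,n}\to u_n$ in $H^1$ with $u_n\in\mathcal{G}$, one has $\|\psi_{0,n}\|_2^2\to c$ and $E(\psi_{0,n})\to m(c)$ (respectively $M(c)$). Conservation of mass and energy along the flow then yields $\|\psi_n(t_n)\|_2^2\to c$ and $E(\psi_n(t_n))\to m(c)$ (respectively $M(c)$). A standard rescaling $\widetilde{\psi}_n(t_n):=(c/\|\psi_n(t_n)\|_2^2)^{1/2}\psi_n(t_n)$ produces a sequence in $S(c)$ with $\|\widetilde{\psi}_n(t_n)-\psi_n(t_n)\|_{H^1}\to 0$ and $E(\widetilde{\psi}_n(t_n))\to m(c)$ (respectively $M(c)$), hence a genuine minimizing sequence for \eqref{gmin} (respectively \eqref{localmin1}).

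For the global minimization case covered by Theorem \ref{thm1}, Theorem \ref{thm1} itself asserts relative compactness in $H^1(\R^N)$ up to translations, so there exist $y_n\in\R^N$ and $u^\ast\in \mathcal{G}$ with $\widetilde{\psi}_n(t_n)(\cdot+y_n)\to u^\ast$ in $H^1$. Since $\mathcal{G}$ is invariant under translations (the energy and mass constraint are translation invariant), this produces elements of $\mathcal{G}$ at distance $o(1)$ from $\psi_n(t_n)$, contradicting the standing assumption $\inf_{u\in\mathcal{G}}\|\psi_n(t_n)-u\|_{H^1}\ge \eps_0$.

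The more delicate case is Theorem \ref{thm3}, where the minimization \eqref{localmin1} is constrained to $V_\rho(c)=\{u\in S(c):\|\nabla u\|_2\le\rho\}$, so the argument requires that the entire trajectory $\psi_n(t)$ remain in $V_\rho(c)$ for all $t\ge 0$. This is the main obstacle and will be handled by an energy-barrier argument: by construction of $\rho$ (see Proposition \ref{prop1} and the definition of $M(c)$), one has $M(c)<\inf\{E(u):u\in S(c),\,\|\nabla u\|_2=\rho\}$, so there exists $\eta>0$ with $E(u)\ge M(c)+\eta$ whenever $\|\nabla u\|_2=\rho$. Since $\|\psi_{0,n}-u_n\|_{H^1}\to 0$ with $u_n$ an interior minimizer, for $n$ large $\|\nabla \psi_{0,n}\|_2<\rho$ and $E(\psi_{0,n})<M(c)+\eta/2$; the continuous map $t\mapsto \|\nabla\psi_n(t)\|_2$ therefore cannot reach $\rho$, for otherwise conservation of energy would violate the barrier. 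Hence $\psi_n(t_n)\in V_\rho(c)$ after the harmless rescaling, the compactness from Theorem \ref{thm3} applies to $\widetilde{\psi}_n(t_n)$, and the same contradiction as above closes the proof.
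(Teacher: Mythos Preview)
Your proof is correct and follows essentially the same Cazenave--Lions contradiction scheme as the paper: assume instability, use conservation of mass and energy to turn $\psi_n(t_n)$ (after rescaling onto $S(c)$) into a minimizing sequence, then invoke the compactness of minimizing sequences from Theorems \ref{thm1} and \ref{thm3} to reach a contradiction. The paper in fact only writes out the argument for the global minimization case of Theorem \ref{thm1} and omits the details for Theorem \ref{thm3}; your additional energy-barrier step showing the flow stays in $V_\rho(c)$ (which should cite Lemma \ref{locmp} and \eqref{loc} rather than Proposition \ref{prop1}) correctly fills that gap.
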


If $2<q<p<2+2(2-b)/N$ and $0<b<\min\{2, N\}$, by Gagliardo-Nirenberg’s inequality \eqref{GN}, then any solution to the Cauchy problem for \eqref{equt} exists globally in time. However, if $p \geq 2+2(2-b)/N$ and $0<b<\min\{2, N\}$, then solutions to the Cauchy problem for \eqref{equt} may blow up in finite time. In this situation, we have global existence versus blowup dichotomy of solutions to the problem for initial data below the ground state energy threshold.

\begin{thm}\label{globalexistence}
Under the assumptions of Theorem \ref{thm4}, Theorem \ref{thm41}, Theorem \ref{thm5} or Theorem \ref{thm6}, if $\psi_0 \in \mathcal{K}^+(c)$, then the solution to the Cauchy problem for \eqref{equt} with initial datum $\psi_0$ exists globally time, where
$$
\mathcal{K}^+(c):=\{ u\in S(c) : E(u) < \gamma(c), Q(u) >0\}.
$$
\end{thm}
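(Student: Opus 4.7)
The plan is to combine local well-posedness with a variational invariance argument for $\mathcal{K}^+(c)$ and with a coercivity estimate that converts $Q(\psi(t))>0$ into a uniform $H^1$-bound; the blow-up alternative in Lemma \ref{localwp} will then force $T_{\max}=+\infty$.

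First, Lemma \ref{localwp} produces a unique maximal solution $\psi\in C([0,T_{\max});H^1(\R^N))$ to the Cauchy problem for \eqref{equt} with datum $\psi_0$, with conserved mass $\|\psi(t)\|_2^2=c$ and conserved energy $E(\psi(t))=E(\psi_0)<\gamma(c)$, where $\gamma(c):=\inf_{u\in P(c)}E(u)$ is the ground-state energy produced by Theorems \ref{thm4}--\ref{thm6}. Since $\psi$ is continuous into $H^1$ and $Q$ is continuous on $H^1(\R^N)$ (use Lemma \ref{cembedding} for the weighted terms), the map $t\mapsto Q(\psi(t))$ is continuous; were there a first $t_*\in(0,T_{\max})$ with $Q(\psi(t_*))=0$, then $\psi(t_*)\in P(c)$ would give $E(\psi_0)=E(\psi(t_*))\geq\gamma(c)$, contradicting $E(\psi_0)<\gamma(c)$. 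Hence $Q(\psi(t))>0$ and $\psi(t)\in\mathcal{K}^+(c)$ throughout $[0,T_{\max})$.

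For the $H^1$-bound I would test a suitable linear combination $E-\theta Q$ chosen to cancel the highest-order nonlinear contribution. In the mass-supercritical regime $p>2+2(2-b)/N$ (Theorems \ref{thm41} and \ref{thm5}) the choice $\theta:=2/(N(p-2)+2b)\in(0,1/2)$ annihilates the $|u|^p$-term and yields
\begin{align*}
\left(\tfrac{1}{2}-\theta\right)\|\nabla\psi(t)\|_2^2+\mu\,\tfrac{N(p-q)}{q(N(p-2)+2b)}\int_{\R^N}|x|^{-b}|\psi(t)|^q\,dx = E(\psi(t))-\theta Q(\psi(t))<E(\psi_0).
\end{align*}
For $\mu=-1$ (Theorem \ref{thm5}) both terms on the left are nonnegative and give at once $\|\nabla\psi(t)\|_2^2\leq 2E(\psi_0)/(1-2\theta)$. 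For $\mu=+1$ (Theorem \ref{thm41}), the companion inequality $\int_{\R^N}|x|^{-b}|\psi(t)|^q\,dx<\frac{2q}{N(q-2)+2b}\|\nabla\psi(t)\|_2^2$ (again from $Q(\psi(t))>0$) is substituted in, and the bound closes via $\tfrac{1}{2}-\theta-\tfrac{2N(p-q)}{(N(p-2)+2b)(N(q-2)+2b)}>0\Leftrightarrow N(q-2)+2b>4$, which holds because $q$ is mass supercritical in Theorem \ref{thm41}. The borderline mass-critical sub-cases (Theorem \ref{thm4}, where $q=2+2(2-b)/N$; Theorem \ref{thm6}, where $p=2+2(2-b)/N$) are handled analogously, the smallness restrictions $c<c_1$, resp.\ $c<c_1^*$, ensuring that the critical Gagliardo--Nirenberg term from \eqref{GN} is absorbed into the $\|\nabla\|_2^2$-term.

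The main obstacle is the coercivity in the focusing mass-supercritical regime of Theorem \ref{thm41} and in the borderline mass-critical cases of Theorems \ref{thm4} and \ref{thm6}: the interplay between the signs of $\mu$, the two Gagliardo--Nirenberg exponents, and the Pohozaev-type constraint $Q>0$ must be exploited via the precise algebraic identity above, together with the quantitative smallness of $c$ encoded in $c_1$ and $c_1^*$.
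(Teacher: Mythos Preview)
Your overall architecture --- invariance of $\mathcal{K}^+(c)$ via the variational characterization of $\gamma(c)$, then an a priori $H^1$-bound, then the blow-up alternative --- is exactly the paper's strategy. The paper packages it as a contradiction (assume $T<\infty$, so $\|\nabla\psi(t)\|_2\to\infty$; the coercivity lemmas then force $Q(\psi(t))\to-\infty$, producing a time $t_0$ with $Q(\psi(t_0))=0$ and hence $E(\psi(t_0))\geq\gamma(c)$), whereas you first establish $Q>0$ for all $t$ and then extract a direct bound via $E-\theta Q$. Your explicit $E-\theta Q$ computations are correct for Theorems~\ref{thm41} and~\ref{thm5}, and your Gagliardo--Nirenberg absorption for Theorem~\ref{thm4} (where $q$ is mass-critical and $c<c_1$) also closes.

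There is, however, a genuine gap in your treatment of Theorem~\ref{thm6}. There $p=2+2(2-b)/N$ is mass-critical and the hypothesis is $c_1<c<c_1^*$, not $c<c_1$. For $c>c_1$ the Gagliardo--Nirenberg coefficient $\tfrac12-\tfrac{C_{N,b}}{p}c^{(2-b)/N}$ is \emph{negative}, so the critical term cannot be absorbed into $\|\nabla\psi\|_2^2$ via \eqref{GN} as you claim; the constant $c_1^*$ controls only the sign of the Lagrange multiplier (Lemma~\ref{la3}) and plays no role in this absorption. The correct route in this case is the one underlying Lemma~\ref{coercive5}: from $E-\tfrac12 Q$ and $Q>0$ you first get a uniform bound on $\int_{\R^N}|x|^{-b}|\psi|^q\,dx$, then apply the interpolation inequality of Lemma~\ref{inequ} (with $r=2^*_b$, so that the $\|\nabla\psi\|_2$-exponent is strictly less than $2$) to control $\int_{\R^N}|x|^{-b}|\psi|^p\,dx$ by a sublinear power of $\|\nabla\psi\|_2^2$; substituting this back into the energy identity bounds $\|\nabla\psi\|_2$. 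You should also note, for this same case, that $Q(\psi(t_*))=0$ with $\mu=-1$ automatically places $\psi(t_*)\in\mathcal{S}(c)$ (since the positive $q$-term in $Q$ forces $\|\nabla\psi(t_*)\|_2^2<\tfrac2p\int|x|^{-b}|\psi(t_*)|^p$), so that indeed $\psi(t_*)\in\mathcal{P}(c)$ and $E(\psi(t_*))\geq\gamma(c)$.
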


To prove Theorem \ref{globalexistence}, we shall make use of the variational characterization of the ground state energy $\gamma(c)$ and the conservation laws in Lemma \ref{localwp}.

\begin{rem}
If $\psi_0 \in \mathcal{K}(c)$, then the solution to the Cauchy problem for \eqref{equt} with initial datum $\psi_0$ scatter under certain assumptions. Such a topic will be discussed in future publication.
\end{rem}

\begin{thm}\label{blowup}
Under the assumptions of Theorem \ref{thm4}, Theorem \ref{thm41}, Theorem \ref{thm5} with $2<q \leq 2+2(2-b)/N$ or Theorem \ref{thm6}, if $\psi_0 \in \mathcal{K}^-(c)$ with $|x| \psi_0 \in L^2(\R^N)$ or $N \geq 2$, $2<p \leq 6$ and $\psi_0$ is radially symmetric, then the solution to the Cauchy problem for \eqref{equt} with initial datum $\psi_0$ blows up in finite time, where
$$
\mathcal{K}^-(c):=\{ u\in S(c) : E(u) < \gamma(c), Q(u) <0\}.
$$
\end{thm}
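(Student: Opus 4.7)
The plan is to derive blowup by a virial obstruction, treating the finite-variance and radial cases in parallel, after first showing that the set $\mathcal{K}^-(c)$ is flow-invariant and that $Q(\psi(t))$ stays quantitatively negative. For the invariance, I would combine conservation of mass and of energy (Lemma \ref{localwp}) with the variational characterization $\gamma(c)=\inf_{P(c)}E$: if $\psi_0\in\mathcal{K}^-(c)$, then $\psi(t)\in S(c)$ with $E(\psi(t))=E(\psi_0)<\gamma(c)$ throughout the maximal existence interval, and by continuity of $t\mapsto Q(\psi(t))$ in $H^1(\R^N)$ the existence of a first time $t_0$ with $Q(\psi(t_0))=0$ would place $\psi(t_0)\in P(c)$ at energy strictly below $\gamma(c)$, contradicting the definition of $\gamma(c)$.

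Next, I would convert the strict inequality $Q(\psi(t))<0$ into a uniform gap $Q(\psi(t))\le -\delta$ with $\delta=\delta(\gamma(c)-E(\psi_0))>0$. Using the fiber $s\mapsto E(u_s)$ with $u_s(x)=s^{N/2}u(sx)$ and the identity $Q(u_s)=s\,\tfrac{d}{ds}E(u_s)$, in each of the four regimes (Theorems \ref{thm4}, \ref{thm41}, \ref{thm5} with $2<q\le 2+2(2-b)/N$, and \ref{thm6}) the fiber attains a unique maximum at some $s^{*}$ characterized by $u_{s^{*}}\in P(c)$. For $u\in\mathcal{K}^-(c)$ this forces $s^{*}<1$, $E(u_{s^{*}})\ge\gamma(c)$, and a direct manipulation of the Pohozaev functional yields an inequality of the form
\begin{equation*}
Q(u)\;\le\;-\,\kappa\bigl(\gamma(c)-E(u)\bigr),
\end{equation*}
with $\kappa>0$ depending on the scaling exponents $N(p-2)/2+b$ and $N(q-2)/2+b$. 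Applied to $u=\psi(t)$ this gives the desired uniform gap.

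With these two steps in hand, I would close the argument via the virial identity $V''(t)=8\,Q(\psi(t))$, where $V(t):=\int_{\R^N}|x|^2|\psi(t,x)|^2\,dx$. When $|x|\psi_0\in L^2(\R^N)$, combining with the gap gives $V''(t)\le -8\delta$, so $V(t)\le V(0)+V'(0)t-4\delta t^2$ must become negative in finite time, contradicting $V\ge 0$ and forcing blowup. When instead $\psi_0$ is radial with $N\ge 2$ and $2<p\le 6$, the variance may be infinite, so I would use a localized virial with a radial cutoff $\phi_R(x)=R^2\phi(|x|/R)$ equal to $|x|^2/2$ on $|x|\le R$ and constant on $|x|\ge 2R$. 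Setting $V_R(t):=\int \phi_R|\psi|^2\,dx$, standard computations give $V_R''(t)=8\,Q(\psi(t))+\mathrm{Err}_R(t)$, where $\mathrm{Err}_R(t)$ is supported in $\{|x|\gtrsim R\}$. The radial Strauss bound $|\psi(x)|\lesssim |x|^{-(N-1)/2}\|\psi\|_{H^1}$, together with mass/energy conservation and the hypothesis $p\le 6$, yields $|\mathrm{Err}_R(t)|\le CR^{-\eta}$ for some $\eta>0$ uniformly in $t$. Choosing $R$ so large that $CR^{-\eta}<4\delta$ reduces the radial case to the same parabolic obstruction.

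The main obstacle is Step 2 together with the radial error estimate. Both require delicate bookkeeping because the two inhomogeneous nonlinearities have different scaling exponents and in each regime of Theorems \ref{thm4}, \ref{thm41}, \ref{thm5} and \ref{thm6} the fiber $s\mapsto E(u_s)$ has a different convexity profile; in the $\mu=-1$ settings the lower-order term actually competes with rather than assists the mass-supercritical one, so the restriction $2<q\le 2+2(2-b)/N$ in Theorem \ref{thm5} is used exactly to keep $\kappa>0$ in the quantitative gap. In the radial estimate the weight $|x|^{-b}$ enters both the cutoff computation and the Strauss interpolation, and the threshold $p=6$ is precisely what keeps the nonlinear error in $V_R''$ decaying in $R$; making this rigorous uniformly in $t$ on the maximal existence interval is the technical crux.
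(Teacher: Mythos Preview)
Your overall architecture matches the paper: invariance of $\mathcal{K}^-(c)$ by conservation plus the Pohozaev characterization, a quantitative gap $Q(\psi(t))\le E(\psi_0)-\gamma(c)$ obtained from concavity of the fiber $\tau\mapsto E(\psi_\tau)$ on $[\tau_\psi,1]$ (the paper gets $\kappa=1$), and then the virial/localized virial identity. The finite-variance case is fine and coincides with Lemma~\ref{viriall2}.

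The radial case, however, has a genuine gap. You claim that Strauss plus ``mass/energy conservation'' yields $|\mathrm{Err}_R(t)|\le CR^{-\eta}$ \emph{uniformly in $t$}. In the present mass-supercritical regimes energy conservation does \emph{not} control $\|\psi(t)\|_{H^1}$: the quantity you are trying to show blows up is precisely $\|\nabla\psi(t)\|_2$. The localized virial remainder in Lemma~\ref{virial} carries explicit factors $\|\nabla\psi(t)\|_2^{(q-2)/2}$ and $\|\nabla\psi(t)\|_2^{(p-2)/2}$, so no single choice of $R$ makes the error smaller than $4\delta$ for all $t$ unless you already know the gradient is bounded, which you do not.

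The paper closes this gap by a different mechanism. Since $p\le 6$ means $(p-2)/2\le 2$, Young's inequality converts the error into $\eps\|\nabla\psi(t)\|_2^2+\eps$ for $R=R(\eps)$ large. The term $\eps\|\nabla\psi(t)\|_2^2$ cannot be dropped; instead it is fed back through the identity obtained from \eqref{negative},
\[
\|\nabla\psi\|_2^2\le \frac{\mu(N(q-2)+2b)}{2q}\int |x|^{-b}|\psi|^q+\frac{N(p-2)+2b}{2p}\int |x|^{-b}|\psi|^p+E(\psi_0)-\gamma(c),
\]
and combined with the expression $4Q(\psi)=8E(\psi)-(\text{nonlinear terms})$. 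After this reorganization the right-hand side is controlled by the nonlinear integrals alone, and the paper then splits into two cases according to whether those integrals stay bounded or not. In the first case one obtains $\tfrac{d}{dt}V_{\chi_R}[\psi]\le -\tfrac{\delta}{2}\gamma(c)$; in the second, $\tfrac{d}{dt}V_{\chi_R}[\psi(t_n)]\to-\infty$ along a sequence. Either conclusion contradicts global existence via nonnegativity of $\int\chi_R|\psi|^2$. To fix your argument, replace the appeal to a uniform $H^1$ bound by this Young-plus-absorption step; the restriction $p\le 6$ is used exactly here so that the exponent on $\|\nabla\psi\|_2$ in the error does not exceed $2$.
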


To prove Theorem \ref{blowup}, the essential argument is to deduce the evolution of (localized) virial quantity along time, see Lemmas \ref{virial} and \ref{viriall2}.

{
\begin{rem}
In the proof of Theorem \ref{blowup}, the essential ingredient to derive blowup of solutions in finite time for radial data is the well-known Strauss inequality \eqref{sti}. This inequality holds only for $N \geq 2$. Then it is unknown to us if blowup of solutions in finite time for radial data occurs as well for $N=1$.
\end{rem}}

As an immediate consequence of Theorem \ref{blowup}, we have the following corollary.

\begin{cor}\label{instability}
Under the assumptions Theorem \ref{thm4}, Theorem \ref{thm41}, Theorem \ref{thm5} with $2<q \leq 2+2(2-b)/N$ or Theorem \ref{thm6}, let $u \in S(c)$ be a ground state to \eqref{equ}-\eqref{mass} at the level $\sigma(c)$, if $|x|u \in L^2(\R^N)$ or $N \geq 2$ and $2<p<6$, then it is unstable by blowup in finite time, i.e. for any $\eps>0$, there exists $v \in H^1(\R^N)$ such that $\|v-u\| \leq \eps$ and the solution to the Cauchy problem for \eqref{equt} with initial datum $v$ blows up in finite time.
\end{cor}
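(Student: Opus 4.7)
The strategy is to perturb the ground state $u$ by the $L^2$-preserving dilation $u_\tau(x) := \tau^{N/2} u(\tau x)$ for $\tau$ slightly larger than $1$, and then invoke Theorem \ref{blowup} directly. The map $\tau \mapsto u_\tau$ is continuous into $H^1(\R^N)$ with $u_1 = u$, so the perturbation can be made arbitrarily small in norm. Moreover, radial symmetry is preserved under isotropic dilation, and a change of variables gives $\||x| u_\tau\|_2 = \tau^{-1} \||x| u\|_2$; hence whichever auxiliary hypothesis of Theorem \ref{blowup} is invoked (finite variance $|x|u \in L^2(\R^N)$, or radiality in dimension $N\ge 2$ with $2 < p \leq 6$) carries over to $u_\tau$.

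The key step is to verify that $u_\tau \in \mathcal{K}^-(c)$ for all $\tau > 1$ sufficiently close to $1$. From \eqref{scaling1} one computes
\[
\frac{d}{dt} E(u_t) = \frac{Q(u_t)}{t},
\]
so $t = 1$ is a critical point of the fiber map $t \mapsto E(u_t)$ because $Q(u) = 0$. In each of the regimes covered by the corollary (Theorems \ref{thm4}, \ref{thm41}, \ref{thm5} with $2<q\leq 2+2(2-b)/N$, and \ref{thm6}), the fiber map is an explicit sum of three monomials in $t$ whose exponents and sign pattern force a unique strict global maximum on $(0,\infty)$; the fact that $u$ achieves $\sigma(c) = \inf_{P(c)} E$, combined with the standard Lagrange analysis on $P(c)$, forces this maximum to be attained precisely at $t = 1$. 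Consequently $E(u_\tau) < E(u) = \gamma(c)$ for every $\tau \neq 1$, and rewriting $Q(u_\tau) = \tau\, (d/dt) E(u_t)\big|_{t=\tau}$ together with the strict-maximum structure at $t=1$ gives $Q(u_\tau) < 0$ for every $\tau > 1$ close to $1$. Hence $u_\tau \in \mathcal{K}^-(c)$.

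Given $\eps > 0$, I then pick $\tau > 1$ close enough to $1$ that $v := u_\tau$ satisfies $\|v - u\| \leq \eps$; by the preceding paragraph $v \in \mathcal{K}^-(c)$ and $v$ inherits the auxiliary property assumed on $u$, so Theorem \ref{blowup} applied with $\psi_0 := v$ yields that the corresponding solution of \eqref{equt} blows up in finite time, which is precisely the instability statement. The main technical point is the strict-maximum property of the fiber map at $t = 1$; this is what makes the dilation direction a genuine descent direction away from $P(c)$ on the correct side, and the required information is already encoded in the decomposition $P(c) = P_+(c) \cup P_0(c) \cup P_-(c)$ used in the earlier existence proofs, so no new work beyond a careful bookkeeping of the signs in \eqref{scaling1} is required.
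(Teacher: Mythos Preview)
Your proposal is correct and follows essentially the same approach as the paper: perturb the ground state via the $L^2$-preserving dilation $u_\tau$ for $\tau>1$ close to $1$, observe that $u_\tau\in\mathcal{K}^-(c)$ and $\|u_\tau-u\|\to 0$, then invoke Theorem~\ref{blowup}. You supply more detail than the paper does (the fiber-map strict-maximum argument, and the check that finite variance and radial symmetry are preserved under dilation), but the underlying idea is identical.
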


\subsection*{Structure of the Paper} This paper is organized as follows. In Section \ref{pre}, we present some preliminary results used to prove our main results and give the proof of Proposition \ref{radial}. In Section \ref{sub}, we consider the mass subcritical case and show the proofs of Proposition \ref{prop11} and Theorem \ref{thm1}. Later, in Section \ref{critical}, we investigate the mass critical case and prove Theorem \ref{thm2}. Section \ref{sup1} is devoted to the study of the mass supercritical case with focusing perturbation and contains the proofs of Theorems \ref{thm3}, \ref{thm4} and \ref{thm41}. And Section \ref{sup2} is devoted to the study of the mass supercritical case with defocusing perturbation and contains the proof of Theorem \ref{thm5}. In Section \ref{rcritical}, we revisit the mass critical case and establish Theorem \ref{thm6}. Successively, in Section \ref{prop}, we consider asymptotical behaviors of the function $c \mapsto \sigma(c)$ and show the proofs of Propositions \ref{prop2} and \ref{zestimate}. Finally, in Section \ref{dynamics}, we investigate dynamical behaviors of solutions to the Cauchy problem for \eqref{equt} and prove Theorems \ref{thmstable}, \ref{globalexistence} and \ref{blowup} as well as Corollary \ref{instability}.

\subsection*{Notations}  Throughout the paper, for any $1 \leq p < \infty$, we denote by $L^p(\R^N)$ the usual Lebesgue
space equipped with the norm 
$$
\|u\|_p=\left(\int_{\R^N} |u|^p \, dx \right)^{\frac 1p}.
$$ 
We denote by $H^1(\R^N)$ the usual Sobolev space equipped with the standard norm
$$
\|u\|=\|u\|_2 +\|\nabla u\|_2.
$$
In addition, we use the letter $C$ for a generic positive constant, whose value may change from line to line. We use the notation $o_n(1)$ for any quantity which tends to zero as $n \to \infty$. For simplicity, we define
$$
2^*_b:=\frac {2(N-b)} {(N-2)^+},
$$
where $2^*=\infty$ if $N=1, 2$ and $2^*= {2(N-b)}/{(N-2)}$ if $N \geq 3$.
\section{Preliminaries} \label{pre}

In this section, we are going to present some preliminary results used to establish our main results. Let us first introduce the well-known Gagliardo-Nirenberg iequality {in \cite{G1}}.

\begin{lem}
Let $N \geq 1$, $2<p<2_b^*$ and $0<b<\min\{2, N\}$, then there exists a constant $C_{N,p,b}>0$ such that, for any $u \in H^1(\R^N)$,
\begin{align} \label{GN}
\int_{\R^N} |x|^{-b} |u|^{p} \, dx \leq C_{N,p,b} \left(\int_{\R^N} |\nabla u|^2 \, dx\right)^{\frac{N(p-2)}{4} + \frac b 2} \left(\int_{\R^N} |u|^2 \,dx\right)^{\frac p2 -\frac{N(p-2)}{4} - \frac b 2},
\end{align}
where the optimal constant $C_{N, p, b}>0$ is given by
\begin{align} \label{cnp}
C_{N,p,b}=\left(\frac{(p-2)N+2b}{2p-((p-2)N+2b)}\right)^{\frac{4-((p-2)N+2b)}{4}} \frac{2p}{((p-2)N+2b)\|Q_p\|_2^{p-2}}.
\end{align}
Moreover, $Q_{p,b} \in H^1(\R^N)$ is the ground state to the equation
\begin{align} \label{equ1}
-\Delta Q_{p,b}+Q_{p,b}=|x|^{-b}|Q_{p,b}|^{p-2} Q_{p,b} \quad \mbox{in} \,\, \R^N.
\end{align}
\end{lem}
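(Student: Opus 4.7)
The plan is to view the inequality as equivalent to a compactness-variational statement, with the optimal constant computed through the Pohozaev identities of the associated ground state. Set $\alpha:=\frac{N(p-2)}{4}+\frac{b}{2}$ and $\beta:=\frac{p}{2}-\alpha$, and define the Weinstein-type functional
$$
J(u):=\frac{\|\nabla u\|_2^{2\alpha}\,\|u\|_2^{2\beta}}{\int_{\R^N}|x|^{-b}|u|^p\,dx}, \qquad u\in H^1(\R^N)\setminus\{0\}.
$$
The inequality \eqref{GN} amounts to showing $I:=\inf J > 0$, and then $C_{N,p,b}=1/I$.

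First, I would establish that $I>0$ (i.e.\ that the inequality holds at all). Under the hypotheses $0<b<\min\{2,N\}$ and $2<p<2^*_b$, the exponent satisfies $N(p-2)/4+b/2\in(0,1)$, so this follows from a standard interpolation argument combining Hardy-Sobolev (which controls $\int|x|^{-b}|u|^{2^*_b}\,dx$ by $\|\nabla u\|_2^{2^*_b}$) with the $L^2$ bound, or equivalently from Caffarelli-Kohn-Nirenberg. At this point the left-hand side of \eqref{GN} is finite and the two sides have matching scaling $u_{\lambda,\mu}(x):=\lambda u(\mu x)$, which leaves $J$ invariant; this lets me normalize any minimizing sequence so that $\|\nabla u_n\|_2=\|u_n\|_2=1$ while the denominator approaches $I^{-1}$.

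Next I would obtain a minimizer. Using the Schwarz symmetric-decreasing rearrangement (which decreases $\|\nabla u\|_2$ and $\|u\|_2$, and, since $|x|^{-b}$ is itself radially symmetric decreasing, only increases $\int|x|^{-b}|u|^p\,dx$), I may assume the sequence is radial and radially decreasing. The compact embedding of radial $H^1(\R^N)$ into $L^p(\R^N;|x|^{-b}\,dx)$ (the weighted analogue of Strauss' lemma, and precisely the compactness stated in the paper's forthcoming Lemma \ref{cembedding}) passes to a nontrivial limit $Q$, and weak lower semicontinuity of the two Sobolev norms gives $J(Q)\le I$, hence equality. A Lagrange multiplier computation then shows $Q$ solves an equation of the form $-\Delta Q+aQ=d|x|^{-b}|Q|^{p-2}Q$ with positive constants $a,d$, and a further rescaling $Q\mapsto\lambda Q(\mu\cdot)$ normalizes the constants so that the optimizer is a ground state $Q_{p,b}$ of \eqref{equ1} (the same scale-invariance of $J$ guarantees this is allowed without changing the extremal value).

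The main obstacle is really this compactness/rearrangement step: because the weight $|x|^{-b}$ breaks translation invariance while being singular at the origin, one cannot invoke Lions' concentration-compactness in the translation-invariant form, and vanishing along a scaling must be excluded by the normalization $\|\nabla u_n\|_2=\|u_n\|_2=1$. Radial rearrangement and the compact radial embedding are the correct replacement, and this is the nontrivial analytic input.

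Finally, the explicit value of $C_{N,p,b}$ in \eqref{cnp} is obtained algebraically. Writing $A:=\|\nabla Q_{p,b}\|_2^2$, $B:=\|Q_{p,b}\|_2^2$, $D:=\int|x|^{-b}|Q_{p,b}|^p\,dx$, testing \eqref{equ1} against $Q_{p,b}$ gives $A+B=D$, while the Pohozaev identity (multiply \eqref{equ1} by $x\cdot\nabla Q_{p,b}$ and integrate, using $\nabla(|x|^{-b})\cdot x=-b|x|^{-b}$) gives
$$
\tfrac{N-2}{2}A+\tfrac{N}{2}B=\tfrac{N-b}{p}\,D.
$$
Solving the linear system yields $A=\tfrac{N(p-2)+2b}{2p}\,D$ and $B=\tfrac{2p-(N(p-2)+2b)}{2p}\,D$, so $D=\tfrac{2p}{2p-(N(p-2)+2b)}\|Q_{p,b}\|_2^2$. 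Substituting into $C_{N,p,b}=D/(A^\alpha B^\beta)$ and using $\alpha+\beta=p/2$, routine algebra (setting $\tau:=(p-2)N+2b$ and regrouping powers of $\tau$ and $2p-\tau$) produces exactly the expression in \eqref{cnp}.
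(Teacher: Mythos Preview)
Your proof is correct and follows the classical Weinstein variational scheme. Note, however, that the paper does not actually prove this lemma: it is stated as a known result and attributed to Genoud \cite{G1}. Your argument is essentially the one in that reference (and in the related work of Farah \cite{F}): minimize the scale-invariant Weinstein functional, reduce to radial functions by symmetric-decreasing rearrangement (using Hardy--Littlewood to see that $\int |x|^{-b}|u|^p\,dx$ only increases, P\'olya--Szeg\H{o} for the gradient), extract a minimizer via compactness, and read off the constant from the Nehari and Pohozaev identities for $Q_{p,b}$.

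Two minor remarks. First, the compactness you invoke is in fact stronger than you state: by Lemma~\ref{cembedding} the full embedding $H^1(\R^N)\hookrightarrow L^p(\R^N,|x|^{-b}dx)$ is compact (the singular weight at the origin already kills translation non-compactness), so the rearrangement step, while natural for obtaining a radial optimizer, is not strictly needed for existence. Second, your appeal to Hardy--Sobolev for the a priori bound $I>0$ is cleanest when $N\ge 3$; for $N=1,2$ one should instead split $\R^N$ into $\{|x|\le 1\}$ and $\{|x|>1\}$ and use H\"older plus the standard Gagliardo--Nirenberg inequality on each piece (or, as you indicate, invoke Caffarelli--Kohn--Nirenberg directly). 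Your algebra for the constant is correct and matches \eqref{cnp}.
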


\begin{rem}
The radial symmetry of ground states to \eqref{equ1} can be directly achieved by invoking the symmetric-decreasing arrangement arguments in \cite{LL} and \cite[Theorem 1.1]{BZ}. For the uniqueness of ground states to \eqref{equ1}, we refer the readers to \cite{G, T, Y}.
\end{rem}

\begin{lem} \label{inequ}
Let ${N \geq 1}$, $2<q<p<2_b^*$ and $0<b<\min\{2, N\}$, then there exists a constant $C>0$ depending only on $N, p, q$ and $b$ such that, for any $u \in H^1(\R^N)$,
$$
\int_{\R^N} |x|^{-b}|u|^p \,dx \leq C \left(\int_{\R^N} |x|^{-b}|u|^q \,dx\right)^{1-\theta} \left(\int_{\R^N} |\nabla u|^2 \, dx\right)^{ \frac{\theta \left(N(r-2)+2b\right)}{4}} \left(\int_{\R^N} |u|^2 \,dx\right)^{\frac{\theta\left(2(r-b)-N(r-2)\right)}{4}},
$$
where $p=(1-\theta) q + \theta r$, $2<q<p<r <2_b^*$ if $N=1,2$ and $2<q<p<r =2_b^*$ if $N \geq 3$.
\end{lem}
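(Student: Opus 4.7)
The inequality has the flavor of a two-step weighted interpolation: first distribute the power $p$ between $q$ and a larger exponent $r$ by H\"older, then invoke Gagliardo--Nirenberg at the endpoint $r$. Concretely, since $p=(1-\theta)q+\theta r$ with $0<\theta<1$, I would write
\begin{align*}
\int_{\R^N}|x|^{-b}|u|^p\,dx
&=\int_{\R^N}\bigl(|x|^{-b(1-\theta)}|u|^{(1-\theta)q}\bigr)\bigl(|x|^{-b\theta}|u|^{\theta r}\bigr)\,dx,
\end{align*}
and apply H\"older's inequality with conjugate exponents $\tfrac1{1-\theta}$ and $\tfrac1\theta$. This yields the clean interpolation
\begin{align*}
\int_{\R^N}|x|^{-b}|u|^p\,dx
\leq \left(\int_{\R^N}|x|^{-b}|u|^q\,dx\right)^{1-\theta}\left(\int_{\R^N}|x|^{-b}|u|^r\,dx\right)^{\theta}.
\end{align*}

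Next I would estimate the factor $\int_{\R^N}|x|^{-b}|u|^r\,dx$. In the case $N=1,2$, one has $r<2_b^*=\infty$, so \eqref{GN} applied with exponent $r$ in place of $p$ gives
\begin{align*}
\int_{\R^N}|x|^{-b}|u|^r\,dx
\leq C_{N,r,b}\left(\int_{\R^N}|\nabla u|^2\,dx\right)^{\frac{N(r-2)+2b}{4}}\left(\int_{\R^N}|u|^2\,dx\right)^{\frac{2(r-b)-N(r-2)}{4}}.
\end{align*}
In the case $N\geq3$ with the endpoint choice $r=2_b^*=2(N-b)/(N-2)$, a direct computation shows $N(r-2)+2b=2r$ and $2(r-b)-N(r-2)=0$, so the required bound reduces to the Hardy--Sobolev inequality $\int_{\R^N}|x|^{-b}|u|^{2_b^*}\,dx\leq C\|\nabla u\|_2^{2_b^*}$; this is again a special case of \eqref{GN} interpreted in the limit (or obtained by standard Caffarelli--Kohn--Nirenberg arguments), and the $L^2$ factor drops out harmlessly with exponent zero.

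Raising this bound to the power $\theta$ and multiplying by the $L^q_{|x|^{-b}}$ factor produces exactly the stated inequality, with $C$ depending only on $N$, $p$, $q$, $b$ (through the choice of $r$ and the constant $C_{N,r,b}$). The only thing to be careful about is admissibility: one has to pick an $r$ strictly between $p$ and $2_b^*$ for $N=1,2$, and take $r=2_b^*$ itself for $N\geq 3$, so that Gagliardo--Nirenberg/Hardy--Sobolev legitimately applies and so that the weight $\theta\in(0,1)$. There is no real obstacle here; the argument is essentially a routine chaining of H\"older and \eqref{GN}, and the bookkeeping of the exponents on $\|\nabla u\|_2^2$ and $\|u\|_2^2$ matches the formulas in the statement by the computation sketched above.
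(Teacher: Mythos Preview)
Your proposal is correct and follows essentially the same route as the paper: H\"older interpolation between the $q$- and $r$-weighted norms, followed by \eqref{GN} applied at exponent $r$. You even handle the endpoint $r=2_b^*$ for $N\geq 3$ more carefully than the paper, which simply cites \eqref{GN} without remarking that the critical case is really Hardy--Sobolev.
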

\begin{proof}
Using H\"older's inequality and \eqref{GN}, we are able to derive that
\begin{align*}
\int_{\R^N} |x|^{-b}|u|^p \,dx &\leq  \left(\int_{\R^N} |x|^{-b}|u|^q \,dx\right)^{1-\theta}  \left(\int_{\R^N} |x|^{-b}|u|^r \,dx\right)^{\theta}  \\
& \leq C \left(\int_{\R^N} |x|^{-b}|u|^q \,dx\right)^{1-\theta}\left(\int_{\R^N} |\nabla u|^2 \, dx\right)^{ \frac{\theta \left(N(r-2)+2b\right)}{4}} \left(\int_{\R^N} |u|^2 \,dx\right)^{\frac{\theta\left(2(r-b)-N(r-2)\right)}{4}}.
\end{align*}
Thus the proof is completed.
\end{proof}

\begin{lem} \cite[Lemma 2.1]{AC} \label{cembedding}
Let ${N \geq 1}$ and $0<b<\min\{2, N\}$, then $H^1(\R^N)$ is compactly embedded into $L^p(\R^N, |x|^{-b}dx)$ for any $2<p<2_b^*$. 
\end{lem}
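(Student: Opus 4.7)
\textbf{Proof plan for Lemma \ref{cembedding}.} My approach is a standard three-region decomposition of $\R^N$ combined with H\"older's inequality and the classical Rellich--Kondrachov theorem. Let $\{u_n\}$ be bounded in $H^1(\R^N)$; extracting a subsequence, I may assume $u_n \wto u$ weakly in $H^1(\R^N)$, and it suffices to show that $v_n := u_n - u$ converges to $0$ strongly in $L^p(\R^N, |x|^{-b}\,dx)$. Given $\eta > 0$, I would write $\R^N = B_\varepsilon \cup (B_R \setminus B_\varepsilon) \cup B_R^c$ for $0 < \varepsilon < R$ to be chosen, and estimate $\int_{\R^N} |x|^{-b}|v_n|^p\,dx$ on each piece separately.

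On the small ball $B_\varepsilon$, I would apply H\"older's inequality with conjugate exponents $(r,s)$ chosen so that $br < N$ (making the singularity $|x|^{-br}$ locally integrable) and $ps \le 2^*$ (so that Sobolev embedding controls $\|v_n\|_{L^{ps}(B_\varepsilon)}$ by $\|v_n\|_{H^1}$). This yields
$$
\int_{B_\varepsilon} |x|^{-b}|v_n|^p\,dx \le C\, \varepsilon^{(N-br)/r}\, \|v_n\|_{H^1}^p,
$$
which is less than $\eta/3$ for $\varepsilon$ sufficiently small, uniformly in $n$. A symmetric H\"older argument on $B_R^c$ with conjugate exponents $(r',s')$ satisfying $br' > N$ and $ps' \le 2^*$ gives
$$
\int_{B_R^c} |x|^{-b}|v_n|^p\,dx \le C\, R^{N/r'-b}\, \|v_n\|_{H^1}^p,
$$
which is less than $\eta/3$ for $R$ sufficiently large, again uniformly in $n$. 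On the intermediate annulus $A := B_R \setminus B_\varepsilon$ the weight $|x|^{-b}$ is bounded, so a direct application of the classical Rellich--Kondrachov theorem (valid since $p < 2_b^* \le 2^*$) gives $v_n \to 0$ strongly in $L^p(A)$ and hence in $L^p(A, |x|^{-b}\,dx)$, making the middle contribution less than $\eta/3$ for $n$ large. Combining the three estimates yields $\int_{\R^N}|x|^{-b}|v_n|^p\,dx \to 0$.

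The main obstacle, and the step where the sharpness of the range $2 < p < 2_b^*$ enters, is the selection of the H\"older exponents. The near-origin estimate forces $s > N/(N-b)$ (deduced from $br < N$ via $1/r + 1/s = 1$) together with $ps \le 2^*$, while the exterior estimate forces $s' < N/(N-b)$ with the same Sobolev bound. A short check shows that the small-ball constraints admit a simultaneous solution precisely when $p < 2(N-b)/(N-2) = 2_b^*$ in dimension $N \ge 3$; the exterior constraints are less restrictive and hold throughout the same range. The dimensions $N = 1, 2$ are handled analogously, exploiting $H^1(\R^N) \hookrightarrow L^q(\R^N)$ for all finite $q$ when $N = 2$ and $H^1(\R) \hookrightarrow L^\infty(\R)$ when $N = 1$, which trivialize the Sobolev bounds on $ps$ and $ps'$.
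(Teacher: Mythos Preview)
The paper does not supply its own proof of this lemma; it merely cites \cite[Lemma 2.1]{AC}. Your argument is correct and is in fact the standard route to this compactness result: the three-region splitting isolates the singularity at the origin and the lack of decay at infinity, handles both uniformly via H\"older and Sobolev, and reduces the remaining compact annulus to classical Rellich--Kondrachov. Your exponent bookkeeping is accurate---the near-origin constraint $s > N/(N-b)$ together with $ps \le 2^*$ is exactly what forces $p < 2_b^*$, while the exterior constraint $s' < N/(N-b)$ with $ps' \le 2^*$ is strictly weaker (one may take $s'$ close to $1$). The low-dimensional cases $N=1,2$ are handled as you indicate, since the Sobolev ceiling $2^*$ becomes $\infty$ and the constraint on $ps$, $ps'$ disappears.
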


\begin{lem} \label{ph}
Let ${N \geq 1}$, $2<q<p<2^*_b$, $0<b<\min\{2, N\}$ and $\mu=\pm 1$. If $u \in S(c)$ is a solution to \eqref{equ}-\eqref{mass}, then $Q(u)=0$.
\end{lem}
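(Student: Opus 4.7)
The plan is to derive $Q(u)=0$ by combining two integral identities — the mass/Nehari identity obtained by testing \eqref{equ} against $u$, and the Pohozaev identity obtained by testing against $x\cdot \nabla u$ — and then eliminating the Lagrange multiplier $\lambda\|u\|_2^2$ between them. This is the standard route for semilinear elliptic equations, adapted to the inhomogeneous weight $|x|^{-b}$ by noting the simple identity $\operatorname{div}(|x|^{-b}x)=(N-b)|x|^{-b}$.

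Testing \eqref{equ} with $u\in H^1(\R^N)$ directly yields
$$
\int_{\R^N}|\nabla u|^2\,dx + \lambda\int_{\R^N} u^2\,dx = \mu\int_{\R^N}|x|^{-b}|u|^q\,dx + \int_{\R^N}|x|^{-b}|u|^p\,dx.
$$
For the Pohozaev identity, I would multiply \eqref{equ} by $x\cdot\nabla u$ and integrate, using the standard computations $\int(-\Delta u)(x\cdot\nabla u)\,dx=\frac{2-N}{2}\int|\nabla u|^2\,dx$, $\lambda\int u(x\cdot\nabla u)\,dx=-\frac{N\lambda}{2}\int u^2\,dx$, together with
$$
\int_{\R^N}|x|^{-b}|u|^{r-2}u(x\cdot\nabla u)\,dx=\frac{1}{r}\int_{\R^N}|x|^{-b}\,x\cdot\nabla(|u|^r)\,dx=-\frac{N-b}{r}\int_{\R^N}|x|^{-b}|u|^r\,dx
$$
for $r\in\{q,p\}$. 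Collecting terms gives
$$
\frac{N-2}{2}\int_{\R^N}|\nabla u|^2\,dx + \frac{N\lambda}{2}\int_{\R^N} u^2\,dx = \frac{\mu(N-b)}{q}\int_{\R^N}|x|^{-b}|u|^q\,dx + \frac{N-b}{p}\int_{\R^N}|x|^{-b}|u|^p\,dx.
$$
Multiplying the first identity by $N/2$ and subtracting the second cancels the $\lambda\int u^2$ term and leaves a coefficient $1$ in front of $\int|\nabla u|^2\,dx$, while the weighted terms receive coefficients $\mu(\tfrac{N}{2}-\tfrac{N-b}{q})=\tfrac{\mu(N(q-2)+2b)}{2q}$ and $\tfrac{N}{2}-\tfrac{N-b}{p}=\tfrac{N(p-2)+2b}{2p}$. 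This is exactly $Q(u)=0$.

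The main obstacle is the rigorous justification of the Pohozaev step: a function $u\in H^1(\R^N)$ is a priori not regular enough to be paired with $x\cdot\nabla u$, and the weight $|x|^{-b}$ is singular at the origin. To handle this I would first promote $u$ to $C^{1,\alpha}_{\mathrm{loc}}\cap W^{2,r}_{\mathrm{loc}}$ away from the origin by standard elliptic bootstrapping (the nonlinear right-hand side sits in $L^r_{\mathrm{loc}}(\R^N\setminus\{0\})$ for suitable $r$ thanks to $2<q<p<2^*_b$), and then apply a two-sided cutoff $\chi_{\varepsilon,R}(x)=\eta(|x|/\varepsilon)\bigl(1-\eta(|x|/R)\bigr)$ to replace the test function by $\chi_{\varepsilon,R}\,x\cdot\nabla u\in H^1(\R^N)$ with compact support in $\R^N\setminus\{0\}$. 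Passing to the limits $\varepsilon\to 0$ and $R\to\infty$, the error terms vanish because $|x|^{-b}|u|^q,|x|^{-b}|u|^p\in L^1(\R^N)$ (Gagliardo–Nirenberg \eqref{GN}) and $|\nabla u|^2\in L^1(\R^N)$, while the restriction $0<b<\min\{2,N\}$ keeps the weight locally integrable near the origin. Once the two identities are established, Step 3 is purely algebraic.
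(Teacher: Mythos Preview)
Your proposal is correct and follows essentially the same route as the paper: derive the Nehari identity by testing against $u$, derive the Pohozaev identity by testing against $x\cdot\nabla u$, and subtract to eliminate $\lambda$. The only difference is in the justification of the Pohozaev step: the paper integrates over $B_R(0)$, writes out the boundary terms explicitly, and uses that $|\nabla u|^2+|u|^2+|x|^{-b}|u|^q+|x|^{-b}|u|^p\in L^1(\R^N)$ to find a sequence $R_n\to\infty$ along which the boundary integrals vanish, whereas you use smooth two-sided cutoffs $\chi_{\varepsilon,R}$ together with a preliminary regularity bootstrap; both are standard, and your version has the minor advantage of addressing the singularity of $|x|^{-b}$ at the origin explicitly.
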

\begin{proof}
Since $u \in S(c)$ is a solution to \eqref{equ}-\eqref{mass}, then there exists $\lambda \in \R$ such that $u \in S(c)$ satisfies the equation
\begin{align} \label{equ2}
-\Delta u + \lambda u=\mu |x|^{-b}|u|^{q-2} u + |x|^{-b}|u|^{p-2} u \quad \mbox{in} \,\, \R^N.
\end{align}
Multiplying \eqref{equ2} by $x \cdot \nabla u$ and integrating on $B_R(0)$, we first have that
\begin{align} \label{ph11}
\begin{split}
&-\int_{B_R(0)} \Delta u (x \cdot \nabla u ) \, dx + \lambda \int_{B_R(0)} u (x \cdot \nabla u ) \, dx \\
&=\int_{B_R(0)}|x|^{-b}|u|^{q-2} u (x \cdot \nabla u) \,dx +\int_{B_R(0)}|x|^{-b}|u|^{p-2} u (x \cdot \nabla u) \,dx.
\end{split}
\end{align} 
With help of the divergence theorem, it is simple to compute that
\begin{align*}
-\int_{B_R(0)} \Delta u (x \cdot \nabla u ) \, dx&= \int_{B_R(0)} \nabla u \cdot \nabla (x \cdot \nabla u) \, dx-R\int_{\partial B_R(0)} \left| \nabla u \cdot \bf{n} \right|^2 \,dS\\
&=\frac{2-N}{2} \int_{B_R(0)}|\nabla u|^2\, dx-\frac R 2 \int_{\partial B_R(0)} \left| \nabla u \cdot \bf{n} \right|^2 \,dS
\end{align*}
and
\begin{align*}
\int_{B_R(0)} u (x \cdot \nabla u ) \, dx=\frac 12 \int_{B_R(0)} x \cdot \nabla \left(|u|^2\right) \, dx=-\frac N 2 \int_{B_R(0)} |u|^2 \, dx + \frac  R 2 \int_{\partial B_R(0)} |u|^2 \, dS,
\end{align*}
where the vector $\bf{n}$ denotes the outward normal to $\partial B_R(0)$. In addition, we are able to show that
\begin{align*}
\int_{B_R(0)}|x|^{-b}|u|^{q-2} u (x \cdot \nabla u) \,dx&=\frac 1 q \int_{B_R(0)} \left(|x|^{-b}x\right) \cdot \nabla\left(|u|^q\right) \, dx\\
&=\frac {b-N}{q} \int_{B_R(0)} |x|^{-b}|u|^q \, dx-\frac  R q \int_{\partial B_R(0)} |x|^{-b} |u|^q \, dS
\end{align*}
and
\begin{align*}
\int_{B_R(0)}|x|^{-b}|u|^{p-2} u (x \cdot \nabla u) \,dx=\frac {b-N}{p} \int_{B_R(0)} |x|^{-b}|u|^p \, dx-\frac  R p \int_{\partial B_R(0)} |x|^{-b} |u|^p \, dS.
\end{align*}
As a result, from \eqref{ph11}, we derive that
\begin{align} \label{ph1}
\begin{split}
&\frac{N-2}{2} \int_{B_R(0)}|\nabla u|^2\, dx +\frac {\lambda N}{2} \int_{B_R(0)} |u|^2 \, dx \\
&=\frac {\mu(N-b)}{q} \int_{B_R(0)} |x|^{-b}|u|^q \, dx+\frac {N-b}{p} \int_{B_R(0)} |x|^{-b}|u|^p \, dx +I_R,
\end{split}
\end{align} 
where
$$
I_R:=-\frac R 2 \int_{\partial B_R(0)} \left| \nabla u \cdot \bf{n} \right|^2 \,dS+\frac  R 2 \int_{\partial B_R(0)} |u|^2 \, dS +\frac  {\mu R} {q} \int_{\partial B_R(0)} |x|^{-b} |u|^q \, dS +\frac  R p \int_{\partial B_R(0)} |x|^{-b} |u|^p \, dS.
$$
Thanks to $u \in H^1(\R^N)$, then
\begin{align*}
&\int_{\R^N} |\nabla u|^2 + |u|^2 + |x|^{-b} |u|^q + |x|^{-b} |u|^p \, dx \\
&= \int_0^{\infty} \int_{\partial B_R(0)}|\nabla u|^2 + |u|^2 + |x|^{-b} |u|^q + |x|^{-b} |u|^p \,dSdR<\infty.
\end{align*}
It then follows that there exists a sequence $\{R_n\} \to \infty$ as $n \to \infty$ such that $I_{R_n}=o_n(1)$.  Making use of \eqref{ph1} with $R=R_n$ and taking the limit as $n \to \infty$, we then get that
\begin{align} \label{ph2}
\frac{N-2}{2} \int_{\R^N}|\nabla u|^2\, dx +\frac {\lambda N}{2} \int_{\R^N} |u|^2 \, dx=\frac {\mu(N-b)}{q} \int_{\R^N} |x|^{-b}|u|^q \, dx+\frac {N-b}{p} \int_{\R^N} |x|^{-b}|u|^p \, dx.
\end{align}
On the other hand, multiplying \eqref{equ2} by $u$ and integrating on $\R^N$, we can obtain that
\begin{align}\label{ph3}
\frac{N}{2} \int_{\R^N}|\nabla u|^2\, dx +\frac {\lambda N}{2} \int_{\R^N} |u|^2 \, dx=\frac {\mu N}{2} \int_{\R^N} |x|^{-b}|u|^q \, dx+\frac N 2 \int_{\R^N} |x|^{-b}|u|^p \, dx.
\end{align}
Thereby, combining \eqref{ph2} and \eqref{ph3}, we conclude that
$$
\int_{\R^N}|\nabla u|^2\, dx=\frac {\mu \left(N(q-2) +2b\right)}{2q} \int_{\R^N} |x|^{-b}|u|^q \, dx+\frac{N(p-2)+2b}{2p} \int_{R^N} |x|^{-b}|u|^p \, dx.
$$
Thus the proof is completed.
\end{proof}

\begin{prop} \label{radial}
Let ${N \geq 1}$, $2<q<p<2^*_b$, $0<b<\min\{2, N\}$ and $\mu =\pm 1$. If $u \in H^1(\R^N)$ is a positive solution to the equation
\begin{align} \label{equ111}
-\Delta u + \lambda u= \mu |x|^{-b}|u|^{q-2} u + |x|^{-b}|u|^{p-2} u \quad \mbox{in} \,\, \R^N,
\end{align}
where $\lambda>0$, then it is radially symmetric and decreasing in the radial direction.
\end{prop}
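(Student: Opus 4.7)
My strategy is the moving plane method in the classical style of Gidas--Ni--Nirenberg, with the origin as the natural fixed center. Although the weight $|x|^{-b}$ destroys translation invariance of \eqref{equ111}, it is invariant under orthogonal transformations and strictly decreasing in $|x|$, which is precisely the structural feature required. By rotational invariance it suffices to prove that $u$ is symmetric across the hyperplane $T_0 = \{x_1 = 0\}$ and strictly decreasing in $x_1$ on the half-space $\{x_1 > 0\}$; applying the argument in an arbitrary direction then yields full radial symmetry and monotonicity in the radial variable.

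\textit{Step 1 (Regularity and exponential decay).} Standard elliptic regularity yields $u \in C^2(\R^N \setminus \{0\}) \cap C(\R^N)$, and a Moser iteration on dyadic annuli $\{R < |x| < 2R\}$ combined with $u \in H^1(\R^N)$ gives $u(x) \to 0$ as $|x| \to \infty$. Since $\lambda > 0$, the potential $V(x) := \lambda - |x|^{-b}(\mu u^{q-2} + u^{p-2})$ satisfies $V(x) \geq \lambda/2$ outside a sufficiently large ball, and comparison with a radial supersolution of the form $C e^{-\sqrt{\lambda/2}\,|x|}$ yields exponential decay of $u$ at infinity.

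\textit{Step 2 (Moving plane).} For $\tau > 0$ set $\Sigma_\tau := \{x_1 > \tau\}$, $x^\tau := (2\tau - x_1, x_2, \ldots, x_N)$, $u_\tau(x) := u(x^\tau)$, and $w_\tau := u_\tau - u$. From \eqref{equ111} one computes
$$
-\Delta w_\tau + \lambda w_\tau = |x^\tau|^{-b} f(u_\tau) - |x|^{-b} f(u), \qquad f(s) := \mu s^{q-1} + s^{p-1}.
$$
The key geometric observation is that $|x^\tau|^2 - |x|^2 = 4\tau(\tau - x_1) < 0$ on $\Sigma_\tau$, so $|x^\tau|^{-b} > |x|^{-b}$. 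Decomposing the right-hand side as $|x|^{-b}[f(u_\tau) - f(u)] + (|x^\tau|^{-b} - |x|^{-b})f(u_\tau)$ and applying the mean value theorem $f(u_\tau) - f(u) = f'(\xi_\tau)\, w_\tau$, I obtain the linear inequality
$$
-\Delta w_\tau + \bigl[\lambda - |x|^{-b} f'(\xi_\tau)\bigr] w_\tau = (|x^\tau|^{-b} - |x|^{-b}) f(u_\tau).
$$
The exponential decay of $u$ ensures that the zeroth-order coefficient $|x|^{-b} f'(\xi_\tau)$ decays at infinity, so the narrow-domain maximum principle of Berestycki--Nirenberg forces $w_\tau \geq 0$ throughout $\Sigma_\tau$ once $\tau \geq \tau_0$ for some $\tau_0 > 0$. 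Setting
$$
\tau_* := \inf\bigl\{\tau > 0 : w_{\tau'} \geq 0 \text{ in } \Sigma_{\tau'} \text{ for all } \tau' \geq \tau\bigr\},
$$
a continuity argument combined with the strong maximum principle and Hopf's lemma applied on $T_{\tau_*}$ rules out $\tau_* > 0$; hence $\tau_* = 0$. By arbitrariness of the direction $e_1$, $u$ is radially symmetric about the origin, and the strict positivity $w_\tau > 0$ in $\Sigma_\tau$ for every $\tau > 0$ yields strict monotonicity in the radial variable.

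\textit{Main obstacle.} The most delicate step is the defocusing case $\mu = -1$, in which $f(s) = -s^{q-1} + s^{p-1}$ is negative for small $s$, so the forcing $(|x^\tau|^{-b} - |x|^{-b}) f(u_\tau)$ in the key linear inequality can be negative on $\{u_\tau < 1\}$. To handle this I would split $\Sigma_\tau$ according to the size of $u_\tau$, absorb the negative portion of the forcing into the zeroth-order coefficient via the elementary bound $|f(u_\tau)| \leq C u_\tau^{q-1}$ when $u_\tau$ is small, and apply the small-volume maximum principle on the thin region where $u_\tau$ is small, whose location is controlled by the exponential decay of $u$. This sign analysis is the technical heart of the argument and the step that requires the most care.
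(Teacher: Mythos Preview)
Your overall plan differs from the paper's: you work with the PDE form of the moving plane (Gidas--Ni--Nirenberg / Berestycki--Nirenberg style), whereas the paper converts \eqref{equ111} into an integral equation via the Yukawa potential $G_\lambda$ (the Green function of $-\Delta+\lambda$) and runs the moving plane in integral form \`a la Chen--Li--Ou, estimating $\|u_s-u\|_{L^r(\Sigma_{s,u})}$ directly. The key structural advantage of the integral route is that the closing contraction is carried out in absolute value, and the smallness comes from the uniform bound $|y_s|^{-b}\le(-s)^{-b}$ on the reflected weight over all of $\Sigma_s$; the sign of $\mu$ plays no role at that step.

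Your Step~2 works for $\mu=+1$, but your proposed handling of $\mu=-1$ does not close, and this is a genuine gap rather than a detail. The forcing term $(|x^\tau|^{-b}-|x|^{-b})f(u_\tau)$ is \emph{not} a multiple of $w_\tau$, so it cannot be ``absorbed into the zeroth-order coefficient'': testing against $(w_\tau)^-$ on $\Omega_\tau^-=\{w_\tau<0\}$ produces a strictly positive contribution $\int_{\Omega_\tau^-}(|x^\tau|^{-b}-|x|^{-b})\,|f(u_\tau)|\,(w_\tau)^-$ with no spare factor of $(w_\tau)^-$ to extract, and the bound $|f(u_\tau)|\le Cu_\tau^{q-1}$ does not help because $u_\tau^{q-1}$ is not controlled by $(w_\tau)^-$. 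Nor is the set $\{u_\tau\text{ small}\}\cap\Sigma_\tau$ ``thin'': it contains $\{|x^\tau|>R\}\cap\Sigma_\tau$, an unbounded region of infinite measure, so no small-volume maximum principle applies there. This sign obstruction is precisely why the paper switches to the integral formulation, where the reflected weight is uniformly small on the whole half-space and the $L^r$ contraction closes regardless of the sign of $f$.
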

\begin{proof}
By applying standard bootstrap arguments, we first have that $u \in C(\R^N) \cap C^2(\R^N\backslash\{0\})$ and $u(x) \to 0$ as $|x| \to \infty$. In order to adapt the moving plane method {introduced in \cite{CLO}}, we need to introduce some notations as follows. For $s \in \R$, we define
$$
\Sigma_s:=\{x \in \R^N : x_1 \geq s\}, \quad
\Sigma_{s, u}:=\{x \in \Sigma_s : u_s(x) >u(x)\},
$$
where $u_s(x):=u(x_s)$ and $x_s:=(2s-x_1, x_2, \cdots, x_N)$. Let $u \in H^1(\R^N)$ be a positive solution to \eqref{equ111}, by \cite[Theorem 6.23]{LL}, then
$$
u(x)=\int_{\R^N} G_{\lambda}(x-y) \left(\mu |y|^{-b}|u(y)|^{q-2} + |y|^{-b}|u(y)|^{p-2}\right)u(y) \, dy,
$$
where $G_{\lambda}$ is the Yukawa potential defined by 
$$
G_{\lambda}(x):=\int_0^{\infty} (4\pi t)^{-\frac N 2} \textnormal{exp} \left(-\frac{|x|^2}{4t}-\lambda t\right) \, dt>0.
$$
Observe that
\begin{align*}
u(x)&=\int_{\Sigma_s} G_{\lambda}(x-y) \left(\mu |y|^{-b}|u(y)|^{q-2} + |y|^{-b}|u(y)|^{p-2}\right)u(y) \, dy\\
& \quad + \int_{\R^N \backslash \Sigma_s}  G_{\lambda}(x-y) \left(|\mu y|^{-b}|u(y)|^{q-2} + |y|^{-b}|u(y)|^{p-2}\right)u(y) \, dy
\end{align*}
and
\begin{align*}
&\int_{\R^N \backslash \Sigma_s}  G_{\lambda}(x-y) \left(|\mu y|^{-b}|u(y)|^{q-2} + |y|^{-b}|u(y)|^{p-2}\right)u(y) \, dy\\
&=\int_{\Sigma_s} G_{\lambda}(x-y_s) \left(\mu |y_s|^{-b}|u(y_s)|^{q-2} + |y_s|^{-b}|u(y_s)|^{p-2}\right)u(y_s) \, dy\\
&= \int_{\Sigma_s} G_{\lambda}(x_s-y) \left(\mu |y_s|^{-b}|u(y_s)|^{q-2} + |y_s|^{-b}|u(y_s)|^{p-2}\right)u(y_s) \, dy,
\end{align*}
where we used the fact that $|x_s-y|=|x-y_s|$. Therefore, we get that
\begin{align} \label{s1}
\begin{split}
u(x)&=\int_{\Sigma_s} G_{\lambda}(x-y) \left(\mu |y|^{-b}|u(y)|^{q-2} + |y|^{-b}|u(y)|^{p-2}\right)u(y) \, dy\\
& \quad +\int_{\Sigma_s} G_{\lambda}(x_s-y) \left(\mu |y_s|^{-b}|u(y_s)|^{q-2} + |y_s|^{-b}|u(y_s)|^{p-2}\right)u(y_s) \, dy.
\end{split}
\end{align}
Making use of \eqref{s1}, we then obtain that 
\begin{align} \label{s11}
\begin{split}
u_s(x)=u(x_s)&=\int_{\Sigma_s} G_{\lambda}(x_s-y) \left(\mu |y|^{-b}|u(y)|^{q-2} + |y|^{-b}|u(y)|^{p-2}\right)u(y) \, dy dz \\
& \quad + \int_{\Sigma_s} G_{\lambda}(x-y) \left(\mu |y_s|^{-b}|u(y_s)|^{q-2} + |y_s|^{-b}|u(y_s)|^{p-2}\right)u(y_s) \, dy.
\end{split}
\end{align}
Therefore, combining \eqref{s1} and \eqref{s11}, we deduce that
\begin{align*}
u_s(x)-u(x)
&=\int_{\Sigma_s} \left(G_{\lambda}(x-y)-G_{\lambda}(x_s-y) \right)\left(\mu |y_s|^{-b}|u(y_s)|^{q-2} + |y_s|^{-b}|u(y_s)|^{p-2}\right)u(y_s)\\
&\quad +\int_{\Sigma_s} \left(G_{\lambda}(x_s-y)-G_{\lambda}(x-y) \right)\left(\mu |y|^{-b}|u(y)|^{q-2} + |y|^{-b}|u(y)|^{p-2}\right)u(y)\, dy.
\end{align*}
Next we claim that there exists a constant $s_0<0$ with $-s_0>0$ large enough such that $\Sigma_{s,u}=\emptyset$ for any $s<s_0$. In the following, we shall assume that $s<0$. Since $|x-y|\leq|x_s-y|$ for any $x, y \in \Sigma_s$, from the definition of $G_{\lambda}$, then
$$
0<G_{\lambda}(x_s-y) \leq G_{\lambda}(x-y).
$$
Hence there holds that
\begin{align*}
u_s(x)-u(x) &= \int_{\Sigma_s}\left(G_{\lambda}(x-y)-G_{\lambda}(x_s-y) \right)\left(\mu |y_s|^{-b}|u(y_s)|^{q-2} + |y_s|^{-b}|u(y_s)|^{p-2}\right)\left(u(y_s)-u(y)\right) \, dy\\
&\quad + \int_{\Sigma_s}\left(G_{\lambda}(x-y)-G_{\lambda}(x_s-y) \right)\left(\mu |y_s|^{-b}|u(y_s)|^{q-2} + |y_s|^{-b}|u(y_s)|^{p-2}\right)u(y) \, dy\\ 
& \quad +\int_{\Sigma_s}\left(G_{\lambda}(x_s-y)-G_{\lambda}(x-y) \right) \left(\mu |y|^{-b}|u(y)|^{q-2} + |y|^{-b}|u(y)|^{p-2}\right)u(y) \, dy \\
&\leq \int_{\Sigma_{s, u}} G_{\lambda}(x-y)\left(\mu |y_s|^{-b}|u(y_s)|^{q-2} + |y_s|^{-b}|u(y_s)|^{p-2}\right)\left(u_s(y)-u(y)\right) \, dy \\
&\quad + \mu \int_{\Sigma_{s ,u}} G_{\lambda}(x-y)\left( |y_s|^{-b} |u(y_s)|^{q-2} - |y_s|^{-b} |u(y)|^{q-2} \right)u(y)\, dy  \\
& \quad + \int_{\Sigma_{s ,u}} G_{\lambda}(x-y) \left(|y_s|^{-b}|u(y_s)|^{p-2} -|y_s|^{-b}|u(y)|^{p-2} \right)u(y)\, dy :=I_1(x)+I_2(x) +I_3(x).
\end{align*}
In view of \cite[Theorem 6.23]{LL}, we know that $G_{\lambda} \in L^1(\R^N)$. Using Young's inequality, we then get that
\begin{align*}
\|I_1\|_{L^r(\Sigma_{s, u})} & \leq \left\|\left(u_s(x)-u(x)\right)\left(|x_s|^{-b}|u(x_s)|^{q-2} + |x_s|^{-b}|u(x_s)|^{p-2}\right) \right\|_{L^r(\Sigma_{s,u})} \\
& \leq \|u_s-u\|_{L^r(\Sigma_{s,u})} \left\| |x_s|^{-b}|u(x_s)|^{q-2} + |x_s|^{-b}|u(x_s)|^{p-2}\right\|_{L^{\infty}(\Sigma_{s,u})} \\
& \leq \|u_s-u\|_{L^r(\Sigma_{s,u})} \left\||x|^{-b}|u(x)|^{q-2} + |x|^{-b}|u(x)|^{p-2}\right\|_{L^{\infty}(\R^N \backslash \Sigma_s)} \\
& \leq (-s)^{-b}\|u_s-u\|_{L^r(\Sigma_{s,u})} \left(\|u\|_{L^{\infty}(\R^N \backslash \Sigma_s)}^{q-2}+\|u\|_{L^{\infty}(\R^N \backslash \Sigma_s)}^{p-2}\right),
\end{align*}
where $r>1$. In addition, we are able to deduce that 
\begin{align*}
\|I_2\|_{L^r(\Sigma_{s, u})}  &\leq C_q \|u_s-u\|_{L^r(\Sigma_{s,u})} \left\| |x_s|^{-b}|u(x_s)|^{q-2} \right\|_{L^{\infty}(\Sigma_{s,u})} \\
& \leq C_q \|u_s-u\|_{L^r(\Sigma_{s,u})} \left\||x|^{-b}|u(x)|^{q-2}\right\|_{L^{\infty}(\R^N \backslash \Sigma_{s,u})} \\
&\leq C_q (-s)^{-b} \|u_s-u\|_{L^r(\Sigma_{s,u})} \left\|u\right\|^{q-2}_{L^{\infty}(\R^N \backslash \Sigma_{s,u})}.
\end{align*}
Similarly, we have that
$$
\|I_3\|_{L^r(\Sigma_{s, u})}  \leq  C_p (-s)^{-b} \|u_s-u\|_{L^r(\Sigma_{s,u})} \left\|u\right\|^{p-2}_{L^{\infty}(\R^N \backslash \Sigma_{s,u})}.
$$
As a consequence, we get that
\begin{align*}
\|u_s-u\|_{L^r(\Sigma_{s,u})} \leq C (-s)^{-b} \|u_s-u\|_{L^r(\Sigma_{s,u})} \left(\|u\|_{L^{\infty}(\R^N \backslash \Sigma_{s,u})}^{q-2}+\|u\|_{L^{\infty}(\R^N \backslash \Sigma_{s,u})}^{p-2}\right).
\end{align*}
Thus the claim follows by taking $-s>0$ large enough. Therefore, there holds that $u_s(x) \leq u(x)$ for any $x \in \Sigma_{s}$ and $s<s_0<0$. Suppose that there exists $s_1<s_0$ such that $u_{s_1}(x) \leq u(x)$ for any $x \in \Sigma_{s_1}$ and $u_{s_1}\not\equiv u$ in $\Sigma_{s_1}$. Then, reasoning as before, we can derive that there exists $\eps>0$ small enough such that $u_{s}(x) \leq u(x)$ for any $x \in \Sigma_{s}$ and $s \in [s_1, s_1+\eps)$.
This in turn implies that if the moving plane process stops, then $u_{s_1}(x) \leq u(x)$ for any $x \in \Sigma_{s_1}$ and $u_{s_1} \equiv u$ in $\Sigma_{s_1}$. Up to translations, we may assume that 
$$
u(0)=\sup_{x \in \R^N} u(x).
$$
Hence the moving plane process starting from any direction has to stop at the origin. Thus we have the conclusion and the proof is completed.
\end{proof}

\section{Mass subcritical case} \label{sub}

In this section, we consider the case $2<q<p<2+2(2-b)/N$ and present the proofs of Proposition \ref{prop11} and Theorem \ref{thm1}.

\begin{proof} [Proof of Proposition \ref{prop11}]
Let us first verify the assertion $(\textnormal{i})$. In virtue of \eqref{GN}, we have that
\begin{align} \label{b1}
\hspace{-1cm}E(u) \geq \frac 12 \|\nabla u\|_2^2-\frac{C_{N,q}}{q}\|\nabla u\|_2^{\frac{N(q-2)}{2} + \frac b 2} c^{\frac q2 -\frac{N(q-2)}{4} -\frac b 2}- \frac{C_{N,p}}{p}\|\nabla u\|_2^{\frac{N(p-2)}{2} + \frac b 2} c^{\frac p2 -\frac{N(p-2)}{4} - \frac b 2}.
\end{align}
Notice that 
$$
0<\frac{N(q-2)}{2} + \frac b 2<\frac{N(p-2)}{2} + \frac b 2<2.
$$
It then follows from \eqref{b1} that $m(c)>-\infty$ for any $c>0$. Furthermore, in view of \eqref{scaling1}, we see that $E(u_t) \to 0$ as $t \to 0^+$ for any $u \in S(c)$. This infers that $m(c) \leq 0$. Therefore, we derive that $-\infty<m(c) \leq 0$. Next we show that the subadditivity inequality holds. {By the definition of $m(c)$ and the fact that $C_0^{\infty}(\R^N)$ is dense in $H^1(\R^N)$, we know that, for any $\eps>0$, there exist $u_1 \in S(c_1) \cap C_0^{\infty}(\R^N) $ and $u_2 \in S(c_2) \cap C_0^{\infty}(\R^N)$ such that 
$$
E(u_1) \leq m(c_1) + \frac {\eps}{2}, \quad E(u_2) \leq m(c_2) + \frac {\eps}{2}.
$$}
Note that $E$ is invariant under any translation in $\R^N$. Then we may assume that $\mbox{supp}\,u_1 \cap \mbox{supp}\, u_2=\emptyset$. Therefore, we have that
$$
m(c_1+c_2) \leq E(u_1+u_2) \leq E(u_1)+E(u_2) \leq  m(c_1) +m(c_2) + \eps,
$$
from which we have the desired conclusion. This further gives that the function $c \mapsto m(c)$ is nonincreasing on $(0, \infty)$.

We now prove the assertion $(\textnormal{ii})$. Let $c>0$ and $\{c_n\} \subset \R^+$ be a sequence such that $c_n \to c$ as $n \to \infty$. We shall deduce that $m(c_n) \to m(c)$ as $n \to \infty$. From the definition of $m(c)$, then there exists a sequence $\{u_n\} \subset S(c_n)$ such that $E(u_n) = m(c_n)  +o_n(1)$. Define
$$
u=\frac{u_n}{\|u_n\|_2} c_n^{\frac 12} \in S(c).
$$
{Observe that $E(u_n) \leq C$ for some $C>0$, by \eqref{b1}, then $\{u_n\}$ is bounded in $H^1(\R^N)$.} This shows that $E(u)=E(u_n) +o_n(1)$, because of $c_n \to c$ as $n \to \infty$. Hence we obtain that $m(c) \leq m(c_n) +o_n(1)$. Similarly, we can show that $m(c_n) \leq m(c) +o_n(1)$. Therefore, the desired conclusion follows.

Finally we infer the assertion $(\textnormal{iii})$. Let us assume there exists a constant $c_0^*>0$ such that $m(c)<0$ for any $0<c<c_0^*$. If not, then there exists some $c_0>0$ such that $m(c_0)=0$. It then follows from the assertion $(\textnormal{i})$ that $m(c)=0$ for any $0<c<c_0$. This already indicates that $m(c) \to 0$ as $c \to 0^+$. If $m(c)<0$, then there exists $u_c \in S(c)$ such that $m(c) \leq E(u_c)<0$. In light of \eqref{GN}, then
$$
\frac 12 \|\nabla u_c\|_2^2 \leq \frac{C_{N,q}}{q}\|\nabla u_c\|_2^{\frac{N(q-2)}{2} + \frac b 2} c^{\frac q2 -\frac{N(q-2)}{4} -\frac b 2}+ \frac{C_{N,p}}{p}\|\nabla u_c\|_2^{\frac{N(p-2)}{2} + \frac b 2} c^{\frac p2 -\frac{N(p-2)}{4} - \frac b 2}.
$$
Since $2<q<p<2+2(2-b)/N$, then $\|\nabla u_c\|_2 \to 0$ as $c \to 0^+$. This in turn gives that $E(u_c) \to 0$ as $c \to 0^+$. As a result, we have that $m(c) \to 0$ as $c \to 0^+$. Next we prove that $m(c) \to -\infty$ as $c \to \infty$. To do this, we first define
\begin{align*}
u_{c}(x):=c^{\frac{2-b}{2(2-b)-N(p-2)}} u(c^{\frac{p-2}{2(2-b)-N(p-2)}} x), \quad x \in \R^N.
\end{align*}
It is straightforward to find that $u_{c} \in S(c)$ if $u \in S(1)$ and
\begin{align*}
E(u_{c})&=c^{\frac{2(2-b)-(N-2)(p-2)}{2(2-b)-N(p-2)}} \left( \frac 12 \int_{\R^N}|\nabla u|^2 \, dx- \frac 1 p \int_{\R^N}|x|^{-b}|u|^p \, dx \right)- \frac{c^{\frac{q(2-b)-(N-b)(p-2)}{2(2-b)-N(p-2)}}}{q}\int_{\R^N}|x|^{-b}|u|^q \, dx.
\end{align*}
Let $u \in S(1)$ be such that
$$
\frac 12 \int_{\R^N}|\nabla u|^2 \, dx< \frac 1 p \int_{\R^N}|x|^{-b}|u|^p \, dx.
$$
Since $2(2-b)-(N-2)(p-2)>q(2-b)-(N-b)(p-2)>0$, then $E(u_c) \to -\infty$ as $c \to \infty$. Thereby, we obtain the desired result. Thus the proof is completed.
\end{proof}

\begin{proof}[Proof of Theorem \ref{thm1}]
Let us first prove the assertion $(\textnormal{i})$. Since $2<q<2+2(2-b)/N$, then $0<N(q-2)/2 + b <2$. In virtue of \eqref{scaling1}, we then see that $E(u_t)<0$ for any $t>0$ small enough, because of $\mu>0$. This immediately infers that $m(c)<0$ for any $c>0$. Next we discuss the compactness of any minimizing sequence to \eqref{gmin} in $H^1(\R^N)$. For this, we shall apply the concentration compactness principle from \cite{Li1, Li2}. Therefore, it suffices to rule out the possibilities of vanishing and dichotomy of any minimizing sequence. Since $2<q<p<2(N-b)/(N-2)^+$, then there exists $r_1, r_2>1$ such that
$$
2<qr_1<2^*, \quad 0<\frac{br_1}{r_1-1}<N, \quad 2<pr_2<2^*, \quad 0<\frac{br_2}{r_2-1}<N,
$$
where $2^*:=2N/(N-2)^+$. As a result of H\"older's inequality, we then know that
\begin{align} \label{v1}
\begin{split}
\int_{\R^N} |x|^{-b}|u_n|^q \, dx &=\int_{B_R(0)} |x|^{-b}|u_n|^q \, dx  + \int_{\R^N \backslash B_R(0)} |x|^{-b}|u_n|^q \, dx \\
& \leq \left(\int_{B_R(0)} |x|^{-\frac{br_1}{r_1-1}}\,dx \right)^{\frac{r_1-1}{r_1}}\left(\int_{B_R(0)} |u_n|^{qr_1} \, dx \right)^{\frac{1}{r_1}} + R^{-b}\int_{\R^N \backslash B_R(0)} |u_n|^q \, dx
\end{split}
\end{align}
and
\begin{align} \label{v2}
\int_{\R^N} |x|^{-b}|u_n|^p \, dx \leq \left(\int_{B_R(0)} |x|^{-\frac{br_1}{r_2-1}}\,dx \right)^{\frac{r_2-1}{r_2}} \left(\int_{B_R(0)} |u_n|^{pr_2} \, dx \right)^{\frac{1}{r_2}} + R^{-b}\int_{\R^N \backslash B_R(0)} |u_n|^p \, dx,
\end{align}
where $R>0$ is a constant. Note that
$$
\left(\int_{B_R(0)} |x|^{-\frac{br_1}{r_1-1}}\,dx \right)^{\frac{r_1-1}{r_1}}<\infty ,\quad \left(\int_{B_R(0)} |x|^{-\frac{br_2}{r_2-1}}\,dx \right)^{\frac{r_2-1}{r_2}}<\infty.
$$ 
Due to $m(c)<0$, from \eqref{v1}-\eqref{v2} and \cite[Lemma I.1]{Li2}, then vanishing can be excluded. It now remains to exclude dichotomy. To do this, we need to establish the strict subadditivity inequality $m(c_1+c_2)<m(c_1)+m(c_2)$ for any $c_1, c_2>0$. It can be done by deducing that $m(\theta c) <\theta m(c)$ for any $\theta>1$. Indeed, by the definition of $m(c)$, then, for any $\eps>0$, there exists $u \in S(c)$ such that $E(u) \leq m(c) + \eps$. Hence there holds that $\theta^{1/2} u \in S(\theta u)$ and 
\begin{align*}
m(\theta c) \leq E(\theta^{\frac 12} u)&=\theta \left(\frac 12 \int_{\R^N} |\nabla u|^2 \,dx -\frac{\theta^{\frac q 2 -1}}{q} \int_{\R^N} |x|^{-b}|u|^q \,dx -\frac {\theta^{\frac  p2 -1}}{p} \int_{\R^N} |x|^{-b} |u|^p \, dx \right) \\
& < \theta E(u) \leq \theta m(c) + \eps,
\end{align*}
from which we have that $m(\theta c) <\theta m(c)$ for any $\theta>1$. Thus we can obtain the desired conclusion. 

We now demonstrate the assertion $(\textnormal{ii})$. Using Lemma \ref{inequ} and taking $r=2+2(2-b)/N$, we then derive that
\begin{align*}
E(u) &\geq \frac 12 \int_{\R^N} |\nabla u|^2 \,dx +\frac{1}{q} \int_{\R^N} |x|^{-b}|u|^q \,dx -C \left(\int_{\R^N} |x|^{-b}|u|^q \,dx\right)^{1-\theta} \left(\int_{\R^N} |\nabla u|^2 \, dx\right)^{\theta} c^{\frac{2-b}{N}\theta} \\
& \geq \left(\frac 12 -C c^{\frac{2-b}{N}\theta}\right)  \int_{\R^N} |\nabla u|^2 \,dx + \left(\frac 1 q -C c^{\frac{2-b}{N}\theta} \right) \int_{\R^N} |x|^{-b}|u|^q \,dx,
\end{align*}
where we used Young's inequality to derive that
$$
\left(\int_{\R^N} |x|^{-b}|u|^q \,dx\right)^{1-\theta} \left(\int_{\R^N} |\nabla u|^2 \, dx\right)^{\theta} \leq(1-\theta)\int_{\R^N} |x|^{-b}|u|^q \,dx + \theta \int_{\R^N} |\nabla u|^2 \, dx.
$$
This readily suggests that $m(c) \geq 0$ for any $c>0$ small enough, because of $0<b<2$. On the other hand, from \eqref{scaling1}, we know that $E(u_t) \to 0$ as $t \to 0^+$ for any $u \in S(c)$. Therefore, we can conclude that there exists a constant $\tilde{c}_0>0$ such that $m(c) = 0$ for any $0<c<\tilde{c}_0$.  Next we prove that there exists a constant $\hat{c}_0>\tilde{c}_0>0$ such that $m(c)<0$ for any $c>\hat{c}_0$. According to \eqref{scaling1}, we see that, for any $u \in S(c)$,
\begin{align*}
E(u_t)&=t^{\frac{N}{2}(q-2)+b}\left(\frac{t^{2-\frac{N}{2}(q-2)-b}}{2} \int_{\R^N} |\nabla u|^2 \,dx+\frac 1 q\int_{\R^N}|x|^{-b}|u|^q \, dx -\frac{t^{\frac{N(p-q)}{2}}}{p} \int_{\R^N}|x|^{-b}|u|^p \, dx\right)\\
&=:t^{\frac{N}{2}(q-2)+b}F_u(t),
\end{align*}
where 
$$
F_u(t):=\frac{t^{2-\frac{N}{2}(q-2)-b}}{2} \int_{\R^N} |\nabla u|^2 \,dx+\frac 1 q\int_{\R^N}|x|^{-b}|u|^q \, dx -\frac{t^{\frac{N(p-q)}{2}}}{p} \int_{\R^N}|x|^{-b}|u|^p \, dx.
$$
It is easy to compute that
\begin{align} \label{minv}
\min_{t > 0} F_u(t)=\frac 1 q\int_{\R^N}|x|^{-b}|u|^q \, dx -C_{N,p,q,b}\frac{\left(\int_{\R^N}|x|^{-b}|u|^p \, dx\right)^{\frac{2(2-b)-N(q-2)}{2(2-b)-N(p-2)}}}{\left(\int_{\R^N} |\nabla u|^2 \,dx\right)^{\frac{N(p-q)}{2(2-b)-N(p-2)}}},
\end{align}
where the constant $C_{N,p,q,b}>0$ is defined by
$$
C_{N,p,q,b}:=\frac{2(2-b)-N(p-2)}{N(p-q)}2^{\frac{N(p-q)}{2(2-b)-N(p-2)}}p^{\frac{2(2-b)-N(q-2)}{N(p-2)-2(2-b)}}\left(\frac{2(2-b)-N(q-2)}{N(p-q)}\right)^{\frac{2(2-b)-N(q-2)}{N(p-2)-2(2-b)}}.
$$
Let us now take
$$
u=\frac{Q_{p,b}}{\|Q_{p,b}\|_2}c^{\frac 12} \in S(c),
$$
where $Q_{p,b} \in H^1(\R^N)$ is the ground state to \eqref{equ1}. Therefore, it follows from \eqref{GN} and \eqref{minv}  that 
\begin{align} \label{min11}
\begin{split}
\min_{t > 0} F_u(t)&=\frac {c^{\frac q 2}}{q\|Q_p\|_2^q}\int_{\R^N}|x|^{-b}|Q_p|^q \, dx -\frac{\widetilde{C}_{N,p,q,b}  c^{\frac{p\left(2(2-b)-N(q-2)\right)}{2\left(2(2-b)-N(p-2)\right)}-\frac{N(p-q)}{2(2-b)-N(p-2)}}}{\|Q_p\|_2^{\left(\frac{N(p-2)}{2}+ b\right)\frac{2(2-b)-N(q-2)}{2(2-b)-N(p-2)}-\frac{2N(p-q)}{2(2-b)-N(p-2)}}}\\
& \quad \times \left(\int_{\R^N} |\nabla Q_p|^2 \,dx\right)^{\left(\frac{N(p-2)}{4}+ \frac b2\right)\frac{2(2-b)-N(q-2)}{2(2-b)-N(p-2)}-\frac{N(p-q)}{2(2-b)-N(p-2)}},
\end{split}
\end{align}
where the constant $\widetilde{C}_{N,p,q,d}>0$ is defined by
$$
\widetilde{C}_{N,p,q,b}:=C_{N,p,q,b}C_{N,p}^{\frac{2(2-b)-N(q-2)}{2(2-b)-N(p-2)}}.
$$
Since $2<q<p$ and $0<b<2$, then
$$
\frac q2 <\frac{p\left(2(2-b)-N(q-2)\right)}{2\left(2(2-b)-N(p-2)\right)}-\frac{N(p-q)}{2(2-b)-N(p-2)}.
$$
As a consequence, from \eqref{min11}, there exists a constant $\hat{c}_0>\tilde{c}_0>0$ such that $\min_{t>0}F_u(t)<0$ for any $c>\hat{c}_0$. This indicates that $m(c)<0$ for any $c>\hat{c}_0$. Finally we show the compactness of any minimizing sequence to \eqref{gmin} in $H^1(\R^N)$ for any $c>\hat{c}_0$. For this, we adapt again the Lions concentration compactness principle. As discussed previously, the essential argument is to prove that $m(\theta c) < \theta m(c)$ for any $\theta>1$. By the definition of $m(c)$, for any $\eps>0$, there exists $u \in S(c)$ such that $E(u) \leq m(c) + \eps <0$. Define
$$
u_{\theta}(x):=\theta^{\frac{2-b}{2(2-b)-N(p-2)}} u(\theta^{\frac{p-2}{2(2-b)-N(p-2)}} x), \quad x \in \R^N.
$$
We see that $u_{\theta} \in S(\theta c)$ and
\begin{align*}
E(u_{\theta})&= \frac{\theta^{\frac{2(2-b)-(N-2)(p-2)}{2(2-b)-N(p-2)}}}{2} \int_{\R^N}|\nabla u|^2 \, dx + \frac{\theta^{\frac{q(2-b)-(N-b)(p-2)}{2(2-b)-N(p-2)}}}{q}\int_{\R^N}|x|^{-b}|u|^q \, dx\\
&\quad -\frac{\theta^{\frac{2(2-b)-(N-2)(p-2)}{2(2-b)-N(p-2)}}}{p}\int_{\R^N}|x|^{-b}|u|^p \, dx.
\end{align*}
Let us now define $f_{u}(\theta):=E(\theta u)-\theta E(u)$ for any $\theta>1$, then
\begin{align*}
f_u(\theta)&=\frac{\theta^{\frac{2(2-b)-(N-2)(p-2)}{2(2-b)-N(p-2)}}-\theta}{2} \int_{\R^N}|\nabla u|^2 \, dx +\frac{\theta^{\frac{q(2-b)-(N-b)(p-2)}{2(2-b)-N(p-2)}}-\theta}{q}\int_{\R^N}|x|^{-b}|u|^q \, dx\\
&\quad -\frac{\theta^{\frac{2(2-b)-(N-2)(p-2)}{2(2-b)-N(p-2)}}-\theta}{p}\int_{\R^N}|x|^{-b}|u|^p \, dx.
\end{align*}
It is simple to compute that 
\begin{align*}
\frac{d}{d \theta} f_u(\theta)&=\left(\frac{2(2-b)-(N-2)(p-2)}{2(2-b)-N(p-2)}\theta^{\frac{2(p-2)}{2(2-b)-N(p-2)}}-1\right)\left(\frac 12 \int_{\R^N}|\nabla u|^2 \, dx-\frac 1 p \int_{\R^N}|x|^{-b}|u|^p \, dx \right) \\
&\quad +\frac 1 q \left(\frac{q(2-b)-(N-b)(p-2)}{2(2-b)-N(p-2)}\theta^{\frac{b(p-2)}{2(2-b)-N(p-2)}}-1\right)\int_{\R^N}|x|^{-b}|u|^q \, dx
\end{align*}
and
\begin{align*}
&\frac{d^2}{d\theta^2} f_u(\theta) \\
&=\left(\frac{2\left(2(2-b)-(N-2)(p-2)\right)(p-2)}{\left(2(2-b)-N(p-2)\right)^2}\theta^{\frac{2(p-2)}{2(2-b)-N(p-2)}-1}\right) \left(\frac 12 \int_{\R^N}|\nabla u|^2 \, dx-\frac 1 p \int_{\R^N}|x|^{-b}|u|^p \, dx \right) \\
&\quad +\frac 1 q \left(\frac{b\left(q(2-b)-(N-b)(p-2)\right)(p-2)}{\left(2(2-b)-N(p-2)\right)^2}\theta^{\frac{b(p-2)}{2(2-b)-N(p-2)}-1}\right)\int_{\R^N}|x|^{-b}|u|^q \, dx.
\end{align*}
Observe that $\frac{d^2}{d\theta^2} f_u(\theta)<0$ if and only if
$$
\theta> \theta_0:=\left(\frac{b\left(q(2-b)-(N-b)(p-2)\right)}{2\left(2(2-b)-(N-2)(p-2)\right)}\frac{\frac 1q \int_{\R^N}|x|^{-b}|u|^q \, dx}{\frac 1 p \int_{\R^N}|x|^{-b}|u|^p \, dx-\frac 12 \int_{\R^N}|\nabla u|^2 \, dx}\right)^{\frac{2(2-b)-N(p-2)}{(2-b)(p-2)}}.
$$
Let us remark that $q(2-b)-(N-b)(p-2)>0$, due to $2<q<p<2+2(2-b)/N$. Therefore, we are able to deduce that
$$
b\left(q(2-b)-(N-b)(p-2) \right) \leq 2\left(2(2-b)-(N-2)(p-2)\right).
$$
Moreover, since $E(u)<0$, then
$$
0<\frac 1q \int_{\R^N}|x|^{-b}|u|^q \, dx<\frac 1 p \int_{\R^N}|x|^{-b}|u|^p \, dx-\frac 12 \int_{\R^N}|\nabla u|^2 \, dx.
$$
As a result, we obtain that $0<\theta_0<1$. Since $E(u)<0$, then it is straightforward to see that
$$
\frac{d}{d \theta} f_u(\theta){\mid_{\theta=1}}<\frac{2(p-2)}{2(2-b)-N(p-2)} E(u)<0.
$$
Consequently, we derive that $f_u(\theta)<0$ for any $\theta>1$. This then leads to
$$
m(\theta c) \leq E(u_{\theta})<\theta E(u) \leq \theta m(c) + \theta \eps,
$$
from which we get that $m(\theta c) <\theta m(c)$ for any $\theta>1$. It then yields the desired conclusion. Thus the proof is completed.
\end{proof}

\section{Mass critical case} \label{critical}

In this section, we study the case $2<q<p=2+2(2-b)/N$. The principal aim in this section is to establish Theorem \ref{thm2}.

\begin{proof}[Proof of Theorem \ref{thm2}]
Let us first prove the assertion $(\textnormal{i})$. In light of \eqref{GN} with $p=2+2(2-b)/N$, we have that, for any $u \in S(c)$,
$$
E(u) \geq \frac 12 \left(1 -\frac{NC_{N,b}c^{\frac{2-b}{N}}}{N+2-b}\right) \int_{\R^N} |\nabla u|^2 \, dx - \frac {C_{N,q}}{q}\left(\int_{\R^N} |\nabla u|^2 \, dx\right)^{\frac{N(q-2)}{4} + \frac b 2} c^{\frac p2 -\frac{N(p-2)}{4}- \frac b 2}.
$$
Since 
$$
\frac{N(q-2)}{4} + \frac b 2<1,
$$
then $m(c)>-\infty$ for any $0<c<c_1$. Furthermore, from \eqref{scaling1}, we know that $E(u_t)<0$ for any $t>0$ small enough. As a result, we obtain that $m(c)<0$ for any $0<c<c_1$. Let $Q_{p,b} \in H^1(\R^N)$ be the ground state to \eqref{equ1} with $p=2+2(2-b)/N$. Define
\begin{align}\label{defw}
w:=\frac{Q_{p,b}}{\|Q_{p,b}\|_2} c^{\frac 12} \in S(c).
\end{align}
Then we can conclude that $w_t \in S(c)$ and 
\begin{align*}
E(w_t)&=\frac{t^2 c}{2\|Q_{p,b}\|_2^2} \int_{\R^N} |\nabla Q_{p,b}|^2 \, dx-\frac{t^{\frac{N}{2}(q-2)+b}c^{\frac q 2}}{q\|Q_{p,b}\|_2^q} \int_{\R^N}|x|^{-b}|Q_{p,b}|^q \, dx-\frac{t^{2}c^{\frac p 2}}{p\|Q_{p,b}\|_2^p} \int_{\R^N}|x|^{-b}|Q_{p,b}|^{p} \, dx \\
&=\frac{t^2 c}{\|Q_{p,b}\|_2^2} \left(\frac 12 -\frac{C_{N,b}c^{\frac{2-b}{N}}}{p}\right)\int_{\R^N} |\nabla Q_{p,b}|^2 \, dx-\frac{t^{\frac{N}{2}(q-2)+b}c^{\frac q 2}}{q\|Q_{p,b}\|_2^q} \int_{\R^N}|x|^{-b}|Q_{p,b}|^q \, dx.
\end{align*}
This indicates that $E(w_t) \to -\infty$ as $t \to \infty$ for any $c \geq c_1$. Thus we derive that $m(c)=-\infty$ for any $c \geq c_1$. Applying the Lions concentration compactness principle and arguing as the proof of Theorem \ref{thm1}, we can get the compactness of any minimizing sequence to \eqref{gmin} in $H^1(\R^N)$. 

We next demonstrate the assertion $(\textnormal{ii})$. Utilizing \eqref{GN} with $p=2+2(2-b)/N$, we derive that, for any $u \in S(c)$,
$$
E(u) \geq \left(\frac 12 -\frac{C_{N,p}c^{\frac{2-b}{N}}}{p}\right) \int_{\R^N} |\nabla u|^2 \, dx +\frac 1 q \int_{\R^N}|x|^{-b}|u|^q \, dx.
$$
This results in $m(c) \geq 0$ for any $0<c \leq c_1$. On the other hand, we know that $E(u_t) \to 0$ as $t \to 0^+$ for any $c \geq 0$. Consequently, we have that $m(c)=0$ for any $0<c\leq c_1$. Reasoning as before, we can deduce 
$$
E(w_t)=\frac{t^2 c}{\|Q_p\|_2^2} \left(\frac 12 -\frac{C_{N,p}c^{\frac{2-b}{N}}}{p}\right)\int_{\R^N} |\nabla Q_p|^2 \, dx+\frac{t^{\frac{N}{2}(q-2)+b}c^{\frac q 2}}{q\|Q_p\|_2^q} \int_{\R^N}|x|^{-b}|Q_p|^q \, dx.
$$
It then follows that $m(c)=-\infty$ for any $c > c_1$. We now prove that \eqref{equ}-\eqref{mass} has no solutions for any $0<c \leq c_1$. If $u \in S(c)$ is a solution to \eqref{equ}-\eqref{mass} for some $0<c \leq c_1$, it then follows from Lemma \ref{ph} and \eqref{GN} that
\begin{align*}
\int_{\R^N} |\nabla u|^2 \, dx + \frac{N(q-2)+2b}{2q} \int_{\R^N}|x|^{-b}|u|^q \, dx&=\frac{N(p-2)+2b}{2p} \int_{\R^N}|x|^{-b}|u|^p \, dx \\
& \leq \frac{2C_{N,p}c^{\frac{2-b}{N}}}{p}\int_{\R^N} |\nabla u|^2 \, dx.
\end{align*}
We then reach a contradiction, because of $0<c \leq c_1$. Thus the proof is completed.
\end{proof}

\section{Mass supercritical case with focusing perturbation} \label{sup1}

In this section, we investigate the existence of solutions to \eqref{equ}-\eqref{mass} in the mass supercritical case $p>2+ 2(2-b)/N$ and $\mu=1$.

\begin{lem} \label{la}
Let ${N \geq 1}$, $2<q<p<2^*_b$, $0<b<\min\{2, N\}$ and $\mu=1$. If $u \in S(c)$ is a solution to the equation
\begin{align} \label{equ121}
-\Delta u + \lambda u= |x|^{-b}|u|^{q-2} u + |x|^{-b}|u|^{p-2} u \quad \mbox{in} \,\, \R^N,
\end{align}
then $\lambda>0$ for any $c>0$.
\end{lem}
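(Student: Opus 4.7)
The plan is to read off the sign of $\lambda$ from two integral identities that any solution of \eqref{equ121} on $S(c)$ must satisfy: the Pohozaev identity furnished by Lemma \ref{ph} and the identity obtained by pairing the equation with $u$ itself. Both produce linear relations among the four quantities $\|\nabla u\|_2^2$, $\lambda\|u\|_2^2$, $\int_{\R^N}|x|^{-b}|u|^q\,dx$, and $\int_{\R^N}|x|^{-b}|u|^p\,dx$, so $\lambda\|u\|_2^2$ can be isolated by a single elimination.

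More precisely, testing \eqref{equ121} against $u$ yields
\begin{equation*}
\|\nabla u\|_2^2+\lambda\|u\|_2^2=\int_{\R^N}|x|^{-b}|u|^q\,dx+\int_{\R^N}|x|^{-b}|u|^p\,dx,
\end{equation*}
while $Q(u)=0$ from Lemma \ref{ph} (applicable since $u\in S(c)$ solves the constrained problem with $\mu=1$) reads
\begin{equation*}
\|\nabla u\|_2^2=\frac{N(q-2)+2b}{2q}\int_{\R^N}|x|^{-b}|u|^q\,dx+\frac{N(p-2)+2b}{2p}\int_{\R^N}|x|^{-b}|u|^p\,dx.
\end{equation*}
Subtracting the second from the first gives
\begin{equation*}
\lambda\|u\|_2^2=\frac{2(N-b)-(N-2)q}{2q}\int_{\R^N}|x|^{-b}|u|^q\,dx+\frac{2(N-b)-(N-2)p}{2p}\int_{\R^N}|x|^{-b}|u|^p\,dx.
\end{equation*}

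I would then check that both coefficients are strictly positive. Since $2<q<p<2_b^*=2(N-b)/(N-2)^+$, the inequalities $(N-2)q<2(N-b)$ and $(N-2)p<2(N-b)$ hold (automatically when $N=1,2$, where $(N-2)^+=0$ and thus $2_b^*=\infty$, and by direct comparison with the threshold when $N\geq 3$). On the other side, because $u\in S(c)$ with $c>0$ we have $\|u\|_2^2=c>0$ and $u\not\equiv 0$; since $|x|^{-b}>0$ a.e.\ on $\R^N$, both integrals $\int_{\R^N}|x|^{-b}|u|^q\,dx$ and $\int_{\R^N}|x|^{-b}|u|^p\,dx$ are strictly positive. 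Combining these two observations forces the right-hand side of the displayed identity to be strictly positive, hence $\lambda>0$.

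There is no real obstacle here beyond the need to invoke Lemma \ref{ph}, which already absorbed the delicate truncation argument needed to make the Pohozaev identity rigorous for $H^1$-solutions in the presence of the singular weight $|x|^{-b}$. The proof thus reduces to the two elementary manipulations above and the strict inequalities $q,p<2_b^*$ that are built into the standing hypotheses.
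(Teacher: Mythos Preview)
Your proof is correct and follows exactly the same route as the paper: test \eqref{equ121} against $u$, subtract the Pohozaev identity $Q(u)=0$ from Lemma~\ref{ph}, and observe that the resulting coefficients in front of the two nonlinear integrals are strictly positive because $q,p<2_b^*$. Your form of the coefficients, $\frac{2(N-b)-(N-2)q}{2q}$ and $\frac{2(N-b)-(N-2)p}{2p}$, is just an algebraic rewriting of the paper's $\frac{2(q-b)-N(q-2)}{2q}$ and $\frac{2(p-b)-N(p-2)}{2p}$, and arguably makes the comparison with $2_b^*$ more transparent.
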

\begin{proof}
Since $u \in H^1(\R^N)$ is a solution to \eqref{equ121}, then
$$
\int_{\R^N} |\nabla u|^2\,dx + \lambda \int_{\R^N} |u|^2 \,dx=\int_{\R^N} |x|^{-b}|u|^{q} \, dx+\int_{\R^N} |x|^{-b}|u|^{p} \, dx.
$$
On the other hand, from Lemma \ref{ph}, we get that $Q(u)=0$, i.e.
$$
\int_{\R^N}|\nabla u|^2\, dx=\frac {N(q-2) +2b}{2q} \int_{\R^N} |x|^{-b}|u|^q \, dx+\frac{N(p-2)+2b}{2p} \int_{R^N} |x|^{-b}|u|^p \, dx.
$$
Therefore, we conclude that
$$
 \lambda \int_{\R^N} |u|^2 \,dx=\frac{2(q-b)-N(q-2)}{2q}\int_{\R^N} |x|^{-b}|u|^q \, dx+\frac{2(p-b)-N(q-2)}{2p}\int_{\R^N} |x|^{-b}|u|^p \, dx>0,
$$
because of $2<q<p<2^*_b$ and $u \neq 0$. This completes the proof.
\end{proof}

\subsection{Focusing mass subcritical perturbation}
We devote this subsection to the study of the case $2<q<2+2(2-b)/N<p$ and $\mu=1$. The goal of this subsection is to prove Theorem \ref{thm3}.

\begin{lem} \label{cod}
Let ${N \geq 1}$, $2<q<2+2(2-b)/N<p$, $0<b<\min\{2, N\}$ and $\mu=1$, then there exists a constant $\tilde{c}_2>0$ such that $P_0(c)=\emptyset$ and $P(c)$ is a smooth manifold of codimension 2 in $H^1(\R^N)$ for any $0<c<\tilde{c}_2$, where $\tilde{c}_2>0$ is defined by
$$
\tilde{c}_2:=\left(\frac{2p\left(2(2-b)-N(q-2)\right)}{C_{N,p,b}N\left(N(p-2)+2b\right)(p-q)}\right)^{\frac{2(2-b)-N(q-2)}{(2-b)(p-q)}}\left(\frac{2q \left(N(p-2)-2(2-b)\right)}{C_{N,q,b}N\left(N(q-2)+2b\right)(p-q)}\right)^{\frac{N(p-2)-2(2-b)}{(2-b)(p-q)}}.
$$
\end{lem}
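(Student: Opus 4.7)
I would prove $P_0(c)=\emptyset$ by contradiction: assuming $u\in P_0(c)$, the pair $Q(u)=\Psi(u)=0$ is a nondegenerate linear system in $\int|x|^{-b}|u|^q$ and $\int|x|^{-b}|u|^p$. Solving it and plugging into Gagliardo--Nirenberg \eqref{GN} squeezes $\|\nabla u\|_2^2$ between an upper and a lower bound that become incompatible when $c<\tilde c_2$. The codimension-$2$ manifold claim then follows from the implicit function theorem, with the scaling curve $t\mapsto u_t$ providing the required tangent probe.

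\textbf{Step 1 (solve the linear system).} Set $A:=\|\nabla u\|_2^2$, $B:=\int|x|^{-b}|u|^q\,dx$, $D:=\int|x|^{-b}|u|^p\,dx$, and abbreviate $\alpha:=N(q-2)+2b$, $\beta:=N(p-2)+2b$. Then $Q(u)=0$, $\Psi(u)=0$ become
\begin{align*}
\frac{\alpha}{2q}B+\frac{\beta}{2p}D=A,\qquad \frac{\alpha(\alpha-2)}{4q}B+\frac{\beta(\beta-2)}{4p}D=A.
\end{align*}
The determinant $\alpha\beta(\beta-\alpha)/(8pq)$ is nonzero since $\beta-\alpha=N(p-q)>0$, and a direct solution yields
\begin{align*}
B=\frac{2q(\beta-4)}{\alpha(\beta-\alpha)}A,\qquad D=\frac{2p(4-\alpha)}{\beta(\beta-\alpha)}A,
\end{align*}
both coefficients being strictly positive because $q<2+2(2-b)/N<p$ forces $\alpha<4<\beta$.

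\textbf{Step 2 (apply GN).} Inserting these two expressions into \eqref{GN} yields, with constants $K_1:=\alpha(\beta-\alpha)C_{N,q,b}/(2q(\beta-4))$ and $K_2:=2p(4-\alpha)/(\beta(\beta-\alpha)C_{N,p,b})$,
\begin{align*}
A^{(4-\alpha)/4}\le K_1\,c^{(2q-\alpha)/4},\qquad A^{(\beta-4)/4}\ge K_2\,c^{(\beta-2p)/4}.
\end{align*}
Since $4-\alpha>0$ and $\beta-4>0$, the first is a genuine upper bound on $A$ and the second a genuine lower bound.

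\textbf{Step 3 (threshold).} Combining the two bounds to eliminate $A$ produces the necessary condition
\begin{align*}
c^{(2q-\alpha)/(4-\alpha)-(\beta-2p)/(\beta-4)}\ge K_2^{4/(\beta-4)}K_1^{-4/(4-\alpha)}.
\end{align*}
A routine expansion verifies the algebraic identity
\begin{align*}
(2q-\alpha)(\beta-4)-(\beta-2p)(4-\alpha)=4(p-q)(2-b),
\end{align*}
so the exponent of $c$ equals $4(p-q)(2-b)/((4-\alpha)(\beta-4))>0$. Raising to the reciprocal power and substituting back the definitions of $K_1,K_2$ recasts the condition as $c\ge\tilde c_2$, where $\tilde c_2=K_2^{(4-\alpha)/((p-q)(2-b))}K_1^{-(\beta-4)/((p-q)(2-b))}$. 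After elementary simplification this coincides with the two-factor expression in the statement (the factor containing $C_{N,p,b}$ coming from $K_2$, the one containing $C_{N,q,b}$ from $K_1^{-1}$). Thus for $0<c<\tilde c_2$ no $u\in P_0(c)$ exists.

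\textbf{Step 4 (manifold).} With $P_0(c)=\emptyset$, every $u\in P(c)$ satisfies $\Psi(u)\neq 0$. The scaling curve $\gamma(t):=u_t$ lies in $S(c)$ since $\|u_t\|_2=\|u\|_2$, and $\frac{d}{dt}Q(u_t)|_{t=1}=\Psi(u)\neq 0$, so the tangent vector $\gamma'(1)\in T_uS(c)$ witnesses that the restriction $Q|_{S(c)}$ has nonzero differential at $u$. By the implicit function theorem, $P(c)=(Q|_{S(c)})^{-1}(0)$ is a smooth submanifold of $S(c)$ of codimension $1$, hence codimension $2$ in $H^1(\R^N)$.

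\textbf{Main obstacle.} The principal effort is the bookkeeping in Step 3: verifying the identity $(2q-\alpha)(\beta-4)-(\beta-2p)(4-\alpha)=4(p-q)(2-b)$ and then checking that the abstract constant $K_2^{\cdots}K_1^{-\cdots}$ telescopes into the explicit two-factor formula defining $\tilde c_2$. Once those exponents are pinned down, the remaining arguments are elementary.
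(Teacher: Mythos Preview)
Your argument for $P_0(c)=\emptyset$ is essentially the paper's: both solve $Q(u)=\Psi(u)=0$ to express the two nonlinear terms as multiples of $\|\nabla u\|_2^2$, then apply \eqref{GN} to each, obtaining incompatible upper and lower bounds on $\|\nabla u\|_2^2$ when $c<\tilde c_2$. Your abbreviations $\alpha,\beta$ and the explicit identity $(2q-\alpha)(\beta-4)-(\beta-2p)(4-\alpha)=4(p-q)(2-b)$ streamline the bookkeeping but the substance is identical.

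Step~4, however, takes a genuinely different route. The paper argues by contradiction: if $dQ(u)=\nu\,dG(u)$ for some $\nu\in\R$, then $u$ solves an auxiliary equation whose Pohozaev identity, combined with $Q(u)=0$, forces $\Psi(u)=0$ and hence $u\in P_0(c)$. You instead exhibit a tangent direction directly: the scaling curve $t\mapsto u_t$ lies in $S(c)$, and since $Q(u_t)=t\,\frac{d}{dt}E(u_t)$ one has $\frac{d}{dt}Q(u_t)\big|_{t=1}=Q(u)+\Psi(u)=\Psi(u)\neq 0$ for $u\in P(c)\setminus P_0(c)$. This shows $dQ$ does not vanish on $T_uS(c)$, hence $(dQ,dG)$ has rank $2$. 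Your approach is shorter and avoids the auxiliary Pohozaev computation; the paper's approach has the minor advantage of identifying exactly which equation a hypothetical degenerate point would satisfy, but for the present lemma that information is not needed.
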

\begin{proof}
We argue by contradiction that $P_0(c) \neq \emptyset$ for some $0<c<\tilde{c}_2$. Thus, for any $u \in P_0(c)$, we have that $Q(u)=0$ and $\Psi(u)=0$. Therefore, we conclude that 
\begin{align*}
\int_{\R^N} |\nabla u|^2 \, dx&=\frac{N\left(N(p-2)+2b\right)(p-q)}{2p\left(2(2-b)-N(q-2)\right)}\int_{\R^N}|x|^{-b}|u|^p \, dx \\
& \leq \frac{C_{N,p,b}N\left(N(p-2)+2b\right)(p-q)}{2p\left(2(2-b)-N(q-2)\right)} \left(\int_{\R^N} |\nabla u|^2 \, dx\right)^{\frac{N(p-2)}{4} + \frac b 2} c^{\frac p2 -\frac{N(p-2)}{4} - \frac b 2},
\end{align*}
where we used \eqref{GN} for the inequality. This indicates that
\begin{align} \label{empty1}
\int_{\R^N} |\nabla u|^2 \, dx  \geq  \left(\frac{2p\left(2(2-b)-N(q-2)\right)}{C_{N,p,b}N\left(N(p-2)+2b\right)(p-q)}\right)^{\frac{4}{N(p-2)-2(2-b)}} c^{\frac{N(p-2)-2(p-b)}{N(p-2)-2(2-b)}}.
\end{align}
Correspondingly, there holds that
\begin{align*}
\int_{\R^N} |\nabla u|^2 \, dx&=\frac{N\left(N(q-2)+2b\right)(p-q)}{2q\left(N(p-2)-2(2-b)\right)}\int_{\R^N}|x|^{-b}|u|^q \, dx \\
&\leq \frac{C_{N,q,b}N\left(N(q-2)+2b\right)(p-q)}{2q\left(N(p-2)-2(2-b)\right)} \left(\int_{\R^N} |\nabla u|^2 \, dx\right)^{\frac{N(q-2)}{4} + \frac b 2} c^{\frac q2 -\frac{N(q-2)}{4} - \frac b 2}.
\end{align*}
This infers that
\begin{align}\label{empty2}
\int_{\R^N} |\nabla u|^2 \, dx \leq \left(\frac{C_{N,q,b}N\left(N(q-2)+2b\right)(p-q)}{2q \left(N(p-2)-2(2-b)\right)}\right)^{\frac{4}{2(2-b)-N(q-2)}} c^{\frac{2(q-b)-N(q-2)}{2(2-b)-N(q-2)}}.
\end{align}
Combining \eqref{empty1} and \eqref{empty2}, we then have that $c \geq \tilde{c}_2$.
This is impossible. Thus we get that $P_0(c)=\emptyset$ for any $0<c<\tilde{c}_2$. 

We now turn to prove the remaining assertion. Let $G(u):=\|u\|_2^2-c$, then 
$$
P(c)=\{u \in H^1(\R^N) : Q(u)=0, G(u)=0\}.
$$
Clearly, $Q$ and $G$ are of class $C^1$. To see that $P(c)$ is codimension $2$ in $H^1(\R^N)$, it suffices to verify that $(dQ, dG): P(c) \to \R^2$ is surjective for any $0<c<\tilde{c}_1$. We suppose by contradition that there exists a constant $\nu \in \R$ such that $dQ(u)=\nu dG(u)$ for some $u \in P(c)$ and $0<c<\tilde{c}_2$. This immediately indicates that $u \in H^1(\R^N)$ satisfies the equation
$$
-\Delta u + \nu u=\frac{N(q-2)+2b}{4} |x|^{-b} |u|^{q-2} u + \frac{N(p-2)+2b}{4} |x|^{-b} |u|^{p-2} u \quad \mbox{in} \,\,\R^N.
$$
Accordingly, we can conclude that $u$ enjoys the following Pohozaev identity,
\begin{align} \label{ph11}
\int_{\R^N} |\nabla u|^2 \, dx = \frac{\left(N(q-2)+2b\right)^2}{8q} \int_{\R^N}|x|^{-b}|u|^q \, dx+\frac{\left(N(p-2)+2b\right)^2}{8p} \int_{\R^N}|x|^{-b}|u|^p \, dx.
\end{align}
Since $Q(u)=0$, from \eqref{ph11}, then $\Psi(u)=0$. This leads to $u \in P_0(c)$ for $0<c<\tilde{c}_2$. We then reach a contradiction. Thus the proof is completed.
\end{proof}

\begin{lem} \label{locmp}
Let ${N \geq 1}$, $2<q<2+2(2-b)/N<p$, $0<b<\min\{2, N\}$ and $\mu=1$, then there exists a constant $\hat{c}_2>0$ such that, for any $0<c<\hat{c}_2$, there exists a constant $\rho>0$ such that
\begin{align} \label{loc}
\inf_{u \in V_{\rho}(c)} E(u) <0 \leq \inf_{\partial V_{\rho}(c)} E(u),
\end{align}
where $V_{\rho}(c):=\{u \in S(c) : \|\nabla u\|^2_2 < \rho\}$ and $\hat{c}_2>0$ is defined by 
$$
\hat{c}_2:=\left(\frac{q\left(N(p-2)-2(2-b)\right)}{2NC_{N,q,b}(p-q)}\right)^{\frac{N(p-2)-2(2-b)}{(p-q)(2-b)}}\left(\frac{p\left(2(2-b)-N(q-2)\right)}{2NC_{N,p,b}(p-q)}\right)^{\frac{2(2-b)-N(q-2)}{(p-q)(2-b)}}.
$$
\end{lem}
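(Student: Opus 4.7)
The plan is to exploit Gagliardo--Nirenberg \eqref{GN} to reduce the minimization of $E$ on $S(c)$ to a one-dimensional study. For $u\in S(c)$, writing $t:=\|\nabla u\|_2$, I would first produce the pointwise lower bound
\[
E(u)\;\geq\; f_c(t):=\frac{t^2}{2}-\frac{C_{N,q,b}}{q}\,c^{\beta_q}\,t^{\alpha_q}-\frac{C_{N,p,b}}{p}\,c^{\beta_p}\,t^{\alpha_p},
\]
with $\alpha_r:=N(r-2)/2+b$ and $\beta_r:=r/2-N(r-2)/4-b/2$ for $r\in\{q,p\}$. The assumption $q<2+2(2-b)/N<p$ is exactly what gives $\alpha_q<2<\alpha_p$ and $\beta_q,\beta_p>0$, so $f_c$ is negative on a right neighborhood of $0$, tends to $-\infty$ at infinity, and the sign of its maximum governs whether the sphere $\{t^2=\rho\}$ can be raised to a nonnegative energy level.

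Next I would analyze $g_c(t):=f_c(t)/t^2=1/2-Ac^{\beta_q}t^{\alpha_q-2}-Bc^{\beta_p}t^{\alpha_p-2}$ with $A:=C_{N,q,b}/q$ and $B:=C_{N,p,b}/p$. Since one exponent of $t$ is negative and the other positive, $g_c$ admits a unique interior maximizer $t_*(c)>0$, characterized by
\[
t_*(c)^{\alpha_p-\alpha_q}\;=\;\frac{A(2-\alpha_q)}{B(\alpha_p-2)}\,c^{\beta_q-\beta_p}.
\]
Substituting back and using the relation $Bt_*^{\alpha_p-2}=\tfrac{2-\alpha_q}{\alpha_p-2}\,At_*^{\alpha_q-2}$ obtained from $g_c'(t_*(c))=0$, the inequality $g_c(t_*(c))\geq 0$ collapses to a single scalar bound of the shape $c^{\kappa}\leq\text{const}$ for some positive exponent $\kappa$. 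Rewriting via the identities $\alpha_p-2=(N(p-2)-2(2-b))/2$, $2-\alpha_q=(2(2-b)-N(q-2))/2$ and $\alpha_p-\alpha_q=N(p-q)/2$ should, after some bookkeeping, match the explicit threshold $\hat c_2$ stated in the lemma.

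Having set $\rho:=t_*(c)^2$, the right-hand inequality in \eqref{loc} is then immediate: every $u\in\partial V_\rho(c)$ satisfies $\|\nabla u\|_2=t_*(c)$, hence $E(u)\geq f_c(t_*(c))=\rho\,g_c(t_*(c))\geq 0$. The left-hand inequality is produced by the mass-preserving dilation $u_0^s(x):=s^{N/2}u_0(sx)$ applied to any fixed $u_0\in S(c)$: formula \eqref{scaling1} yields
\[
E(u_0^s)=\frac{s^2}{2}\|\nabla u_0\|_2^2-\frac{s^{\alpha_q}}{q}\int_{\R^N}|x|^{-b}|u_0|^q\,dx-\frac{s^{\alpha_p}}{p}\int_{\R^N}|x|^{-b}|u_0|^p\,dx,
\]
and since $\alpha_q<2$ the $L^q$-term dominates as $s\to 0^+$, so $E(u_0^s)<0$. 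At the same time $\|\nabla u_0^s\|_2^2=s^2\|\nabla u_0\|_2^2\to 0$, which places $u_0^s$ inside $V_\rho(c)$ for $s$ sufficiently small.

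The only delicate point is the last step of the reduction: matching the threshold produced by $g_c(t_*(c))=0$ with the precise constant $\hat c_2$ displayed in the statement. This is a straightforward but slightly intricate exercise in tracking fractional exponents; all the genuinely analytic content of the lemma is already captured by \eqref{GN} and the scaling identity \eqref{scaling1}.
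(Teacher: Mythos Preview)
Your proposal is correct and follows essentially the same route as the paper: apply \eqref{GN} to bound $E(u)$ below by a one-variable function of $\|\nabla u\|_2$, locate the level where this lower bound is nonnegative to produce $\rho$, and use the scaling \eqref{scaling1} with $s\to 0^+$ (exploiting $\alpha_q<2$) to get a point of negative energy inside $V_\rho(c)$. The only cosmetic difference is that the paper factors out $t^{\alpha_q}$ rather than $t^2$ before optimizing, but the resulting threshold $\hat c_2$ is identical since both reductions have the same sign as $f_c$ for $t>0$.
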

\begin{proof}
By means of \eqref{GN}, we derive that, for any $u \in S(c)$, 
\begin{align*}
E(u) &\geq \frac 12 \|\nabla u\|_2^2 -\frac{C_{N,q,b}}{q}\|\nabla u\|_2^{\frac{N(q-2)}{2} + b } c^{\frac q2 -\frac{N(q-2)}{4} - \frac b 2} -\frac{C_{N,p,b}}{p}\|\nabla u\|_2^{\frac{N(p-2)}{2} + b} c^{\frac p2 -\frac{N(p-2)}{4} - \frac b 2} \\
&=\|\nabla u\|_2^{\frac{N(q-2)}{2} + b}\left(\frac 12 \|\nabla u\|_2^{\frac{2(2-b)-N(q-2)}{2}}-\frac{C_{N,q,b}}{q}c^{\frac q2 -\frac{N(q-2)}{4}- \frac b 2} -\frac{C_{N,p,b}}{p}\|\nabla u\|_2^{\frac{N(p-q)}{2}} c^{\frac p2 -\frac{N(p-2)}{4} - \frac b 2} \right).
\end{align*}
For any $t>0$, we define
$$
g(t):=\frac 12 t^{\frac{2(2-b)-N(q-2)}{2}}-\frac{C_{N,q,b}}{q}c^{\frac{2(q-b)-N(q-2)}{4}}-\frac{C_{N,p,b}}{p}c^{\frac{2(p-b)-N(p-2)}{4}}t^{\frac{N(p-q)}{2}}.
$$
Since $2<q<2+2(2-b)/N<p$, then $g(0)<0$ and $g(t) \to -\infty$ as $t \to \infty$. In addition, we can compute that $g$ has only one maximum point $t_{max}\in (0, \infty)$ given by
$$
t_{max}:=\left(\frac{p\left(2(2-b)-N(q-2)\right)}{2NC_{N,p,b}(p-q)c^{\frac{2(p-b)-N(p-2)}{4}}}\right)^{\frac{2}{N(p-2)-2(2-b)}}
$$
such that
\begin{align*}
\max_{t>0} g(t)&=g(t_{max}) \\
&=\frac{N(p-2)-2(2-b)}{2N(p-q)}\left(\frac{p\left(2(2-b)-N(q-2)\right)}{2NC_{N,p,b}(p-q)c^{\frac{2(p-b)-N(p-2)}{4}}}\right)^{\frac{2(2-b)-N(q-2)}{N(p-2)-2(2-b)}}-\frac{C_{N,q,b}}{q}c^{\frac{2(q-b)-N(q-2)}{4}}.
\end{align*}
If $0<c<\hat{c}_2$, then $\max_{t>0} g(t)>0$. On the other hand, we see that $E(u_t)<0$ for any $t>0$ small enough. Therefore, it follows that, for any $0<c<\hat{c}_2$, there exists a constant $\rho>0$ such that \eqref{loc} holds. Thus the proof is completed.
\end{proof}

With the help of \eqref{loc}, we are now able to introduce the following local minimization problem, for any $0<c<\hat{c}_2$,
\begin{align} \label{lmin}
M(c):=\inf_{u \in V_{\rho}(c)} E(u)<0.
\end{align}

\begin{lem} \label{exist1}
Let ${N \geq 1}$, $2<q<2+2(2-b)/N<p$, $0<b<\min\{2, N\}$ and $\mu=1$, then any minimizing sequence to \eqref{lmin} is compact in $H^1(\R^N)$ up to translation for any $0<c<\hat{c}_2$. In particular, there exists a positive solution to \eqref{equ}-\eqref{mass} at the energy level $M(c)<0$ for any $0<c<\hat{c}_2$.
\end{lem}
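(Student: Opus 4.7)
The plan is to combine the compact embedding of Lemma~\ref{cembedding} with an Ekeland-type Palais-Smale reduction, using the positivity of the associated Lagrange multiplier from Lemma~\ref{la} to upgrade weighted-norm convergence to strong $H^1$-convergence. Let $\{u_n\} \subset V_\rho(c)$ be a minimizing sequence for $M(c)$. Since $\|u_n\|_2^2 = c$ and $\|\nabla u_n\|_2^2 < \rho$, the sequence is bounded in $H^1(\R^N)$, so along a subsequence $u_n \wto u$ in $H^1(\R^N)$ and, by Lemma~\ref{cembedding}, $u_n \to u$ strongly in $L^r(\R^N, |x|^{-b}dx)$ for every $2 < r < 2_b^*$; in particular the two nonlinear terms in $E(u_n)$ pass to the corresponding terms of $E(u)$. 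If $u \equiv 0$ these terms vanish in the limit and $\liminf E(u_n) \geq 0 > M(c)$, a contradiction, so $u \not\equiv 0$.

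The core step is to exclude loss of $L^2$-mass, i.e.\ to show $\|u\|_2^2 = c$. The geometric separation $M(c) < 0 \leq \inf_{\partial V_\rho(c)} E$ guaranteed by Lemma~\ref{locmp} means that $V_\rho(c)$ is an effectively open subset of $S(c)$ on which $M(c)$ is a strict infimum, so Ekeland's variational principle on the constraint produces an equivalent minimizing sequence, still denoted $\{u_n\}$, that is simultaneously a Palais-Smale sequence for $E|_{S(c)}$ at level $M(c)$. Standard Lagrange-multiplier considerations then furnish a bounded sequence $\lambda_n \to \lambda \in \R$ with
\begin{equation*}
-\Delta u_n + \lambda_n u_n - |x|^{-b}|u_n|^{q-2} u_n - |x|^{-b}|u_n|^{p-2} u_n \to 0 \quad \text{in } H^{-1}(\R^N),
\end{equation*}
and passing to the weak limit shows that $u$ solves the corresponding stationary equation. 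The Pohozaev-based computation carried out in Lemma~\ref{la} depends only on the equation and on $u \not\equiv 0$, and so still yields $\lambda > 0$ for the weak limit. Subtracting the equations for $u_n$ and $u$, pairing with $u_n - u$, and using strong convergence in the weighted $L^q$ and $L^p$ norms together with $\lambda_n \to \lambda > 0$ gives $\|\nabla(u_n - u)\|_2^2 + \lambda \|u_n - u\|_2^2 = o_n(1)$, hence $u_n \to u$ in $H^1(\R^N)$.

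Strong convergence implies $u \in S(c)$ with $E(u) = M(c) < 0$ and $\|\nabla u\|_2^2 \leq \rho$; the bound $\inf_{\partial V_\rho(c)} E \geq 0$ then forces strict inequality, so $u \in V_\rho(c)$ realizes $M(c)$. Since $E(|u|) = E(u)$ we may take $u \geq 0$, and standard elliptic bootstrap together with the strong maximum principle applied to the equation (whose right-hand side is nonnegative) upgrades this to $u > 0$. The main obstacle will be justifying the Palais-Smale reduction on the open subset $V_\rho(c) \subset S(c)$ and securing the positivity of $\lambda$: both rely crucially on the geometric separation from Lemma~\ref{locmp} and on the extension of Lemma~\ref{la} from $S(c)$ to arbitrary nontrivial $H^1$-solutions of the limit equation.
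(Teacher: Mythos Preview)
Your proof is correct but follows a genuinely different route from the paper's. The paper argues via the Lions concentration compactness principle: vanishing is ruled out because $M(c)<0$ forces the weighted $L^q$ and $L^p$ integrals to stay bounded away from zero (exactly as in the proof of Theorem~\ref{thm1}), and dichotomy is excluded by establishing the strict subadditivity inequality $M(c)<M(c-\bar c)+M(\bar c)$ through a scaling argument; compactness of any minimizing sequence then follows, and the minimizer is rendered positive via $E(|u|)\le E(u)$ and the maximum principle.

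By contrast, you bypass the concentration compactness machinery entirely. You exploit directly the compact embedding of Lemma~\ref{cembedding} (which is available precisely because $b>0$) to get strong convergence of the nonlinear terms, then use the geometric separation from Lemma~\ref{locmp} to run Ekeland's principle inside $V_\rho(c)$ and upgrade the minimizing sequence to a Palais--Smale sequence. The decisive step is that the Pohozaev computation behind Lemma~\ref{la} applies to any nontrivial $H^1$-solution of the limit equation, yielding $\lambda>0$, which in turn forces the full $H^1$-norm to converge. This route avoids the subadditivity computation altogether and in fact gives compactness \emph{without} translations (the phrase ``up to translations'' in the statement is vestigial here, since the weight $|x|^{-b}$ breaks translation invariance). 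The trade-off is that you need the Ekeland/Palais--Smale reduction and the extension of Lemma~\ref{la} to solutions not a priori on $S(c)$, whereas the paper's argument is more self-contained and parallels the classical unweighted theory.
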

\begin{proof}
To prove this, we shall adapt the Lions concentration compactness principle as before. Let $\{u_n\} \subset V_{\rho}(c)$ be a minimizing sequence to \eqref{lmin}. Due to $M(c)<0$, then vanishing cannot occur as the proof of Theorem \ref{thm1}. Moreover, by scaling technique, it is not hard to derive that
$$
M(c)<M(c-\bar{c})+M(\bar{c}),
$$
where $0<\bar{c}<\hat{c}_2$. This infers that dichotomy cannot occur, either. Therefore, we obtain that $\{u_n\}$ is compact in $H^1(\R^N)$ up to translations. Since $E(|u|) \leq E(u)$ for any $u \in S(c)$, then $M(c)$ is achieved by some nonnegative $u \in V_{\rho}(c)$ for $0<c<\hat{c}_2$. Via the maximum principle, we can deduce that $u$ is positive. Thus the proof is completed.
\end{proof}

\begin{lem} \label{unique1}
Let ${N \geq 1}$, $2<q<2+2(2-b)/N<p$, $0<b<\min\{2, N\}$ and $\mu=1$, then, for any $u \in S(c)$ and $0<c<\min\{\tilde{c}_2, \hat{c}_2\}$, the function $t \mapsto E(u_t)$ has exactly two critical points $0<t_{1, u}<t_{2,u}<\infty$ such that $u_{t_{1,u}} \in P_+(c)$, $u_{t_{2,u}} \in P_-(c)$ and
$$
E(u_{t_{1,u}})<0<E(u_{t_{2,u}})=\sup_{t>0} E(u_t).
$$
Moreover, the functions $u\mapsto t_{1,u}$ and $u\mapsto t_{2,u}$ are of class $C^1$.
\end{lem}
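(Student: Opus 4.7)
The plan is to reduce the problem to the study of the scalar function
\[
\varphi_u(t) := E(u_t) = \frac{t^2}{2}A - \frac{t^\alpha}{q}B - \frac{t^\beta}{p}C,
\]
where $A := \|\nabla u\|_2^2$, $B := \int_{\R^N}|x|^{-b}|u|^q\,dx$, $C := \int_{\R^N}|x|^{-b}|u|^p\,dx$, and $\alpha := N(q-2)/2 + b$, $\beta := N(p-2)/2 + b$. The hypothesis $2<q<2+2(2-b)/N<p$ forces $\alpha\in(0,2)$ and $\beta>2$. Since $\varphi_u'(t) = Q(u_t)/t$, the critical points of $\varphi_u$ coincide with the positive zeros of $\psi_u(t) := Q(u_t)/t^2 = A - (\alpha/q)t^{\alpha-2}B - (\beta/p)t^{\beta-2}C$. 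The endpoint behaviour is $\varphi_u(t)\to 0^-$ as $t\to 0^+$ (dominated by $-t^\alpha B/q$ since $\alpha<2<\beta$) and $\varphi_u(t)\to -\infty$ as $t\to\infty$ (dominated by $-t^\beta C/p$).

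First I would count the zeros of $\psi_u$ from its shape. Since $\alpha-2<0<\beta-2$, $\psi_u(t)\to-\infty$ at both endpoints, and $\psi_u'(t)=0$ reduces to a single relation $t^{\beta-\alpha} = \alpha(2-\alpha)pB/\bigl[\beta(\beta-2)qC\bigr]$, which has a unique positive solution. Hence $\psi_u$ is unimodal on $(0,\infty)$ and admits at most two positive zeros.

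Next, to ensure that exactly two zeros occur, I would produce a positive value of $\varphi_u$ by borrowing from the proof of Lemma \ref{locmp}: that argument yields a constant $s_*>0$ and an auxiliary function $g$ with $g(s_*)>0$ such that $E(v)\geq\|\nabla v\|_2^\alpha g(\|\nabla v\|_2)$ for every $v\in S(c)$, crucially because $c<\hat{c}_2$. Applied to $v = u_{t^*}$ with $t^* := s_*/\|\nabla u\|_2$ (note $\|\nabla u_{t^*}\|_2 = s_*$), this gives $\varphi_u(t^*)\geq s_*^{\alpha}g(s_*)>0$. Combined with the endpoint behaviour, $\varphi_u$ must decrease from $0$ down to a strict local minimum, then increase through $0$ up to a strict global maximum, and finally decrease to $-\infty$. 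Thus there are exactly two critical points $0<t_{1,u}<t_{2,u}<\infty$ with $E(u_{t_{1,u}})<0<E(u_{t_{2,u}})=\sup_{t>0}E(u_t)$.

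Finally, a direct scaling computation using $\|\nabla u_t\|_2^2 = t^2A$, $\int|x|^{-b}|u_t|^q\,dx = t^\alpha B$, $\int|x|^{-b}|u_t|^p\,dx = t^\beta C$ yields the identity $\Psi(u_t) = t^2\varphi_u''(t)$. At the local minimum $\varphi_u''(t_{1,u})\geq 0$ and at the local maximum $\varphi_u''(t_{2,u})\leq 0$; Lemma \ref{cod} (which gives $P_0(c)=\emptyset$ for $c<\tilde{c}_2$) upgrades these to strict inequalities, so $u_{t_{1,u}}\in P_+(c)$ and $u_{t_{2,u}}\in P_-(c)$. The $C^1$ regularity of $u\mapsto t_{i,u}$ then follows from the implicit function theorem applied to $F(t,u):=Q(u_t)$, which is jointly $C^1$ on $(0,\infty)\times(H^1(\R^N)\setminus\{0\})$; the needed nondegeneracy $\partial_t F(t_{i,u},u) = \Psi(u_{t_{i,u}})/t_{i,u}\neq 0$ is exactly the strict sign obtained above. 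The only delicate point in the whole argument is the strict positivity of $\sup_{t>0}\varphi_u$, which is where the upper bound $c<\hat{c}_2$ (rather than $c\leq\hat{c}_2$) genuinely enters via the strict inequality $\max g > 0$ in Lemma \ref{locmp}.
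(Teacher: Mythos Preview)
Your proof is correct and follows the same overall strategy as the paper's: reduce to the scalar function $\varphi_u(t)=E(u_t)$, use the Gagliardo--Nirenberg inequality (via Lemma~\ref{locmp}) to show $\sup_{t>0}\varphi_u(t)>0$ when $c<\hat c_2$, read off the endpoint signs, and finish with the implicit function theorem. The one organizational difference is in how you count critical points. The paper applies a second Gagliardo--Nirenberg estimate directly to $\varphi_u'$ to show that $\max_{t>0}\varphi_u'(t)>0$ for $c<\tilde c_2$ and then asserts that, together with $\varphi_u'<0$ at both ends, this forces exactly two zeros. You instead pass to $\psi_u(t)=\varphi_u'(t)/t$, whose derivative vanishes at a single explicit point, so $\psi_u$ is unimodal and admits at most two zeros; the positivity of $\sup\varphi_u$ then supplies at least two. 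This is a bit cleaner, and it shifts the role of the constraint $c<\tilde c_2$ to the very end, where you invoke Lemma~\ref{cod} ($P_0(c)=\emptyset$) to upgrade $\varphi_u''(t_{i,u})$ to a strict sign and secure the nondegeneracy for the implicit function theorem. In fact your unimodal analysis already yields strict signs at $t_{1,u}$ and $t_{2,u}$ without Lemma~\ref{cod}, since $\psi_u'(t_{i,u})\neq 0$ and $\varphi_u''(t_{i,u})=t_{i,u}\psi_u'(t_{i,u})$; so your route incidentally shows that the constraint $c<\tilde c_2$ is not strictly needed for this particular lemma, though it is of course used elsewhere in the section.
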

\begin{proof}
Note first that 
\begin{align}\label{scaling2}
E(u_t)=\frac{t^2}{2} \int_{\R^N} |\nabla u|^2 \,dx-\frac{t^{\frac{N}{2}(q-2)+b}}{q} \int_{\R^N}|x|^{-b}|u|^q \, dx -\frac{t^{\frac{N}{2}(p-2)+b}}{p} \int_{\R^N}|x|^{-b}|u|^p \, dx.
\end{align}
Since 
$$
\frac{N}{2}(q-2)+b<2<\frac{N}{2}(p-2)+b,
$$
then $E(u_t)<0$ for any $t >0$ small enough and $E(u_t)<0$ for any $t<0$ large enough. Furthermore, it follows from \eqref{GN} and \eqref{scaling2} that
\begin{align*}
E(u_t) & \geq \frac{t^2}{2} \|\nabla u\|_2^2-\frac{C_{N,q,b}t^{\frac{N}{2}(q-2)+b}}{q}\|\nabla u\|_2^{\frac{N(q-2)}{2} + b} c^{\frac q2 -\frac{N(q-2)}{4} - \frac b 2} \\
& \quad -\frac{C_{N,p,b}t^{\frac{N}{2}(p-2)+b}}{p}\|\nabla u\|_2^{\frac{N(p-2)}{2} + b} c^{\frac p2 -\frac{N(p-2)}{4} - \frac b 2} \\
&=\|\nabla u\|_2^{\frac{N(q-2)}{2} + b} t^{\frac{N}{2}(q-2)+b} \left(\frac{t^{(2-b)-\frac {N} {2}(q-2)}}{2}\|\nabla u\|_2^{(2-b)-\frac{N}{2} (q-2)} \right.\\
& \quad \left.-\frac{C_{N,q,b}}{q} c^{\frac q2 -\frac{N(q-2)}{4} - \frac b 2}-\frac{C_{N,p,b}t^{\frac{N}{2}(p-q)}}{p}\|\nabla u\|_2^{\frac{N(p-q)}{2}} c^{\frac p2 -\frac{N(p-2)}{4} - \frac b 2} \right).
\end{align*}
For any $t>0$, we define 
\begin{align*}
h_u(t)&:=\frac{t^{(2-b)-\frac {N} {2} (q-2)}}{2}\|\nabla u\|_2^{(2-b)-\frac {N} {2} (q-2)} -\frac{C_{N,q,b}}{q} c^{\frac q2 -\frac{N(q-2)}{4} - \frac b 2} \\
& \quad-\frac{C_{N,p,b}t^{\frac{N}{2}(p-q)}}{p}\|\nabla u\|_2^{\frac{N(p-q)}{2}} c^{\frac p2 -\frac{N(p-2)}{4} - \frac b 2}.
\end{align*}
Through simple calculations, we get that
\begin{align*}
\max_{t>0} h_u(t)&=\frac{N(p-2)-2(2-b)}{2N(p-q)}\left(\frac{p\left(2(2-b)-N(q-2)\right)}{2C_{N,p,b}N(p-q)c^{\frac{2(p-b)-N(p-2)}{4}}}\right)^{\frac{2(2-b)-N(q-2)}{N(p-2)-2(2-b)}} \\
& \quad-\frac{C_{N,q,b}}{q} c^{\frac{2(p-b)-N(q-2)}{4}}.
\end{align*}
If $0<c<\hat{c}_2$, then $\max_{t>0} h_u(t)>0$. This suggests that $\max_{t>0}E(u_t)>0$ for any $0<c<\hat{c}_2$. 
Observe that
\begin{align*}
\frac{d}{dt} E(u_t)&=t\int_{\R^N} |\nabla u|^2 \,dx-\frac{N(q-2)+2b}{2q}t^{\frac N 2 (q-2)+b-1}\int_{\R^N}|x|^{-b}|u|^q \, dx\\ 
&\quad-\frac{N(p-2)+2b}{2p}t^{\frac N 2 (p-2)+b-1}\int_{\R^N}|x|^{-b}|u|^p \, dx.
\end{align*}
Then we find that $\frac{d}{dt} E(u_t)<0$ for any $t >0$ small enough and $\frac{d}{dt} E(u_t)<0$ for any $t<0$ large enough. By \eqref{GN},  then
\begin{align*}
\frac{d}{dt} E(u_t) & \geq t \|\nabla u\|_2^2-\frac{C_{N,q,b}(N(q-2)+2b)t^{\frac{N}{2}(q-2)+b-1}}{2q}\|\nabla u\|_2^{\frac{N(q-2)}{2} + b} c^{\frac q2 -\frac{N(q-2)}{4} - \frac b 2} \\
& \quad -\frac{C_{N,p,b}(N(p-2)+b)t^{\frac{N}{2}(p-2)+b}-1}{2p}\|\nabla u\|_2^{\frac{N(p-2)}{2} + b} c^{\frac p2 -\frac{N(p-2)}{4} - \frac b 2} \\
&=2\|\nabla u\|_2^{\frac{N(q-2)}{2} + b} t^{\frac{N}{2}(q-2)+b-1} \left(\frac{t^{(2-b)-\frac {N} {2}(q-2)}}{2}\|\nabla u\|_2^{(2-b)-\frac{N}{2} (q-2)} \right.\\
& \quad \left.-\frac{C_{N,q,b}(N(q-2)+b)}{4q} c^{\frac q2 -\frac{N(q-2)}{4} - \frac b 2} \right.\\
& \quad \left.-\frac{C_{N,p,b}(N(p-2)+b)t^{\frac{N}{2}(p-q)}}{4p}\|\nabla u\|_2^{\frac{N(p-q)}{2}} c^{\frac p2 -\frac{N(p-2)}{4} - \frac b 2} \right).
\end{align*}
Similarly, we can compuet that $\max_{t>0}\frac{d}{dt} E(u_t)>0$ for any $0<c<\tilde{c}_2$, where the constant $\tilde{c}_2>0$ is given by Lemma \ref{cod}. It means that the function $t \mapsto \frac{d}{dt} E(u_t)$ has exactly two zeros on $(0, \infty)$.
Therefore, we are able to conclude that the function $t \mapsto E(u_t)$ has exactly two critical points $0<t_{1,u}<t_{2,u}<\infty$. Further, we can derive that $u_{t_{1,u}} \in P_+(c)$, $u_{t_{2,u}} \in P_-(c)$ satisfying 
$$
E(u_{t_{1,u}})<0<E(u_{t_{2,u}})=\sup_{t>0} E(u_t).
$$
Note that
$$
\frac{d}{dt} E(u_t) \mid_{t={t_{1,u}}}=\frac{d}{dt} E(u_t) \mid_{t={t_{2,u}}}=0, \quad \frac{d^2}{dt^2} E(u_t) \mid_{t={t_{1,u}}}>0, \quad \frac{d^2}{dt^2} E(u_t) \mid_{t={t_{2,u}}}<0,
$$
It then follows from the implicit function theorem that the functions $u\mapsto t_{1,u}$ and $u\mapsto t_{2,u}$ are of class $C^1$. Thus the proof is completed.
\end{proof}

\begin{prop} \label{prop1}
Let ${N \geq 1}$, $2<q<2+2(2-b)/N<p$, $0<b<\min\{2, N\}$ and $\mu=1$, then $M(c)=\sigma(c)$ for any $0<c<\min\{\bar{c}_2, \hat{c}_2\}.$, where
$$
\sigma(c):=\inf_{u \in P(c)} E(u).
$$
\end{prop}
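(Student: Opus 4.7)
The plan is to establish $M(c)=\sigma(c)$ by proving the two inequalities separately, using the fiber analysis from Lemma~\ref{unique1} together with the energy barrier supplied by Lemma~\ref{locmp}. Throughout, I work in the range of $c$ where both Lemma~\ref{cod} (so that $P_0(c)=\emptyset$ and $P(c)=P_+(c)\cup P_-(c)$) and Lemma~\ref{locmp} (so that the local minimization problem \eqref{localmin1} is well posed with $M(c)<0\le \inf_{\partial V_\rho(c)} E$) are available.

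For the easy direction $\sigma(c)\le M(c)$, I would invoke Lemma~\ref{exist1} to produce a minimizer $u\in V_\rho(c)$ of $M(c)$. Since $E(u)=M(c)<0\le \inf_{\partial V_\rho(c)} E$, the minimizer cannot sit on $\partial V_\rho(c)$, so it is an interior minimizer and hence a free critical point of $E|_{S(c)}$. Lemma~\ref{ph} then forces $Q(u)=0$, i.e.\ $u\in P(c)$, so $\sigma(c)\le E(u)=M(c)$.

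For the reverse inequality $\sigma(c)\ge M(c)$, I would split $P(c)$ and treat each branch with Lemma~\ref{unique1}. Any $u\in P_-(c)$ satisfies $1=t_{2,u}$, hence $E(u)=E(u_{t_{2,u}})>0>M(c)$, so the $P_-(c)$ branch is irrelevant to $\sigma(c)$. Any $u\in P_+(c)$ satisfies $1=t_{1,u}$ with $E(u)<0$; the key claim is that $u\in V_\rho(c)$, from which $E(u)\ge M(c)$ is immediate. To prove $\|\nabla u\|_2^2<\rho$, I would argue along the fiber $t\mapsto u_t$: since $t_{1,u}=1$ is the unique critical point in $(0,t_{2,u})$ and it is a local minimum, $t\mapsto E(u_t)$ is strictly decreasing on $(0,1]$; combined with $E(u_t)\to 0^-$ as $t\to 0^+$ (the dominant term is the negative $q$-contribution, whose exponent $\tfrac{N(q-2)}{2}+b$ is the smallest), this yields $E(u_t)<0$ for every $t\in(0,1]$. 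Were $\|\nabla u\|_2^2\ge \rho$, the value $t_0:=\sqrt{\rho}/\|\nabla u\|_2\in(0,1]$ would place $u_{t_0}\in\partial V_\rho(c)$, whence Lemma~\ref{locmp} forces $E(u_{t_0})\ge 0$, contradicting the above. Hence $P_+(c)\subset V_\rho(c)$ and $\sigma(c)=\inf_{P_+(c)}E\ge M(c)$.

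The main obstacle is the last step, which requires aligning the threshold $\rho$ from Lemma~\ref{locmp} with the actual location of $P_+(c)$ along each fiber. The argument works because the negative-energy branch of each fiber $\{u_t\}_{t>0}$ is trapped in $V_\rho(c)$ by the energy barrier on $\partial V_\rho(c)$; once this is noted, the two-branch decomposition of $P(c)$ makes the identification $M(c)=\sigma(c)$ routine.
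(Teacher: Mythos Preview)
Your proof is correct and follows essentially the same approach as the paper. The paper's argument for $\sigma(c)\ge M(c)$ is stated more tersely---for any $u\in P(c)$ it invokes Lemmas~\ref{locmp} and~\ref{unique1} to produce $u_{t_{1,u}}\in V_\rho(c)$ with $E(u)\ge E(u_{t_{1,u}})$---but your fiber argument (showing $E(u_t)<0$ on $(0,1]$ for $u\in P_+(c)$ and using the barrier $E\ge 0$ on $\partial V_\rho(c)$ to force $\|\nabla u\|_2^2<\rho$) is precisely the content of that claim, made explicit.
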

\begin{proof}
Let $u \in S(c)$ be a minimizer to \eqref{lmin}, then $u$ is a solution to \eqref{equ}-\eqref{mass}. In view of Lemma \ref{ph}, then $u \in P(c)$. This yields that $\sigma(c) \leq M(c)$. On the other hand, for any $u \in P(c)$, it follows from Lemmas \ref{locmp} and \ref{unique1} that there exists a unique $t_{u_1}>0$ such that $u_{t_{1, u}} \in V_{\rho}(c)$ and $E(u) \geq E(u_{t_{1,u}})$. Therefore, we have that $\sigma(c) \geq M(c)$. Thus the proof is completed.
\end{proof}

From Lemma \ref{unique1}, we get that $P_-(c) \neq  \emptyset$ for any $0<c<\min\{\tilde{c}_2, \hat{c}_2\}$. As a consequence, we are able to introduce the following minimization problem, for any $0<c<c_2$, 
\begin{align} \label{min1}
\sigma_-(c):=\inf_{u\in P_-(c)}E(u).
\end{align}

\begin{lem} \label{coercive1}
Let ${\geq 1}$, $2<q<2+2(2-b)/N<p$, $0<b<\min\{2, N\}$ and $\mu=1$, then there exists a constant $c_2>0$ such that $\sigma_-(c)>0$ and $E$ restricted on $P_-(c)$ is coercive for any $0<c<c_2$, where $c_2>0$ is defined by
\begin{align*}
c_2:= \left(\frac{2p\left(2(2-b)-N(q-2)\right)}{C_{N,p,b}N\left(N(p-2)+2b\right)(p-q)}\right)^{\frac{2(2-b)-N(q-2)}{(p-q)(2-b)}}\left(\frac{q \left(N(p-2)-2(2-b)\right)}{2C_{N,q,b} N(p-q)} \right)^{\frac{N(p-2)-2(b-2)}{(p-q)(2-b)}}.
\end{align*}
Moreover, there holds that $c_2<\min\{\tilde{c}_2, \hat{c}_2\}$.
\end{lem}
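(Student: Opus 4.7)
The plan has three pieces. Write $\gamma_q := \frac{N(q-2)}{2}+b$ and $\gamma_p := \frac{N(p-2)}{2}+b$, so the hypotheses $q<2+2(2-b)/N<p$ force $\gamma_q<2<\gamma_p$. For coercivity, I would first observe that on $P(c)$ (and hence on $P_-(c)$) the relation $Q(u)=0$ allows me to eliminate the $p$-integral from $E(u)$, yielding
\begin{equation*}
E(u) \;=\; \frac{\gamma_p-2}{2\gamma_p}\|\nabla u\|_2^2 \;-\; \frac{\gamma_p-\gamma_q}{q\gamma_p}\int_{\R^N}|x|^{-b}|u|^q\,dx.
\end{equation*}
Bounding the $q$-integral via Gagliardo--Nirenberg \eqref{GN} by $C\|\nabla u\|_2^{\gamma_q}c^{\beta_q}$ with $\beta_q := \frac{q}{2}-\frac{N(q-2)}{4}-\frac{b}{2}$, and using $\gamma_q<2<\gamma_p$, one concludes that $E(u)\to\infty$ as $\|\nabla u\|_2\to\infty$ along $P_-(c)$.

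For the positivity of $\sigma_-(c)$, the plan is to exploit the variational characterization from Lemma \ref{unique1}. For every $u\in P_-(c)$ one has $u=u_{t_{2,u}}$ with $t_{2,u}=1$ (since $P_0(c)=\emptyset$ by Lemma \ref{cod} and the two critical points of $t\mapsto E(u_t)$ are simple by Lemma \ref{unique1}), so $E(u)=\sup_{t>0}E(u_t)$. The proof of Lemma \ref{locmp} already contains the pointwise lower bound
\begin{equation*}
E(v) \;\geq\; \|\nabla v\|_2^{\gamma_q}\, g(\|\nabla v\|_2), \qquad g(s):=\frac{s^{2-\gamma_q}}{2}-\frac{C_{N,q,b}}{q}c^{\beta_q}-\frac{C_{N,p,b}}{p}s^{\gamma_p-\gamma_q}c^{\beta_p},
\end{equation*}
valid for every $v\in S(c)$. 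Choosing $v=u_t$ with $t=t_{\max}/\|\nabla u\|_2$, where $t_{\max}>0$ is the unique maximizer of $g$ on $(0,\infty)$, forces $\|\nabla u_t\|_2=t_{\max}$, and hence $E(u)\geq E(u_t)\geq t_{\max}^{\gamma_q}g(t_{\max})$. Since $c<c_2<\hat c_2$, the proof of Lemma \ref{locmp} guarantees $g(t_{\max})>0$, yielding a uniform positive lower bound on $E|_{P_-(c)}$ and therefore $\sigma_-(c)>0$.

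Finally, to verify $c_2<\min\{\tilde c_2,\hat c_2\}$, I would directly compare the three explicit formulas. The two factors of $c_2$ share their exponents with one factor of $\tilde c_2$ and one factor of $\hat c_2$, and the respective bases differ only by the multiplicative factors $\frac{4}{N(q-2)+2b}$ and $\frac{N(p-2)+2b}{4}$, both of which exceed $1$ thanks to $q<2+2(2-b)/N$ and $p>2+2(2-b)/N$, respectively. I expect the second step to be the main obstacle: coercivity is a routine manipulation and the final comparison is pure bookkeeping, whereas the sup-over-fiber argument requires carefully invoking both Lemma \ref{unique1} and the proof of Lemma \ref{locmp} to transfer the barrier estimate to an arbitrary $u\in P_-(c)$.
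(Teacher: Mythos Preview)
Your coercivity argument and your verification of $c_2<\min\{\tilde c_2,\hat c_2\}$ coincide with the paper's (the paper also writes $E(u)=E(u)-\tfrac{2}{N(p-2)+2b}Q(u)$ and applies \eqref{GN} to the $q$-term, and the final comparison is indeed pure bookkeeping on the explicit bases and exponents, exactly as you describe).

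Your positivity argument, however, is genuinely different. The paper does \emph{not} pass through the sup-over-fiber characterization. Instead, after reaching the inequality
\[
E(u)\;\geq\;\|\nabla u\|_2^{\gamma_q}\Bigl(\tfrac{\gamma_p-2}{2\gamma_p}\|\nabla u\|_2^{2-\gamma_q}-\tfrac{C_{N,q,b}(\gamma_p-\gamma_q)}{q\gamma_p}c^{\beta_q}\Bigr),
\]
the paper exploits the \emph{second} defining condition of $P_-(c)$, namely $\Psi(u)<0$, together with $Q(u)=0$, and repeats the computation leading to \eqref{empty1} to obtain a lower bound $\|\nabla u\|_2^2\geq K\,c^{\alpha}$ depending only on $N,p,q,b$. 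Feeding this lower bound back into the bracket makes it strictly positive precisely when $c<c_2$, which is in fact how the paper \emph{derives} the constant $c_2$.

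Your route---using $E(u)=\sup_{t>0}E(u_t)$ for $u\in P_-(c)$ (via Lemma~\ref{unique1}) and then evaluating the Gagliardo--Nirenberg barrier $g$ from Lemma~\ref{locmp} at its maximizer---is correct and actually yields $\sigma_-(c)>0$ on the larger range $0<c<\min\{\tilde c_2,\hat c_2\}$; combined with your third step this covers $0<c<c_2$ as required. The trade-off is that your argument leans on Lemma~\ref{unique1} (hence implicitly on Lemma~\ref{cod}), whereas the paper's argument is self-contained once \eqref{empty1} is in hand, uses the sign of $\Psi$ directly, and naturally singles out the threshold $c_2$ rather than importing it from the statement.
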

\begin{proof}
For any $u \in P_-(c)$, we know that $Q(u)=0$. This results in 
\begin{align} \label{q1} \nonumber
E(u)&=E(u)-\frac{2}{N(p-2)+2b} Q(u)\\
&=\frac{N(p-2)-2(2-b)}{2\left(N(p-2)+2b\right)} \int_{\R^N} |\nabla u|^2 \,dx - \frac{N(p-q)}{q\left(N(p-2)+2b\right)} \int_{\R^N} |x|^{-b}|u|^{q} \, dx \\ \nonumber
& \geq  \|\nabla u\|_2^{\frac{N(q-2)+2b}{2}} \Bigg(\frac{N(p-2)-2(2-b)}{2\left(N(p-2)+2b\right)} \|\nabla u\|_2^{\frac{2(2-b)-N(q-2)}{2}} -\frac{C_{N,q,b}N(p-q)}{q\left(N(p-2)+2b\right)} c^{\frac{2(q-b)-N(q-2)}{4}} \Bigg),
\end{align}
where we used \eqref{GN} for the inequality. For any $u \in P_-(c)$, we have that $Q(u)=0$ and $\Psi(u)<0$. Arguing as the proof of \eqref{empty1}, we then conclude that
\begin{align*}
\int_{\R^N} |\nabla u|^2 \, dx  \geq  \left(\frac{2p\left(2(2-b)-N(q-2)\right)}{C_{N,p}N\left(N(p-2)+2b\right)(p-q)}\right)^{\frac{4}{N(p-2)-2(2-b)}} c^{\frac{N(p-2)-2(p-b)}{N(p-2)-2(2-b)}}.
\end{align*}
It then follows from \eqref{q1} that
\begin{align*}
E(u)& \geq \|\nabla u\|_2^{\frac{N(q-2)+2b}{2}} \Bigg(\frac{N(p-2)-2(2-b)}{2\left(N(p-2)+2b\right)} \left(\frac{2p\left(2(2-b)-N(q-2)\right)}{C_{N,p}N\left(N(p-2)+2b\right)(p-q)}\right)^{\frac{2(2-b)-N(q-2)}{N(p-2)-2(2-b)}} \\
& \qquad -\frac{C_{N,q,b}N(p-q)}{q\left(N(p-2)+2b\right)} c^{\frac{(p-q)(2-b)}{N(p-2)-2(2-b)}} \Bigg)c^{\frac{\left(N(p-2)-2(p-b)\right)\left(2(2-b)-N(q-2)\right)}{4\left(N(p-2)-2(2-b)\right)}} .
\end{align*}
This yields that $\sigma_-(c)>0$ and $E$ restricted on $P_-(c)$ is coercive for any $0<c<c_2$. Due to $2<q<2+2(2-b)/N<p$, then $c_2<\min\{\tilde{c}_2, \hat{c}_2\}$. Thus the proof is completed.
\end{proof}

\begin{defi}\label{homotopy} \cite[Definition 3.1]{Gh}
Let $B$ be a closed subset of a set $Y \subset H^1(\R^N)$. We say that a class $\mathcal{G}$ of compact subsets of $Y$ is a homotopy stable family with the closed boundary $B$ provided that
\begin{enumerate}
\item [\textnormal{(i)}] Every set in $\mathcal{G}$ contains $B$.
\item [\textnormal{(ii)}] For any $A \in \mathcal{G}$ and $\eta \in C([0, 1] \times Y, Y)$ satisfying $\eta(t, x)=x$ for all $(t, x) \in (\{0\} \times Y) \cup([0, 1] \times B)$, then $\eta(\{1\} \times A) \in \mathcal{G}$.
\end{enumerate}
\end{defi}

\begin{lem}\label{ps}
Let ${N \geq 1}$, $2<q<2+2(2-b)/N<p$, $0<b<\min\{2, N\}$ and $\mu=1$. Let $\mathcal{G}$ be a homotopy stable family of compact subsets of $S(c)$ with closed boundary $B$. Define
\begin{align} \label{ming}
\sigma_{\mathcal{G}}(c):=\inf_{A\in \mathcal{G}}\max_{u \in A} F(u),
\end{align}
where $F(u):=E(u_{t_{2,u}})=\max_{t>0}E(u_t)$ and $0<c<c_2$, where $c_2>0$ is the constant given in Lemma \ref{coercive1}. Suppose that $B$ is contained on a connected component of $P_-(c)$ and $\max\{\sup F(B), 0\}<\sigma_{\mathcal{G}}(c)<\infty$. Then there exists a Palais-Smale sequence $\{u_n\} \subset P_-(c)$ satisfying $(u_n)^-=o_n(1)$ for $E$ restricted on $S(c)$ at the level $\sigma_{\mathcal{G}}(c)$, where $u^-$ denotes the negative part of $u$ defined by $u^-(x):=\max \{-u(x), 0\}$ for $x \in \R^N$.
\end{lem}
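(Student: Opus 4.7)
The plan is to apply Ghoussoub's minimax principle (Theorem 3.2 of the reference \cite{Gh}) to a scaling-modified auxiliary functional on $S(c)$ and then transfer the resulting Palais--Smale sequence to one living in $P_-(c)$ via the projection $u \mapsto u_{t_{2,u}}$. Using Lemma \ref{unique1}, the map $T : S(c) \to P_-(c)$ defined by $T(u) := u_{t_{2,u}}$ is $C^1$, and the auxiliary functional $\widetilde{F}(u) := E(T(u)) = \max_{t > 0} E(u_t)$, which coincides with the $F$ in the statement, is a $C^1$ function on $S(c)$. Two features will be essential. First, $F|_{P_-(c)} = E|_{P_-(c)}$, since $t_{2,u} = 1$ for $u \in P_-(c)$, so the hypothesis $\max\{\sup F(B), 0\} < \sigma_{\mathcal{G}}(c)$ matches the geometric picture of $E$ on the ``mountain'' component. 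Second, $F(u_s) = F(u)$ for every $s > 0$, reflecting the scaling invariance of the maximum over dilations, which will be used to match the Lagrange multipliers.

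With this setup, Ghoussoub's principle applied to the homotopy stable family $\mathcal{G}$ with boundary $B$ and the $C^1$ functional $F|_{S(c)}$ produces a sequence $\{v_n\} \subset S(c)$ with $F(v_n) \to \sigma_{\mathcal{G}}(c)$, $\nabla_{S(c)} F(v_n) \to 0$ in $H^{-1}(\R^N)$, and $\mathrm{dist}(v_n, A_n) \to 0$ for some minimizing family $\{A_n\} \subset \mathcal{G}$. I would set $u_n := T(v_n) = (v_n)_{t_{2,v_n}} \in P_-(c)$, so that $E(u_n) = F(v_n) \to \sigma_{\mathcal{G}}(c)$. A direct chain-rule computation, using the fact that $\frac{d}{ds} E(u_s)|_{s = t_{2,u}} = 0$ at the critical point $t_{2,u}$ selected by Lemma \ref{unique1}, shows that for every $\varphi \in T_{v_n} S(c)$,
\begin{align*}
dF(v_n)[\varphi] = dE(u_n)[L_{t_{2,v_n}}\varphi], \qquad (L_t \varphi)(x) := t^{N/2} \varphi(tx).
\end{align*}
Since $L_t$ is an $L^2$-isometry sending $T_{v_n} S(c)$ onto $T_{u_n} S(c)$, and the coercivity of $E$ on $P_-(c)$ from Lemma \ref{coercive1}, together with the upper bound $F(v_n) \le \sigma_{\mathcal{G}}(c) + 1$, forces $t_{2,v_n}$ to stay in a compact subinterval of $(0,\infty)$, the operator $L_{t_{2,v_n}}$ is a uniformly bounded isomorphism between the relevant tangent spaces. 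Consequently, $\nabla_{S(c)} F(v_n) \to 0$ transfers to $\nabla_{S(c)} E(u_n) \to 0$, delivering the required PS sequence.

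To enforce $(u_n)^- = o_n(1)$, I would symmetrize the minimax family by replacing each $A \in \mathcal{G}$ with $\{|u| : u \in A\}$: the inequalities $E(|u|) \le E(u)$ and hence $F(|u|) \le F(u)$ (valid because the nonlinear terms depend only on $|u|$ and $\|\nabla |u|\|_2 \le \|\nabla u\|_2$) leave the minimax level unchanged and allow $A_n$ to be chosen in the non-negative cone; the localization $\mathrm{dist}(v_n, A_n) \to 0$ then forces $(v_n)^- \to 0$ in $H^1(\R^N)$, and continuity of $T$ propagates this to $(u_n)^- \to 0$. The main technical obstacle will be the rigorous bookkeeping of the chain-rule identity and the transfer of the PS condition through the scaling $L_{t_{2,v_n}}$: one must verify that $t_{2,v_n}$ remains in a compact subinterval of $(0,\infty)$, which requires combining Lemma \ref{coercive1} with uniform $H^1$-bounds on $v_n$ coming from the localization near the bounded set $A_n$. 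A secondary delicate point is that $u \mapsto |u|$ is only continuous, not $C^1$, on $H^1(\R^N)$, so to incorporate the symmetrization one should apply Ekeland's variational principle directly to the symmetrized minimizing sequences rather than modify Ghoussoub's deformation.
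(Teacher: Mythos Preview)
Your overall strategy coincides with the paper's: apply Ghoussoub's minimax principle to $F$ on $S(c)$, then push the resulting Palais--Smale sequence onto $P_-(c)$ via the dilation map $u\mapsto u_{t_{2,u}}$ and use the chain-rule identity $dF(v)[\varphi]=dE(v_{t_{2,v}})[L_{t_{2,v}}\varphi]$ to transfer the PS condition. The treatment of $(u_n)^-$ via $F(|u|)\le F(u)$ is also the same.

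There is, however, a genuine gap in your argument for the boundedness of $t_{2,v_n}$. You propose to extract uniform $H^1$-bounds on $v_n$ from the localization $\mathrm{dist}(v_n,A_n)\to 0$. But the $A_n$, while individually compact, carry no \emph{uniform} bound in $n$, and more seriously nothing prevents elements of $A_n$ from having arbitrarily small gradient: $F(u)=\max_{t>0}E(u_t)$ simply does not control $\|\nabla u\|_2$ from below. Without a uniform lower bound on $\|\nabla v_n\|_2$ the scaling parameter $t_{2,v_n}$ may blow up, and then $L_{t_{2,v_n}}$ is not a uniformly bounded isomorphism on $H^1$, so the transfer of the PS property fails.

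The paper fixes exactly this point by exploiting the homotopy-stable structure of $\mathcal G$ \emph{before} invoking Ghoussoub: it deforms each minimizing set $A_n$ (taken nonnegative) to $D_n:=\{u_{t_{2,u}}:u\in A_n\}\subset P_-(c)$ via the admissible homotopy $\eta(s,u)=u_{1-s+st_{2,u}}$, checks that $D_n\in\mathcal G$ and is still minimizing for $F$, and only then applies Ghoussoub's theorem. The localization $\mathrm{dist}_{H^1}(\tilde u_n,D_n)\to 0$ now lands near $P_-(c)$, where the coercivity of $E|_{P_-(c)}$ (Lemma~\ref{coercive1}) furnishes an upper bound on $\|\nabla w\|_2$ for $w\in D_n$, and the uniform lower bound on $\|\nabla u\|_2$ built into the definition of $P_-(c)$ (the estimate in the proof of Lemma~\ref{coercive1}) furnishes the lower bound. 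Both then pass to $\tilde u_n$, pinning $t_n=t_{2,\tilde u_n}$ in a compact interval. This deformation-to-$P_-(c)$ step is the missing ingredient in your sketch; once you insert it, the rest of your argument goes through as written.
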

\begin{proof}
The proof of this lemma is inspired by elements presented in \cite{BS1, BS2}. Let us first define a function $\eta: [0,1] \times S(c) \to S(c)$ by $\eta(s, u)=u_{1-s+st_{2,u}}$. In view of Lemma \ref{unique1}, we know that, for any $u \in P_-(c)$, $t_{2,u}=1$.  Notice that $B \subset P_-(c)$, then $\eta(s ,u)=u$ for any $(s, u) \in (\{0\} \times S(c)) \cup([0, 1] \times B)$. In addition, from Lemma \ref{unique1}, we have that $\eta \in C([0,1] \times S(c), S(c))$. Let us now suppose that $\{A_n\} \subset \mathcal{G}$ is a minimizing sequence to \eqref{ming}. 
Note that
$$
F(|u|)=\max_{t>0} E(|u|_t) \leq \max E(u_t)=F(u).
$$
Then, without restriction, we may assume that $A_n$ is nonnegative. In light of Definition \ref{homotopy}, we then get that
$$
D_n:=\eta(\{1\} \times A_n)=\{u_{t_{2,u}} : u \in A_n\} \in \mathcal{G}.
$$
Record that $D_n \subset P_-(c)$, then 
$$
\max_{v \in D_n}F(v)=\max_{u \in A_n}F(u).
$$ 
This readily suggests that there exists another minimizing sequence $\{D_n\} \subset P_-(c)$ to \eqref{ming}. Applying \cite[Theorem 3.2]{Gh}, we then deduce that there exists a Palais-Smale sequence $\{\tilde{u}_n\} \subset S(c)$ for $F$ at the level $\sigma_{\mathcal{G}}(c)$ such that $(\tilde{u}_n)^-=o_n(1)$ and
\begin{align} \label{dist}
\mbox{dist}_{H^1}(\tilde{u}_n, D_n)=o_n(1).
\end{align}
Note that $D_n$ is compact for any $n\in \N$, then there exists $w_n \in D_n$ such that 
\begin{align} \label{achieve}
\mbox{dist}_{H^1}(\tilde{u}_n, D_n)=\|\tilde{u}_n-w_n\|=o_n(1).
\end{align}

For simplicity, we shall write $t_n=t_{2, \tilde{u}_n}$ and $u_n=(\tilde{u}_n)_{t_n}$ in what follows. As an immediate consequence, we have that $(u_n)^-=o_n(1)$. Next we claim that there exists a constant $C>0$ such that $1/C \leq t_n \leq C$. Indeed, observe first that
$$
t_n^2=\frac{\int_{\R^N} |\nabla u_n|^2 \, dx}{\int_{\R^N} |\nabla \tilde{u}_n|^2 \, dx}.
$$
Since $E(u_n)=F(\tilde{u}_n)=\sigma_{\mathcal{G}}(c)+o_n(1)$, $\{u_n\} \subset P_-(c)$ and $0<\sigma_{\mathcal{G}}(c)<\infty$, from Lemma \ref{coercive1}, it then follows that there exists a constant $C_1>0$ such that $1/C_1 \leq \|u_n\|\leq C_1$. On the other hand, since $\{D_n\} \subset P_-(c)$ is a minimizing sequence to \eqref{ming}, from Lemma \ref{coercive1}, it then yields that $\{D_n\}$ is bounded in $H^1(\R^N)$. Thanks to \eqref{dist}, then $\{\tilde{u}_n\}$ is bounded in $H^1(\R^N)$, i.e. there exists a constant $C_2>0$ such that $\|\tilde{u}_n\| \leq C_2$. 
Utilizing the assumption $\sigma_{\mathcal{G}}(c)>0$, we can get that $\|w_n\| \geq 1/C_2$. By \eqref{achieve}, then 
$$
\|\tilde{u}_n\| \geq \|w_n\|-\|\tilde{u}_n-w_n\|\geq \frac{1}{C_2}+o_n(1).
$$
Therefore, the claim follows.

We now demonstrate that $\{u_n\} \subset P_-(c)$ is a Palais-Smale sequence for $E$ restricted on $S(c)$ at the level $\sigma_{\mathcal{G}}(c)$. We denote by $\|\cdot\|_{*}$ the dual norm of $(T_u S(c))^*$. Accordingly, there holds that
\begin{align*}
\|dE(u_n)\|_*=\sup_{\psi \in T_{u_n}S(c), \|\psi\|\leq 1}|dE(u_n)[\psi]|=\sup_{\psi \in T_{u_n}S(c), \|\psi\|\leq 1} |dE(u_n)[(\psi_{\frac{1}{t_n}})_{t_n}]|.
\end{align*}
From straightforward calculations, we are able to derive that the mapping $T_uS(c) \to T_{u_{t_u}}S(c)$ defined by $\psi \mapsto \psi_{t_u}$ is an isomorphism. Furthermore, it is simple to check that $dF(u)[\psi]=dE(u_{t_{2,u}})[\psi_{t_{2,u}}]$ for any $u \in S(c)$ and $\psi \in T_uS(c)$. As a result, we have that
$$
\|dE(u_n)\|_*=\sup_{\psi \in T_{u_n}S(c), \|\psi\| \leq 1}|dF(\tilde{u}_n)[\psi_{\frac{1}{t_n}}]|.
$$
Since $\{\tilde{u}_n\} \subset S(c)$ is a Palais-Smale sequence for $F$ at the level $\sigma_{\mathcal{G}}(c)$, we then  apply the claim to deduce that $\{u_n\} \subset P_-(c)$ is a Palais-Smale sequence for $E$ restricted on $S(c)$ at the level $\sigma_{\mathcal{G}}(c)$. Thus the proof is completed.
\end{proof}

\begin{lem} \label{pss1}
Let ${N \geq 1}$, $2<q<2+2(2-b)/N<p$, $0<b<\min\{2, N\}$ and $\mu=1$, then there exists a Palais-Smale sequence $\{u_n\} \subset P_-(c) $ satisfying $(u_n)^-=o_n(1)$ for $E$ restricted on $S(c)$ at the level $\sigma_-(c)$ for any $0<c<c_2$. 
\end{lem}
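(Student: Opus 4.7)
The plan is to invoke Lemma~\ref{ps} with the simplest possible homotopy-stable family: take
$$
\mathcal{G}:=\bigl\{\{u\} : u\in S(c)\bigr\}, \qquad B:=\emptyset.
$$
First, I will verify that $\mathcal{G}$ is a homotopy-stable family with closed boundary $B$ in the sense of Definition~\ref{homotopy}. The inclusion $B\subset A$ is vacuous, and for any continuous $\eta:[0,1]\times S(c)\to S(c)$ with $\eta(0,\cdot)=\mathrm{id}$, each singleton $\{u\}\in\mathcal{G}$ is sent to the singleton $\{\eta(1,u)\}\subset S(c)$, which again belongs to $\mathcal{G}$. The hypothesis that $B$ lie in a connected component of $P_-(c)$ is vacuous as well.

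Next, I will identify the minimax level with $\sigma_-(c)$. By Lemma~\ref{unique1}, every $u\in S(c)$ has a unique scaling $u_{t_{2,u}}\in P_-(c)$ with $F(u)=E(u_{t_{2,u}})$; conversely, if $v\in P_-(c)$ then $t_{2,v}=1$ and $F(v)=E(v)$. Therefore
$$
\sigma_{\mathcal{G}}(c)=\inf_{u\in S(c)} F(u)=\inf_{v\in P_-(c)} E(v)=\sigma_-(c).
$$
With $B=\emptyset$, the condition $\max\{\sup F(B),0\}<\sigma_{\mathcal{G}}(c)<\infty$ reduces to $0<\sigma_-(c)<\infty$. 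The positivity is supplied by Lemma~\ref{coercive1} for $0<c<c_2$, while the finiteness is clear since $P_-(c)\neq\emptyset$ (any $u\in S(c)$ yields an element $u_{t_{2,u}}\in P_-(c)$ of finite energy).

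Applying Lemma~\ref{ps} then directly produces a Palais--Smale sequence $\{u_n\}\subset P_-(c)$ with $(u_n)^-=o_n(1)$ for $E\!\restriction\! S(c)$ at the level $\sigma_{\mathcal{G}}(c)=\sigma_-(c)$, which is the desired conclusion. There is no serious obstacle here: the only non-bookkeeping point is the identification $\sigma_{\mathcal{G}}(c)=\sigma_-(c)$, and this rests on the one-to-one correspondence between elements of $P_-(c)$ and the strict maxima of the fibers $t\mapsto E(u_t)$ furnished by Lemma~\ref{unique1}.
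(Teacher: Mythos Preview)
Your proposal is correct and follows essentially the same route as the paper: take $\mathcal{G}$ to be the family of all singletons in $S(c)$ with $B=\emptyset$, identify $\sigma_{\mathcal{G}}(c)=\sigma_-(c)$ via the bijection between $P_-(c)$ and the fiber maxima from Lemma~\ref{unique1}, and then invoke Lemma~\ref{ps}. The paper's proof is slightly terser (it omits the explicit check that singletons form a homotopy-stable family and the finiteness of $\sigma_-(c)$), but the argument is the same.
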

\begin{proof}
Let $B=\emptyset$ and let $\mathcal{G}$ be all singletons in $S(c)$. In virtue of \eqref{ming}, we then have that
$$
\sigma_{\mathcal{G}}(c)=\inf_{u \in S(c)} \sup_{t \geq 0} E(u_t).
$$
We are now going to prove that $\sigma_{\mathcal{G}}(c)=\sigma_-(c)$. From Lemma \ref{unique1}, we know that, for any $u \in S(c)$, there exists a unique $t_{2,u}>0$ such that $u_{t_{2,u}} \in P_-(c)$ and $E(u_{t_{2,u}})=\max_{t \geq 0}E(u_t)$. This then implies that
$$
\inf_{u \in S(c)} \sup_{t \geq 0} E(u_t)  \geq \inf_{u \in P_-(c)} E(u).
$$
On the other hand, for any $u \in P_-(c)$, by Lemma \ref{unique1}, we derive that $E(u)=\max_{t \geq 0}E(u_t)$. This then gives that
$$
\inf_{u \in S(c)} \sup_{t \geq 0} E(u_t)  \leq \inf_{u \in P_-(c)} E(u).
$$
Therefore, we have that $\sigma_{\mathcal{G}}(c)=\sigma_-(c)$. It then follows from Lemma \ref{ps} that the result of this lemma holds and the proof is completed.
\end{proof}

\begin{lem} \label{exist2}
Let ${N \geq 1}$, $2<q<2+2(2-b)/N<p$, $0<b<\min\{2, N\}$ and $\mu=1$, then there exists a positive solution to \eqref{equ}-\eqref{mass}  at the level $\sigma(c)>0$ for any $0<c<c_2$.
\end{lem}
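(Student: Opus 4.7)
The plan is to extract a strongly convergent subsequence from the Palais--Smale sequence $\{u_n\}\subset P_-(c)$ with $(u_n)^-=o_n(1)$ provided by Lemma~\ref{pss1}. Since $\sigma_-(c)<\infty$ and $E$ is coercive on $P_-(c)$ by Lemma~\ref{coercive1}, the sequence $\{u_n\}$ is bounded in $H^1(\R^N)$. Passing to a subsequence, $u_n\rightharpoonup u$ weakly in $H^1(\R^N)$, and by the compact embedding of Lemma~\ref{cembedding}, $u_n\to u$ strongly in $L^s(\R^N,|x|^{-b}dx)$ for every $s\in(2,2^*_b)$, in particular for $s=q$ and $s=p$. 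The Palais--Smale condition supplies Lagrange multipliers $\lambda_n\in\R$ with
\begin{equation*}
-\Delta u_n+\lambda_n u_n-|x|^{-b}|u_n|^{q-2}u_n-|x|^{-b}|u_n|^{p-2}u_n=o_n(1)\quad\text{in }H^{-1}(\R^N).
\end{equation*}
Testing this identity against $u_n$ and subtracting $Q(u_n)=0$ yields
\begin{equation*}
\lambda_n c=\frac{2(q-b)-N(q-2)}{2q}\int_{\R^N}|x|^{-b}|u_n|^q\,dx+\frac{2(p-b)-N(p-2)}{2p}\int_{\R^N}|x|^{-b}|u_n|^p\,dx+o_n(1),
\end{equation*}
so $\lambda_n\to\lambda$ along a subsequence; passing to the limit in the equation, $u$ solves $-\Delta u+\lambda u=|x|^{-b}|u|^{q-2}u+|x|^{-b}|u|^{p-2}u$ weakly.

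The crucial step is to rule out $u\equiv 0$ and secure $\lambda>0$. If $u\equiv 0$, the displayed identity forces $\lambda_n=o_n(1)$, the Palais--Smale identity tested with $u_n$ then gives $\|\nabla u_n\|_2\to 0$, and hence $E(u_n)\to 0$, contradicting $\sigma_-(c)>0$ from Lemma~\ref{coercive1}. Thus $u\not\equiv 0$, and Lemma~\ref{la} applied to the limit equation produces $\lambda>0$. Testing the limit equation against $u$ and subtracting the Pohozaev identity $Q(u)=0$ (Lemma~\ref{ph}, whose proof does not actually require $u\in S(c)$) produces an expression for $\lambda\|u\|_2^2$ that, by strong weighted $L^s$ convergence, coincides with the expression above for $\lambda c$. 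Since $\lambda>0$, this forces $\|u\|_2^2=c$. Inserting this back into the test of the Palais--Smale identity by $u_n$ and comparing with the corresponding test of the limit equation gives $\|\nabla u_n\|_2\to\|\nabla u\|_2$, so $u_n\to u$ strongly in $H^1(\R^N)$.

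Strong convergence yields $E(u)=\sigma_-(c)$ and $u\in P(c)$, with $\Psi(u)=\lim_{n\to\infty}\Psi(u_n)\le 0$; since $P_0(c)=\emptyset$ by Lemma~\ref{cod} (note $c_2<\tilde{c}_2$ by Lemma~\ref{coercive1}), in fact $\Psi(u)<0$ and $u\in P_-(c)$. From $(u_n)^-=o_n(1)$ and strong $H^1$ convergence one gets $u\ge 0$, and the strong maximum principle applied to $-\Delta u+\lambda u=|x|^{-b}u^{q-1}+|x|^{-b}u^{p-1}\ge 0$ with $\lambda>0$ yields $u>0$; the radial symmetry and monotonicity in the radial direction then follow from Proposition~\ref{radial}. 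The principal obstacle is the coupled determination of $u\not\equiv 0$, $\lambda>0$, and $\|u\|_2^2=c$, which must be closed by playing the Pohozaev identity against the Palais--Smale identity and exploiting the strict positivity of $\sigma_-(c)$ to preclude any loss of mass at infinity.
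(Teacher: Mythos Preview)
Your proof is correct and follows essentially the same route as the paper: the Palais--Smale sequence on $P_-(c)$ from Lemma~\ref{pss1}, boundedness via the coercivity of Lemma~\ref{coercive1}, the compact embedding of Lemma~\ref{cembedding} to force strong convergence of the nonlinear terms, nontriviality of the weak limit from $\sigma_-(c)>0$, and positivity of $\lambda$ via Lemma~\ref{la}. The only tactical difference is in how strong $H^1$ convergence is closed: the paper tests the difference of \eqref{equ11} and \eqref{equ12} against $u_n-u$ and uses $\lambda>0$ directly to conclude $\|u_n-u\|\to 0$, whereas you first recover $\|u\|_2^2=c$ by matching the two expressions for $\lambda c$ and $\lambda\|u\|_2^2$, and then infer norm convergence; both are standard and yield the same conclusion.
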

\begin{proof}
In view of Lemma \ref{pss1}, then there exists a Palais-Smale sequence $\{u_n\} \subset P_-(c)$ for $E$ restricted on $S(c)$ at the level $\sigma_-(c)$ for any $0<c<c_2$. From Lemma \ref{coercive1}, then $\{u_n\}$ is bounded in $H^1(\R^N)$. Therefore, there exists $u \in H^1(\R^N)$ such that $u_n \wto u$ in $H^1(\R^N)$ as $n \to \infty$. Furthermore, by Lemma \ref{cembedding}, then $u_n \to u$ in $L^p(\R^N, |x|^{-b} dx)$ and $L^q(\R^N, |x|^{-b} dx)$ as $n \to \infty$. As a consequence, then $u \neq 0$. If not, by $Q(u_n)=0$, then $\sigma_-(c)=0$. This is impossible, see Lemma \ref{coercive1}. {Since $\{u_n\}\subset P_-(c)$ is a Palais-Smale sequence for $E$ restricted on $S(c)$ at the level $\sigma_-(c)$, arguing as the proof of \cite[Lemma 3]{BeLi}, we then have that} 
\begin{align} \label{equ11}
-\Delta u_n + \lambda_n u_n= |x|^{-b}|u_n|^{q-2} u_n + |x|^{-b}|u_n|^{p-2} u_n +o_n(1) \quad \mbox{in} \,\, \R^N,
\end{align}
where 
$$
\lambda_n:=\frac 1 c \left(\int_{\R^N} |x|^{-b}|u_n|^q \, dx + \int_{\R^N} |x|^{-b}|u_n|^p \, dx-\int_{\R^N} |\nabla u|^2 \, dx\right).
$$
Then $\{\lambda_n\} \subset \R$ is bounded, because $\{u_n\}$ is bounded in $H^1(\R^N)$. As a result, there exists $\lambda \in \R$ such that $\lambda_n \to \lambda$ in $\R$ as $n \to \infty$. Note that $u_n \wto u$ in $H^1(\R^N)$ as $n \to \infty$, from \eqref{equ11}, then $u$ satisfies the equation
\begin{align} \label{equ12}
-\Delta u + \lambda u= |x|^{-b}|u|^{q-2} u + |x|^{-b}|u|^{p-2} u \quad \mbox{in} \,\, \R^N.
\end{align}
Further, we have that $\lambda>0$ by Lemma \ref{la}. Using Lemma \ref{cembedding} and \eqref{equ11}-\eqref{equ12}, we then conclude that $u_n \to u$ in $H^1(\R^N)$ as $n \to \infty$. Furthermore, notice that $(u_n)^-=o_n(1)$, then $u$ is nonnegative. By the maximum principle, then $u$ is positive. Thus proof is completed.
\end{proof}

\begin{proof}[Proof of Theorem \ref{thm3}]
As a consequence of Theorems \ref{exist1} and \ref{exist2}, we derive the existence of two positive solutions. One is a local minimizer to \eqref{lmin} and the other is a mountain pass type solution as the level $\sigma_-(c)$. Invoking Lemma \ref{radial}, we now have that the solutions are radially symmetric and decreasing in the radial direction. This completes the proof.
\end{proof}

\subsection{Focusing mass critical perturbation} We next consider the case $2<q=2+2(2-b)/N<p$ and $mu=1$. In this subsection, we are going to establish Theorem \ref{thm4}.

\begin{lem} 
Let ${N \geq 1}$, $2<q=2+2(2-b)/N<p$, $0<b<\min\{2, N\}$ and $\mu=1$, then $P_0(c)=\emptyset$ and $P(c)$ is a smooth manifold of codimension $2$ in $H^1(\R^N)$ for any $c>0$.
\end{lem}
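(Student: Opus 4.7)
The strategy mirrors that of Lemma \ref{cod}, but the computations simplify drastically because of the mass-critical role of $q$. The plan is to exploit the identity $\alpha_q := N(q-2) + 2b = 4$, which holds precisely because $q = 2 + 2(2-b)/N$, so that the $B$-term in $Q(u)$ and $\Psi(u)$ has matching coefficients and cancels upon subtraction.

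\textbf{Step 1: Emptiness of $P_0(c)$.} Suppose for contradiction that $u \in P_0(c)$. Writing $A = \|\nabla u\|_2^2$, $B = \int_{\R^N}|x|^{-b}|u|^q\,dx$, $C = \int_{\R^N}|x|^{-b}|u|^p\,dx$, and $\alpha_p = N(p-2)+2b$, the conditions $Q(u) = 0$ and $\Psi(u) = 0$ become
\begin{align*}
A &= \frac{\alpha_q}{2q}\, B + \frac{\alpha_p}{2p}\, C, \\
A &= \frac{\alpha_q(\alpha_q-2)}{4q}\, B + \frac{\alpha_p(\alpha_p-2)}{4p}\, C.
\end{align*}
Because $\alpha_q = 4$ both coefficients of $B$ equal $2/q$. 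Subtracting eliminates $A$ and $B$ and leaves
\[
\frac{\alpha_p(\alpha_p - 4)}{4p}\, C = 0.
\]
Since $p > 2 + 2(2-b)/N$, one has $\alpha_p > 4$, which forces $C = 0$. Then $u \equiv 0$, contradicting $u \in S(c)$ with $c > 0$. Hence $P_0(c) = \emptyset$.

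\textbf{Step 2: Manifold structure.} Write $P(c) = \{G = 0,\ Q = 0\}$ with $G(u) = \|u\|_2^2 - c$. It suffices to show that the differential $(dG, dQ) : T_u H^1(\R^N) \to \R^2$ is surjective at every $u \in P(c)$. If not, there exists $\nu \in \R$ such that $dQ(u) = \nu\, dG(u)$, which means $u$ solves
\[
-\Delta u + \nu u = \frac{\alpha_q}{4}\, |x|^{-b} |u|^{q-2} u + \frac{\alpha_p}{4}\, |x|^{-b}|u|^{p-2} u \quad \mbox{in} \,\, \R^N.
\]
Applying the Pohozaev identity (as established in the proof of Lemma \ref{ph}) to this equation yields
\[
A = \frac{\alpha_q}{4}\cdot\frac{\alpha_q}{2q}\, B + \frac{\alpha_p}{4}\cdot\frac{\alpha_p}{2p}\, C = \frac{\alpha_q^2}{8q}\, B + \frac{\alpha_p^2}{8p}\, C.
\]
Subtracting the identity $Q(u) = 0$ (and again using $\alpha_q = 4$ so that the $B$-coefficients agree) gives
\[
\frac{\alpha_p(\alpha_p - 4)}{8p}\, C = 0,
\]
so $C = 0$ and hence $u \equiv 0$, contradicting $u \in S(c)$. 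Consequently $(dG, dQ)$ is surjective everywhere on $P(c)$, and $P(c)$ is a smooth codimension-$2$ submanifold of $H^1(\R^N)$.

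The only delicate point is Step 2: we must know that a solution of the modified equation obtained from $dQ = \nu\, dG$ still satisfies the Pohozaev identity, which is guaranteed by Lemma \ref{ph} applied with the modified nonlinearity. Once this is in hand, both conclusions reduce to the single algebraic observation that $\alpha_q = 4$ in the critical regime.
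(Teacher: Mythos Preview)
Your proof is correct and follows essentially the same approach as the paper: the key observation in both is that $\alpha_q = N(q-2)+2b = 4$ makes the $B$-coefficients in $Q(u)=0$ and $\Psi(u)=0$ coincide, forcing $C=0$ and hence $u\equiv 0$. The paper only spells out Step 1 and defers Step 2 to the argument of Lemma \ref{cod}, whereas you carry out the Pohozaev subtraction explicitly; but the underlying mechanism is identical.
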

\begin{proof}
We only prove that $P_0(c)=\emptyset$ for any $c>0$. The rest proof can be done by following the ideas of that of Lemma \ref{cod}. Argue by contradiction that $P_0(c) \neq \emptyset$ for some $c>0$, then there would exist $u \in P_0(c)$, i.e. $Q(u)=\Psi(u)=0$. Since $N(q-2)+2b=4$, we then deduce that
$$
\int_{\R^N} |x|^{-b}|u|^p \, dx=0.
$$
This is impossible. Thus we have completed the proof.
\end{proof}

\begin{lem}\label{unique2}
Let ${N \geq 1}$, $2<q=2+2(2-b)/N<p$, $0<b<\min\{2, N\}$ and $\mu=1$, then, for any $u \in S(c)$, there exists a unique $t_u>0$ such that $u_{t_u} \in P(c)$ and $E(u_t)=\max_{t \geq 0} E(u_t)$ for any $0<c<c_1$, where $c_1>0$ is the constant decided in Theorem \ref{thm1}. Moreover, the function $t \mapsto E(u_t)$ is concave on $[t_u, \infty)$ and the function $u \mapsto t_u$ is of class $C^1$.
\end{lem}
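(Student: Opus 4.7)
The plan rests on exploiting the mass critical exponent $q = 2 + 2(2-b)/N$, which implies $\frac{N}{2}(q-2) + b = 2$ and collapses the first two terms in \eqref{scaling1} into a common quadratic power of $t$. Writing
\[
\alpha := \tfrac{N}{2}(p-2) + b, \qquad A(u) := \tfrac{1}{2}\|\nabla u\|_2^2 - \tfrac{1}{q}\int_{\R^N}|x|^{-b}|u|^q\,dx, \qquad B(u) := \tfrac{1}{p}\int_{\R^N}|x|^{-b}|u|^p\,dx,
\]
we have $\alpha > 2$ (by the assumption $p > q$) and
\[
E(u_t) = A(u)\, t^2 - B(u)\, t^{\alpha}.
\]
The first step is to show $A(u) > 0$ for every $u \in S(c)$ with $0 < c < c_1$. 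By \eqref{GN} applied to the mass critical exponent $q$ together with the explicit form of $c_1$ from Theorem \ref{thm2}, one gets
\[
\tfrac{1}{q}\int_{\R^N}|x|^{-b}|u|^q\,dx \leq \tfrac{C_{N,b}}{q}\,c^{\frac{2-b}{N}}\,\|\nabla u\|_2^2 < \tfrac{1}{2}\|\nabla u\|_2^2,
\]
because $c < c_1 = (q/(2C_{N,b}))^{N/(2-b)}$. Hence $A(u) \geq \bigl(\tfrac{1}{2} - \tfrac{C_{N,b}}{q}c^{(2-b)/N}\bigr)\|\nabla u\|_2^2 > 0$.

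With $A(u), B(u) > 0$ and $\alpha > 2$, the one-variable analysis of $f(t) := A t^2 - B t^{\alpha}$ is immediate. Its derivative $f'(t) = t\bigl(2A - \alpha B t^{\alpha-2}\bigr)$ vanishes on $(0,\infty)$ at the single point
\[
t_u := \left(\tfrac{2A(u)}{\alpha B(u)}\right)^{\!\frac{1}{\alpha-2}},
\]
and since $f(0) = 0$, $f$ is positive on $(0, (A/B)^{1/(\alpha-2)})$ and tends to $-\infty$ as $t \to \infty$, the critical point $t_u$ is a strict global maximum, which gives $E(u_{t_u}) = \max_{t \geq 0} E(u_t)$. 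The identity $Q(u_t) = t\frac{d}{dt}E(u_t)$ yields $u_{t_u} \in P(c)$.

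For the concavity claim, I would differentiate once more to get $f''(t) = 2A - \alpha(\alpha-1) B\, t^{\alpha-2}$. At $t = t_u$ this equals $2A(u)(2-\alpha) < 0$, and since $f''$ is strictly decreasing in $t$ (as $\alpha - 1 > 0$ and $B > 0$), it remains nonpositive on $[t_u, \infty)$, proving concavity there. Finally, the $C^1$ regularity of $u \mapsto t_u$ follows from the implicit function theorem applied to the map $(u, t) \mapsto \frac{d}{dt}E(u_t)$: the partial derivative in $t$ at $(u, t_u)$ equals $f''(t_u) = 2A(u)(2-\alpha) \neq 0$, and $u \mapsto (A(u), B(u))$ is $C^1$ on $S(c)$ by standard Sobolev estimates together with Lemma \ref{cembedding}. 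The only delicate point in the whole argument is the positivity of $A(u)$, which is precisely what forces the restriction $c < c_1$; everything else is then a direct one-variable computation.
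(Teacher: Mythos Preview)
Your proof is correct and follows essentially the same approach as the paper: both exploit the fact that at the mass critical exponent the scaling formula collapses to $E(u_t)=A(u)t^2-B(u)t^{\alpha}$ with $\alpha>2$, establish $A(u)>0$ from the Gagliardo--Nirenberg inequality under the constraint $c<c_1$, and then carry out the elementary one-variable analysis. The only cosmetic difference is that the paper records the explicit formula for $t_u$ and reads off $C^1$ regularity from it, whereas you invoke the implicit function theorem; both are equivalent here.
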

\begin{proof}
By using \eqref{scaling1} and \eqref{GN} , we first obtain that, for any $u \in S(c)$,
\begin{align*}
E(u_t) &=\frac{t^2}{2} \int_{\R^N} |\nabla u|^2 \,dx-\frac{t^2}{q} \int_{\R^N}|x|^{-b}|u|^q \, dx -\frac{t^{\frac{N}{2}(p-2)+b}}{p} \int_{\R^N}|x|^{-b}|u|^p \, dx\\
&\geq \frac {t^2}{2} \left(1-\frac{NC_{N,b}c^{\frac{2-b}{N}}}{N+2-b}\right) \int_{\R^N} |\nabla u|^2 \,dx -\frac {t^{\frac N 2 (p-2)+b}}{p}\int_{\R^N} |x|^{-b}|u|^p \,dx.
\end{align*}
Due to $p>2+2(2-b)/N$ and $0<c<c_1$, then $E(u_t)>0$ for any $t>0$ small enough. In addition, we have that $E(u_t)<0$ for any $t>0$ large enough. 
Observe that 
$$
\frac{d}{dt}E(u_t)=t\left(\int_{\R^N} |\nabla u|^2 \,dx-\frac{2}{q}\int_{\R^N}|x|^{-b}|u|^q \, dx \right)-\frac{N(p-2)+2b}{2p}t^{\frac{N}{2}(p-2)+b-1}\int_{\R^N}|x|^{-b}|u|^p \, dx.
$$
and
$$
\frac 12 \int_{\R^N} |\nabla u|^2 \,dx-\frac{1}{q} \int_{\R^N}|x|^{-b}|u|^q \, dx>0,
$$
because of $0<c<c_1$. Then we see there exists a unique $t_u>0$ such that $\frac{d}{dt} E(u_t)\mid_{t=t_u}=0$, where $t_u>0$ is given by
\begin{align} \label{deftu1}
t_u:=\left(\frac{2pq\int_{\R^N} |\nabla u|^2 \,dx-4p\int_{\R^N}|x|^{-b}|u|^q \, dx}{q(N(p-2)+2b)\int_{\R^N}|x|^{-b}|u|^p \, dx}\right)^{\frac{N(p-2)-2(2-b)}{2}}.
\end{align}
Furthermore, we find that $\frac{d}{dt} E(u_t)>0$ for any $0<t<t_u$ and $\frac{d}{dt} E(u_t)<0$ for any $t>t_u$. Therefore, the function $t \mapsto E(u_t)$ has a unique critical point $t_u>0$ such that $u_{t_u} \in P(c)$ and $E(u_t)=\max_{t \geq 0} E(u_t)$. Note that
\begin{align*}
\frac{d^2}{dt^2}E(u_t) &=\int_{\R^N} |\nabla u|^2 \,dx-\frac{2}{q}\int_{\R^N}|x|^{-b}|u|^q \, dx \\
& \quad -\frac{(N(p-2)+2b)((N(p-2)-2(1-b))}{4p}t^{\frac{N}{2}(p-2)+b-2}\int_{\R^N}|x|^{-b}|u|^p \, dx.
\end{align*}
Hence we have that $\frac{d^2}{dt^2}E(u_t)<0$ for any $t>t_u$. In view of \eqref{deftu1}, then the function $u \mapsto t_u$ is of class $C^1$. This completes the proof.
\end{proof}

As a result of Lemma \ref{unique2}, we see that $P(c) \neq \emptyset$ for any $0<c<c_1$. Let us now consider the following minimization problem, for any $0<c<c_1$,
\begin{align}\label{min2}
\sigma(c):=\inf_{u \in P(c)} E(u).
\end{align}

\begin{lem}\label{coercive2}
Let ${N \geq 1}$, $2<q=2+2(2-b)/N<p$, $0<b<\min\{2, N\}$ and $\mu=1$, then $\sigma(c)>0$ and $E$ restricted on $P(c)$ is coercive for any $0<c<c_1$.
\end{lem}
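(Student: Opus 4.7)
The key identity to exploit is that for $u \in P(c)$, one has $Q(u) = 0$, so I can rewrite $E(u) = E(u) - \frac{2}{N(p-2)+2b}Q(u)$ to eliminate the $p$-term (which is the one preventing direct sign control). Writing $\alpha := N(p-2)+2b > 4$ (since $p>2+2(2-b)/N$), a direct calculation gives
\begin{align*}
E(u) = \frac{\alpha-4}{2\alpha}\int_{\R^N}|\nabla u|^2\,dx - \frac{\alpha-4}{\alpha q}\int_{\R^N}|x|^{-b}|u|^q\,dx, \quad u \in P(c),
\end{align*}
where I used $N(q-2)+2b=4$ coming from $q=2+2(2-b)/N$.

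Next I apply the Gagliardo-Nirenberg inequality \eqref{GN} to the $q$-term. Since $q=2+2(2-b)/N$ is mass critical, the exponent on $\|\nabla u\|_2$ is exactly $2$, so
\begin{align*}
\int_{\R^N}|x|^{-b}|u|^q\,dx \leq C_{N,b}\,c^{(2-b)/N}\int_{\R^N}|\nabla u|^2\,dx,
\end{align*}
with $C_{N,b}$ the optimal constant in \eqref{GN} at the critical exponent. Substituting, for $u \in P(c)$,
\begin{align*}
E(u) \geq \frac{\alpha-4}{\alpha}\left(\frac{1}{2} - \frac{C_{N,b}\,c^{(2-b)/N}}{q}\right)\int_{\R^N}|\nabla u|^2\,dx.
\end{align*}
Using $q=2(N+2-b)/N$, the bracket is positive precisely when $c^{(2-b)/N} < (N+2-b)/(NC_{N,b})$, that is $c<c_1$. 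Hence there exists a constant $\kappa(c)>0$ such that $E(u) \geq \kappa(c)\|\nabla u\|_2^2$ for all $u \in P(c)$, which together with $\|u\|_2^2=c$ gives coercivity of $E$ restricted on $P(c)$.

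To upgrade this into $\sigma(c)>0$, I need a positive lower bound on $\|\nabla u\|_2$ along $P(c)$. For this, I use $Q(u)=0$ directly, which reads
\begin{align*}
\int_{\R^N}|\nabla u|^2\,dx = \frac{2}{q}\int_{\R^N}|x|^{-b}|u|^q\,dx + \frac{N(p-2)+2b}{2p}\int_{\R^N}|x|^{-b}|u|^p\,dx.
\end{align*}
Applying \eqref{GN} to both terms (critical for $q$, supercritical for $p$) gives
\begin{align*}
\left(1 - \frac{2C_{N,b}}{q}c^{(2-b)/N}\right)\|\nabla u\|_2^2 \leq \frac{(N(p-2)+2b)C_{N,p,b}}{2p}\,c^{\,p/2-N(p-2)/4-b/2}\,\|\nabla u\|_2^{\,N(p-2)/2+b}.
\end{align*}
For $c<c_1$ the left-hand coefficient is strictly positive, and since $N(p-2)/2+b>2$, this forces $\|\nabla u\|_2$ to be bounded below by a positive constant depending only on $c$. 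Combining with the preceding estimate yields $\sigma(c)\geq \kappa(c)\cdot(\inf_{P(c)}\|\nabla u\|_2^2)>0$.

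No real obstacle is expected: the calculation is routine once the right linear combination $E-\tfrac{2}{\alpha}Q$ is used, and the mass-critical form of \eqref{GN} for the $q$-term is exactly what makes the constant $c_1$ from Theorem \ref{thm2} appear. The only thing to be slightly careful about is the arithmetic linking $q$, $C_{N,b}$ and $c_1$, which follows immediately from $q=2(N+2-b)/N$.
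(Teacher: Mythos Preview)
Your argument is correct, but it proceeds via a different linear combination than the paper. The paper takes $E(u)-\tfrac12 Q(u)$, which in the mass-critical-$q$ case kills both the gradient and the $q$-term simultaneously (since $N(q-2)+2b=4$), leaving
\[
E(u)=\frac{N(p-2)-2(2-b)}{4p}\int_{\R^N}|x|^{-b}|u|^p\,dx,
\]
and then uses $Q(u)=0$ together with the mass-critical Gagliardo--Nirenberg inequality to bound $\int |x|^{-b}|u|^p$ from below by a positive multiple of $\|\nabla u\|_2^2$, recovering coercivity in two steps. You instead take $E(u)-\tfrac{2}{N(p-2)+2b}Q(u)$ to eliminate the $p$-term, and then the mass-critical Gagliardo--Nirenberg inequality applied to the remaining $q$-term yields $E(u)\ge \kappa(c)\|\nabla u\|_2^2$ in a single stroke. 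Your route is slightly more direct for coercivity; the paper's route has the minor advantage that the expression for $E(u)$ is manifestly nonnegative from the outset. The lower bound on $\|\nabla u\|_2$ along $P(c)$, and hence $\sigma(c)>0$, is obtained in exactly the same way in both arguments.
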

\begin{proof}
For any $u \in P(c)$, we have that $Q(u)=0$. This gives that
\begin{align} \label{c2}
E(u)=E(u)-\frac 12 Q(u)=\frac{N(p-2)-2(2-b)}{4p} \int_{\R^N} |x|^{-b} |u|^p \,dx.
\end{align}
Furthermore, utilizing \eqref{GN}, we obtain that if $Q(u)=0$, then
\begin{align} \label{c22}
\begin{split}
\int_{\R^N} |\nabla u|^2 \, dx &=\frac{2}{q} \int_{\R^N}|x|^{-b}|u|^q \, dx+\frac{N(p-2)+2b}{2p} \int_{\R^N}|x|^{-b}|u|^p \, dx \\
& \leq \frac{2C_{N,b} c^{\frac{2-b}{N}}}{q}\int_{\R^N} |\nabla u|^2 \, dx + \frac{N(p-2)+2b}{2p} \int_{\R^N}|x|^{-b}|u|^p \, dx.
\end{split}
\end{align}
Since $0<c<c_1$, then
\begin{align} \label{c21}
0<\frac{2p\left(q-2 C_{N,b} c^{\frac{2-b}{N}}\right)}{q\left(N(p-2)+2b\right)} \int_{\R^N} |\nabla u|^2 \, dx \leq \int_{\R^N}|x|^{-b}|u|^p \, dx.
\end{align}
Making use of \eqref{GN} and \eqref{c22}, we can derive that
\begin{align*}
\int_{\R^N} |\nabla u|^2 \, dx & \leq \frac{2C_{N,b} c^{\frac{2-b}{N}}}{q}\int_{\R^N} |\nabla u|^2 \, dx \\
& \quad +\frac{C_{N,p,b}\left(N(p-2)+2b\right)}{2p}\left(\int_{\R^N} |\nabla u|^2 \, dx \right)^{\frac{N(p-2)+2b}{4}} c^{\frac p2 -\frac{N(p-2)+2b}{4}},
\end{align*}
from which we then get that
\begin{align} \label{c23}
\int_{\R^N} |\nabla u|^2\,dx \geq \left(\frac{2p\left(q-2C_{N,b}c^{\frac{2-b}{N}}\right)}{qC_{N,p,b}\left(N(p-2)+2b\right)}\right)^{\frac{4}{N(p-2)-2(2-b)}}c^{\frac{N(p-2)-2(p-b)}{N(p-2)-2(2-b)}}.
\end{align}
Taking into account \eqref{c2}, \eqref{c22} and \eqref{c23}, we then get that $\sigma(c)>0$ for any $0<c<c_1$. From \eqref{c2} and \eqref{c21}, we then have that $E$ restricted on $P(c)$ is coercive for any $0<c<c_1$. This completes the proof.
\end{proof}

\begin{lem} \label{pss2}
Let ${N \geq 1}$, $2<q=2+2(2-b)/N<p$, $0<b<\min\{2, N\}$ and $\mu=1$, then there exists a Palais-Smale sequence $\{u_n\} \subset P(c)$ satisfying $(u_n)^-=o_n(1)$ for $E$ restricted on $S(c)$ at the level $\sigma(c)$ for any $0<c<c_1$.
\end{lem}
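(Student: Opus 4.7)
The plan is to mimic the strategy used in Lemmas \ref{ps} and \ref{pss1}, adapted to the mass-critical perturbation where the fibered function $t \mapsto E(u_t)$ has a single maximum point rather than a local-minimum/mountain-pass pair. I would define $F: S(c) \to \R$ by $F(u) := E(u_{t_u}) = \max_{t > 0} E(u_t)$, where $t_u > 0$ is the unique positive critical point furnished by Lemma \ref{unique2}. Since $u \mapsto t_u$ is of class $C^1$, $F$ is $C^1$ on $S(c)$. Taking $B = \emptyset$ and $\mathcal{G}$ to be the collection of all singletons in $S(c)$, the Ghoussoub minimax principle in Definition \ref{homotopy} becomes applicable.

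First I would identify $\sigma_{\mathcal{G}}(c) = \sigma(c)$. For any $u \in S(c)$, one has $u_{t_u} \in P(c)$ and $F(u) = E(u_{t_u}) \geq \sigma(c)$, yielding $\sigma_{\mathcal{G}}(c) \geq \sigma(c)$. Conversely, for any $v \in P(c)$, Lemma \ref{unique2} forces $t_v = 1$, so $F(v) = E(v)$, yielding $\sigma_{\mathcal{G}}(c) \leq \sigma(c)$. Next I would set up the homotopy $\eta(s, u) := u_{1 - s + s\, t_u}$, which is $H^1$-continuous, preserves $S(c)$ and satisfies $\eta(0, \cdot) = \mathrm{Id}$. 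Given any minimizing family $\{A_n\} = \{\{u_n\}\} \subset \mathcal{G}$, I would replace $u_n$ by $|u_n|$ (this only decreases $F$), set $D_n := \eta(\{1\} \times A_n) = \{(u_n)_{t_{u_n}}\} \subset P(c)$, and apply \cite[Theorem 3.2]{Gh} to extract a sequence $\{\tilde{u}_n\} \subset S(c)$ with $(\tilde{u}_n)^- = o_n(1)$, $F(\tilde{u}_n) \to \sigma(c)$, $\|dF(\tilde{u}_n)\|_* \to 0$ and $\mathrm{dist}_{H^1}(\tilde{u}_n, D_n) \to 0$. Setting $t_n := t_{\tilde{u}_n}$ and $u_n := (\tilde{u}_n)_{t_n}$, the membership $u_n \in P(c)$ is automatic and $E(u_n) = F(\tilde{u}_n) \to \sigma(c)$; the nonnegativity $(u_n)^- = o_n(1)$ follows from $(\tilde{u}_n)^- = o_n(1)$ via the $L^2$-isometric rescaling.

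The final and most delicate step is to upgrade the PS property of $\{\tilde{u}_n\}$ for $F$ to one of $\{u_n\}$ for $E|_{S(c)}$. This rests on controlling $t_n$ away from $0$ and $\infty$. Since $\{u_n\} \subset P(c)$ and $E(u_n) \to \sigma(c)$ is finite, Lemma \ref{coercive2} provides an $H^1$-bound on $\{u_n\}$; since $\sigma(c) > 0$, it also provides a positive lower bound on $\|u_n\|$ (otherwise passing to the limit in the Gagliardo-Nirenberg-type inequality would force $\sigma(c) = 0$). In parallel, coercivity gives an $H^1$-bound on $\{D_n\}$, and the estimate $\mathrm{dist}_{H^1}(\tilde{u}_n, D_n) \to 0$ transfers boundedness and positive lower bounds to $\{\tilde{u}_n\}$. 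Using the identity
\begin{equation*}
t_n^2 = \frac{\int_{\R^N} |\nabla u_n|^2 \, dx}{\int_{\R^N} |\nabla \tilde{u}_n|^2 \, dx},
\end{equation*}
together with the boundedness and non-degeneracy of numerator and denominator, yields $1/C \leq t_n \leq C$. Finally, exploiting the isomorphism $T_{\tilde{u}_n} S(c) \to T_{u_n} S(c)$, $\psi \mapsto \psi_{t_n}$, and the chain-rule identity $dF(u)[\psi] = dE(u_{t_u})[\psi_{t_u}]$ established via the $C^1$-dependence of $t_u$, one concludes
\begin{equation*}
\|dE(u_n)\|_* = \sup_{\psi \in T_{u_n}S(c),\, \|\psi\|\leq 1}\bigl|dF(\tilde{u}_n)[\psi_{1/t_n}]\bigr| = o_n(1).
\end{equation*}

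The main obstacle I anticipate is precisely the two-sided control of $t_n$: in the mass critical setting the scaling invariance degenerates the Pohozaev analysis, so one must carefully combine Lemma \ref{coercive2}, the strict positivity $\sigma(c) > 0$ (which crucially requires $0 < c < c_1$), and the approximation $\mathrm{dist}_{H^1}(\tilde{u}_n, D_n) = o_n(1)$ to rule out concentration ($t_n \to \infty$) and vanishing ($t_n \to 0$) of the rescaled sequence.
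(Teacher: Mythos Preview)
Your proposal is correct and follows essentially the same approach as the paper: the paper's proof is a one-line reference saying to apply Lemmas \ref{unique2} and \ref{coercive2} and argue as in Lemmas \ref{ps} and \ref{pss1} with $P_-(c)$ replaced by $P(c)$, which is precisely what you carry out in detail. Your identification of the two-sided control of $t_n$ as the delicate point, and your use of the coercivity in Lemma \ref{coercive2} together with $\sigma(c)>0$ and the distance estimate $\mathrm{dist}_{H^1}(\tilde u_n, D_n)=o_n(1)$ to obtain it, matches exactly the mechanism in the proof of Lemma \ref{ps}.
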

\begin{proof}
Applying Lemmas \ref{unique2} and \ref{coercive2} and arguing as the proof of Lemmas \ref{ps} and \ref{pss1} through replacing the role of $P_-(c)$ by $P(c)$, we are able to finish the proof.
\end{proof}

\begin{proof}[Proof of Theorem \ref{thm4}]
Using Lemmas \ref{coercive2} and \ref{pss2} and following the way of the proof of Lemma \ref{exist2}, we can complete the proof.
\end{proof}

\subsection{Focusing mass supcritical perturbation} Let us now turn to investigate the case $2<2+2(2-b)/N<q<p$ and $\mu=1$ and present the proof of Theorem \ref{thm41}.

\begin{lem} 
Let ${N \geq 1}$, $2+2(2-b)/N<q<p$, $0<b<\min\{2, N\}$ and $\mu=1$, then $P_0(c)=\emptyset$ and $P(c)$ is a smooth manifold of codimension $2$ in $H^1(\R^N)$ for any $c>0$.
\end{lem}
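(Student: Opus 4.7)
The plan is to mirror the two-step strategy of Lemma~\ref{cod}: first rule out $P_0(c)$ by a purely algebraic argument, then deduce the codimension-$2$ manifold structure via a Lagrange multiplier / Pohozaev argument that reduces everything back to membership in $P_0(c)$.

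First I would establish $P_0(c) = \emptyset$. Suppose $u \in P_0(c)$, so that $Q(u) = 0$ and $\Psi(u) = 0$. Setting $\alpha_q := (N(q-2)+2b)/(2q)$ and $\alpha_p := (N(p-2)+2b)/(2p)$, the explicit formulas for $Q$ and $\Psi$ yield
\[
Q(u) - \Psi(u) = \alpha_q\cdot\frac{N(q-2)+2b-4}{4}\int_{\R^N}|x|^{-b}|u|^q\,dx + \alpha_p\cdot\frac{N(p-2)+2b-4}{4}\int_{\R^N}|x|^{-b}|u|^p\,dx.
\]
Under the assumption $2+2(2-b)/N < q < p$, both $N(q-2)+2b-4$ and $N(p-2)+2b-4$ are strictly positive, while $\alpha_q, \alpha_p > 0$. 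Hence $Q(u) = \Psi(u) = 0$ forces both weighted integrals to vanish, and then $Q(u) = 0$ gives $\|\nabla u\|_2^2 = 0$, i.e.\ $u \equiv 0$, contradicting $u \in S(c)$ with $c>0$.

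For the codimension-$2$ assertion, set $G(u) := \|u\|_2^2 - c$, so that $P(c) = \{u \in H^1(\R^N) : Q(u) = 0,\ G(u) = 0\}$ with $Q, G \in C^1$. By the implicit function theorem it suffices to prove that $(dQ(u), dG(u))$ is surjective onto $\R^2$ for every $u \in P(c)$. Suppose not: then there exists $\nu \in \R$ such that $dQ(u) = \nu\, dG(u)$, which means $u$ weakly satisfies
\[
-\Delta u + \nu u = \frac{N(q-2)+2b}{4}|x|^{-b}|u|^{q-2}u + \frac{N(p-2)+2b}{4}|x|^{-b}|u|^{p-2}u \quad \text{in }\R^N.
\]
Running the argument of Lemma~\ref{ph} verbatim on this modified equation yields the Pohozaev identity
\[
\int_{\R^N}|\nabla u|^2\,dx = \frac{(N(q-2)+2b)^2}{8q}\int_{\R^N}|x|^{-b}|u|^q\,dx + \frac{(N(p-2)+2b)^2}{8p}\int_{\R^N}|x|^{-b}|u|^p\,dx.
\]
Combining this identity with $Q(u) = 0$ gives $\Psi(u) = 0$ after a direct coefficient comparison, so $u \in P_0(c)$; since $P_0(c) = \emptyset$, this is the desired contradiction.

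The main obstacle is conceptually mild and amounts to careful bookkeeping of the coefficients. The decisive sign observation is that in the doubly mass-supercritical regime $q, p > 2 + 2(2-b)/N$, both factors $N(r-2)+2b-4$ ($r = q, p$) are strictly positive, which is exactly what drives the emptiness of $P_0(c)$; the remaining analytic content is simply the Pohozaev identity for the modified equation, whose derivation is identical to that in Lemma~\ref{ph}.
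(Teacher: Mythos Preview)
Your proof is correct and follows essentially the same approach as the paper: subtract the identities $Q(u)=0$ and $\Psi(u)=0$ to obtain a linear combination of the two weighted integrals with strictly positive coefficients (since $q,p>2+2(2-b)/N$), forcing $u\equiv 0$; then handle the codimension-$2$ claim exactly as in Lemma~\ref{cod}. The only discrepancy is an inconsequential arithmetic slip: the coefficient in your expression for $Q(u)-\Psi(u)$ should be $\alpha_r\cdot\frac{N(r-2)+2b-4}{2}$ rather than $\alpha_r\cdot\frac{N(r-2)+2b-4}{4}$, but this does not affect the sign argument or the conclusion.
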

\begin{proof}
For simplicity, we only verify that $P_0(c)=\emptyset$. If $u \in P_0(c)$, then $Q(u)=\Psi(u)=0$. This readily yields that
\begin{align*}
&\frac{N(q-2)+2b}{2q} \left(\frac{N(q-2)+2(b-1)}{2}-1\right) \int_{\R^N}|x|^{-b}|u|^q \, dx\\
& \quad +\frac{N(p-2)+2b}{2p} \left(\frac{N(p-2)+2(b-1)}{2}-1\right) \int_{\R^N}|x|^{-b}|u|^p \, dx=0.
\end{align*}
Since $p>q>2$, $b>0$ and
$$
\frac{N(p-2)+2(b-1)}{2}>\frac{N(p-2)+2(b-1)}{2}>1,
$$
then $u=0$. We the reach a contradiction. It in turn yields that $P_0(c)=\emptyset$ for any $c>0$. This completes the proof.
\end{proof}

\begin{lem}\label{unique3}
Let ${N \geq 1}$, $2+2(2-b)/N<q<p$, $0<b<\min\{2, N\}$ and $\mu=1$, then, for any $u \in S(c)$, there exists a unique $t_u>0$ such that $u_{t_u} \in P(c)$ and $E(u_t)=\max_{t \geq 0} E(u_t)$ for any $c>0$. Moreover, the function $t \mapsto E(u_t)$ is concave on $[t_u, \infty)$ and the function $u \mapsto t_u$ is of class $C^1$.
\end{lem}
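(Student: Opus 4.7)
The plan is to parallel the proof of Lemma \ref{unique2}, the main simplification being that now both scaling exponents $\alpha:=N(q-2)/2+b$ and $\beta:=N(p-2)/2+b$ strictly exceed $2$, due to the hypothesis $2+2(2-b)/N<q<p$. From \eqref{scaling1} we can rewrite
\[
E(u_t)=\frac{t^2}{2}\|\nabla u\|_2^2-\frac{t^{\alpha}}{q}\int_{\R^N}|x|^{-b}|u|^q\,dx-\frac{t^{\beta}}{p}\int_{\R^N}|x|^{-b}|u|^p\,dx,
\]
so $E(u_t)>0$ for $t>0$ small and $E(u_t)\to-\infty$ as $t\to\infty$, the latter being driven by the dominant $-t^{\beta}$-term.

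Next I would factor the derivative as $\frac{d}{dt}E(u_t)=t\,\phi(t)$ with
\[
\phi(t):=\|\nabla u\|_2^2-\frac{\alpha\,t^{\alpha-2}}{q}\int_{\R^N}|x|^{-b}|u|^q\,dx-\frac{\beta\,t^{\beta-2}}{p}\int_{\R^N}|x|^{-b}|u|^p\,dx.
\]
Since $\alpha-2,\beta-2>0$, the function $\phi$ is strictly decreasing on $(0,\infty)$, with $\phi(0^+)=\|\nabla u\|_2^2>0$ and $\phi(t)\to-\infty$. Hence there exists a unique $t_u>0$ with $\phi(t_u)=0$, equivalently $Q(u_{t_u})=0$, i.e. $u_{t_u}\in P(c)$. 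The sign of $\frac{d}{dt}E(u_t)$ on $(0,t_u)$ and $(t_u,\infty)$ then yields $E(u_{t_u})=\max_{t\ge 0}E(u_t)$.

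For concavity on $[t_u,\infty)$ I would compute
\[
\frac{d^2}{dt^2}E(u_t)=\|\nabla u\|_2^2-\frac{\alpha(\alpha-1)t^{\alpha-2}}{q}\int_{\R^N}|x|^{-b}|u|^q\,dx-\frac{\beta(\beta-1)t^{\beta-2}}{p}\int_{\R^N}|x|^{-b}|u|^p\,dx,
\]
which is itself decreasing in $t$ (again because $\alpha,\beta>2$). Substituting the identity $\phi(t_u)=0$ to eliminate $\|\nabla u\|_2^2$ gives
\[
\frac{d^2}{dt^2}E(u_t)\Big|_{t=t_u}=-\frac{\alpha(\alpha-2)t_u^{\alpha-2}}{q}\int_{\R^N}|x|^{-b}|u|^q\,dx-\frac{\beta(\beta-2)t_u^{\beta-2}}{p}\int_{\R^N}|x|^{-b}|u|^p\,dx<0,
\]
and the monotonicity propagates this negativity to every $t\geq t_u$. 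The $C^1$-dependence $u\mapsto t_u$ then follows from the implicit function theorem applied to $G(u,t):=\tfrac{d}{dt}E(u_t)$, since $G$ is of class $C^1$ jointly in $(u,t)$ on $H^1(\R^N)\times(0,\infty)$, $G(u,t_u)=0$, and $\partial_tG(u,t_u)\neq 0$ by the strict inequality above. There is no serious obstacle here; the only thing to notice, which distinguishes this case from Lemmas \ref{unique1} and \ref{unique2}, is that the condition $\alpha>2$ alone forces strict monotonicity of $\phi$, so a unique fibering point is obtained for every $c>0$ without any smallness assumption and without needing to distinguish the two critical points that appear in the mass subcritical perturbation regime.
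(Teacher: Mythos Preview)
Your proof is correct and follows essentially the same approach as the paper's own proof, which is quite terse: the paper simply notes that $E(u_t)>0$ for small $t$, $E(u_t)<0$ for large $t$, asserts uniqueness of $t_u$ directly from the hypothesis $2+2(2-b)/N<q<p$, and invokes the implicit function theorem as in Lemma~\ref{unique1}. Your factorisation $\frac{d}{dt}E(u_t)=t\,\phi(t)$ with $\phi$ strictly decreasing makes the uniqueness argument explicit, and you additionally supply the concavity computation on $[t_u,\infty)$ that the paper leaves implicit.
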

\begin{proof}
In virtue of \eqref{scaling1}, we know that $E(u_t)>0$ for any $t>0$ small enough and $E(u_t)<0$ for any $t>0$ large enough. Due to $2+2(2-b)/N<q<p$, then there exists a unique $t_u>0$ such that $\frac{d}{dt} E(u_t)\mid_{t=t_u}=0$. Furthermore, there holds that $\frac{d}{dt}E(u_t)>0$ for any $0<t<t_u$ and $\frac{d}{dt}E(u_t)<0$ for any $t>t_u$. As a consequence, we can conclude that $u_{t_u} \in P(c)$ and $\max_{t \geq 0} E(u_t)=E(u_{t_u})$. The smoothness of the function $u \mapsto t_u$ benefits directly from the implicit function theorem as the proof of Lemma \ref{unique1} . Thus the proof is completed.
\end{proof}

In this subsection, we introduce the following minimization problem, for any $c>0$,
\begin{align}\label{min3}
\sigma(c):=\inf_{u \in P(c)} E(u).
\end{align}

\begin{lem}\label{coercive31}
Let ${N \geq 1}$, $2+2(2-b)/N<q<p$, $0<b<\min\{2, N\}$ and $\mu=1$, then $\sigma(c)>0$ and $E$ restricted on $P(c)$ is coercive for any $c>0$.
\end{lem}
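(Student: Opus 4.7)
\textbf{Proof plan for Lemma \ref{coercive31}.} The plan is to mimic the strategy of Lemma \ref{coercive2} but with the multiplier chosen to eliminate the $L^q$ term rather than the $L^p$ term, exploiting that both nonlinearities are now mass supercritical. Pick the constant $\alpha:=\frac{2}{N(q-2)+2b}$ and, for any $u\in P(c)$, rewrite
\begin{align*}
E(u)=E(u)-\alpha\, Q(u)=\left(\tfrac12-\alpha\right)\!\int_{\R^N}\!|\nabla u|^2\,dx+\frac{N(p-q)}{p\left(N(q-2)+2b\right)}\!\int_{\R^N}\!|x|^{-b}|u|^p\,dx,
\end{align*}
where a direct computation shows the coefficient of the $L^q$-term cancels. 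The assumption $q>2+2(2-b)/N$ gives $N(q-2)+2b>4$, so $\frac12-\alpha>0$; and $p>q$ gives $N(p-q)>0$. Hence both coefficients are strictly positive.

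From this identity, $E(u)\geq\left(\tfrac12-\alpha\right)\|\nabla u\|_2^2$ for every $u\in P(c)$, and since $\|u\|_2^2=c$ is prescribed, $E$ is coercive on $P(c)$ with respect to the full $H^1$-norm.

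To obtain $\sigma(c)>0$, I would next exploit the Pohozaev identity $Q(u)=0$ together with Gagliardo-Nirenberg's inequality \eqref{GN}: for $u\in P(c)$,
\begin{align*}
\|\nabla u\|_2^2 \leq \frac{C_{N,q,b}\left(N(q-2)+2b\right)}{2q}\|\nabla u\|_2^{\frac{N(q-2)}{2}+b}c^{\frac q2-\frac{N(q-2)}{4}-\frac b2}+\frac{C_{N,p,b}\left(N(p-2)+2b\right)}{2p}\|\nabla u\|_2^{\frac{N(p-2)}{2}+b}c^{\frac p2-\frac{N(p-2)}{4}-\frac b2}.
\end{align*}
Both exponents $\frac{N(q-2)}{2}+b$ and $\frac{N(p-2)}{2}+b$ strictly exceed $2$ under the present hypothesis, so dividing by $\|\nabla u\|_2^2$ yields an inequality of the form $1\leq A\|\nabla u\|_2^{s_1}+B\|\nabla u\|_2^{s_2}$ with $s_1,s_2>0$, forcing a uniform lower bound $\|\nabla u\|_2\geq\kappa(c)>0$ on $P(c)$.

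Combining the two steps, $E(u)\geq\left(\tfrac12-\alpha\right)\kappa(c)^2>0$ for every $u\in P(c)$, hence $\sigma(c)>0$, and coercivity is already established. No subtle compactness is required here; the only point to watch is the sign bookkeeping in the choice of $\alpha$, which I expect to be the only genuine obstacle and is handled once one verifies $N(q-2)+2b>4$ from $q>2+2(2-b)/N$.
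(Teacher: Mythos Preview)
Your proof is correct and follows the same overall scheme as the paper: write $E(u)=E(u)-\alpha Q(u)$ with a well-chosen $\alpha$, then use $Q(u)=0$ together with Gagliardo--Nirenberg to force a uniform lower bound on $\|\nabla u\|_2$ over $P(c)$.

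The one genuine difference is the choice of multiplier. You take $\alpha=\tfrac{2}{N(q-2)+2b}$, which kills the $L^q$-term and leaves $E(u)$ as a positive combination of $\|\nabla u\|_2^2$ and the $L^p$-term; coercivity is then immediate. The paper instead takes $\alpha=\tfrac12$, which kills the gradient term and leaves $E(u)$ as a positive combination of the $L^q$- and $L^p$-terms (both coefficients $\tfrac{N(\cdot-2)-2(2-b)}{4\cdot}$ are positive precisely because $q,p>2+2(2-b)/N$); coercivity then requires invoking $Q(u)=0$ once more to bound those nonlinear integrals from below by $\|\nabla u\|_2^2$. Your route is marginally more direct for coercivity, while the paper's identity makes the role of both supercritical exponents symmetric and visible. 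The lower-bound step (your inequality $1\le A\|\nabla u\|_2^{s_1}+B\|\nabla u\|_2^{s_2}$ with $s_1,s_2>0$) is identical in both arguments.
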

\begin{proof}
For any $u \in P(c)$, there holds that $Q(u)=0$. It then follows that
\begin{align} \label{c3}
\begin{split}
E(u)&=E(u)-\frac 12 Q(u)\\
&=\frac{N(q-2)-2(2-b)}{4q} \int_{\R^N} |x|^{-b}|u|^q \,dx + \frac{N(p-2)-2(2-b)}{4p} \int_{\R^N} |x|^{-b}|u|^p \,dx.
\end{split}
\end{align}
Since $Q(u)=0$, by \eqref{GN}, then
\begin{align} \label{c3111}
\begin{split}
\int_{\R^N} |\nabla u|^2 \,dx & =\frac{N(q-2)+2b}{2q} \int_{\R^N} |x|^{-b}|u|^q \,dx + \frac{N(p-2)+2b}{2p} \int_{\R^N} |x|^{-b}|u|^p \,dx \\
& \leq \frac{C_{N,q,b}\left(N(q-2)+2b\right)}{2q}\left(\int_{\R^N} |\nabla u|^2 \, dx \right)^{\frac{N(q-2)+2b}{4}} c^{\frac q2 -\frac{N(q-2)+2b}{4}} \\
& \quad + \frac{C_{N,p,b}\left(N(p-2)+2b\right)}{2p}\left(\int_{\R^N} |\nabla u|^2 \, dx \right)^{\frac{N(p-2)+2b}{4}} c^{\frac p2 -\frac{N(p-2)+2b}{4}}.
\end{split}
\end{align}
Note that $2+2(2-b)/N<q<p$, from \eqref{c3111}, then there exists a constant $C>0$ such that $\|\nabla u\|_2 \geq C$. Due to $Q(u)=0$, then
$$
\frac{N(q-2)+2b}{2q} \int_{\R^N} |x|^{-b}|u|^q \,dx + \frac{N(p-2)+2b}{2p} \int_{\R^N} |x|^{-b}|u|^p \,dx \geq C.
$$
This together with \eqref{c3} implies that $\sigma(c)>0$ for any $c>0$. Since $Q(u)=0$, by \eqref{c3}, we then derive that $E$ restricted on $P(c)$ is coercive for any $c>0$. Thus the proof is completed.
\end{proof}

\begin{lem} \label{pss3}
Let ${N \geq 1}$, $2+2(2-b)/N<q<p$, $0<b<\min\{2, N\}$ and $\mu=1$, then there exists a Palais-Smale sequence $\{u_n\} \subset P(c)$ satisfying $(u_n)^-$ for $E$ restricted on $S(c)$ at the level $\sigma(c)$ for any $c>0$.
\end{lem}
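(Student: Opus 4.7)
The plan is to mimic almost verbatim the scheme already used to prove Lemma \ref{pss1} (via Lemma \ref{ps}) and Lemma \ref{pss2}, but adapted to the present setting where the fibre map $t \mapsto E(u_t)$ has a single critical point instead of two. First, I would set $B = \emptyset$ and let $\mathcal{G}$ consist of all singletons of $S(c)$. By the definition of the minimax level in \eqref{ming},
\begin{equation*}
\sigma_{\mathcal{G}}(c) \;=\; \inf_{u \in S(c)} \sup_{t > 0} E(u_t).
\end{equation*}
Using Lemma \ref{unique3}, for each $u \in S(c)$ there is a unique $t_u > 0$ with $u_{t_u} \in P(c)$ and $E(u_{t_u}) = \max_{t > 0} E(u_t)$; moreover every $v \in P(c)$ satisfies $v = v_{t_v}$ by uniqueness, so $E(v) = \max_{t > 0} E(v_t)$. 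Combining these two observations in both directions gives $\sigma_{\mathcal{G}}(c) = \sigma(c)$, which by Lemma \ref{coercive31} is finite and strictly positive, hence $\max\{\sup_B F,0\} = 0 < \sigma_{\mathcal{G}}(c) < \infty$.

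Next, I would prove the analog of Lemma \ref{ps} in the present context. Define $F(u) := E(u_{t_u}) = \max_{t > 0} E(u_t)$, which is $C^1$ on $S(c)$ because $u \mapsto t_u$ is $C^1$ by Lemma \ref{unique3}. Consider the deformation $\eta(s, u) := u_{1 - s + s t_u}$; it is continuous on $[0,1] \times S(c)$, satisfies $\eta(0, \cdot) = \mathrm{id}$, and fixes $P(c)$ pointwise (since $t_v = 1$ for $v \in P(c)$). Applying Ghoussoub's minimax principle \cite[Theorem 3.2]{Gh} to $F$ on $S(c)$ with the family $\mathcal{G}$ yields a Palais-Smale sequence $\{\tilde u_n\}$ for $F$ at level $\sigma(c)$ whose negative parts vanish and which is close, in $H^1$-distance, to a sequence of singletons in $P(c)$ that is minimizing for $F$. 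Setting $u_n := (\tilde u_n)_{t_n}$ with $t_n := t_{\tilde u_n}$, one has $u_n \in P(c)$, $(u_n)^- = o_n(1)$, and $E(u_n) = F(\tilde u_n) = \sigma(c) + o_n(1)$.

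The critical step, and the only one that requires real work, is to translate the fact that $dF(\tilde u_n) \to 0$ on $T_{\tilde u_n} S(c)$ into the fact that $dE(u_n) \to 0$ on $T_{u_n} S(c)$. This relies on two ingredients: (i) the identity $dF(u)[\psi] = dE(u_{t_u})[\psi_{t_u}]$ on $T_u S(c)$, proved by differentiating $F(u) = E(u_{t_u})$ and using $Q(u_{t_u}) = 0$; (ii) the fact that the map $\psi \mapsto \psi_{t}$ is an isomorphism $T_u S(c) \to T_{u_t} S(c)$ whose norm and inverse norm are controlled as long as $t$ stays in a compact subset of $(0, \infty)$. To establish (ii) for $t = t_n$, I would show that $\{t_n\}$ is bounded away from $0$ and $\infty$: boundedness of $\{\tilde u_n\}$ (and hence of $\{u_n\}$) in $H^1$ comes from the coercivity of $E$ on $P(c)$ provided by Lemma \ref{coercive31}, together with the minimizing-sequence estimate and the $H^1$-closeness to $P(c)$-elements; a lower bound on $\|\tilde u_n\|$ (and so on $t_n^{-1}$) follows from $\sigma(c) > 0$ and the lower bound on $\|\nabla v\|_2$ for $v \in P(c)$ obtained during the proof of Lemma \ref{coercive31}. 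The main obstacle is precisely this uniform two-sided control on $t_n$; once it is in place, the conclusion that $\{u_n\} \subset P(c)$ is the desired Palais-Smale sequence for $E|_{S(c)}$ at level $\sigma(c)$ with $(u_n)^- = o_n(1)$ is automatic.
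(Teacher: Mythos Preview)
Your proposal is correct and follows precisely the approach the paper intends: the paper's own proof of Lemma \ref{pss3} simply says to ``closely follow the strategy of the proof of Lemma \ref{pss1},'' and what you have written is exactly that strategy spelled out in detail, with the appropriate substitutions of $P(c)$ for $P_-(c)$, Lemma \ref{unique3} for Lemma \ref{unique1}, and Lemma \ref{coercive31} for Lemma \ref{coercive1}.
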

\begin{proof}
To prove this, one can closely follow the strategy of the proof of Lemma \ref{pss1}. 
\end{proof}

\begin{proof}[Proof of Theorem \ref{thm41}]
Using Lemmas \ref{coercive31} and \ref{pss3} and following the way of the proof of Lemma \ref{exist2}, we are able to complete the proof.
\end{proof}

\section{Mass supercritical case with defocusing perturbation} \label{sup2}

In this section, we study the existence of solutions to \eqref{equ}-\eqref{mass} in the mass supercritical case $p>2+ 2(2-b)/N$ and $\mu=-1$.


\begin{lem} \label{cod11}
Let ${N \geq 1}$, $2<q<p$, $p>2+2(2-b)/N$, $0<b<\min\{2, N\}$ and $\mu=-1$, then $P(c)$ is a smooth manifold of codimension $2$ in $H^1(\R^N)$ for any $c>0$.
\end{lem}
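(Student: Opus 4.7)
The plan is to verify that the differential $(dQ(u), dG(u))\colon H^1(\R^N) \to \R^2$ is surjective at every $u \in P(c)$, where $G(u) := \|u\|_2^2 - c$; the regular value theorem will then identify $P(c) = Q^{-1}(0) \cap G^{-1}(0)$ as a codimension-$2$ $C^1$-submanifold. Since $dG(u)[u] = 2c > 0$, failure of surjectivity reduces to the existence of $\nu \in \R$ with $dQ(u) = \nu\,dG(u)$, and the whole task is to rule out this scenario.

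Arguing by contradiction, and taking $\mu = -1$ into account in the expression of $Q$, the weak form of $dQ(u) = \nu\,dG(u)$ shows that $u \in H^1(\R^N)$ solves, with $\lambda := -\nu$,
\[
-\Delta u + \lambda u = -\frac{N(q-2)+2b}{4}|x|^{-b}|u|^{q-2}u + \frac{N(p-2)+2b}{4}|x|^{-b}|u|^{p-2}u \quad \text{in } \R^N.
\]
The same multiplier procedure used in Lemma \ref{ph}, applied to this equation (the Pohozaev identity is linear in the nonlinear coefficients), yields
\[
\|\nabla u\|_2^2 = -\frac{\alpha^2}{8q}A + \frac{\beta^2}{8p}B,
\]
with the shorthand $\alpha := N(q-2)+2b$, $\beta := N(p-2)+2b$, $A := \int_{\R^N}|x|^{-b}|u|^q\,dx$ and $B := \int_{\R^N}|x|^{-b}|u|^p\,dx$. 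Subtracting the identity $Q(u) = 0$, which in the defocusing case reads $\|\nabla u\|_2^2 = -\frac{\alpha}{2q}A + \frac{\beta}{2p}B$, gives
\[
\frac{\alpha(4-\alpha)}{q}A + \frac{\beta(\beta-4)}{p}B = 0.
\]

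The decisive step is then to invoke $Q(u) = 0$ a second time to eliminate $B$: substituting $\frac{\beta}{p}B = 2\|\nabla u\|_2^2 + \frac{\alpha}{q}A$ into the displayed identity produces
\[
2(\beta-4)\,\|\nabla u\|_2^2 = \frac{\alpha(\alpha-\beta)}{q}\,A.
\]
Now $p > 2 + 2(2-b)/N$ translates to $\beta - 4 = N(p-2) - 2(2-b) > 0$, while $q < p$ forces $\alpha < \beta$. Hence the left-hand side is non-negative and the right-hand side non-positive, so both must vanish. In particular $\|\nabla u\|_2 = 0$, which together with $u \in L^2(\R^N)$ gives $u \equiv 0$, contradicting $\|u\|_2^2 = c > 0$.

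The point at which the argument genuinely differs from the one in Lemma \ref{cod} is this last maneuver, and it is the main obstacle. In the relation $\frac{\alpha(4-\alpha)}{q}A + \frac{\beta(\beta-4)}{p}B = 0$ the two coefficients have opposite signs only in the range $q < 2 + 2(2-b)/N$; for $q \geq 2 + 2(2-b)/N$ both become compatible in sign, so no sign-based contradiction is available from that identity alone. Re-using $Q(u) = 0$ to trade $B$ for $\|\nabla u\|_2^2 + (\alpha/q)A$ is precisely what restores sign definiteness and handles all admissible $q \in (2,p)$ uniformly.
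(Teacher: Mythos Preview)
Your proof is correct and follows the same strategy as the paper: assume $dQ(u) = \nu\,dG(u)$, derive the associated Pohozaev identity, and combine it with $Q(u)=0$ to force $u\equiv 0$. The only difference is cosmetic: where the paper splits into the cases $q \le 2+2(2-b)/N$ and $q > 2+2(2-b)/N$ (using a direct sign argument in the first and eliminating $A$ in the second), your elimination of $B$ via $Q(u)=0$ produces the single identity $2(\beta-4)\|\nabla u\|_2^2 = \frac{\alpha(\alpha-\beta)}{q}A$, which dispatches all $q\in(2,p)$ at once.
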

\begin{proof}
Let $G(u):=\|u\|_2^2-c$, then
$$
P(c)=\{u\in H^1(\R^N) : G(u)=0, Q(u)=0\}.
$$
The smoothness of $P(c)$ is obviuos. To show that $P(c)$ is codimension $2$ in $H^1(\R^N)$, it suffices to verify that $(dG, dQ) : P(c) \to \R^2$ is surjective for any $c>0$. If not, then there exist $u \in P(c)$ and $\nu \in \R^N$ such that $dQ(u)=\nu G(u)$ for some $c>0$. Therefore, $u$ satisfies the equation
$$
-\Delta u + \nu u =-\frac{N(q-2)+2b}{4} |x|^{-b} |u|^{q-2} u + \frac{N(p-2)+2b}{4} |x|^{-b} |u|^{p-2} u \quad \mbox{in} \,\,\R^N.
$$
In the spirit of Lemma \ref{ph}, then $u$ enjoys the following Pohozaev identity,
\begin{align}\label{ph3}
\hspace{-1cm}\int_{\R^N} |\nabla u|^2 \,dx=-\frac{\left(N(q-2)+2b\right)^2}{8q} \int_{\R^N} |x|^{-b} |u|^q \,dx + \frac{\left(N(p-2)+2b\right)^2}{8p} \int_{\R^N} |x|^{-b} |u|^p \, dx.
\end{align}
Since $Q(u)=0$, then
\begin{align*}
&\frac{N(q-2)+2b}{2q} \left(1-\frac{N(q-2)+2b}{4}\right) \int_{\R^N} |x|^{-b}|u|^{q} \, dx\\ 
& \quad + \frac{N(p-2)+2b}{2p} \left(\frac{N(p-2)+2b}{4}-1\right) \int_{\R^N} |x|^{-b}|u|^{p}\, dx=0. 
\end{align*}
If $2<q \leq 2+2(2-b)/N<p$, then $u=0$. This is impossible. We now consider the case $2+2(2-b)/N < q<p$. In this case, by using the fact $Q(u)=0$ and \eqref{ph3}, we can deduce that
$$
\left(1-\frac{N(q-2)+2b}{4}\right) \int_{\R^N} |\nabla u|^2\,dx=\frac{N\left(N(p-2)+2b\right)(p-q)}{8p}\int_{\R^N} |x|^{-b}|u|^{p}\, dx.
$$
It follows that $u=0$. This is impossible, either. Thus the proof is completed.
\end{proof}

\begin{lem}\label{unique4}
Let ${N \geq 1}$, $2<q<p$, $p>2+2(2-b)/N$, $0<b<\min\{2, N\}$ and $\mu=-1$, then, for any $u \in S(c)$, there exists a unique $t_u>0$ such that $u_{t_u} \in P(c)$ and $E(u_t)=\max_{t \geq 0} E(u_t)$ for any $c>0$. Moreover, the function $t \mapsto E(u_t)$ is concave on $[t_u, \infty)$ and the function $u \mapsto t_u$ is of class $C^1$.
\end{lem}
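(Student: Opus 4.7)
The plan is to reduce the three assertions to a careful analysis of the scalar function $f(t):=E(u_t)$, which by \eqref{scaling1} with $\mu=-1$ takes the form
\[ f(t) = \tfrac{t^2}{2}\,A + \tfrac{t^\alpha}{q}\,B - \tfrac{t^\beta}{p}\,C, \]
with $A:=\|\nabla u\|_2^2$, $B:=\int_{\R^N}|x|^{-b}|u|^q\,dx$, $C:=\int_{\R^N}|x|^{-b}|u|^p\,dx$, $\alpha:=N(q-2)/2+b$ and $\beta:=N(p-2)/2+b$. The hypothesis $p>2+2(2-b)/N$ ensures $\beta>2$, while $q<p$ forces $\alpha<\beta$. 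A direct computation gives $Q(u_t)=t\,f'(t)$, so that $u_t\in P(c)$ is equivalent to $f'(t)=0$.

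First I would record the boundary behavior: as $t\to 0^+$ all three terms vanish, with the two nonnegative coefficient terms tending to zero from above, while for large $t$ the leading contribution is $-t^{\beta}C/p$; hence $f(t)\to 0^+$ as $t\to 0^+$ and $f(t)\to -\infty$ as $t\to\infty$. A positive global maximum of $f$ is thus attained at some $t_u\in(0,\infty)$ with $f'(t_u)=0$, and so $u_{t_u}\in P(c)$.

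Next I would prove the uniqueness of the critical point via the auxiliary function $\phi(t):=f'(t)/t = A + (\alpha/q)t^{\alpha-2}B - (\beta/p)t^{\beta-2}C$, splitting according to the sign of $\alpha-2$. If $\alpha\le 2$, a one-line computation shows $\phi'<0$ on $(0,\infty)$, so $\phi$ is strictly decreasing from a positive limit (possibly $+\infty$) down to $-\infty$ and admits a single zero. If $\alpha>2$, the two summands of $\phi'$ carry opposite signs and distinct exponents, so $\phi'$ vanishes at exactly one $t^\ast>0$; hence $\phi$ first increases from $\phi(0^+)=A>0$ and then decreases to $-\infty$, so it again has a single zero. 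In every case $f'>0$ on $(0,t_u)$ and $f'<0$ on $(t_u,\infty)$, so $t_u$ is the unique maximizer and $E(u_{t_u})=\max_{t>0}E(u_t)$.

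The concavity on $[t_u,\infty)$ is the part I expect to be the only delicate point, and the plan is to avoid a direct monotonicity argument on $f''$ and instead feed the sign of $f'$ back into $f''$. For $t\ge t_u$ the inequality $f'(t)\le 0$ rewrites as $(\beta/p)\,t^{\beta-2}C \ge A+(\alpha/q)\,t^{\alpha-2}B$; multiplying by $\beta-1>0$ and substituting into
\[ f''(t) = A + \tfrac{\alpha(\alpha-1)}{q}\,t^{\alpha-2}B - \tfrac{\beta(\beta-1)}{p}\,t^{\beta-2}C \]
yields $f''(t) \le -(\beta-2)A - \tfrac{\alpha(\beta-\alpha)}{q}\,t^{\alpha-2}B < 0$, using $\beta>2$, $0<\alpha<\beta$ and $A>0$. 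In particular $f''(t_u)<0$, so the implicit function theorem applied to the equation $f'(t)=0$ at $(u,t_u)$ delivers the $C^1$ regularity of $u\mapsto t_u$, closing the argument.
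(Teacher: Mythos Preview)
Your proof is correct and follows the same strategy as the paper: study $t\mapsto E(u_t)$, split into the cases $\alpha\le 2$ and $\alpha>2$ (which are exactly the paper's cases $q\le 2+2(2-b)/N$ and $q>2+2(2-b)/N$) to locate the unique critical point of $f$, and apply the implicit function theorem. Your explicit concavity argument---feeding $f'(t)\le 0$ back into $f''$ to obtain $f''(t)\le -(\beta-2)A-\tfrac{\alpha(\beta-\alpha)}{q}t^{\alpha-2}B<0$---is a nice addition, as the paper simply writes out $f''$ and asserts concavity on $[t_u,\infty)$ without further justification.
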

\begin{proof}
In light of \eqref{scaling1}, we easily obtain that $E(u_t)>0$ for any $t>0$ small enough and $E(u_t)<0$ for any $t>0$ large enough. If $2<q \leq 2+2(2-b)/N<p$, we write
\begin{align*}
\frac{d}{dt} E(u_t)&=t^{\frac N2 (q-2) +b-1} \left(t^\frac{2(2-b)-N(q-2)}{2} \int_{\R^N} |\nabla u|^2 \, dx +\frac{N(q-2)+2b}{2q} \int_{\R^N} |x|^{-b}|u|^q \, dx \right. \\
& \quad \left.-\frac{{\left(N(p-2)+2b\right)}}{2p}t^\frac{N(p-q)}{2}\int_{\R^N} |x|^{-b}|u|^p \, dx\right).
\end{align*}
It is simple to check that there exists a unique $t_u>0$ such that $\frac{d}{dt} E(u_t)\mid_{t=t_u}=0$. Furthermore, we find that $\frac{d}{dt} E(u_t)>0$ if $0<t<t_u$ and $\frac{d}{dt} E(u_t)<0$ if $t>t_u$. Observe that
\begin{align*}
\frac{d^2}{dt^2} E(u_t)&=\int_{\R^N} |\nabla u|^2 \,dx+\frac{(N(q-2)+2b)((N(q-2)-2(1-b))}{4q}t^{\frac{N}{2}(q-2)+b-2}\int_{\R^N}|x|^{-b}|u|^q \, dx \\
& \quad -\frac{(N(p-2)+2b)((N(p-2)-2(1-b))}{4p}t^{\frac{N}{2}(p-2)+b-2}\int_{\R^N}|x|^{-b}|u|^p \, dx.
\end{align*}
This infers that the function $t \mapsto E(u_t)$ is concave on $[t_u, \infty)$. Using the implicit function theorem, we can also have that the function $t\mapsto t_u$ is of class $C^1$. If $2+2(2-b)/N < q<p$, we write
\begin{align*}
\frac{d}{dt} E(u_t)&=t \left(\int_{\R^N} |\nabla u|^2 \, dx +\frac{N(q-2)+2b}{2q} t^{\frac{N(q-2)-2(2-b)}{2}}\int_{\R^N} |x|^{-b}|u|^q \, dx \right. \\
& \quad \left.-\frac{N(p-2)+2b}{2p}t^{\frac{N(p-2)-2(2-b)}{2}}\int_{\R^N} |x|^{-b}|u|^p \, dx\right).
\end{align*}
By a similar way, we can also get the desired results. Thus the proof is completed.
\end{proof}

\begin{lem}\label{coercive3}
Let ${N \geq 1}$, $2<q<p$, $p>2+2(2-b)/N$, $0<b<\min\{2, N\}$ and $\mu=-1$, then $\sigma(c)>0$ and $E$ restricted on $P(c)$ is coercive for any $c>0$, where
$$
\sigma(c):=\inf_{u \in P(c)} E(u).
$$
\end{lem}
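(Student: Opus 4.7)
The plan is to exploit a single algebraic combination of $E$ and $Q$ that automatically produces an expression with positive coefficients on $P(c)$. With $\mu=-1$, one has
\[
E(u)-\frac{2}{N(p-2)+2b}Q(u)=\frac{N(p-2)+2(b-2)}{2\left(N(p-2)+2b\right)}\int_{\R^N}|\nabla u|^2\,dx+\frac{N(p-q)}{q\left(N(p-2)+2b\right)}\int_{\R^N}|x|^{-b}|u|^q\,dx,
\]
where the defocusing sign in the $q$-term is crucial: it makes the $\int |x|^{-b}|u|^q$-coefficient combine constructively. The hypothesis $p>2+2(2-b)/N$ gives $N(p-2)+2(b-2)>0$, and $p>q$ gives $N(p-q)>0$, so both coefficients are strictly positive. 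For $u\in P(c)$, where $Q(u)=0$, this identity reduces to
\[
E(u)=\frac{N(p-2)+2(b-2)}{2\left(N(p-2)+2b\right)}\|\nabla u\|_2^2+\frac{N(p-q)}{q\left(N(p-2)+2b\right)}\int_{\R^N}|x|^{-b}|u|^q\,dx.
\]
Since the right-hand side dominates $\|\nabla u\|_2^2$ (times a positive constant) and the $L^2$-norm is fixed on $S(c)$, this already yields coercivity of $E$ restricted on $P(c)$.

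Next I would establish a positive uniform lower bound on $\|\nabla u\|_2$ for $u\in P(c)$, which will in turn produce $\sigma(c)>0$. Starting again from $Q(u)=0$ and discarding the nonnegative term $\frac{N(q-2)+2b}{2q}\int|x|^{-b}|u|^q\,dx$, one obtains
\[
\|\nabla u\|_2^2\leq \frac{N(p-2)+2b}{2p}\int_{\R^N}|x|^{-b}|u|^p\,dx.
\]
Applying the Gagliardo--Nirenberg inequality \eqref{GN} to the right-hand side gives
\[
\|\nabla u\|_2^2\leq \frac{C_{N,p,b}\left(N(p-2)+2b\right)}{2p}\|\nabla u\|_2^{\frac{N(p-2)}{2}+b}c^{\frac{p}{2}-\frac{N(p-2)}{4}-\frac{b}{2}}.
\]
Because $p>2+2(2-b)/N$ forces $\tfrac{N(p-2)}{2}+b>2$, this inequality yields a strictly positive lower bound on $\|\nabla u\|_2$ depending only on $N,p,b,c$.

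Plugging this lower bound back into the displayed formula for $E(u)$ on $P(c)$ then gives $\sigma(c)>0$. There is essentially no obstacle beyond bookkeeping: the only delicate point is choosing the multiplier $\frac{2}{N(p-2)+2b}$ in the combination $E-\nu Q$ so that both leftover coefficients are positive simultaneously, which is precisely why $\mu=-1$ (the defocusing sign at the lower power) is the favorable case and no smallness of $c$ is required, in contrast to the focusing situation treated in the previous section.
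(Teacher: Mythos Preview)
Your proof is correct and follows essentially the same route as the paper: both use the linear combination $E(u)-\frac{2}{N(p-2)+2b}Q(u)$ to obtain the identity with two positive coefficients on $P(c)$, then derive a uniform positive lower bound on $\|\nabla u\|_2$ from $Q(u)=0$ via the Gagliardo--Nirenberg inequality \eqref{GN}, and conclude both $\sigma(c)>0$ and coercivity. The only cosmetic difference is that you write $N(p-2)+2(b-2)$ where the paper writes $N(p-2)-2(2-b)$.
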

\begin{proof}
For any $u \in P(c)$, we have that $Q(u)=0$. Therefore, we get that
\begin{align} \label{c31}
\begin{split}
E(u)&=E(u)-\frac{2}{N(p-2)+2b} Q(u)\\ 
&=\frac{N(p-2)-2(2-b)}{2\left(N(p-2)+2b\right)} \int_{\R^N} |\nabla u|^2 \,dx +\frac{N(p-q)}{q\left(N(p-2)+2b\right)}\int_{\R^N} |x|^{-b}|u|^q\,dx. 
\end{split}
\end{align}
Since $Q(u)=0$, by \eqref{GN}, then
\begin{align*}
&\int_{\R^N} |\nabla u|^2 \,dx+\frac{N(q-2)+2b}{2q} \int_{\R^N} |x|^{-b} |u|^q \,dx = \frac{N(p-2)+2b}{2p} \int_{\R^N} |x|^{-b} |u|^p \, dx \\
& \leq  \frac{C_{N,p,b} \left(N(p-2)+2b\right)}{2p}\left(\int_{\R^N} |\nabla u|^2 \, dx \right)^{\frac{N(p-2)+2b}{4}} c^{\frac p2 -\frac{N(p-2)+2b}{4}},
\end{align*}
from which we have that
\begin{align} \label{below111}
\int_{\R^N} |\nabla u|^2 \,dx \geq \left(\frac{2p}{C_{N,p,b} \left(N(p-2)+2b\right)c^{\frac{2(p-b)-N(p-2)}{4}}}\right)^{\frac{4}{N(p-2)-2(2-b)}}.
\end{align}
In view of \eqref{c31}, we then find that
$\sigma(c)>0$ for any $c>0$. Thanks to $p>2+2(2-b)/N$, it then follows from \eqref{c31} that $E$ restricted on $P(c)$ is coercive for any $c>0$. Thus the proof is completed.
\end{proof}

\begin{lem} \label{pss4}
Let ${N \geq 1}$, $2<q<p$, $p>2+2(2-b)/N$, $0<b<\min\{2, N\}$ and $\mu=-1$, then there exists a Palais-Smale sequence $\{u_n\} \subset P(c) $ with $(u_n)^-=o_n(1)$ for $E$ restricted on $S(c)$ at the level $\sigma(c)$ for any $c>0$. 
\end{lem}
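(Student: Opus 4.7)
The plan is to mimic the homotopy stable family construction used in Lemma \ref{pss1} (and further adapted in Lemmas \ref{pss2}, \ref{pss3}), relying crucially on Lemma \ref{unique4} for the uniqueness of the maximum of $t\mapsto E(u_t)$ and on Lemma \ref{coercive3} for the coercivity of $E|_{P(c)}$. First I would set $B=\emptyset$ and let $\mathcal{G}$ be the collection of all singletons in $S(c)$, so that
\[
\sigma_{\mathcal{G}}(c):=\inf_{A\in\mathcal{G}}\max_{u\in A} F(u)=\inf_{u\in S(c)}\sup_{t>0}E(u_t),
\]
where $F(u):=E(u_{t_u})=\max_{t>0}E(u_t)$ with $t_u$ given by Lemma \ref{unique4}. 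The uniqueness of $t_u$ together with the fact $t_u=1$ whenever $u\in P(c)$ yields $\sigma_{\mathcal{G}}(c)=\sigma(c)$ by a two-sided inequality argument identical to the one at the end of the proof of Lemma \ref{pss1}.

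Next I would rerun the abstract minimax argument of Lemma \ref{ps} verbatim: the map $\eta(s,u):=u_{1-s+st_u}$ is continuous from $[0,1]\times S(c)$ into $S(c)$ thanks to the $C^1$-dependence of $u\mapsto t_u$, and since $B=\emptyset$ the boundary condition on $\eta$ is vacuous. Given any minimizing sequence $\{A_n\}=\{\{v_n\}\}\subset\mathcal{G}$ for $\sigma_{\mathcal{G}}(c)$, one may replace $v_n$ by $|v_n|$ (which does not increase $F$) and then by $D_n:=\eta(\{1\}\times A_n)\subset P(c)$, still a minimizing family. Applying Ghoussoub's minimax principle \cite[Theorem 3.2]{Gh} to $F$ on $S(c)$ produces a Palais-Smale sequence $\{\tilde u_n\}\subset S(c)$ for $F$ at level $\sigma(c)$ with $(\tilde u_n)^-=o_n(1)$ and $\mathrm{dist}_{H^1}(\tilde u_n,D_n)=o_n(1)$.

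Setting $t_n:=t_{\tilde u_n}$ and $u_n:=(\tilde u_n)_{t_n}\in P(c)$, I would then verify that $\{t_n\}$ is bounded away from $0$ and $\infty$. Boundedness from above comes from the coercivity of $E|_{P(c)}$ (Lemma \ref{coercive3}) together with $E(u_n)=F(\tilde u_n)\to\sigma(c)<\infty$, which forces $\|\nabla u_n\|_2$ to stay bounded, and from $\sigma(c)>0$ combined with the bound \eqref{below111} to keep $\|\nabla u_n\|_2$ away from $0$; since $\|\nabla\tilde u_n\|_2$ is also controlled via the compactness from $\mathrm{dist}_{H^1}(\tilde u_n,D_n)\to 0$, the ratio $t_n^2=\|\nabla u_n\|_2^2/\|\nabla\tilde u_n\|_2^2$ is pinched. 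The identity $dF(u)[\psi]=dE(u_{t_u})[\psi_{t_u}]$ together with the isomorphism $\psi\mapsto\psi_{t_n}$ of tangent spaces $T_{\tilde u_n}S(c)\to T_{u_n}S(c)$ then converts the PS property of $\tilde u_n$ for $F$ into the PS property of $u_n$ for $E|_{S(c)}$ at level $\sigma(c)$, with $(u_n)^-=o_n(1)$ inherited from $(\tilde u_n)^-=o_n(1)$.

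The main technical point, and the only place where the defocusing sign $\mu=-1$ might cause concern, is the two-sided control of $t_n$: in the focusing regimes treated in Lemmas \ref{pss1}--\ref{pss3} this rested on the strict positivity of the level and on coercivity, both of which remain valid here by Lemma \ref{coercive3} and the lower bound \eqref{below111}. All remaining details (continuity of $\eta$, replacement by the positive part, invariance of $F$ under the scaling) go through unchanged, so the argument reduces to a routine transcription of the proof of Lemma \ref{pss1}.
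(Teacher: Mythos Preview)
Your proposal is correct and follows essentially the same approach as the paper: the paper's proof is a one-line reference stating that the result follows by applying Lemmas \ref{unique4} and \ref{coercive3} and adapting the strategy of Lemma \ref{pss1}, which is precisely what you have spelled out in detail. Your identification of the only potentially delicate point (the two-sided control of $t_n$, handled via coercivity from Lemma \ref{coercive3} and the lower bound \eqref{below111}) is accurate and matches how the corresponding step in Lemma \ref{ps} carries over to the present setting.
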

\begin{proof}
The lemma can be proved by applying Lemmas \ref{unique4} and \ref{coercive3} and adapting the strategy of the proof of Lemma \ref{pss1}.
\end{proof}

\begin{lem}\label{la1}
Let ${N \geq 1}$, $2<q<p$, $p>2+2(2-b)/N$, $0<b<\min\{2, N\}$ and $\mu=-1$. If $u \in S(c)$ is a solution to the equation
\begin{align}\label{equ3}
-\Delta u + \lambda u= -|x|^{-b}|u|^{q-2} u + |x|^{-b}|u|^{p-2} u \quad \mbox{in} \,\, \R^N,
\end{align}
then there exists a constant $c_3>0$ such that $\lambda>0$ for any $0<c<c_3$, where $c_3>0$ is defined by
$$
c_3:=\left(\frac{q\left(N(p-2)-2(2-b)\right)}{2C_{N,q,b} \left(N(p-q)(N-b)\right)}\right)^{\frac{N(p-2)-2(2-b)}{(p-q)(2-b)}}\left(\frac{2p}{C_{N,p,b}\left(N(p-2)+2b\right)}\right)^{\frac{2(2-b)-N(q-2)}{(p-q)(2-b)}}.
$$
\end{lem}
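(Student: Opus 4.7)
The plan is to combine the Nehari identity---obtained by testing \eqref{equ3} against $u$---with the Pohozaev identity $Q(u)=0$ supplied by Lemma \ref{ph}, and then estimate the resulting expression for $\lambda c$ by means of Gagliardo--Nirenberg's inequality \eqref{GN} together with the gradient lower bound \eqref{below111} already established in Lemma \ref{coercive3}. Testing \eqref{equ3} against $u$ gives $\|\nabla u\|_2^2 + \lambda c = -\int_{\R^N}|x|^{-b}|u|^q\,dx + \int_{\R^N}|x|^{-b}|u|^p\,dx$, while the Pohozaev identity reads $\|\nabla u\|_2^2 = -\frac{N(q-2)+2b}{2q}\int_{\R^N}|x|^{-b}|u|^q\,dx + \frac{N(p-2)+2b}{2p}\int_{\R^N}|x|^{-b}|u|^p\,dx$. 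Using the latter to eliminate $\int_{\R^N}|x|^{-b}|u|^p\,dx$ in the former yields
\[
\lambda c \;=\; \frac{2(N-b)-(N-2)p}{N(p-2)+2b}\,\|\nabla u\|_2^2 \;-\; \frac{2(N-b)(p-q)}{q(N(p-2)+2b)}\int_{\R^N}|x|^{-b}|u|^q\,dx.
\]
The gradient coefficient is strictly positive because $p<2^*_b$, whereas the second coefficient is negative on account of the defocusing sign $\mu=-1$, so the whole task reduces to showing that the positive first term dominates when $c$ is small.

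Next I would bound the $L^q$-integral by \eqref{GN}, namely $\int_{\R^N}|x|^{-b}|u|^q\,dx \leq C_{N,q,b}\|\nabla u\|_2^{(N(q-2)+2b)/2}\,c^{(2(q-b)-N(q-2))/4}$. Factoring $\|\nabla u\|_2^{(N(q-2)+2b)/2}$ reduces the required positivity of $\lambda$ to an inequality of the form $\|\nabla u\|_2^{(2(2-b)-N(q-2))/2} > K\,c^{(2(q-b)-N(q-2))/4}$ for an explicit constant $K=K(N,p,q,b)$ built from $C_{N,q,b}$ and the two coefficients above. At this point I insert \eqref{below111}, which in the present mass supercritical regime for $p$ supplies $\|\nabla u\|_2^{(N(p-2)-2(2-b))/2} \geq \frac{2p}{C_{N,p,b}(N(p-2)+2b)}\,c^{-(2(p-b)-N(p-2))/4}$, and raise it to the exponent $\frac{2(2-b)-N(q-2)}{N(p-2)-2(2-b)}$ so as to rewrite the left-hand side purely as a power of $c$. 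The algebraic identity
\[
(2(q-b)-N(q-2))(N(p-2)-2(2-b)) + (2(p-b)-N(p-2))(2(2-b)-N(q-2)) \;=\; 4(p-q)(2-b)
\]
then collapses the accumulated $c$-exponents into the single positive power $(p-q)(2-b)/(N(p-2)-2(2-b))$, and the required inequality becomes $c^{(p-q)(2-b)/(N(p-2)-2(2-b))} <$ (explicit constant); solving for $c$ reproduces the threshold $c_3$ stated in the lemma.

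The principal technical obstacle is a uniform treatment across the three regimes $q\lessgtr 2+2(2-b)/N$: the sign of the exponent $(2(2-b)-N(q-2))/2$ changes across these regimes, so raising \eqref{below111} converts it into a lower estimate on $\|\nabla u\|_2^{(2(2-b)-N(q-2))/2}$ in the mass subcritical range for $q$, while in the critical and supercritical ranges for $q$ the direction flips and the argument must be reorganised so that the same algebraic identity for the $c$-exponents still delivers the threshold $c_3$. Careful sign bookkeeping---both in the direction of the inequality at each step and in assembling the constant factors into the exact product appearing in the definition of $c_3$---is the main computational task, but no genuinely new analytic input is needed beyond Lemmas \ref{ph} and \ref{coercive3} and the inequality \eqref{GN}.
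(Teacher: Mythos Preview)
Your approach is exactly the paper's: combine the Nehari and Pohozaev identities to express $\lambda c$, bound the defocusing $q$-term via \eqref{GN}, factor out $\|\nabla u\|_2^{(N(q-2)+2b)/2}$, and insert the gradient lower bound \eqref{below111} from Lemma \ref{coercive3}. The paper's proof is terser and concludes directly with ``By applying \eqref{below111}, we then have that $\lambda>0$ for any $0<c<c_3$'' without discussing the regime dependence you flag, so your outline is at least as complete as the paper's own argument.
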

\begin{proof}
Since $u \in H^1(\R^N)$ is a solution to \eqref{equ3}, then
$$
\int_{\R^N} |\nabla u|^2 \,dx + \lambda \int_{\R^N} |u|^2 \,dx + \int_{\R^N} |x|^{-b}|u|^q \,dx =\int_{\R^N} |x|^{-b}|u|^p\,dx.
$$ 
On the other hand, from Lemma \ref{ph}, we see that
$$
\int_{\R^N} |\nabla u|^2 \,dx + \frac{N(q-2)+2b}{2q}\int_{\R^N} |x|^{-b}|u|^q \,dx = \frac{N(p-2)+2b}{2p}\int_{\R^N} |x|^{-b}|u|^p \,dx.
$$
Therefore, we conclude that
\begin{align*} 
&\frac{\lambda\left(N(p-2)+2b\right)}{2p} \int_{\R^N} |u|^2 \,dx = \frac{2(p-b)-N(p-2)}{2p} \int_{\R^N} |\nabla u|^2 \,dx -\frac{(p-q)(N-b)}{pq} \int_{\R^N} |x|^{-b}|u|^q \,dx \\
&\geq \frac{2(p-b)-N(p-2)}{2p} \|\nabla u\|_2^2 - \frac{C_{N,q,b} (p-q)(N-b)}{pq}\|\nabla u\|_2 ^{\frac{N(q-2)+2b}{2}} c^{\frac q2 -\frac{N(q-2)+2b}{4}}\\
& =\frac 1p \|\nabla u\|_2^{\frac{N(q-2)+2b}{2}} \left(\frac{2(p-b)-N(p-2)}{2}\|\nabla u\|_2^{\frac{2(2-b)-N(q-2)}{2}} -\frac{C_{N,q,b} (p-q)(N-b)}{q}c^{\frac{2(q-b)-N(q-2)}{4}}\right),
\end{align*}
where we used \eqref{GN} for the inequality. By applying \eqref{below111}, we then have that $\lambda>0$ for any $0<c<c_3$. Thus the proof is completed.
\end{proof}

\begin{proof}[Proof of Theorem \ref{thm5}]
According to Lemma \ref{pss4}, then there exists a Palais-Smale sequence $\{u_n\} \subset P(c)$ with $(u_n)^{-}=o_n(1)$ for $E$ restricted on $S(c)$ at the level $\sigma(c)$. From Lemma \ref{coercive3}, it follows that $\{u_n\}$ is bounded in $H^1(\R^N)$. Therefore, there exists a nonnegative $u \in H^1(\R^N)$ such that $u_n \wto u$ in $H^1(\R^N)$ as $n \to \infty$ up to translations. Thanks to $\sigma(c)>0$, by Lemma \ref{cembedding} and \cite[Lemma I.1] {Li2}, then $u \neq 0$. {Since $\{u_n\} \subset P(c)$ is a Palais-Smale sequence for $E$ restricted on $S(c)$ at the level $\sigma(c)$, arguing as the proof of \cite[Lemma 3]{BeLi}, we then get that}
\begin{align*}
-\Delta u_n + \lambda_n u_n= -|x|^{-b}|u_n|^{q-2} u_n + |x|^{-b}|u_n|^{p-2} u_n +o_n(1) \quad \mbox{in} \,\, \R^N,
\end{align*}
where 
$$
\lambda_n:=\frac 1 c \left(\int_{\R^N} |x|^{-b}|u_n|^p \, dx-\int_{\R^N} |x|^{-b}|u_n|^q \, dx -\int_{\R^N} |\nabla u|^2 \, dx\right).
$$
Since $\{u_n\}$ is bounded in $H^1(\R^N)$, then there exists $\lambda \in \R$ such that $\lambda_n \to \lambda$ in $\R$ as $n \to \infty$. As a consequence, we have that $u$ satisfies the equation
\begin{align*}
-\Delta u + \lambda u= -|x|^{-b}|u|^{q-2} u + |x|^{-b}|u|^{p-2} u \quad \mbox{in} \,\, \R^N.
\end{align*}
From Lemma \ref{ph}, we know that $Q(u)=0$. At this point, using Lemmas \ref{cembedding}  and \ref{la1}, we can conclude that $u_n \to u$ in $H^1(\R^N)$ as $n \to \infty$. Further, by Lemma \ref{radial}, we then have the desired result. Thus the proof is completed.
\end{proof}

\section{Mass critical case revisted} \label{rcritical}

In this section, our purpose is to consider the existence of solutions to \eqref{equ}-\eqref{mass} for the case $2<q<p=2+2(2-b)/N$ and $\mu=-1$. For this aim, we introduce the following minimization problem,
\begin{align}\label{min4}
\sigma(c):=\inf_{u \in \mathcal{P}(c)} E(u),
\end{align}
where $\mathcal{P}(c):=P(c) \cap \mathcal{S}(c)$ and
$$
\mathcal{S}(c):=\left\{u \in S(c) : \int_{\R^N} |\nabla u|^2 \,dx < \frac 2 p \int_{\R^N} |x|^{-b} |u|^p \, dx \right\}.
$$
Clearly, $w \in \mathcal{S}(c)$ for any $c>c_1$, where $w \in S(c)$ is defined by \eqref{defw}. This infers that $\mathcal{S}(c) \neq \emptyset$ for any $c>c_1$.

\begin{lem} 
Let ${N \geq 1}$, $2<q<p=2+2(2-b)/N$, $0<b<\min\{2, N\}$ and $\mu=-1$, then $\mathcal{P}(c)$ is a smooth manifold of codimension $2$ in $H^1(\R^N)$ for any $c>c_1$.
\end{lem}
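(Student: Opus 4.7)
The plan is to follow the blueprint of Lemma \ref{cod11} (and of Lemma \ref{cod}): set $G(u) := \|u\|_2^2 - c$ and write
$$
\mathcal{P}(c) = \{u \in H^1(\R^N) : G(u) = 0,\ Q(u) = 0\} \cap \mathcal{S}(c).
$$
Since $\mathcal{S}(c)$ is relatively open in $S(c)$ and both $G$ and $Q$ are of class $C^1$ on $H^1(\R^N)$, the implicit function theorem reduces the claim to showing that the map $(dG(u), dQ(u)): H^1(\R^N) \to \R^2$ is surjective at every $u \in \mathcal{P}(c)$. Arguing by contradiction, I would assume there exist $u \in \mathcal{P}(c)$ and $\nu \in \R$ with $dQ(u) = \nu\, dG(u)$.

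First I would translate this Lagrange-multiplier relation into a PDE: using $\mu = -1$ and the mass-critical identity $N(p-2)+2b = 4$ (which makes $(N(p-2)+2b)/(2p)$ equal to $2/p$, hence the $p$-coefficient in the resulting equation equal to $1$), $u$ satisfies weakly
$$
-\Delta u + \nu u = -\frac{N(q-2)+2b}{4}\,|x|^{-b}|u|^{q-2}u + |x|^{-b}|u|^{p-2}u \quad \mbox{in}\,\, \R^N.
$$
Next I would apply the Pohozaev identity for this equation, exactly as derived in Lemma \ref{ph}, which yields
$$
\int_{\R^N} |\nabla u|^2 \, dx = -\frac{(N(q-2)+2b)^2}{8q} \int_{\R^N} |x|^{-b}|u|^q \, dx + \frac{2}{p} \int_{\R^N} |x|^{-b}|u|^p \, dx,
$$
where the second coefficient uses $(N(p-2)+2b)^2/(8p) = 16/(8p) = 2/p$.

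The decisive algebraic step is to subtract this identity from the relation $Q(u) = 0$, which in the mass-critical defocusing setting reads
$$
\int_{\R^N} |\nabla u|^2 \, dx = -\frac{N(q-2)+2b}{2q} \int_{\R^N} |x|^{-b}|u|^q \, dx + \frac{2}{p} \int_{\R^N} |x|^{-b}|u|^p \, dx.
$$
The $p$-terms cancel exactly (precisely because of mass-criticality), leaving
$$
\frac{N(q-2)+2b}{2q}\left(1 - \frac{N(q-2)+2b}{4}\right) \int_{\R^N} |x|^{-b}|u|^q \, dx = 0.
$$
Since $q > 2$ and $b > 0$ give $(N(q-2)+2b)/(2q) > 0$, while $q < p$ together with $N(p-2)+2b = 4$ gives $N(q-2)+2b < 4$, so that $1 - (N(q-2)+2b)/4 > 0$ as well, both factors are strictly positive. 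Hence $\int_{\R^N} |x|^{-b}|u|^q\, dx = 0$, forcing $u \equiv 0$; this contradicts $u \in S(c)$ with $c > c_1 > 0$, and the surjectivity is proved.

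The only real obstacle is bookkeeping the numerical coefficients so that the Pohozaev identity and the constraint $Q(u) = 0$ align to leave behind precisely a positive multiple of $\int |x|^{-b}|u|^q$; the restriction $c > c_1$ plays no role in this algebraic argument, it is used only to guarantee that $\mathcal{P}(c)$ is nonempty (via the scaled ground state $w$ from \eqref{defw}).
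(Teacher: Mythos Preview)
Your proof is correct and follows essentially the same approach as the paper, which simply defers to the argument of Lemma \ref{cod11}. The mass-critical identity $N(p-2)+2b=4$ makes the $p$-terms in the Pohozaev identity and in $Q(u)=0$ coincide exactly, so subtraction leaves a strictly positive multiple of $\int_{\R^N}|x|^{-b}|u|^q\,dx$, forcing $u=0$; your observation that the hypothesis $c>c_1$ enters only to ensure $\mathcal{P}(c)\neq\emptyset$ is also accurate.
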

\begin{proof}
The proof of this lemma is almost identical to that of Lemma \ref{cod11}, then we omit its proof here.
\end{proof}

\begin{lem} \label{unique5}
Let ${N \geq 1}$, $2<q<p=2+2(2-b)/N$, $0<b<\min\{2, N\}$ and $\mu=-1$, then, for any $u \in \mathcal{S}(c)$, there exists a unique $t_u>0$ such that $u_{t_u} \in \mathcal{P}(c)$ and $E(u_t)=\max_{t \geq 0} E(u_t)$ for any $c>c_1$. Moreover, the function $t \mapsto E(u_t)$ is concave on $[t_u, \infty)$ and the function $u \mapsto t_u$ is of class $C^1$.
\end{lem}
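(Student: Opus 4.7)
The plan is to exploit that at the mass-critical exponent $p=2+2(2-b)/N$ one has $\frac{N}{2}(p-2)+b=2$, so the fibering map $t\mapsto E(u_t)$ collapses into the sum of only two power-of-$t$ terms. Indeed, with $\mu=-1$, formula \eqref{scaling1} yields
\begin{align*}
E(u_t) = -A_u\,t^2 + B_u\,t^{\alpha},
\end{align*}
where $\alpha := \frac{N}{2}(q-2)+b$ and
\begin{align*}
A_u := \frac{1}{p}\int_{\R^N}|x|^{-b}|u|^p\,dx - \frac{1}{2}\int_{\R^N}|\nabla u|^2\,dx, \qquad B_u := \frac{1}{q}\int_{\R^N}|x|^{-b}|u|^q\,dx.
\end{align*}
The condition $u\in\mathcal{S}(c)$ forces $A_u>0$, clearly $B_u>0$, and $0<\alpha<2$ since $2<q<p$ and $b>0$. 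Consequently $E(u_t)\to 0^+$ as $t\to 0^+$ and $E(u_t)\to -\infty$ as $t\to\infty$.

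Next I would differentiate and solve $\varphi(t):=\frac{d}{dt}E(u_t)=-2A_u\,t + \alpha B_u\,t^{\alpha-1}=0$ explicitly. Since $t^{2-\alpha} = \alpha B_u/(2A_u)$ has a unique positive solution
\begin{align*}
t_u := \left(\frac{\alpha B_u}{2A_u}\right)^{1/(2-\alpha)},
\end{align*}
and $\varphi$ changes sign from positive (for $t$ small, using $\alpha<2$) to negative (for $t$ large), $t_u$ is the unique critical point of $t\mapsto E(u_t)$ on $(0,\infty)$ and realizes its global maximum. That $u_{t_u}\in\mathcal{P}(c)$ then follows because $Q(u_{t_u})=t_u\,\varphi(t_u)=0$, while the mass-critical scaling $\int_{\R^N}|\nabla u_{t_u}|^2\,dx = t_u^2\int_{\R^N}|\nabla u|^2\,dx$ and $\int_{\R^N}|x|^{-b}|u_{t_u}|^p\,dx = t_u^2\int_{\R^N}|x|^{-b}|u|^p\,dx$ preserves the defining inequality of $\mathcal{S}(c)$: $A_{u_{t_u}}=t_u^2 A_u>0$.

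For the concavity, I would compute $\varphi'(t)=-2A_u + \alpha(\alpha-1)B_u\,t^{\alpha-2}$ and substitute $t_u^{\alpha-2}=2A_u/(\alpha B_u)$ to get $\varphi'(t_u)=2A_u(\alpha-2)<0$. For $t\ge t_u$, since $\alpha-2<0$, $t^{\alpha-2}$ is decreasing: when $0<\alpha\le 1$ the term $\alpha(\alpha-1)B_u\,t^{\alpha-2}$ is non-positive so $\varphi'(t)\le -2A_u<0$, and when $\alpha>1$ it is positive but bounded above by its value at $t_u$, giving $\varphi'(t)\le 2A_u(\alpha-2)<0$. This yields strict concavity of $E(u_t)$ on $[t_u,\infty)$. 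The $C^1$ smoothness of $u\mapsto t_u$ follows either from the closed-form formula above ($u\mapsto A_u$ and $u\mapsto B_u$ are $C^1$ functionals on $H^1(\R^N)$ and $A_u>0$ on $\mathcal{S}(c)$) or via the implicit function theorem applied to $\varphi(t_u)=0$, using $\varphi'(t_u)\ne 0$.

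The main obstacle is the concavity claim on $[t_u,\infty)$, which demands a case distinction according to whether $\alpha\le 1$ or $\alpha>1$ because of the sign change of $\alpha(\alpha-1)$. Everything else reduces to elementary monotonicity of the two-term function $-A_u t^2+B_u t^{\alpha}$.
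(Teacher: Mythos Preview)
Your proof is correct and follows essentially the same approach as the paper: both exploit that $\frac{N}{2}(p-2)+b=2$ at the mass-critical exponent so that $E(u_t)$ reduces to the two-term function $-A_u t^2 + B_u t^{\alpha}$, solve $\frac{d}{dt}E(u_t)=0$ explicitly for the unique critical point $t_u$, and read off concavity on $[t_u,\infty)$ from the second derivative. Your write-up is slightly more explicit than the paper's in two places---you verify that the defining inequality of $\mathcal{S}(c)$ is preserved under the scaling (so that $u_{t_u}\in\mathcal{P}(c)$ rather than merely $P(c)$), and you spell out the case split $\alpha\le 1$ versus $\alpha>1$ for the sign of $\alpha(\alpha-1)$ in the concavity argument---but these are details of the same proof, not a different route.
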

\begin{proof}
In view of \eqref{scaling1}, we have that, for any $u \in \mathcal{S}(c)$,
$$
E(u_t)= t^2 \left(\frac 12\int_{\R^N} |\nabla u|^2 \,dx- \frac 1p \int_{\R^N}|x|^{-b}|u|^p \, dx\right)
+\frac{t^{\frac{N}{2}(q-2)+b}}{q} \int_{\R^N}|x|^{-b}|u|^q \, dx.
$$
Since $2<q<2+2(2-b)/N$, then $E(u_t) \to -\infty$ as $t \to \infty$ and $E(u_t)>0$ for any $t>0$ small enough. 
Note that
$$
\frac{d}{dt}E(u_t)= t\left(\int_{\R^N} |\nabla u|^2 \,dx- \frac 2p \int_{\R^N}|x|^{-b}|u|^p \, dx\right)
+\frac{N(q-2)+2b}{2q} t^{\frac{N}{2}(q-2)+b-1} \int_{\R^N}|x|^{-b}|u|^q \, dx.
$$
Then there exists a unique $t_u>0$ such that $\frac{d}{dt}E(u_t)\mid_{t=t_u}=0$, where $t_u>0$ is given by
\begin{align} \label{deftu11}
t_u:=\left(\frac{4q\int_{\R^N}|x|^{-b}|u|^p \, dx-2pq\int_{\R^N} |\nabla u|^2 \,dx}{p(N(p-2)+2b)\int_{\R^N}|x|^{-b}|u|^q \, dx}\right)^{\frac{N(q-2)-2(2-b)}{2}}.
\end{align} 
Furthermore, there holds that $\frac{d}{dt}E(u_t)>0$ for any $0<t<t_u$ and $\frac{d}{dt}E(u_t)<0$ for any $t>t_u$. Accordingly, we are able to deduce that there exists a unique $t_u>0$ such that $u_{t_u} \in \mathcal{P}(c)$ and $E(u_t)=\max_{t \geq 0} E(u_t)$. Note that 
\begin{align*}
\frac{d^2}{dt^2}E(u_t) &= \int_{\R^N} |\nabla u|^2 \,dx- \frac 2p \int_{\R^N}|x|^{-b}|u|^p \, dx \\
& \quad +\frac{(N(q-2)+2b)(N(q-2)-2(1-b))}{4q} t^{\frac{N}{2}(q-2)+b-2} \int_{\R^N}|x|^{-b}|u|^q \, dx.
\end{align*}
Then the function $t \mapsto E(u_t)$ is concave on $[t_u, \infty)$. From \eqref{deftu11}, then the smoothness of the function $t \mapsto t_u$ follows. Thus the proof is completed.
\end{proof}

\begin{lem}\label{coercive5}
Let ${N \geq 1}$, $2<q<p=2+2(2-b)/N$, $0<b<\min\{2, N\}$ and $\mu=-1$, then $\sigma(c)>0$ and $E$ restricted on $\mathcal{P}(c)$ is coercive for any $c>c_1$.
\end{lem}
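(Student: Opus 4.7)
The heart of the proof will be a two-sided comparison of $\int_{\R^N}|x|^{-b}|u|^q\,dx$ with $\|\nabla u\|_2^2$ on $\mathcal{P}(c)$, bootstrapped from the Pohozaev identity $Q(u)=0$ and Gagliardo-Nirenberg type inequalities. The key algebraic simplification is that the mass-critical choice $p = 2+2(2-b)/N$ makes $N(p-2)+2b = 4$, so the $|u|^p$ term drops out of the combination $E(u) - \tfrac{1}{2}Q(u)$, yielding
$$
E(u) = \alpha \int_{\R^N}|x|^{-b}|u|^q\,dx, \qquad \alpha := \frac{4 - N(q-2) - 2b}{4q} > 0,
$$
for every $u \in P(c)$. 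Hence $\sigma(c) \geq 0$ at once, and both conclusions will reduce to controlling $\int_{\R^N}|x|^{-b}|u|^q\,dx$ from below and above in terms of $\|\nabla u\|_2$.

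For the \emph{upper} bound the plan is to combine $Q(u)=0$ with the sharp Gagliardo-Nirenberg inequality at the mass-critical exponent, $\int_{\R^N}|x|^{-b}|u|^p\,dx \leq C_{N,b} c^{(2-b)/N}\|\nabla u\|_2^2$; a short rearrangement produces
$$
\int_{\R^N}|x|^{-b}|u|^q\,dx \leq K'(c)\|\nabla u\|_2^2, \qquad K'(c) := \frac{2q}{N(q-2)+2b}\left(\frac{2 C_{N,b} c^{(2-b)/N}}{p}-1\right),
$$
and $K'(c) > 0$ precisely because $c > c_1 = (p/(2C_{N,b}))^{N/(2-b)}$ makes the parenthesis strictly positive. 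For the matching \emph{lower} bound the plan is to invoke Lemma \ref{inequ} with an auxiliary exponent $r \in (p, 2^*_b)$ (or $r = 2^*_b$ when $N \geq 3$) and $\theta = (p-q)/(r-q) \in (0,1)$, yielding an estimate of the form $\int|x|^{-b}|u|^p\,dx \leq C(\int|x|^{-b}|u|^q\,dx)^{1-\theta}\|\nabla u\|_2^{\theta(N(r-2)+2b)/2}c^{\theta(2(r-b)-N(r-2))/4}$; feeding in the trivial consequence $\|\nabla u\|_2^2 \leq (2/p)\int|x|^{-b}|u|^p\,dx$ of $Q(u)=0$ and solving for the $|u|^q$ integral leads to
$$
\int_{\R^N}|x|^{-b}|u|^q\,dx \geq C_1\|\nabla u\|_2^{\kappa(r)}, \qquad \kappa(r) := \frac{2 - \theta(N(r-2)+2b)/2}{1-\theta}.
$$

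Combining the two bounds forces $\|\nabla u\|_2^{2 - \kappa(r)} \geq C_1/K'(c)$, a strictly positive lower bound on $\|\nabla u\|_2$ uniform over $\mathcal{P}(c)$ whenever $\kappa(r) \in (0, 2)$. Substituting back into $E(u) \geq \alpha C_1 \|\nabla u\|_2^{\kappa(r)}$ then delivers both claims simultaneously: $\sigma(c) > 0$ because the right-hand side is bounded below by a positive constant independent of $u \in \mathcal{P}(c)$, and coercivity because $\|u\|_2^2 = c$ is fixed, so $\|u\|_{H^1} \to \infty$ forces $\|\nabla u\|_2 \to \infty$, which in turn sends the same lower bound to $+\infty$. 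The main technical hurdle is verifying that $\kappa(r) \in (0, 2)$ for some admissible $r$: at the degenerate endpoint $r = p$ both the numerator and denominator of $\kappa(r)$ vanish, so an L'H\^opital calculation is needed, which will produce $\lim_{r \to p^+}\kappa(r) = N(q-2)/2 + b$, a value lying strictly inside $(0, 2)$ thanks to $2 < q < p = 2+2(2-b)/N$; by continuity any $r$ sufficiently close to $p$ from above will then satisfy $\kappa(r) \in (0, 2)$, closing the argument.
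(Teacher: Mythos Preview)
Your proof is correct and shares the paper's core strategy: both arguments begin from the identity $E(u)=\frac{2(2-b)-N(q-2)}{4q}\int_{\R^N}|x|^{-b}|u|^q\,dx$ (obtained from $E-\tfrac12 Q$ using $N(p-2)+2b=4$), and both control the $q$-integral via Lemma~\ref{inequ}. The execution differs. The paper plugs the full Pohozaev identity into Lemma~\ref{inequ} with $r=2^*_b$ (for $N\ge3$), then applies Young's inequality to the resulting estimate to simultaneously extract a uniform positive lower bound on $\int|x|^{-b}|u|^q$ and an upper bound $\|\nabla u\|_2^2\le\tilde C\big(\int|x|^{-b}|u|^q\big)^{\beta}$; these two bounds are $c$-independent, and the hypothesis $c>c_1$ is used only implicitly to ensure $\mathcal P(c)\neq\emptyset$. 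Your route instead obtains an \emph{upper} bound $\int|x|^{-b}|u|^q\le K'(c)\|\nabla u\|_2^2$ from the mass-critical Gagliardo--Nirenberg inequality (which is where $c>c_1$ enters explicitly), and a \emph{lower} bound $\int|x|^{-b}|u|^q\ge C_1\|\nabla u\|_2^{\kappa(r)}$ from Lemma~\ref{inequ} combined with the $\mathcal S(c)$-condition; the L'H\^opital computation showing $\lim_{r\to p^+}\kappa(r)=\tfrac{N(q-2)+2b}{2}\in(0,2)$ is correct (and in fact $\kappa(2^*_b)\in(0,2)$ holds as well for $N\ge3$, so the limit argument is only really needed for $N=1,2$). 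Both routes are valid; the paper's avoids the extra upper-bound step and the continuity/limit argument by packaging everything through one Young's inequality.
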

\begin{proof}
For simplicity, we only show the proof for the case $N \geq 3$. The case $N=1,2$ can handled by an analogous way. Since $u \in \mathcal{P}(c)$, then $Q(u)=0$. Thus we can deduce that, for any $u \in \mathcal{P}(c)$,
\begin{align} \label{c5}
E(u)=E(u)-\frac 12 Q(u)=\frac{2(2-b)-N(q-2)}{4q} \int_{\R^N} |x|^{-b}|u|^q \,dx.
\end{align}
Making use of Lemma \ref{inequ} with $r=2_b^*$, we obtain that if $Q(u)=0$, then
\begin{align*}
\int_{\R^N} |\nabla u|^2 \,dx+\frac{N(q-2)+2b}{2q} \int_{\R^N} |x|^{-b} |u|^q \,dx &= \frac{N}{N+2-b} \int_{\R^N} |x|^{-b} |u|^p \, dx \\
& \leq  C \left(\int_{\R^N} |x|^{-b} |u|^q \,dx\right)^{1-\theta}\left(\int_{\R^N} |\nabla u|^2 \, dx\right)^{\frac{\theta(N-b)}{N-2}},
\end{align*}
where $p=(1-\theta) q+\theta 2_b^*$. Since $q>2$, then $\theta(N-b)/(N-2) <1$ and $\left(1-\theta\right)N+\theta b>2$. Using Young's inequality, we then have that
\begin{align*}
C\left(\int_{\R^N} |x|^{-b} |u|^q \,dx\right)^{1-\theta}\left(\int_{\R^N} |\nabla u|^2 \, dx\right)^{\frac{\theta(N-b)}{N-2}} &\leq \widetilde{C} \left(\int_{\R^N} |x|^{-b} |u|^q \,dx\right)^{\frac{\left(1-\theta\right)\left(N-2\right)}{\left(1-\theta\right)N+\theta b-2}} + \frac 12 \int_{\R^N} |\nabla u|^2 \,dx.
\end{align*}
Then we derive that 
\begin{align} \label{c511}
\int_{\R^N} |x|^{-b} |u|^q \,dx \geq \left(\frac{N(q-2)+2b}{2q \widetilde{C}}\right)^{\frac{\left(1-\theta\right)N+\theta b-2}{\theta(2-b)}}
\end{align}
and
\begin{align} \label{c521}
\int_{\R^N} |\nabla u|^2 \,dx \leq 2 \widetilde{C} \left(\int_{\R^N} |x|^{-b} |u|^q \,dx\right)^{\frac{\left(1-\theta\right)\left(N-2\right)}{\left(1-\theta\right)N+\theta b-2}}.
\end{align}
Coming back to \eqref{c5} and using \eqref{c511} and \eqref{c521}, we then get the desired result. Thus the proof is completed. 
\end{proof}

\begin{lem} \label{pss5}
Let ${N \geq 1}$, $2<q<p=2+2(2-b)/N$, $0<b<\min\{2, N\}$ and $\mu=-1$, then there exists a Palais-Smale sequence $\{u_n\} \subset P(c) $ with $(u_n)^-=o_n(1)$ for $E$ restricted on $\mathcal{S}(c)$ at the level $\sigma(c)$ for any $c>c_1$. 
\end{lem}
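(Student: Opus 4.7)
The plan is to closely mirror the proof of Lemma \ref{pss1}, namely to apply Ghoussoub's minimax principle (\cite[Theorem 3.2]{Gh}) to the functional $F(u):=E(u_{t_u})=\max_{t\geq 0}E(u_t)$ on the manifold $\mathcal{S}(c)$, but with two modifications that account for the mass-critical scaling. First I would observe that $\mathcal{S}(c)$ is \emph{open} in $S(c)$, since $u\mapsto \int_{\R^N}|\nabla u|^2\,dx-(2/p)\int_{\R^N}|x|^{-b}|u|^p\,dx$ is continuous on $H^1(\R^N)$. Moreover, because $p=2+2(2-b)/N$ forces $N(p-2)/2+b=2$, the scaling $u\mapsto u_t$ multiplies both $\int|\nabla u|^2\,dx$ and $\int|x|^{-b}|u|^p\,dx$ by the same factor $t^2$, so $\mathcal{S}(c)$ is invariant under $u\mapsto u_t$ for every $t>0$. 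This is what makes Lemma \ref{unique5} applicable on $\mathcal{S}(c)$ and ensures the fiber map $u\mapsto t_u$ (and hence $F$) is $C^1$ on $\mathcal{S}(c)$.

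With that in hand, I would take $B=\emptyset$ and let $\mathcal{G}$ be the family of all singletons in $\mathcal{S}(c)$; Lemma \ref{unique5} then gives the identity
$$
\sigma_{\mathcal{G}}(c):=\inf_{u\in \mathcal{S}(c)}\max_{t\geq 0} E(u_t)=\inf_{u\in \mathcal{P}(c)} E(u)=\sigma(c),
$$
since $u\mapsto u_{t_u}$ is a bijection $\mathcal{S}(c)\to \mathcal{P}(c)$ and on $\mathcal{P}(c)$ one has $t_u=1$. The deformation $\eta(s,u):=u_{1-s+st_u}$ is admissible by the scaling invariance just noted, and $\sigma(c)>0$ by Lemma \ref{coercive5}. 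To enforce the nonnegativity constraint I would first replace the minimizing sequence of singletons $\{\{u_n\}\}$ by $\{\{|u_n|\}\}$, which is still in $\mathcal{G}$ because $F(|u|)\leq F(u)$ and $|u|\in \mathcal{S}(c)$ whenever $u\in \mathcal{S}(c)$ (both sides of the defining inequality are invariant under taking absolute values). Ghoussoub's principle then furnishes a Palais-Smale sequence $\{\tilde u_n\}\subset \mathcal{S}(c)$ for $F$ at level $\sigma(c)$ with $(\tilde u_n)^-=o_n(1)$ and $\mathrm{dist}_{H^1}(\tilde u_n, D_n)=o_n(1)$ for some minimizing sequence $\{D_n\}\subset \mathcal{P}(c)$.

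Finally, setting $t_n:=t_{\tilde u_n}$ and $u_n:=(\tilde u_n)_{t_n}\in \mathcal{P}(c)$, the isomorphism $\psi\mapsto \psi_{t_n}$ of tangent spaces together with $dF(\tilde u_n)[\psi]=dE(u_n)[\psi_{t_n}]$ transfers the Palais-Smale property from $F$ to $E|_{S(c)}$, provided $\{t_n\}$ stays in a compact subinterval of $(0,\infty)$. This last boundedness is the main obstacle. The upper bound on $t_n$ and the lower bound on $\|\tilde u_n\|_{H^1}$ come from coercivity of $F$ on $\mathcal{S}(c)$ (which is inherited from Lemma \ref{coercive5} since $F(\tilde u_n)=E(u_n)$ and $\{u_n\}\subset \mathcal{P}(c)$) together with the fact that $\{D_n\}$ is bounded in $H^1(\R^N)$ and $\mathrm{dist}_{H^1}(\tilde u_n,D_n)=o_n(1)$. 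For the lower bound on $t_n$, I would use the explicit formula \eqref{deftu11} combined with the a priori bounds \eqref{c511} and \eqref{c521} on $\int |x|^{-b}|u_n|^q\,dx$ and $\|\nabla u_n\|_2$: the denominator in \eqref{deftu11} stays bounded above by the $H^1$-bound on $\tilde u_n$, while the numerator is bounded below away from $0$ since $\tilde u_n$ is close to $D_n\subset \mathcal{P}(c)$ and the Pohozaev quantity $\int |\nabla v|^2-(2/p)\int|x|^{-b}|v|^p$ is uniformly negative on $\mathcal{P}(c)$ by $Q(v)=0$ together with the lower bound \eqref{c511}. Putting this together yields $1/C\leq t_n\leq C$, which completes the argument.
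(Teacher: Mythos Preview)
Your approach is essentially the paper's: it too defers to the argument of Lemma~\ref{pss1} (via Lemma~\ref{pss4}), carried out on the open submanifold $\mathcal{S}(c)$ in place of $S(c)$, and you have correctly identified the crucial new ingredient, namely that for $p=2+2(2-b)/N$ the set $\mathcal{S}(c)$ is invariant under $u\mapsto u_t$.

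One small correction in your $t_n$ estimate: the exponent $\tfrac{N(q-2)-2(2-b)}{2}$ in \eqref{deftu11} is \emph{negative} (since $q<2+2(2-b)/N$), so bounding the numerator from below and the denominator from above yields an \emph{upper} bound on $t_n$, not a lower one. For the lower bound you need the opposite: the numerator is bounded above by the $H^1$-bound on $\tilde u_n$ together with \eqref{GN}, and the denominator $\int|x|^{-b}|\tilde u_n|^q\,dx$ is bounded below because $\tilde u_n$ is $H^1$-close to $D_n\subset\mathcal{P}(c)$, where \eqref{c511} applies. Alternatively, and closer to the paper's template in Lemma~\ref{ps}, one may simply use the identity $t_n^{2}=\|\nabla u_n\|_2^{2}/\|\nabla\tilde u_n\|_2^{2}$ and read off both bounds from the two-sided control of $\|\nabla u_n\|_2$ (Lemma~\ref{coercive5} and $\sigma(c)>0$) and of $\|\nabla\tilde u_n\|_2$ (closeness to $D_n$).
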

\begin{proof}
The proof of this lemma can be completed adapting the ideas to that of Lemma \ref{pss4} and replacing the role of $S(c)$ by $\mathcal{S}(c)$, we omit its proof here.
\end{proof}

\begin{lem} \label{la3}
Let ${N \geq 1}$, $2<q<p=2+2(2-b)/N$, $0<b<\min\{2, N\}$ and $\mu=-1$. If $u \in \mathcal{S}(c)$ is a solution to the equation
\begin{align}\label{equ5}
-\Delta u + \lambda u= -|x|^{-b}|u|^{q-2} u + |x|^{-b}|u|^{p-2} u \quad \mbox{in} \,\, \R^N,
\end{align}
then there exists a constant $c_1^*>0$ such that $\lambda>0$ for any $c_1<c<c_1^*$, where $c_1^*>0$ is defined by
$$
c_1^*:=\left(\frac{p\left(2(q-b)-N(q-2)\right)}{2C_{N,b}(p-q)(N-b)}\right)^{\frac{N}{2-b}}>c_1.
$$
\end{lem}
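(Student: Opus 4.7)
The plan is to combine two integral identities with one sharp estimate, all tied to the mass critical value $p=2+2(2-b)/N$. First I will test \eqref{equ5} against $u$ to get $\|\nabla u\|_2^2 + \lambda c = \int|x|^{-b}|u|^p\,dx-\int|x|^{-b}|u|^q\,dx$, and then invoke the Pohozaev identity $Q(u)=0$ from Lemma~\ref{ph}. Because $p$ is mass critical, the algebraic identity $N(p-2)+2b=4$ collapses the coefficient $(N(p-2)+2b)/(2p)$ to $2/p$, so Pohozaev reads $\|\nabla u\|_2^2 = (2/p)\int|x|^{-b}|u|^p\,dx - ((N(q-2)+2b)/(2q))\int|x|^{-b}|u|^q\,dx$. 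Subtracting the two identities eliminates $\|\nabla u\|_2^2$ and yields
$$\lambda c = \frac{p-2}{p}\int_{\R^N}|x|^{-b}|u|^p\,dx - \frac{2(q-b)-N(q-2)}{2q}\int_{\R^N}|x|^{-b}|u|^q\,dx,$$
in which both coefficients are strictly positive when $2<q<p$.

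Next I will control $\int|x|^{-b}|u|^q\,dx$ from above in terms of $\int|x|^{-b}|u|^p\,dx$. Gagliardo--Nirenberg \eqref{GN} at mass critical exponent reduces to $\int|x|^{-b}|u|^p\,dx \leq C_{N,b}\|\nabla u\|_2^2\,c^{(2-b)/N}$, which inverts to $\|\nabla u\|_2^2 \geq \int|x|^{-b}|u|^p\,dx/(C_{N,b}c^{(2-b)/N})$. Feeding this lower bound into the Pohozaev identity above, and using that $c>c_1$ ensures $2C_{N,b}c^{(2-b)/N}>p$ so that the rearrangement has the correct sign, produces
$$\int_{\R^N}|x|^{-b}|u|^q\,dx \;\leq\; \frac{2q\bigl(2C_{N,b}c^{(2-b)/N}-p\bigr)}{p\,(N(q-2)+2b)\,C_{N,b}c^{(2-b)/N}}\int_{\R^N}|x|^{-b}|u|^p\,dx.$$

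Finally I will insert this bound into the formula for $\lambda c$, factor out $\int|x|^{-b}|u|^p\,dx$ and common denominators, and apply the algebraic identity
$$2\bigl(2(q-b)-N(q-2)\bigr) - (p-2)(N(q-2)+2b) = 2(N-b)(p-q),$$
which holds precisely because $p-2=2(2-b)/N$. After this reduction, positivity of $\lambda$ becomes
$$p\bigl(2(q-b)-N(q-2)\bigr) > 2(N-b)(p-q)\,C_{N,b}\,c^{(2-b)/N},$$
which is exactly $c<c_1^*$. The accompanying strict inequality $c_1^*>c_1$ will be verified by expanding $2(q-b)-N(q-2)-(N-b)(p-q) = (2-b)(q-2+2b/N)>0$.

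The main obstacle will be the algebraic bookkeeping in the final reduction: the $c$-dependent coefficient has the ``wrong'' sign at first sight, and only the identity produced by the mass critical value $p=2+2(2-b)/N$ turns the sign test into the clean threshold $c_1^*$. Once the two identities $N(p-2)+2b=4$ and $2(2(q-b)-N(q-2))-(p-2)(N(q-2)+2b)=2(N-b)(p-q)$ are in place, every other step is routine.
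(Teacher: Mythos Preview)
Your proof is correct and reaches the same threshold $c_1^*$, but the elimination strategy differs from the paper's. The paper takes the linear combination of the two identities that eliminates the $q$-integral, obtaining
\[
\frac{\lambda\,(N(q-2)+2b)}{2q}\,c \;=\; \frac{2(q-b)-N(q-2)}{2q}\,\|\nabla u\|_2^2 \;-\; \frac{(N-b)(p-q)}{pq}\int_{\R^N}|x|^{-b}|u|^p\,dx,
\]
and then a \emph{single} application of the mass-critical Gagliardo--Nirenberg inequality \eqref{GN} bounds the $p$-integral above by $C_{N,b}\,c^{(2-b)/N}\|\nabla u\|_2^2$, immediately giving positivity of $\lambda$ for $c<c_1^*$. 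You instead eliminate $\|\nabla u\|_2^2$ first, which forces a detour: you invert \eqref{GN} to lower-bound $\|\nabla u\|_2^2$, feed that back into the Pohozaev identity to bound the $q$-integral by the $p$-integral, and only then substitute into your expression for $\lambda c$; the final reduction also requires the auxiliary identity $2\bigl(2(q-b)-N(q-2)\bigr)-(p-2)\bigl(N(q-2)+2b\bigr)=2(N-b)(p-q)$. Both arguments use exactly the same inputs and yield the same conclusion; the paper's elimination order is simply shorter, since removing the $q$-integral leaves two quantities that Gagliardo--Nirenberg compares directly.
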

\begin{proof}
Since $u \in H^1(\R^N)$ is a solution to \eqref{equ5}, then
$$
\int_{\R^N} |\nabla u|^2 \,dx + \lambda \int_{\R^N} |u|^2 \,dx + \int_{\R^N} |x|^{-b}|u|^q \,dx =\int_{\R^N} |x|^{-b}|u|^p\,dx.
$$ 
In addition, from Lemma \ref{ph}, we have that
$$
\int_{\R^N} |\nabla u|^2 \,dx + \frac{N(q-2)+2b}{2q}\int_{\R^N} |x|^{-b}|u|^q \,dx = \frac{N(p-2)+2b}{2p}\int_{\R^N} |x|^{-b}|u|^p \,dx.
$$
As a consequence, we derive that
\begin{align*}
\frac{\lambda\left(N(q-2)+2b\right)}{2q}\int_{\R^N} |u|^2 \,dx&=\frac{2(q-b)-N(q-2)}{2q}\int_{\R^N} |\nabla u|^2 \,dx-\frac{(N-b)(p-q)}{pq}\int_{\R^N} |x|^{-b}|u|^p \,dx \\
& \geq \left(\frac{2(q-b)-N(q-2)}{2q}-\frac{C_{N,b}(N-b)(p-q)}{pq} c^{\frac{2-b}{N}}\right)\int_{\R^N} |\nabla u|^2 \,dx, 
\end{align*}
where we used \eqref{GN} for the inequality. Therefore, we see that $\lambda>0$ for any $c_1<c<c_1^*$. Due to $2<q<p=2+2(2-b)/N$, then $c_1^*>c_1$. Thus the proof is completed.
\end{proof}


\begin{proof}[Proof of Theorem \ref{thm6}]
Taking advantage of previous lemmas in this section and reasoning as the proof of Theorem \ref{thm5}, we are able to derive the desired result.
\end{proof}

\section{Propositions of the function $c \mapsto \sigma(c)$ for $c>0$} \label{prop}

In this section, we investigate some properties of the function $c \mapsto \sigma(c)$ for $c>c_0$, where $\sigma(c) \in \R^+$ is defined by
\begin{align*}
\sigma(c):=
\left\{
\begin{aligned}
&\inf_{u \in P(c)} E(u), \quad \mbox{if} \,\, q>2+2(2-b)/N\,\,\mbox{and} \,\, \mu=1 \,\,\mbox{or} \,\, p>2+2(2-b)/N \,\, \mbox{and} \,\, \mu=-1, \\
&\inf_{u \in \mathcal{P}(c)} E(u), \quad \mbox{if} \,\, p=2+2(2-b)/N\,\, \mbox{and} \,\,\mu=-1.
\end{aligned}
\right.
\end{align*}
Throughout this section, we shall always assume that${N \geq 1}$, $2<q<p<2^*_b$ and $0<b<\min\{2, N\}$. To prove Proposition \ref{prop2}, we are going to establish the following propositions.

\begin{prop} \label{nonincreasing}
Let $q>2+2(2-b)/N$ and $\mu=1$ or $p \geq 2+2(2-b)/N$ and $\mu=-1$, then the function $c \mapsto \sigma(c)$ is nonincreasing on $(c_0, \infty)$.
\end{prop}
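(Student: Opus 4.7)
Lemmas \ref{unique3}, \ref{unique4}, and \ref{unique5} furnish in each of the three cases the mountain-pass characterization
\begin{equation*}
\sigma(c) = \inf_{u} \max_{t \geq 0} E(u_t),
\end{equation*}
where the infimum is taken over $u \in S(c)$ when $\mu = 1$ and $q > 2+2(2-b)/N$ or when $\mu = -1$ and $p > 2+2(2-b)/N$, and over $u \in \mathcal{S}(c)$ when $\mu = -1$ and $p = 2+2(2-b)/N$. The plan is, for fixed $c' > c > c_0$ and $\varepsilon > 0$, to take a near-minimizer $u$ for $\sigma(c)$ and exhibit an admissible competitor $w$ at mass $c'$ with $\max_{t \geq 0} E(w_t) \leq E(u) + O(\varepsilon)$, which immediately gives $\sigma(c') \leq \sigma(c)$.

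Concretely, pick $u \in P(c)$ (respectively $\mathcal{P}(c)$) with $E(u) \leq \sigma(c) + \varepsilon$, and choose $\phi \in C_c^{\infty}(\R^N)$ with $\|\phi\|_2^2 = c' - c$ and $\|\nabla \phi\|_2^2 \leq \varepsilon$; the latter is arranged by rescaling $\phi(x) = \lambda^{N/2}\phi_0(\lambda x)$ with $\lambda > 0$ sufficiently small. For $y \in \R^N$ set $v_y := u + \phi(\cdot - y)$ and $w_y := (c' / \|v_y\|_2^2)^{1/2} v_y \in S(c')$. Using Lebesgue's dominated convergence together with the decay of the weight $|x|^{-b}$ at infinity, all mixed terms and the $|x|^{-b}$-weighted integrals of $\phi(\cdot - y)$ vanish as $|y| \to \infty$, so that
\begin{align*}
\|\nabla w_y\|_2^2 &\to \|\nabla u\|_2^2 + \|\nabla \phi\|_2^2, \\
\int_{\R^N} |x|^{-b} |w_y|^r \, dx &\to \int_{\R^N} |x|^{-b} |u|^r \, dx \quad (r = p, q).
\end{align*}
Consequently $E((w_y)_t) \to E(u_t) + \tfrac{t^2}{2}\|\nabla \phi\|_2^2$ uniformly for $t$ in compact subsets of $(0, \infty)$. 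In the subcase $p = 2+2(2-b)/N$, $\mu = -1$, the membership $w_y \in \mathcal{S}(c')$ for $|y|$ large is ensured because $u$ satisfies the open strict inequality defining $\mathcal{S}(c)$, and this is preserved by the vanishing perturbation.

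By Lemma \ref{unique3}, \ref{unique4} or \ref{unique5}, select $t_y > 0$ with $(w_y)_{t_y}$ in the Pohozaev constraint set and $E((w_y)_{t_y}) = \max_{t \geq 0} E((w_y)_t)$. Since $\int |x|^{-b}|w_y|^p\, dx$ is uniformly bounded below by a positive constant while $\|\nabla w_y\|_2^2$ is uniformly bounded above, the negative term $-p^{-1} t^{N(p-2)/2 + b} \int |x|^{-b}|w_y|^p\, dx$ dominates $E((w_y)_t)$ as $t$ grows, uniformly in $|y|$ large; hence there exists $T > 0$, independent of $|y|$ large, with $E((w_y)_t) \leq E(u) - 1$ for all $t \geq T$. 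Combined with the uniform convergence on $[0, T]$, this yields
\begin{equation*}
\sigma(c') \leq E((w_y)_{t_y}) \leq \max_{t \in [0, T]} \left( E(u_t) + \tfrac{t^2}{2} \|\nabla \phi\|_2^2 \right) + o_y(1) \leq E(u) + \tfrac{T^2}{2}\varepsilon + o_y(1),
\end{equation*}
and letting $|y| \to \infty$ followed by $\varepsilon \to 0^+$ gives $\sigma(c') \leq \sigma(c)$.

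The main obstacle is the quantitative control of the scaling parameter $t_y$ of the competitor: a priori the extra $L^2$-mass could shift $t_y$ far from $1$ and amplify the kinetic contribution $\tfrac{t_y^2}{2}\|\nabla \phi\|_2^2$. This is resolved by choosing $\phi$ with arbitrarily small Dirichlet energy (possible since $q, p < 2^*_b$), combined with the decay of $|x|^{-b}$ at infinity which causes the translate $\phi(\cdot - y)$ to contribute essentially only its kinetic energy to $E$, leaving the rest of the energy landscape of $u$ undisturbed. This partial restoration of translation invariance by the vanishing weight is the conceptual heart of the argument.
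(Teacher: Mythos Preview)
Your argument is correct and follows the same underlying idea as the paper's proof---add a piece with arbitrarily small Dirichlet energy, placed far from the origin so that the weight $|x|^{-b}$ kills its contribution to the nonlinear terms---but the implementation differs. The paper first truncates $u$ to compact support via a cut-off $u^{\delta}(x)=u(x)\chi(\delta x)$, then adds a compactly supported piece $\tilde v^{\delta}$ with support in an annulus \emph{exactly disjoint} from that of $u^{\delta}$; the energy then splits exactly, $E((w^{\delta}_t)_s)=E((u^{\delta})_s)+E(((\tilde v^{\delta})_t)_s)$, and one bounds $\max_{s}$ of each summand separately. You instead keep $u$ untouched and translate $\phi(\cdot-y)$ to infinity, obtaining only \emph{asymptotic} splitting via the decay of $|x|^{-b}$ and the vanishing of cross terms. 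Your route avoids the truncation step and the auxiliary convergence $(u^{\delta})_{t_{u^{\delta}}}\to u$, at the price of needing the uniform tail bound on $[T,\infty)$ and the uniform convergence on $[0,T]$.

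One point deserves a cleaner treatment: you couple the two small parameters, writing both $E(u)\le\sigma(c)+\varepsilon$ and $\|\nabla\phi\|_2^2\le\varepsilon$, and conclude $\sigma(c')\le E(u)+\tfrac{T^2}{2}\varepsilon$. Since $u$ (and hence $T$) varies with $\varepsilon$, the limit $\varepsilon\to0^+$ requires either checking that $T$ stays bounded over near-minimizers (which does follow from coercivity of $E$ on $P(c)$ together with $Q(u)=0$, giving uniform two-sided bounds on $\|\nabla u\|_2^2$ and the weighted integrals), or---more simply---decoupling the parameters: fix $u$ with $E(u)\le\sigma(c)+\varepsilon_1$, send $\|\nabla\phi\|_2^2=\varepsilon_2\to0$ first (so that $\sigma(c')\le E(u)$ with $T=T(u)$ fixed), and only then let $\varepsilon_1\to0$. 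With that adjustment the proof is complete.
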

\begin{proof}
For simplicity, we only show the proof for the case $q>2+2(2-b)/N$ and $\mu=1$ or $p>2+2(2-b)/N$ and $\mu=-1$. In this case, we have that $c_0=0$. To prove the result, we need to verify that, if $c_1>c_2>0$, then $\sigma(c_1) \leq \sigma(c_2)$. In view of the definition of $\sigma(c)$, we know that, for any $\eps>0$, there exists $u \in P(c_2)$ such that
\begin{align} \label{non1}
E(u) \leq \sigma (c_2) + \frac{\eps}{2}.
\end{align}
Let $\chi \in C_0^{\infty}(\R^N, [0,1])$ be a cut-off function such that $\chi(x) =1$ for $|x| \leq 1$ and $\chi(x)=0$ for $|x|\geq 2$. For $\delta>0$ small, we define $u^{\delta}(x):=u(x) \chi(\delta x)$ for $x \in \R^N$. It is standard to check that $u^{\delta} \to u$ in $H^1(\R^N)$ as $\delta \to 0^+$. Therefore, we have that $(u^{\delta})_{t_{u^{\delta}}} \to u$ in $H^1(\R^N)$ as $\delta \to 0^+$, where $t_{u^{\delta}}>0$ is a constant such that $Q((u^{\delta})_{t_{u^{\delta}}})=0$ and $t_{u_{\delta}} \to 1$ as $\delta \to 0^+$. It then follows that there exists a constant $\delta>0$ small such that
\begin{align} \label{non2}
E((u^{\delta})_{t_{u_{\delta}}}) \leq E(u) + \frac{\eps}{4}.
\end{align}
Let $v \in C^{\infty}_0(\R^N)$ be such that $\mbox{supp}\,v \subset B(0, 1+2/\delta) \backslash B(0, 2/\delta)$ and set
$$
\tilde{v}^{\delta}:=\frac{\left(c_1-\|u^{\delta}\|_2^2\right)^{\frac 12}}{\|v\|_2} v.
$$ 
For $0<t<1$, we now define $w^{\delta}_t:=u^{\delta} + (\tilde{v}^{\delta})_t$, where $ (\tilde{v}^{\delta})_t(x):= t^{N/2}\tilde{v}^{\delta}(tx)$ for $x\in \R^N$. Observe that $\mbox{supp}\, u_{\delta} \cap \mbox{supp}\, (\tilde{v}^{\delta})_t =\emptyset$, then $\|w^{\delta}_t\|_2^2=c_1$. Moreover, it is not hard to infer that $ \| \nabla(\tilde{v}^{\delta})_t\|_2 \to 0$ as $t \to 0^+$. Therefore, we are able to deduce that there exist $t, \delta>0$ small enough such that
\begin{align}\label{non3}
\max_{s>0}E(((\tilde{v}^{\delta})_t)_s) \leq \frac{\eps}{4}.
\end{align}
Accordingly, from \eqref{non1}-\eqref{non3}, we arrive at
\begin{align*}
\sigma(c_1) \leq \max_{s>0} E((w^{\delta}_t)_s)&=\max_{s>0}\left(E((u^{\delta})_s) + E(((\tilde{v}^{\delta})_t)_s) \right) \\
&\leq E((u^{\delta})_{t_{u_{\delta}}})  + \frac{\eps}{4}\leq E(u)+ \frac{\eps}{2} \leq \sigma(c_2) +\eps.
\end{align*}
Since $\eps>0$ is arbitrary, then $\sigma(c_1) \leq \sigma(c_2)$. Thus the proof is completed.
\end{proof}


\begin{prop} \label{continuous}
Let $q>2+2(2-b)/N$ and $\mu=1$ or $p \geq 2+2(2-b)/N$ and $\mu=-1$, then the function $c \mapsto \sigma(c)$ is continuous for any $c>c_0$.
\end{prop}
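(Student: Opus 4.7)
The plan is to prove continuity at a fixed $c > c_0$ by establishing the two inequalities $\limsup_{c' \to c} \sigma(c') \leq \sigma(c)$ and $\liminf_{c' \to c} \sigma(c') \geq \sigma(c)$, using as the common engine the uniqueness of the fiber-map peak $u \mapsto t_u$ provided by Lemmas \ref{unique3}, \ref{unique4}, or \ref{unique5} according to the subcase ($q > 2+2(2-b)/N$ with $\mu=1$; $p > 2+2(2-b)/N$ with $\mu=-1$; or $p = 2+2(2-b)/N$ with $\mu=-1$), together with the monotonicity from Proposition \ref{nonincreasing}. For brevity let $\mathcal{M}(c)$ denote the corresponding admissible set: $P(c)$ in the first two subcases, and $\mathcal{P}(c) = P(c)\cap \mathcal{S}(c)$ in the last.

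For the upper bound at $c$, given $\eps>0$ I select $u \in \mathcal{M}(c)$ with $E(u) \leq \sigma(c)+\eps$ and define, for a sequence $c_n \to c$, $v_n := (c_n/c)^{1/2} u \in S(c_n)$, so that $v_n \to u$ in $H^1(\R^N)$. In the critical case $\mathcal{S}$ is an open condition, so $v_n \in \mathcal{S}(c_n)$ for $n$ large. The relevant uniqueness lemma then produces a unique $t_n>0$ with $(v_n)_{t_n} \in \mathcal{M}(c_n)$; since $u \in \mathcal{M}(c)$ corresponds to $t_u=1$ and the map $v \mapsto t_v$ is $C^1$, one gets $t_n \to 1$, hence $(v_n)_{t_n} \to u$ in $H^1$, so $E((v_n)_{t_n}) \to E(u)$. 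This gives $\limsup \sigma(c_n) \leq E(u) \leq \sigma(c) + \eps$, and $\eps \to 0$ closes this direction.

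For the lower bound, Proposition \ref{nonincreasing} disposes of $c_n \leq c$, reducing matters to $c_n \downarrow c$. I pick $u_n \in \mathcal{M}(c_n)$ with $E(u_n) \leq \sigma(c_n) + 1/n$. Coercivity of $E$ on $\mathcal{M}(c_n)$ (Lemmas \ref{coercive31}, \ref{coercive3}, or \ref{coercive5}) combined with $\sigma(c_n) \leq \sigma(c)$ furnishes a uniform $H^1$-bound on $\{u_n\}$; moreover, the coercivity proofs also supply a uniform positive lower bound on $\|\nabla u_n\|_2$, and the Pohozaev identity $Q(u_n)=0$ combined with Gagliardo-Nirenberg yields a uniform positive lower bound on $\int_{\R^N} |x|^{-b}|u_n|^p\,dx$. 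Set $\tilde{u}_n := (c/c_n)^{1/2} u_n \in S(c)$; since $c_n \to c$, $\tilde{u}_n - u_n \to 0$ in $H^1$, so $Q(\tilde{u}_n) = Q(u_n) + o(1) = o(1)$. Applying the uniqueness lemma again produces $s_n>0$ with $(\tilde{u}_n)_{s_n} \in \mathcal{M}(c)$; the upper and lower bounds on the three coefficients $\|\nabla \tilde{u}_n\|_2^2$, $\int|x|^{-b}|\tilde{u}_n|^q\,dx$, $\int|x|^{-b}|\tilde{u}_n|^p\,dx$ confine the unique zero of the scalar polynomial $s \mapsto Q((\tilde{u}_n)_s)$ to a compact subinterval of $(0,\infty)$, and $Q(\tilde{u}_n) \to 0$ together with uniqueness then forces $s_n \to 1$. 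Consequently $(\tilde{u}_n)_{s_n} - \tilde{u}_n \to 0$ in $H^1$, $E((\tilde{u}_n)_{s_n}) = E(u_n) + o(1)$, and $\sigma(c) \leq E((\tilde{u}_n)_{s_n}) \leq \sigma(c_n) + o(1)$, yielding $\sigma(c) \leq \liminf \sigma(c_n)$.

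The main technical obstacle is precisely the claim $s_n \to 1$ in the lower-bound argument: one must quantitatively rule out $s_n$ escaping to $0$ or $\infty$, which requires both the uniform positivity of the weighted $L^p$-norm of $u_n$ (to prevent the leading term in $Q((\tilde{u}_n)_s)$ from degenerating as $s \to \infty$) and the uniform positivity of $\|\nabla u_n\|_2$ (to prevent collapse as $s \to 0$). Both are present in each subcase by the coercivity estimates combined with the Pohozaev identity, but verifying this uniformly as $c_n \to c^+$ is the crux of the argument; once $\{s_n\}$ lies in a compact subset of $(0,\infty)$, the smooth dependence of the fiber map on its coefficients and the uniqueness of its vanishing locus force the limit to be $1$, and the remaining passage to the limit is straightforward.
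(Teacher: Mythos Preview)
Your proof is correct and follows essentially the same approach as the paper's: both scale between $S(c)$ and $S(c_n)$ and use the fiber-map/minimax characterization $\sigma(c)=\inf_{u}\max_{t>0}E(u_t)$ together with the continuity of $u\mapsto t_u$ to pass to the limit. The paper's version is terser (the lower bound is handled by symmetry with a one-line ``similarly''), whereas you supply the coercivity and $s_n\to 1$ details explicitly and additionally invoke Proposition~\ref{nonincreasing} to dispose of the subcase $c_n\leq c$, which is a minor convenience not used in the paper.
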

\begin{proof}
For simplicity, we only prove the result for the case $q>2+2(2-b)/N$ and $\mu=1$ or $p >2+2(2-b)/N$ and $\mu=-1$. Let $c>0$ and $\{c_n\} \subset (0, \infty)$ satisfying $c_n \to c$ as $n \to \infty$, we shall prove that $\sigma(c_n) \to \sigma(c)$ as $n \to \infty$. From the definition of $\sigma(c)$, we have that, for any $\eps>0$, there exists $u \in P(c)$ such that
$$
E(u) \leq \sigma(c) + \frac{\eps}{2}.
$$
Define
$$
u_n:=\left(\frac{c_n}{c}\right)^{\frac 12} v \in S(c_n).
$$
It is not hard to verify that $v_n \to v$ in $H^1(\R^N)$ as $n \to \infty$. Therefore, we are able to deduce that
\begin{align*}
\sigma(c_n) \leq \max_{t>0} E((u_n)_{t}) \leq \max_{t>0} E(u_t) + \frac{\eps}{2}=E(u)+\frac{\eps}{2} \leq \sigma(c) +\eps.
\end{align*}
This implies that
$$
\limsup_{n\to \infty}\sigma(c_n) \leq \sigma(c).
$$
On the other hand, by a similar way, we can prove that
$$
\sigma(c) \leq \displaystyle\liminf_{n\to \infty}\sigma(c_n).
$$
Then the desired conclusion follows. Thus proof is completed.
\end{proof}

\begin{prop} \label{limit1}
Let $q>2+2(2-b)/N$ and $\mu=1$ or $p>2+2(2-b)/N$ and $\mu=-1$, then $\sigma(c) \to \infty$ as $c \to 0^+$.
\end{prop}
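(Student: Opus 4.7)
The plan is to show that $\|\nabla u\|_2^2 \to \infty$ uniformly over $u \in P(c)$ as $c \to 0^+$, and then couple this with the identity $E(u) = E(u) - \tfrac{1}{2}Q(u)$ valid on $P(c)$ to conclude that $\inf_{u\in P(c)} E(u) \to \infty$. The two cases of the hypothesis require slightly different inputs but follow the same template.

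In the case $p > 2+2(2-b)/N$ and $\mu = -1$, I would simply reread the proof of Lemma \ref{coercive3}: combining $Q(u)=0$ with the Gagliardo-Nirenberg inequality \eqref{GN} already produced the explicit lower bound \eqref{below111},
$$\int_{\R^N} |\nabla u|^2 \, dx \geq \left(\frac{2p}{C_{N,p,b}(N(p-2)+2b)}\right)^{\frac{4}{N(p-2)-2(2-b)}} c^{-\frac{2(p-b)-N(p-2)}{N(p-2)-2(2-b)}}.$$
The exponent of $c$ is strictly negative, because $p < 2_b^*$ forces $2(p-b) > N(p-2)$, so the right-hand side blows up as $c \to 0^+$. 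Inserting this into formula \eqref{c31}, whose first summand is a positive multiple of $\|\nabla u\|_2^2$ and whose second summand is non-negative, immediately yields $\sigma(c) \to \infty$.

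For the case $q > 2+2(2-b)/N$ and $\mu = 1$, I would revisit the Gagliardo-Nirenberg estimate \eqref{c3111} from the proof of Lemma \ref{coercive31}. Writing $A := \|\nabla u\|_2^2 > 0$ it takes the shape $A \leq C_1 A^{\alpha} c^{\beta_1} + C_2 A^{\gamma} c^{\beta_2}$ with $\alpha = (N(q-2)+2b)/4 > 1$ and $\gamma = (N(p-2)+2b)/4 > 1$ (both exceed $1$ precisely because $q, p > 2+2(2-b)/N$), together with $\beta_1, \beta_2 > 0$ (because $q, p < 2_b^*$). Dividing through by $A$ and noting that at least one of the two summands on the right must be $\geq 1/2$ gives, by a pigeonhole, a lower bound of the form $A \geq K c^{-r}$ with $r > 0$, so $A \to \infty$ as $c \to 0^+$. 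Finally, the identity \eqref{c3} writes $E(u)$ as a linear combination with positive coefficients of the two weighted $L^q$ and $L^p$ integrals, while $Q(u)=0$ writes $A$ as a linear combination with positive coefficients of the same two integrals; comparing ratios of coefficients produces $E(u) \geq \kappa A$ for a constant $\kappa > 0$ independent of $c$, and $\sigma(c) \to \infty$ follows.

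The only step demanding any real thought is the pigeonhole/bootstrap inside the $\mu = 1$ case that turns \eqref{c3111} into a quantitative lower bound on $\|\nabla u\|_2^2$ in terms of $c$; once the signs of $\alpha - 1$, $\gamma - 1$, $\beta_1$, $\beta_2$ are checked against the standing assumptions $2+2(2-b)/N < q < p < 2_b^*$, the rest is purely bookkeeping.
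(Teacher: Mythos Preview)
Your proposal is correct and follows essentially the same route as the paper: use $Q(u)=0$ together with the Gagliardo--Nirenberg inequality to force $\|\nabla u\|_2^2 \to \infty$ uniformly on $P(c)$ as $c\to 0^+$, then feed this into the identity $E(u)=E(u)-\tfrac{1}{2}Q(u)$ (or its variant $E(u)-\tfrac{2}{N(p-2)+2b}Q(u)$ in the $\mu=-1$ case). Your pigeonhole argument and the direct comparison $E(u)\ge \kappa\,\|\nabla u\|_2^2$ are slightly crisper than the paper's intermediate step through the weighted $L^q$ and $L^p$ integrals, but the substance is the same.
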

\begin{proof}
Here we only consider the case for $q>2+2(2-b)/N$ and $\mu=1$. The other case can be done by a similar way. For any $u_c\in P(c)$, we have that $Q(u_c)=0$. Therefore, we conclude that
\begin{align} \label{c311}
\begin{split}
E(u_c)&=E(u_c)-\frac 12 Q(u_c)\\ 
&=\frac{N(q-2)-2(2-b)}{4q} \int_{\R^N} |x|^{-b}|u_c|^q \,dx + \frac{N(p-2)-2(2-b)}{4p} \int_{\R^N} |x|^{-b}|u_c|^p \,dx.
\end{split}
\end{align}
Using the fact that $Q(u_c)=0$ and \eqref{GN}, we find that
\begin{align*}
\int_{\R^N} |\nabla u_c|^2 \,dx &= \frac{N(q-2)+2b}{2q}\int_{\R^N} |x|^{-b}|u_c|^q \,dx + \frac{N(p-2)+2b}{2p}\int_{\R^N} |x|^{-b}|u_c|^p \,dx \\
& \leq  \frac{C_{N,q,b} \left(N(q-2)+2b\right)}{2q}\left(\int_{\R^N} |\nabla u_c|^2 \,dx\right)^{\frac{N(q-2)+2b}{4}} c^{\frac q2 -\frac{N(q-2)+2b}{4}}  \\
& \quad +\frac{C_{N,p,b} \left(N(p-2)+2b\right)}{2p}\left(\int_{\R^N} |\nabla u_c|^2 \,dx\right)^{\frac{N(p-2)+2b}{4}} c^{\frac p2 -\frac{N(p-2)+2b}{4}}.
\end{align*}
Due to $2+2(2-b)/N<q<p<2^*_b$, then $\|\nabla u_c\|_2 \to \infty$ as $c \to 0^+$. Note that $Q(u_c)=0$, then
$$
\int_{\R^N} |x|^{-b}|u_c|^q \,dx + \int_{\R^N} |x|^{-b}|u_c|^p \,dx \to \infty \quad \mbox{as} \,\, c \to 0^+.
$$
This along with \eqref{c311} leads to the desired result. Thus the proof is completed.
\end{proof}

\begin{prop} \label{limit2}
Let $p=2+2(2-b)/N$ and $\mu=-1$, then $\sigma(c) \to \infty$ as $c \to (c_1)^+$.
\end{prop}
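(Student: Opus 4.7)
My plan is to reduce the problem to showing that the $q$-norm $\int_{\R^N}|x|^{-b}|u|^q\,dx$ blows up uniformly on $\mathcal{P}(c)$ as $c\to(c_1)^+$. This reduction is natural because for any $u \in \mathcal{P}(c)$ the identity $Q(u)=0$, together with the mass-critical exponent relation $p=2+2(2-b)/N$ (so $N(p-2)+2b=4$), yields exactly as in the proof of Lemma \ref{coercive5} the identity
$$E(u) = \frac{2(2-b)-N(q-2)}{4q}\int_{\R^N}|x|^{-b}|u|^q\,dx.$$

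The key step is to extract two competing bounds from $Q(u)=0$. First, I would apply the sharp Gagliardo-Nirenberg inequality \eqref{GN} to the $p$-term; using that $2C_{N,b}/p = c_1^{-(2-b)/N}$ by the very definition of $c_1$, this yields, for $c>c_1$,
$$\left(\left(\frac{c}{c_1}\right)^{\frac{2-b}{N}}-1\right)\int_{\R^N}|\nabla u|^2\,dx \;\geq\; \frac{N(q-2)+2b}{2q}\int_{\R^N}|x|^{-b}|u|^q\,dx. \qquad (\textnormal{A})$$
This single inequality is not enough, but it encodes how the relative weight of the $q$-integral against the gradient is squeezed to $0$ as $c\to(c_1)^+$. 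Second, mimicking the second half of the proof of Lemma \ref{coercive5} — applying Lemma \ref{inequ} with $r=2_b^*$ when $N\geq 3$ (and any fixed $r\in(p,\infty)$ when $N=1,2$), then using Young's inequality to absorb the gradient term — I would derive a reverse-type interpolation
$$\int_{\R^N}|\nabla u|^2\,dx \;\leq\; K\left(\int_{\R^N}|x|^{-b}|u|^q\,dx\right)^{\frac{1-\theta}{1-\eta}}, \qquad (\textnormal{B})$$
with $\theta\in(0,1)$ determined by $p=(1-\theta)q+\theta r$ and $\eta=\theta(N(r-2)+2b)/4\in(0,1)$; a direct computation shows $\eta>\theta$ precisely because $b<2$, so the exponent $(1-\theta)/(1-\eta)$ is strictly greater than $1$.

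Plugging (B) into (A) produces
$$K\left(\left(\frac{c}{c_1}\right)^{\frac{2-b}{N}}-1\right)\left(\int_{\R^N}|x|^{-b}|u|^q\,dx\right)^{\frac{\eta-\theta}{1-\eta}} \geq \frac{N(q-2)+2b}{2q},$$
which, since $(\eta-\theta)/(1-\eta)>0$, forces
$$\int_{\R^N}|x|^{-b}|u|^q\,dx \;\geq\; \textnormal{const}\cdot\left(\left(\frac{c}{c_1}\right)^{\frac{2-b}{N}}-1\right)^{-\frac{1-\eta}{\eta-\theta}} \longrightarrow \infty$$
as $c\to(c_1)^+$, uniformly in $u\in\mathcal{P}(c)$. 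Via the reduction above, this gives $\sigma(c)\to\infty$. The only mild technical point will be tracking the dependence of $K$ on $c$ in (B): for $N\geq 3$ the Hardy–Sobolev exponent $r=2_b^*$ produces no mass factor in Lemma \ref{inequ}, so $K$ is independent of $c$; for $N=1,2$ one picks a finite $r>p$ and a mass factor $c^{\textnormal{something positive}}$ appears, but since $c$ lies in a bounded right-neighborhood of $c_1$ this factor stays harmlessly bounded.
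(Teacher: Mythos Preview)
Your argument is correct and cleaner than the paper's. Both proofs begin with the same reduction $E(u)=\tfrac{2(2-b)-N(q-2)}{4q}\int|x|^{-b}|u|^q\,dx$ for $u\in\mathcal P(c)$, and both rely on the sharp Gagliardo--Nirenberg inequality applied to the $p$-term (your inequality (A), the paper's \eqref{c52}) together with the interpolation estimate of Lemma~\ref{inequ}. The difference lies in how these are combined. The paper first invokes Proposition~\ref{nonincreasing} to get a uniform positive lower bound $\sigma(c)\geq\sigma(c_1^*)$, hence a lower bound on the $q$-integral; it then feeds this into \eqref{c52} to force $\|\nabla u_c\|_2\to\infty$, deduces $\int|x|^{-b}|u_c|^p\to\infty$, and finally interpolates back to obtain $\int|x|^{-b}|u_c|^q\to\infty$. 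You instead reuse the reverse bound (B) already established in Lemma~\ref{coercive5} (namely \eqref{c521}) and substitute it directly into (A), arriving at the explicit lower bound $\int|x|^{-b}|u|^q\gtrsim\big((c/c_1)^{(2-b)/N}-1\big)^{-(1-\eta)/(\eta-\theta)}$ in one algebraic step. Your route avoids the dependence on the monotonicity of $\sigma$, is self-contained, and even yields a quantitative blow-up rate; the paper's route is more circuitous but perhaps closer in spirit to a ``bootstrap'' argument. One small remark: your justification ``$\eta>\theta$ precisely because $b<2$'' is exact only in the case $N\geq 3$, $r=2_b^*$ (where $\eta/\theta=(N-b)/(N-2)$); for $N=1,2$ the inequality $\eta>\theta$ is equivalent to $r>p$, which holds by your choice of $r$ --- the conclusion is unaffected.
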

\begin{proof}
For any $u_c \in P(c)$, we know that $Q(u_c)=0$. This then yields that
\begin{align} \label{c51}
E(u_c)=E(u_c)-\frac 12 Q(u_c)=\frac{2(2-b)-N(q-2)}{4q} \int_{\R^N} |x|^{-b}|u_c|^q \,dx.
\end{align}
By using the fact that $Q(u)=0$ and \eqref{GN}, we obtain that
\begin{align} \label{c52}
\begin{split}
\int_{\R^N} |\nabla u_c|^2 \, dx + \frac{N(q-2)+2b}{2q} \int_{\R^N}|x|^{-b}| u_c|^q \, dx&=\frac{N(p-2)+2b}{2p} \int_{\R^N}|x|^{-b}| u_c|^p \, dx \\
& \leq \frac{2C_{N,b}c^{\frac{2-b}{N}}}{p}\int_{\R^N} |\nabla  u_c|^2 \, dx.
\end{split}
\end{align}
According to Proposition \ref{nonincreasing}, we know that $\sigma(c) \geq \sigma(c_1^*)$ for any $c_1<c<c_1^*$. It then yields from \eqref{c51} and \eqref{c52} that $\|\nabla u_c\|_2 \to \infty$ as $c \to c_1^+$. Applying \eqref{c52}, we then derive that
\begin{align} \label{limit0}
\int_{\R^N}|x|^{-b}| u_c|^p \, dx \to \infty \quad \mbox{as} \,\, c \to c_1^+.
\end{align}
By means of \eqref{c52}, we see that
$$
\int_{\R^N} |\nabla u_c|^2 \, dx<\frac{N(p-2)+2b}{2p} \int_{\R^N}|x|^{-b}| u_c|^p \, dx.
$$
Using Lemma \ref{inequ} with $2<q<r<p$ such that $\theta\left(N(r-2)+2b\right)<4$, we then derive from \eqref{limit0} that
$$
\int_{\R^N}|x|^{-b}| u_c|^q \, dx \to \infty \quad \mbox{as} \,\, c \to c_1^+.
$$ 
This together with \eqref{c51} readily implies the desired result. Thus the proof is completed.
\end{proof}

\begin{prop} \label{limit3}
Let $q>2+2(2-b)/N$ and $\mu=1$, then $\sigma(c) \to 0$ as $c \to \infty$.
\end{prop}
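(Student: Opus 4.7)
The plan is to combine the monotonicity from Proposition \ref{nonincreasing} with an explicit test-function upper bound. Since $\sigma(c)$ is positive (Lemma \ref{coercive31}) and nonincreasing on $(0,\infty)$, the limit $L:=\lim_{c\to\infty}\sigma(c) \geq 0$ exists, so it suffices to produce, for each large $c$, an element of $P(c)$ whose energy tends to $0$.

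Fix $\phi \in C_0^\infty(\R^N) \setminus \{0\}$ with $\|\phi\|_2 = 1$, and set $u_c := c^{1/2}\phi \in S(c)$. By Lemma \ref{unique3}, there exists a unique $t_c>0$ such that $(u_c)_{t_c} \in P(c)$ and $E((u_c)_{t_c}) = \max_{t>0}E((u_c)_t)$, so $\sigma(c) \leq \max_{t>0}E((u_c)_t)$. Writing $B:=\|\nabla\phi\|_2^2$, $A_q:=\int_{\R^N}|x|^{-b}|\phi|^q\,dx$, $A_p := \int_{\R^N}|x|^{-b}|\phi|^p\,dx$, $\alpha_q := N(q-2)/2 + b$ and $\alpha_p := N(p-2)/2 + b$, formula \eqref{scaling1} gives
$$E((u_c)_t) = \frac{Bc}{2}\,t^2 - \frac{A_q c^{q/2}}{q}\,t^{\alpha_q} - \frac{A_p c^{p/2}}{p}\,t^{\alpha_p}.$$

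The key step is the change of variables $t = c^{-\lambda_q}\tau$ with $\lambda_q := (q-2)/[N(q-2) - 2(2-b)] > 0$, chosen so that the gradient and $|u|^q$ contributions share the common factor $c^{1-2\lambda_q}$. A direct algebra then gives the two identities
$$1 - 2\lambda_q = \frac{(N-2)(q-2) - 2(2-b)}{N(q-2) - 2(2-b)}, \qquad \left(\frac{p}{2} - \lambda_q \alpha_p\right) - (1 - 2\lambda_q) = \frac{(2-b)(q-p)}{N(q-2) - 2(2-b)},$$
both of which are negative: the first because the subcriticality $q < 2^*_b = 2(N-b)/(N-2)^+$ is exactly the inequality $(N-2)(q-2) < 2(2-b)$, and the second because $q<p$. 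Discarding the (nonpositive) $|u|^p$ term, which is of strictly lower order in $c$, yields
$$\sigma(c) \leq \max_{t>0}E((u_c)_t) \leq c^{1-2\lambda_q}\max_{\tau>0}\left[\frac{B\tau^2}{2} - \frac{A_q\tau^{\alpha_q}}{q}\right] = Mc^{1-2\lambda_q}$$
for a finite constant $M = M(\phi, N, q, b) > 0$, and $1 - 2\lambda_q<0$ forces the right-hand side to tend to $0$ as $c\to\infty$. The only substantive point is spotting the correct scale $\lambda_q$; after that, subcriticality $q < 2^*_b$ is precisely what drives $1 - 2\lambda_q$ below zero, and the rest is routine exponent bookkeeping.
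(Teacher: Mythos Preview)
Your proof is correct. The overall strategy matches the paper's: take the same test family $u_c=c^{1/2}\phi\in S(c)$, project onto $P(c)$ via Lemma \ref{unique3}, and bound $\sigma(c)\le\max_{t>0}E((u_c)_t)$. The execution differs. The paper works with the actual maximizer $t_c$, uses the Pohozaev identity $Q((u_c)_{t_c})=0$ to deduce $t_c\to 0$ and $t_c^2 c\to 0$, and then reads off the decay of the energy from those asymptotics. You instead drop the (nonpositive) $|u|^p$ contribution outright, rescale $t=c^{-\lambda_q}\tau$ so that the gradient and $|u|^q$ terms share a common power of $c$, and optimize the resulting two-term function explicitly. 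This avoids extracting asymptotics from the implicit equation for $t_c$ and gives the quantitative rate $\sigma(c)=O(c^{1-2\lambda_q})$ with $1-2\lambda_q<0$ forced exactly by $q<2^*_b$; the paper's argument is more tied to the Pohozaev structure but does not produce an explicit rate. One small remark: your phrasing ``$q<2^*_b$ is exactly the inequality $(N-2)(q-2)<2(2-b)$'' is literally correct for $N\ge 3$; for $N=1,2$ the inequality $(N-2)(q-2)<2(2-b)$ holds trivially since the left side is $\le 0$, so the conclusion $1-2\lambda_q<0$ is valid in all dimensions.
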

\begin{proof}
Let $u \in S(1)$ and define $u_c:=c^{1/2}u \in S(c)$. In view of Lemma \ref{unique3}, there exists a unique $t_c >0$ such that $Q((u_c)_{t_c})=0$. This means that
\begin{align} \label{limit00}
\begin{split}
\int_{\R^N} |\nabla u|^2 \, dx &=\frac{N(q-2)+2b}{2q}(t_c^2 c)^{\frac q 2-1}t_c^{\frac{N}{2}(q-2)+b-q}\int_{\R^N}|x|^{-b}|u|^q \, dx\\
&\quad +\frac{N(p-2)+2b}{2p} (t_c^2 c)^{\frac p 2-1}t_c^{\frac{N}{2}(p-2)+b-p}\int_{\R^N}|x|^{-b}|u|^p \, dx,
\end{split}
\end{align}
from which we have that $t_c \to 0$ and $t_c^2 c \to 0$ as $c \to \infty$, because of $2+2(2-b)/N<q<p<2^*_b$. Observe that
\begin{align*}
\sigma(c) \leq E((u_c)_{t_c}) &=E((u_c)_{t_c})-\frac 12 Q((u_c)_{t_c}) \\
&=\frac{N(q-2)-2(2-b)}{4q} t_c^{\frac{N}{2}(q-2)+b}c^{\frac q 2}\int_{\R^N}|x|^{-b}|u|^q \, dx \\
& \quad +\frac{N(p-2)-2(2-b)}{4p} t_c^{\frac{N}{2}(p-2)+b}c^{\frac p 2}\int_{\R^N}|x|^{-b}|u|^p \, dx.
\end{align*}
As a consequence, it follows from \eqref{limit00} that $\sigma(c) \to 0$ as $c \to \infty$. Thus the proof is completed.
\end{proof}

In order to consider the asymptotic behaviors of the function $c \mapsto \sigma(c)$ as $c \to \infty$ for the case $\mu=-1$ and $N \geq 3$, we need to study the following zero mass equation,
\begin{align} \label{zequ}
-\Delta u + |x|^{-b}|u|^{q-2} u = |x|^{-b}|u|^{p-2} u \quad \mbox{in} \,\, \R^N.
\end{align}
Let us first introduce the natural Sobolev space $X$ used to consider solutions to \eqref{zequ}, which is defined by the completion of $C_0^{\infty}(\R^N)$ under the norm
$$
\|u\|_X:=\left(\int_{\R^N}|\nabla u|^2 \,dx\right)^{\frac 12} + \left(\int_{\R^N}|x|^{-b}|u|^q \, dx\right)^{\frac 1 q}.
$$
It is standard to demonstrate that $X$ is a reflexive Banach space. Next we present the associated embedding result in $X$.

\begin{lem} \label{space}
Let $N \geq 3$, then there exists a constant $C>0$ depending only on $N, p, q$ and $b$ such that, for any $u \in X$,
\begin{align} \label{inequality}
\left(\int_{\R^N} |x|^{-b}|u|^p \,dx \right)^{\frac 1p}\leq C \left(\int_{\R^N} |x|^{-b}|u|^q \,dx\right)^{\frac{1-\theta}{q}} \left(\int_{\R^N} |\nabla u|^2 \, dx\right)^{\frac{\theta}{2}},
\end{align}
where $0 <\theta < 1$ such that $1/p=(1-\theta)/q + \theta/2^*_b$.
\end{lem}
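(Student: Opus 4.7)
The plan is to obtain \eqref{inequality} as a Hölder-type interpolation between the $L^q(|x|^{-b}dx)$ norm and the Hardy--Sobolev norm $L^{2^*_b}(|x|^{-b}dx)$, where $2^*_b = 2(N-b)/(N-2)$. The endpoint ingredient will be the Hardy--Sobolev (Caffarelli--Kohn--Nirenberg) inequality, which in the present setting reads
\begin{align*}
\left(\int_{\R^N}|x|^{-b}|u|^{2^*_b}\,dx\right)^{\frac{1}{2^*_b}} \leq C\left(\int_{\R^N}|\nabla u|^2\,dx\right)^{\frac 12}
\end{align*}
for all $u\in C_0^{\infty}(\R^N)$ (valid for $N\geq 3$ and $0<b<2$). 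I would quote this as a known fact since the parameters $\alpha=0,\beta=b/2^*_b$ fall within the admissible range of CKN.

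Next, I would carry out the interpolation. The condition $1/p=(1-\theta)/q+\theta/2^*_b$ together with $2<q<p<2^*_b$ immediately gives $\theta\in(0,1)$, and moreover the weight splits cleanly as
\begin{align*}
|x|^{-b}|u|^p = \bigl(|x|^{-\frac{b(1-\theta)p}{q}}|u|^{(1-\theta)p}\bigr)\cdot\bigl(|x|^{-\frac{b\theta p}{2^*_b}}|u|^{\theta p}\bigr),
\end{align*}
since $(1-\theta)p/q+\theta p/2^*_b=1$. Applying Hölder's inequality with conjugate exponents $\alpha=q/((1-\theta)p)$ and $\beta=2^*_b/(\theta p)$ yields
\begin{align*}
\int_{\R^N}|x|^{-b}|u|^p\,dx \leq \left(\int_{\R^N}|x|^{-b}|u|^q\,dx\right)^{\frac{(1-\theta)p}{q}}\left(\int_{\R^N}|x|^{-b}|u|^{2^*_b}\,dx\right)^{\frac{\theta p}{2^*_b}}.
\end{align*}
Taking $p$-th roots and substituting the Hardy--Sobolev inequality into the second factor produces precisely \eqref{inequality} for $u\in C_0^{\infty}(\R^N)$.

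Finally, since $X$ is defined as the completion of $C_0^{\infty}(\R^N)$ under $\|\cdot\|_X$, the inequality extends to arbitrary $u\in X$ by a standard density argument: approximate $u$ by a Cauchy sequence $\{u_n\}\subset C_0^{\infty}(\R^N)$, pass to the limit on the right-hand side using the definition of $\|\cdot\|_X$, and use Fatou's lemma on the left-hand side (after extracting a pointwise a.e. convergent subsequence). The only mild obstacle is verifying that the Hardy--Sobolev inequality is applicable with the present exponent $b$ and that the intermediate exponent $\theta$ is admissible; both follow directly from $N\geq 3$, $0<b<2$ and $2<q<p<2^*_b$, so no delicate case analysis is needed.
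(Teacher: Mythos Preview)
Your proof is correct and follows essentially the same approach as the paper: H\"older interpolation between $L^q(|x|^{-b}dx)$ and $L^{2^*_b}(|x|^{-b}dx)$, followed by the Hardy--Sobolev inequality at the endpoint. The paper's version is terser (it omits the explicit weight splitting and the density argument), but the argument is identical.
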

\begin{proof}
From H\"older's inequality, we first have that
$$
\left(\int_{\R^N} |x|^{-b}|u|^p \,dx \right)^{\frac 1p}\leq \left(\int_{\R^N} |x|^{-b}|u|^q \,dx\right)^{\frac{1-\theta}{q}} \left(\int_{\R^N} |x|^{-b} |u|^{2^*_b} \, dx\right)^{\frac{\theta}{2^*_b}}.
$$
In addition, we know that 
$$
\left(\int_{\R^N} |x|^{-b} |u|^{2^*_b} \, dx\right)^{\frac{1}{2^*_b}} \leq S_{N,b} \left(\int_{\R^N} |\nabla u|^2 \, dx \right)^{\frac 12}.
$$
Then we obtain \eqref{inequality} and the proof is completed.
\end{proof}

\begin{lem} \label{embedding}
Let $N \geq 3$, then the embedding $X \hookrightarrow L^p(\R^N, |x|^{-b}dx) $ is continuous for any $q \leq p\leq 2^*_b$ and locally compact for any $q \leq p<2^*_b$.
\end{lem}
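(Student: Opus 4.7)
\medskip

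\noindent\textbf{Proof proposal for Lemma \ref{embedding}.}

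The plan is to treat continuity and local compactness separately. For continuity, I would first dispose of the two endpoint cases. The case $p=q$ is immediate from the definition of the $X$-norm. For $p=2^*_b$, the Hardy--Sobolev inequality yields $\|u\|_{L^{2^*_b}(\R^N,|x|^{-b}dx)}\le S_{N,b}\|\nabla u\|_2\le S_{N,b}\|u\|_X$ for every $u\in C_0^\infty(\R^N)$, hence on all of $X$ by density. For $q<p<2^*_b$, I would simply invoke the interpolation inequality \eqref{inequality} of Lemma \ref{space} together with Young's inequality to conclude
$$
\|u\|_{L^p(\R^N,|x|^{-b}dx)}\le C\,\|u\|_{L^q(\R^N,|x|^{-b}dx)}^{1-\theta}\|\nabla u\|_2^{\theta}\le C\,\|u\|_X.
$$

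For local compactness, fix a bounded measurable $\Omega\subset\R^N$ and let $\{u_n\}\subset X$ be bounded. By reflexivity of $X$ and Sobolev's embedding $X\hookrightarrow L^{2^*}(\R^N)$ (applied on $C_0^\infty$ and extended by density, using $N\ge 3$), I may pass to a subsequence so that $u_n\wto u$ in $X$, $u_n\wto u$ in $L^{2^*}(\R^N)$, and $u_n\to u$ a.e.\ in $\R^N$. In particular $\{u_n\}$ is bounded in $H^1_{\mathrm{loc}}(\R^N)$, so the classical Rellich--Kondrachov theorem gives, up to a further subsequence, strong convergence $u_n\to u$ in $L^p_{\mathrm{loc}}(\R^N)$ for every $p<2^*$, and in particular for $p\le 2^*_b$.

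The main obstacle is the singularity of the weight $|x|^{-b}$ at the origin, which the Rellich argument does not see. I would handle it by splitting, for $\eps>0$,
$$
\int_{\Omega}|x|^{-b}|u_n-u|^p\,dx=\int_{\Omega\setminus B_\eps(0)}|x|^{-b}|u_n-u|^p\,dx+\int_{\Omega\cap B_\eps(0)}|x|^{-b}|u_n-u|^p\,dx.
$$
On $\Omega\setminus B_\eps(0)$ the weight is bounded by $\eps^{-b}$ and the first integral tends to $0$ as $n\to\infty$ by the strong $L^p_{\mathrm{loc}}$ convergence established above. For the tail at the origin, I would pick $r\in(p,2^*_b]$ (possible since $p<2^*_b$) and apply H\"older's inequality with the weighted measure $|x|^{-b}dx$ to obtain
$$
\int_{\Omega\cap B_\eps(0)}|x|^{-b}|u_n-u|^p\,dx\le\Bigl(\int_{B_\eps(0)}|x|^{-b}\,dx\Bigr)^{\!\frac{r-p}{r}}\Bigl(\int_{B_\eps(0)}|x|^{-b}|u_n-u|^r\,dx\Bigr)^{\!\frac{p}{r}}.
$$
Since $b<N$, the first factor equals a multiple of $\eps^{(N-b)(r-p)/r}$ and tends to $0$ as $\eps\to 0^+$, while the second factor is uniformly bounded in $n$ thanks to the continuous embedding proved in the first part. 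Choosing $\eps$ small to make the tail uniformly small and then letting $n\to\infty$ on the complement yields $u_n\to u$ in $L^p(\Omega,|x|^{-b}dx)$, completing the proof.
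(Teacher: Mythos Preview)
Your argument is correct. The continuity part is essentially identical to the paper's. For local compactness the two proofs diverge in how the singularity of $|x|^{-b}$ at the origin is absorbed. The paper works on the whole bounded domain $\Omega$ in a single step: it applies H\"older's inequality with the \emph{unweighted} Lebesgue measure, choosing $s>1$ with $bs<N$ and $ps/(s-1)<2^*$ so that
\[
\int_{\Omega}|x|^{-b}|u_n|^p\,dx\le\Bigl(\int_{\Omega}|x|^{-bs}\,dx\Bigr)^{1/s}\Bigl(\int_{\Omega}|u_n|^{ps/(s-1)}\,dx\Bigr)^{(s-1)/s},
\]
and the second factor vanishes by Rellich--Kondrachov. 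The existence of such an $s$ is exactly where the hypothesis $p<2^*_b$ enters. Your route instead isolates the singularity geometrically: you split $\Omega=(\Omega\setminus B_\eps)\cup(\Omega\cap B_\eps)$, use Rellich on the outer piece where the weight is bounded, and on $B_\eps$ apply H\"older with respect to the \emph{weighted} measure $|x|^{-b}dx$ together with the continuous embedding at some $r\in(p,2^*_b]$, letting the total mass $\int_{B_\eps}|x|^{-b}\,dx\sim\eps^{N-b}$ provide the smallness. Both arguments hinge on the same ingredients ($X\hookrightarrow H^1_{\mathrm{loc}}$ via the Sobolev embedding $X\hookrightarrow L^{2^*}$, Rellich, and a H\"older step), and the restriction $p<2^*_b$ plays the same role in each. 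The paper's version is slightly more economical; yours makes the localization of the singularity more transparent and would adapt more readily if the weight had a different structure.
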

\begin{proof}
The continuity of the embedding is directly from Lemma \ref{inequality}. We now prove that the embedding is locally compact. Let $\Omega \subset \R^N$ be a bounded domain and $\{u_n\} \subset X$ be a sequence such that $(u_n)_{\mid \Omega} \wto 0$ in $X$ as $n \to \infty$. We are going to prove that $(u_n)_{\mid \Omega} \to 0$ in $L^p(\R^N, |x|^{-b}dx)$ as $n \to \infty$ for any $q \leq p<2^*_b$. Observe that, for any $u \in X$, there holds that $u \in L^{2^*}(\R^N)$. It then leads to $u_{\mid \Omega} \in H^1(\Omega)$.  Since the embedding $H^1(\R^N) \hookrightarrow L^r(\R^N)$ is locally compact for any $1 \leq r <2^*$, then $(u_n)_{\mid \Omega} \to 0$ in $L^r(\R^N)$ as $n \to \infty$ for any $1 \leq r <2^*$. In view of H\"older's inequality, we then derive that 
\begin{align*}
\int_{\Omega} |x|^{-b}|u_n|^p \,dx \leq \left(\int_{\Omega} |x|^{-bs} \,dx\right)^{\frac 1 s} \left(\int_{\Omega} |u_n|^{\frac{ps}{s-1}} \,dx\right)^{\frac{s-1}{s}}=o_n(1),
\end{align*}
where $s>1$ with $0<bs<N$ and $1<ps/(s-1)<2^*$. Then the desired conclusion follows and the proof is completed.
\end{proof}

\begin{lem} \label{lions}
Let $N \geq 3$ and $\{u_n\}\subset X$ be a bounded sequence satisfying
\begin{align} \label{lions1}
\sup_{y \in \R^N} \int_{B_R(y)} |x|^{-b} |u_n|^q \,dx=o_n(1),
\end{align}
then $u_n \to 0$ in $L^p(\R^N, |x|^{-b}\,dx)$ as $n \to \infty$ for any $q<p<2^*_b$.
\end{lem}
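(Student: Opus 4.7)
The approach I would take is the classical Lions concentration-compactness vanishing argument, adapted to the weighted measure $|x|^{-b}\,dx$. The starting point is the H\"older interpolation $L^p(|x|^{-b}dx)\subset(L^q(|x|^{-b}dx),L^{2^{\ast}_b}(|x|^{-b}dx))$ with parameter $\lambda=(2^{\ast}_b-p)/(2^{\ast}_b-q)\in(0,1)$, so that $p=\lambda q+(1-\lambda)2^{\ast}_b$ and the splitting of the weight $|x|^{-b}=(|x|^{-b})^{\lambda}(|x|^{-b})^{1-\lambda}$ together with H\"older's inequality on any ball $B_R(y)$ gives
\[
\int_{B_R(y)}|x|^{-b}|u_n|^p\,dx\le\Bigl(\int_{B_R(y)}|x|^{-b}|u_n|^q\,dx\Bigr)^{\lambda}\Bigl(\int_{B_R(y)}|x|^{-b}|u_n|^{2^{\ast}_b}\,dx\Bigr)^{1-\lambda}.
\]
I would then cover $\R^N$ by a family of balls $\{B_R(y_i)\}_{i\in\mathbb{N}}$ with uniformly finite overlap, extract the small quantity $\varepsilon_n=\sup_y\int_{B_R(y)}|x|^{-b}|u_n|^q\,dx=o_n(1)$ from the first factor, and use the Hardy-Sobolev inequality (Lemma \ref{space} raised to the appropriate power) together with $\|\nabla u_n\|_2\le C$ to control the $L^{2^{\ast}_b}(|x|^{-b}dx)$-integrals both globally and ball-by-ball, reducing the proof to estimating the discrete sum $\sum_i(\int_{B_R(y_i)}|x|^{-b}|u_n|^{2^{\ast}_b})^{1-\lambda}$.

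To turn mere boundedness of that sum into actual smallness, I would treat the distinguished exponent $p^{\ast}:=2+q(2-b)/(N-b)$ first. For this $p^{\ast}$ the H\"older interpolation above, paired with the global Hardy-Sobolev bound $\int|x|^{-b}|u|^{2^{\ast}_b}\le C\|\nabla u\|_2^{2^{\ast}_b}$, produces a weighted Gagliardo-Nirenberg inequality of the form
\[
\int_{\R^N}|x|^{-b}|u|^{p^{\ast}}\,dx\le C\|u\|_{L^q(|x|^{-b}dx)}^{p^{\ast}-2}\,\|\nabla u\|_2^{2},
\]
precisely because the choice $\mu=2/p^{\ast}$ in the interpolation yields gradient exponent $2$. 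Localizing this inequality through a smooth cutoff $\chi_i$ supported on $B_{2R}(y_i)$ and summing, exploiting $\sum_i\|\nabla u_n\|_{L^2(B_{2R}(y_i))}^2\le C_0\|\nabla u_n\|_2^2\le C$ from finite overlap, gives the rate $\int_{\R^N}|x|^{-b}|u_n|^{p^{\ast}}\,dx\le C\varepsilon_n^{(p^{\ast}-2)/q}\to 0$. Once the result is established at $p^{\ast}$, one further H\"older interpolation in the weighted $L^p$-scale --- between $L^q(|x|^{-b}dx)$ and $L^{p^{\ast}}(|x|^{-b}dx)$ when $p<p^{\ast}$, or between $L^{p^{\ast}}(|x|^{-b}dx)$ and $L^{2^{\ast}_b}(|x|^{-b}dx)$ when $p>p^{\ast}$, together with the uniform bounds on $\|u_n\|_{L^q(|x|^{-b}dx)}$ and $\|u_n\|_{L^{2^{\ast}_b}(|x|^{-b}dx)}$ --- upgrades the convergence to every $p\in(q,2^{\ast}_b)$.

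The hard step is the cutoff argument used to localize the weighted Gagliardo-Nirenberg inequality: writing $\|\nabla(\chi_iu_n)\|_2^2\le C(\|\nabla u_n\|_{L^2(B_{2R}(y_i))}^2+R^{-2}\|u_n\|_{L^2(B_{2R}(y_i))}^2)$ introduces an $L^2$-correction that is uniformly controlled on each individual ball (via $\nabla u_n\in L^2\Rightarrow u_n\in L^{2^{\ast}}$ and local Lebesgue interpolation on bounded pieces) but need not be summable over the cover, because $u_n$ is not required to lie globally in $L^2(\R^N)$. Overcoming this requires either a Poincar\'e-type bound absorbing the $L^2$-term into the $L^q(|x|^{-b}dx)$-norm locally, or splitting the problem into near-origin and far-from-origin regions and handling the tail by the decay of the weight $|x|^{-b}\le R_0^{-b}$ for $|x|>R_0$ combined with the global $L^{2^{\ast}_b}(|x|^{-b}dx)$-bound; the concrete implementation of this absorption is the delicate point on which the whole argument hinges.
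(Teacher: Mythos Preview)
Your scheme matches the paper's: cover $\R^N$ by balls, interpolate locally between $L^q(|x|^{-b})$ and $L^{2^*_b}(|x|^{-b})$, pull out the vanishing sup, sum, then bootstrap in $p$. The only tactical difference is the auxiliary exponent: you pick $p^{\ast}$ so the gradient power is $2$, while the paper sets $(1-\alpha)p=1$, so that after one more H\"older step through an intermediate $r\in(p,2^*_b)$ the bracket $(\int_B|x|^{-b}|u|^q)^{1/q}+\|\nabla u\|_{L^2(B)}$ appears to the first power and the gradient part sums by Cauchy--Schwarz (using $2(p-1)/q>1$).

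The difficulty you single out is genuine, and the paper's proof does not escape it. To reach its local bound the paper applies Lemma~\ref{space} on a ball, i.e.\ asserts $(\int_B|x|^{-b}|u|^r)^{1/r}\le C(\int_B|x|^{-b}|u|^q)^{(1-\theta)/q}\|\nabla u\|_{L^2(B)}^{\theta}$; this is false for $u$ constant on $B$, and any honest localisation via a cutoff produces exactly the $R^{-1}\|u\|_{L^2}$ correction you flag. Neither of your absorptions closes it over the full cover: $\sum_i\|u\|_{L^2(B_i)}^2$ and $\sum_i\|u\|_{L^{2^*}(B_i)}^2$ are not controlled by $\|u\|_X$, and converting to the weighted $L^q$-norm picks up a diverging $|y_i|^{b/q}$ prefactor. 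So the gap is shared by both arguments.

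Your near/far split is nonetheless the right cure, once one sees that on the exterior region the weight itself supplies the missing decay. For balls $B_R(y)$ with $|y|>R_0\gg R$ one has $|x|^{-b}\sim|y|^{-b}$ on $B$; interpolating the \emph{unweighted} $\|u\|_{L^p(B)}$ between $L^q(B)$ and $L^{2^*}(B)$ and converting back produces a net factor $|y|^{b(q-p)/(2^*-q)}$ with negative exponent, and H\"older on the sum (the residual exponents on $a_i=\int_{B_i}|x|^{-b}|u|^q$ and $e_i=\|u\|_{L^{2^*}(B_i)}$ satisfy $\tfrac{2^*-p}{2^*-q}+\tfrac{\mu p}{2^*}=1$) yields $\int_{|x|>R_0}|x|^{-b}|u|^p\le C\,R_0^{b(q-p)/(2^*-q)}$, uniformly for $u$ bounded in $X$. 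Together with the finite-cover argument on $B_{R_0}(0)$ this closes the proof; it in fact shows that the embedding $X\hookrightarrow L^p(\R^N,|x|^{-b}dx)$ is compact for every $q<p<2^*_b$, from which the lemma is immediate.
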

\begin{proof}
For $2<q<p<r<2^*_b$, by H\"older's inequality and Lemma \ref{inequality}, we infer that
\begin{align*}
\int_{B_R(y)}|x|^{-b}|u|^p \, dx & \leq \left(\int_{B_R(y)}|x|^{-b}|u|^q \, dx \right)^{\frac{\alpha p}{q}} \left(\int_{B_R(y)}|x|^{-b}|u|^r \, dx \right)^{\frac{(1-\alpha)p}{r}} \\
& \leq C \left(\int_{B_R(y)}|x|^{-b}|u|^q \, dx \right)^{\frac{\alpha p}{q}}\left(\int_{B_R(y)} |x|^{-b}|u|^q \,dx\right)^{\frac{(1-\theta)(1-\alpha)p}{q}} \|\nabla u\|_{L^2(B_R(y))}^{\theta(1-\alpha)p}\\
& \leq  C \left(\int_{B_R(y)}|x|^{-b}|u|^q \, dx \right)^{\frac{\alpha p}{q}} \left(\left(\int_{B_R(y)}|x|^{-b}|u|^q \, dx\right)^{\frac 1 q}+\|\nabla u\|_{L^2(B_R(y))} \right)^{(1-\alpha)p},
\end{align*}
where $0<\alpha<1$ and $0<\theta<1$ such that  
$$
\frac 1 p =\frac {\alpha }{q} + \frac {1-\alpha} {r}, \quad \frac 1 r =  \frac {1-\theta} {q} +\frac{\theta}{2^*_b}.
$$
Choosing $(1-\alpha)p=1$, we then obtain that
$$
\int_{B_R(y)}|x|^{-b}|u|^p \, dx  \leq C \left(\int_{B_R(y)}|x|^{-b}|u|^q \, dx \right)^{\frac{p-1}{q}} \left(\left(\int_{B_R(y)}|x|^{-b}|u|^q \, dx\right)^{\frac 1 q}+\|\nabla u\|_{L^2(B_R(y))} \right).
$$
Observe that every point in $\R^N$ is contained in at most $N+1$ balls if covering $\R^N$ by balls of radius $R>0$. This results in
$$
\int_{\R^N}|x|^{-b}|u|^p \, dx \leq C(N+1) \left(\sup_{y \in \R^N} \int_{B_R(y)} |x|^{-b} |u|^q \,dx\right)\|u\|_X.
$$
Therefore, if $\{u_n\} \subset X$ is bounded and \eqref{lions1} is valid, then we get that $u_n \to 0$ in $L^p(\R^N, |x|^{-b}\,dx)$ as $n \to \infty$. Thus the proof is completed.
\end{proof}

\begin{lem}\label{existzero}
Let $N \geq 3$, then there exists a nonnegative, symmetric and decreasing ground state to \eqref{zequ} in $X$ at the level $\sigma_0>0$, where $\sigma_0>0$ is defined by \begin{align} \label{zmin}
\sigma_0:=\inf\{E(u) : u \in X \backslash \{0\},  Q(u)=0\}
\end{align}
and
$$
Q(u)=\int_{\R^N} |\nabla u|^2 \, dx + \frac{N(q-2)+2b}{2q} \int_{\R^N}|x|^{-b}|u|^q \, dx-\frac{N(p-2)+2b}{2p} \int_{\R^N}|x|^{-b}|u|^p \, dx.
$$
\end{lem}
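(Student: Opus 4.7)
The plan is to realize $\sigma_0$ as the minimum of $E$ over the Pohozaev manifold $P := \{u \in X \setminus \{0\} : Q(u) = 0\}$, extract a nontrivial weak limit of a minimizing sequence (exploiting rather than fighting the lack of translation invariance caused by the weight $|x|^{-b}$), and finally symmetrize.

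First, substituting $Q(u)=0$ into $E(u)$ gives the identity
\[
E(u) = \frac{N(p-2)-2(2-b)}{2[N(p-2)+2b]}\|\nabla u\|_2^2 + \frac{N(p-q)}{q[N(p-2)+2b]}\|u\|_{L^q(\R^N,|x|^{-b})}^q \qquad \text{on } P.
\]
Both coefficients are positive, and combining $Q(u)=0$ with Lemma \ref{space} produces a uniform lower bound on $\|\nabla u\|_2^2 + \|u\|_{L^q(|x|^{-b})}^q$ on $P$; hence $\sigma_0>0$. For a minimizing sequence $\{u_n\}\subset P$, the identity above forces $\{u_n\}$ to be bounded in $X$, and we extract $u_n \wto u$ in $X$ along a subsequence.

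The crux is to show $u \not\equiv 0$. If $\sup_{y\in\R^N}\int_{B_R(y)}|x|^{-b}|u_n|^q \to 0$ for every $R>0$, then Lemma \ref{lions} gives $\|u_n\|_{L^p(|x|^{-b})}\to 0$, which via $Q(u_n)=0$ forces $\|\nabla u_n\|_2,\|u_n\|_{L^q(|x|^{-b})}\to 0$, contradicting $\sigma_0>0$. Therefore there exist $R,\delta>0$ and $y_n\in\R^N$ with $\int_{B_R(y_n)}|x|^{-b}|u_n|^q \geq \delta$. Because of the weight, the centers $y_n$ must remain bounded: otherwise $|x|^{-b}\leq (|y_n|-R)^{-b}$ on $B_R(y_n)$ would give $\int_{B_R(y_n)}|u_n|^q \geq \delta(|y_n|-R)^b \to \infty$, in conflict with the Sobolev bound $\|u_n\|_{L^{2^*}}\leq C\|\nabla u_n\|_2$ combined with H\"older's inequality on the fixed-volume ball $B_R(y_n)$. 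The local compactness in Lemma \ref{embedding} then provides $u_n \to u$ strongly in $L^q(B,|x|^{-b})$ on any fixed ball $B$ containing all $B_R(y_n)$, so $u\not\equiv 0$.

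With a nontrivial weak limit in hand, I conclude that $u$ attains $\sigma_0$ and satisfies $Q(u)=0$ via a Brezis-Lieb decomposition $E(u_n)=E(u)+E(u_n-u)+o_n(1)$ together with the analogous decomposition of $Q$, combined with the scaling flow $t\mapsto E(u_t)$, which attains its unique maximum on $(0,\infty)$ exactly at the Pohozaev point. A case analysis on the sign of $\lim Q(u_n-u)$, together with the formula for $E$ on $P$ applied to $u_n-u$, rules out all alternatives to $u_n-u \to 0$ in $X$. Replacing $u$ by $|u|$ yields a nonnegative minimizer, and the strong maximum principle applied to the resulting Euler-Lagrange equation gives strict positivity away from the origin. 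For radial symmetry and monotonicity, since the moving-plane method is not available in the zero-mass setting (see the remark following Proposition \ref{zestimate}), I invoke the polarization arguments of \cite{BS} and \cite[Theorem 1.1]{BZ}, which are compatible with the radially symmetric nonincreasing weight $|x|^{-b}$ and produce a radial decreasing minimizer at the same level $\sigma_0$. The principal obstacle is the non-vanishing step: the lack of translation invariance destroys the classical concentration-compactness strategy, but also confines any nontrivial mass to a bounded region around the origin, which is the key structural feature exploited above.
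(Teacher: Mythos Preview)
Your proof is correct and follows the same overall architecture as the paper (minimize on the Pohozaev set, rule out vanishing, symmetrize by polarization), but differs in two technical respects worth noting. First, for the conclusion step the paper invokes Ekeland's variational principle to upgrade the minimizing sequence to a Palais--Smale sequence; the weak limit is then automatically a solution of \eqref{zequ}, so $Q(u)=0$ comes for free from the Pohozaev identity, and Fatou's lemma applied to the positive quantity $E-\tfrac{2}{N(p-2)+2b}Q$ immediately gives $E(u)\le\sigma_0$. Your route via Brezis--Lieb splitting of $E$ and $Q$ together with the scaling flow and a sign analysis on $Q(u)$, $Q(u_n-u)$ is a legitimate alternative that avoids Ekeland, at the cost of a slightly longer case analysis; both methods reach the same endpoint. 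Second, your treatment of non-vanishing is actually sharper than the paper's: the paper writes ``up to translations'' somewhat loosely, whereas you correctly observe that the decaying weight $|x|^{-b}$ forces the concentration centers $y_n$ to remain bounded (your H\"older argument against the Sobolev bound $\|u_n\|_{L^{2^*}}\le C$ is exactly the right mechanism), so no translation is needed and local compactness on a fixed ball suffices. One small point you leave implicit is that a minimizer on $P$ is a \emph{free} critical point of $E$ (the Lagrange multiplier coming from the constraint $Q=0$ vanishes because $P$ is a natural constraint); this is needed to apply the maximum principle and is handled in the paper by the Palais--Smale construction.
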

\begin{proof}
We first infer that $\sigma_0>0$. Let $u \in X$ be such that $Q(u)=0$. Then we have that
\begin{align} \label{zbe}
\begin{split}
E(u)=E(u)&=E(u)-\frac{2}{N(p-2)+2b} Q(u)\\ 
&=\frac{N(p-2)-2(2-b)}{2\left(N(p-2)+2b\right)} \int_{\R^N} |\nabla u|^2 \,dx +\frac{N(p-q)}{q\left(N(p-2)+2b\right)}\int_{\R^N} |x|^{-b}|u|^q\,dx.
\end{split}
\end{align}
On the other hand, from Lemma \ref{space}, we see that
\begin{align*}
\int_{\R^N} |\nabla u|^2 \,dx+\frac{N(q-2)+2b}{2q} \int_{\R^N} |x|^{-b} |u|^q \,dx &= \frac{N(p-2)+2b}{2p} \int_{\R^N} |x|^{-b} |u|^p \, dx \\
& \leq  \widetilde{C} \left(\int_{\R^N} |x|^{-b}|u|^q \,dx\right)^{\frac{\left(1-\theta\right)p}{q}} \left(\int_{\R^N}|\nabla u|^2 \, dx\right)^{\frac{\theta p}{2}} \\
& \leq\widetilde{C} \left(\int_{\R^N}|\nabla u|^2 \, dx + \int_{\R^N} |x|^{-b}|u|^q \,dx\right)^{\left(\frac{1-\theta}{q}+\frac{\theta}{2} \right)p}, 
\end{align*}
which implies that there exists a constant $\widehat{C}>0$ such that
$$
\int_{\R^N} |\nabla u|^2 \,dx + \int_{\R^N} |x|^{-b}|u|^q \,dx \geq \widehat{C}.
$$
This jointly with \eqref{zbe} infers that $\sigma_0>0$. 

We next demonstrate that $\sigma_0>0$ is attained by a nonnegative $u \in X$. In view of Ekeland's variational principle, then there exists a Palais-Smale sequence $\{u_n\} \subset X$ such that $Q(u_n)=0$ for $E$ at the level $\sigma_0>0$. By applying \eqref{zbe}, we conclude that $\{u_n\}$ is bounded in $X$. From Lemmas \ref{lions} and \ref{embedding}, we then derive that there exists a nontrivial $u \in X$ such that $u_n \wto u$ in $X$ as $n \to \infty$ up to translations, because of $\sigma_0>0$. Further, we are able to obtain that $u \in X$ is a solution to \eqref{zequ}. This then yields that $Q(u)=0$ and $E(u) \geq \sigma_0$. Therefore, by Fatou's lemma, we have that 
\begin{align*}
\sigma_0 \leq E(u)=&E(u)-\frac{2}{N(p-2)+2b} Q(u) \\
&=\frac{N(p-2)-2(2-b)}{2\left(N(p-2)+2b\right)} \int_{\R^N} |\nabla u|^2 \,dx +\frac{N(p-q)}{q\left(N(p-2)+2b\right)}\int_{\R^N} |x|^{-b}|u|^q\,dx \\
& \leq \frac{N(p-2)-2(2-b)}{2\left(N(p-2)+2b\right)} \int_{\R^N} |\nabla u_n|^2 \,dx +\frac{N(p-q)}{q\left(N(p-2)+2b\right)}\int_{\R^N} |x|^{-b}|u_n|^q\,dx +o_n(1) \\
& = E(u_n)-\frac{2}{N(p-2)+2b} Q(u_n)+o_n(1) \\
&=E(u_n)+o_n(1)=\sigma_0+o_n(1).
\end{align*}
As a consequence, we get that $E(u)=\sigma_0$. Observe that $Q(|u|) \leq Q(u)=0$, then there exists a unique constant $0<t_{|u|} \leq 1$ such that $Q(|u|_{t_{|u|}})=0$, from which we are able to obtain that 
\begin{align*}
\sigma_0 \leq E(|u|_{t_{|u|}})&=E(|u|_{t_{|u|}})-\frac{2}{N(p-2)+2b} Q(|u|_{t_{|u|}}) \\
& \leq \frac{N(p-2)-2(2-b)}{2\left(N(p-2)+2b\right)} \int_{\R^N} |\nabla |u||^2 \,dx +\frac{N(p-q)}{q\left(N(p-2)+2b\right)}\int_{\R^N} |x|^{-b}|u|^q\,dx \\
&\leq \frac{N(p-2)-2(2-b)}{2\left(N(p-2)+2b\right)} \int_{\R^N} |\nabla u|^2 \,dx +\frac{N(p-q)}{q\left(N(p-2)+2b\right)}\int_{\R^N} |x|^{-b}|u|^q\,dx \\
&= E(u)-\frac{2}{N(p-2)+2b} Q(u) \\
&=E(u)=\sigma_0.
\end{align*}
It then follows that $E(|u|)=\gamma_0$ and $|u| \in X$ is also a minimizer to \eqref{zmin}. 

We now prove that $\sigma_0>0$ is achieved by some radially symmetric and decreasing function in $X$. To do this, we need to introduce the definition of polarization of measurable functions. Here we denote by $\mathcal{H}$ the family of all affine closed half spaces in $\R^N$ and denote by $\mathcal{H}_0$ the family of all closed half spaces in $\R^N$, i.e. $H \in \mathcal{H}_0$ if and only if $H \in \mathcal{H}$ and $0$ lies in the hyperplane $\partial H$. For $H \in \mathcal{H}$, we denote by $R_H: \R^N \to \R^N$ the reflection with respect to the boundary of $H$. We define the polarization of a measurable function $u : \R^N \to \R$ with respect to $H$ by
\begin{align*}
u_H(x):=\left\{
\begin{aligned}
\max\left\{u(x), u(R_H(x))\right\}, &\quad x \in H, \\
\min \left\{u(x), u(R_H(x))\right\}, &\quad x \in \R^N \backslash H.
\end{aligned}
\right.
\end{align*}
Let $u \in X$ be a nonnegative minimizer to \eqref{zmin}. In view of \cite[Lemmas 2.2-2.3]{BWW}, we can obtain that, for any $H \in \mathcal{H}_0$, 
$$
\int_{\R^N} |\nabla u_H|^2 \,dx =\int_{\R^N} |\nabla u|^2 \, dx
$$
and
$$
 \int_{\R^N} |x|^{-b}|u_H|^q\,dx=\int_{\R^N} |x|^{-b}|u|^q\,dx, \quad  \int_{\R^N} |x|^{-b}|u_H|^p\,dx=\int_{\R^N} |x|^{-b}|u|^p\,dx.
$$
As a consequence, we find that $u_H \in X$ is also a minimizer to \eqref{zmin} for any $H \in \mathcal{H}_0$. It follows from \cite[Theorem 1]{Sch} that there exist a sequence $\{H_n\} \subset \mathcal{H}_0$ and a sequence $\{u_n\} \subset X$ such that $u_n \to u^*$ in $L^r(\R^N)$ as $n \to \infty$ for any $ 1 \leq r <\infty$, where $u^*$ denotes the symmetric-decreasing rearrangement of $u$ and the sequence $\{u_n\}$ is defined by
$$
u_1:=u, \quad u_{n+1}:=(u_n)_{H_1H_2 \cdots H_{n+1}}.
$$
Therefore, we conclude that
$$
\int_{\R^N} |\nabla u^*|^2 \,dx  \leq \int_{\R^N} |\nabla u|^2 \, dx
$$
and
$$
 \int_{\R^N} |x|^{-b}|u^*|^q\,dx=\int_{\R^N} |x|^{-b}|u|^q\,dx, \quad  \int_{\R^N} |x|^{-b}|u^*|^p\,dx=\int_{\R^N} |x|^{-b}|u|^p\,dx.
$$
This yields that $E(u^*) \leq E(u)=\sigma_0$. Note that $Q(u)=0$, then $Q(u^*) \leq 0$. Hence we have that there exists a constant $0<t^* \leq 1$ such that $Q((u^*)_{t^*})=0$. Further, we are able to derive that $u^* \in X$ is a minimizer to \eqref{zmin} and
$$
\int_{\R^N} |\nabla u^*|^2 \,dx=\int_{\R^N} |\nabla u|^2 \, dx.
$$
In view of \cite[Theorem 1.1]{BZ}, then it is not difficult to deduce that $u$ is radially symmetric and decreasing up to translations. Thus the proof is completed.
\end{proof}

\begin{proof}[Proof of Proposition \ref{zestimate}]
Let $u \in X$ be the solution to \eqref{zequ} obtained in Lemma \ref{existzero}. Note first that $X \subset L^{2^*}(\R^N)$ for $N \geq 3$. Then, applying standard bootstrap arguments, we get that $u \in C(\R^N) \cap C^2(\R^N\backslash\{0\})$ and $u(x) \to 0$ as $|x| \to \infty$. Utilizing the maximum principle, we then have that $u>0$. 

We next establish the decay estimates of the solution to \eqref{zequ}. Let us first show that $u(x) \sim |x|^{-\alpha}$ as $|x| \to \infty$. To do this, we shall adapt some ingredients from \cite{DS}. For $R>0$, we know that $-\Delta (|x|^{2-N})=0$ in $\R^N \backslash B_R(0)$. On the other hand, since $q<p$ and $u(x) \to 0$ as $|x| \to \infty$, then $-\Delta u \leq 0$ in $\R^N \backslash B_R(0)$ for $R>0$ large enough. From the maximum principle, we then obtain that $u(x) \leq C |x|^{2-N}$ in $\R^N \backslash B_R(0)$. This means that $u(r) \leq C r^{2-N}$ for any $r>0$. We now prove that $u(r) \leq C r^{\frac{b-2}{q-2}}$ for any $r>0$. Since $u \in X$ is positive and radially symmetric, then \eqref{zequ} can be rewritten as
\begin{align}\label{zequ1}
-u_{rr}-\frac{N-1}{r}u_r+r^{-b}u^{q-1}=r^{-b}u^{p-1}.
\end{align}
It is simple to conclude that $u_{rr}>0$ for any $r>0$. This shows that $u_r$ is increasing and $\lim_{r \to \infty}|u_r(r)|=0$. Multiplying \eqref{zequ1} by $u_r$ and integrating on $[t, \infty)$, we have that
\begin{align} \label{d0}
-\int_t^{\infty} u_{rr}u_r \,dr +\int_t^{\infty}r^{-b}u^{q-1} u_r \, dr=\int_t^{\infty}r^{-b}u^{p-1} u_r \, dr + (N-1)\int_t^{\infty}\frac{u_r^2}{r}\,dr.
\end{align}
Note that $u(r) \to 0$ and $u_r(r) \to 0$ as $r \to \infty$. Therefore, from \eqref{d0}, we obtain that
\begin{align*}
f(t):=\frac 12 u_t^2-\frac 1 q t^{-b}u^q +\frac 1 p t^{-b}u^p&=\int_t^{\infty} \frac{b}{r} \left(\frac{N-1}{b} u_r^2-\frac 1 q r^{-b}u^q + \frac 1 p r^{-b} u^p\right) \, dr,
\end{align*}
from which we get that
\begin{align} \label{nonnegative}
\frac{d}{dr} f(r)= -\frac{b}{r}f(r)-\frac{2(N-1)-b}{2r} u_r^2.
\end{align}
This means that
$$
\frac{d}{dr} \left(r^b f(r) \right)=-\frac{(2(N-1)-b)r^{b-1}}{2} u_r^2<0.
$$
As a result, we derive that $r^b f(r)$ is decreasing for any $r>0$. It then follows that $f(r)$ is decreasing for any $r>0$. Note that $f(r) \to 0$ as $r \to \infty$. Hence there holds that $f(r)>0$ for any $r>0$ large enough. Thus we conclude that $u_r^2 \geq r^{-b} u^q/q$ for any $r>0$ large enough, because of $u(r) \to 0$ as $r \to \infty$ and $q<p$. Then we see that, for any $r>0$ large enough,
\begin{align} \label{d01}
\left(r^{\frac b 2} u^{\frac{2-q}{2}}\right)'=\frac b 2r^{\frac b 2 -1} u^{\frac {2-q}{2}} -\frac{q-2}{2} r^{\frac b 2}u^{-\frac{q}{2}} u_r \geq \frac{q-2}{2} r^{\frac b 2}u^{-\frac{q}{2}} |u_r| \geq \frac{q-2}{2 \sqrt {q}}.
\end{align}
On the other hand, there holds that, for any $r>0$,
\begin{align} \label{d02}
\left(r^{\frac b 2} u^{\frac{2-q}{2}}\right)'=\frac b 2r^{\frac b 2 -1} u^{\frac {2-q}{2}} -\frac{q-2}{2} r^{\frac b 2}u^{-\frac{q}{2}} u_r  \geq \frac b 2r^{\frac b 2 -1} u^{\frac {2-q}{2}}.
\end{align}
Combining \eqref{d01} and \eqref{d02}, we conclude that there exists a constant $C>0$ such that, for any $r>0$,
\begin{align} \label{d1}
\left(r^{\frac b 2} u^{\frac{2-q}{2}}\right)' \geq C.
\end{align}
Therefore, from \eqref{d1}, there holds that $ u(r) \leq C^{\frac{q-2}{2}} r^{\frac{b-2}{q-2}}$ for any $r>0$. Consequently, we have that $u(r) \leq C r^{-\alpha}$ for any $r>0$.

Let us define $v(r)=r^{\alpha}u(r)$ for $r>0$. We are going to prove that there exists a constant $l>0$ such that $v(r) \to l$ as $r \to \infty$. From the discussions above, we see that $v$ is bounded. In addition, by \eqref{zequ1}, it is not hard to verify that $v$ solves the following equation,
\begin{align} \label{zequ2}
v_{rr}-\frac{2\alpha +1-N}{r} v_r=\frac{\alpha \left(N-\alpha -2\right)}{r^2} v+r^{-b+\alpha(2-q)}v^{q-1}-r^{-b+\alpha(2-p)}v^{p-1}.
\end{align}
First we consider the case that $\alpha=N-2$. In this case, by \eqref{zequ2}, then $v$ satisfies the equation
\begin{align*}
v_{rr}-\frac{N-3}{r} v_r=r^{-b+(N-2)(2-q)}v^{q-1}-r^{-b+(N-2)(2-p)}v^{p-1}.
\end{align*}
Define $w(t):=v(r)$ for $t=\beta r^{N-2}/(N-2)$ and 
$
\beta=\left(N-2\right)^{\frac{b-2+q(N-2)}{b-2+(N-2)(q-2)}}>0.
$
Therefore, we have that $w$ enjoys the equation
\begin{align}\label{iddd1}
w''=t^{-\frac{b-2+q(N-2)}{N-2}}w^{q-1}-\left(N-2\right)^{\frac{(N-2)^2(p-q)}{b-2+(N-2)(q-2)}}t^{-\frac{b-2+p(N-2)}{N-2}}w^{p-1}.
\end{align}
It then follows that $w''(t) >0$ for any $t>0$ large enough, because of $p<q$ and $0<b-2+q(N-2)<b-2+q(N-2)$.
Note that $w$ is bounded, then $w'(t) \to 0$ as $t \to \infty$. As a consequence, by \eqref{iddd1}, we get that
$$
-w'(t)=\int_t^{\infty} s^{-\frac{b-2+q(N-2)}{N-2}}w^{q-1}(s)-\left(N-2\right)^{\frac{(N-2)^2(p-q)}{b-2+(N-2)(q-2)}}s^{-\frac{b-2+p(N-2)}{N-2}}w^{p-1}(s) \,ds.
$$
This gives that $w'(t) <0$ for any $t>0$ large enough. Furthermore, there holds that
\begin{align*}
w(t)&=\int_t^{\infty}\int_{\tau}^{\infty} s^{-\frac{b-2+q(N-2)}{N-2}}w^{q-1}(s)-\left(N-2\right)^{\frac{(N-2)^2(p-q)}{b-2+(N-2)(q-2)}}s^{-\frac{b-2+p(N-2)}{N-2}}w^{p-1}(s) \,ds d\tau \\
&=\int_t^{\infty}(s-t)\left(s^{-\frac{b-2+q(N-2)}{N-2}}w^{q-1}(s)-\left(N-2\right)^{\frac{(N-2)^2(p-q)}{b-2+(N-2)(q-2)}}s^{-\frac{b-2+p(N-2)}{N-2}}w^{p-1}(s)\right) \,ds
\end{align*}
Therefore, we find that, for $t>0$ large enough, 
\begin{align} \label{below}
\begin{split}
w(t)&=\int_{t}^{\infty}(s-t)\left(s^{-\frac{b-2+q(N-2)}{N-2}}w^{q-1}(s)-\left(N-2\right)^{\frac{(N-2)^2(p-q)}{b-2+(N-2)(q-2)}}s^{-\frac{b-2+p(N-2)}{N-2}}w^{p-1}(s)\right) \,ds \\
& \leq w^{q-1}(t) \int_{t}^{\infty}(s-t)s^{-\frac{b-2+q(N-2)}{N-2}}\, ds \\
&=w^{q-1}(t) t^{2-\frac{b+q(N-2)}{N-2}} \mathcal{B}\left(\frac{b-2+q(N-2)}{N-2}-2, 2\right),
\end{split}
\end{align}
where $\mathcal{B}(a, b)$ denotes the Beta function for $a,b>0$. If $\beta=N-2$ and $(2-b)/(q-2) \neq N-2$, then
$$
\frac{b-2+q(N-2)}{N-2} > 2,
$$
It then follows from \eqref{below} that there exists a constant $l>0$ such that $w(t) \to l$ as $t \to \infty$. This in turn leads to such that $v(r) \to l$ as $r \to \infty$.

Next we consider the case $\alpha>N-2$.
If $N \geq 4$, we define $w(t):=v(r)$ for $r=\exp\left(\frac{t}{|N-2\alpha -1|}\right)$ and $t>0$. In view of \eqref{zequ2}, we then see that 
\begin{align} \label{idd1}
w''=&\frac{1}{\left(N-2\alpha -1\right)^2} f(w),
\end{align}
where
$$
f(w):=\alpha\left(N-\alpha-2\right) w+\exp\left(\frac{\left(2-b+\alpha(2-q)\right)t}{|N-2\alpha -1|}\right) w^{q-1}-\exp\left(\frac{\left(2-b+\alpha(2-p)\right)t}{|N-2\alpha -1|}\right)w^{p-1}.
$$
Integrating \eqref{idd1} on $[t, \infty)$, we then get that
\begin{align} \label{idd2}
-w'(t) =\frac{1}{\left(N-2\alpha -1\right)^2} \exp{\left(\frac{(2+2\alpha-N)t}{|N-2\alpha -1|}\right)}\int_t^{\infty}\exp{\left(-\frac{(2+2\alpha-N)s}{|N-2\alpha -1|}\right)}f(w(s)) \,ds.
\end{align}
Note that $\alpha=\max\{(2-b)/(q-2), N-2\}$. If $ \alpha > N-2$, then $N-2\alpha-1<0$ and
$$
\frac{2-b+\alpha(2-p)}{|N-2\alpha-1|}<\frac{2-b+\alpha(2-q)}{|N-2\alpha-1|} \leq 0.
$$
Then there holds that, for any $t>0$ large enough,
$$
f(w(t)) \leq -\frac{\alpha\left(\alpha-N+2\right)}{2} w(t)<0.
$$
Taking into account \eqref{idd2}, we then obtain that $w'(t)>0$ for any $t>0$ large enough. Due to $w>0$, then $w(t) \to l$ as $t \to \infty$. This shows that $v(r) \to l$ as $r \to \infty$.
In this case, if $N=3$, then \eqref{zequ2} reduces to
\begin{align} \label{vequ}
v_{rr}-\frac{2(\alpha-1)}{r} v_r=\frac{\alpha \left(1-\alpha\right)}{r^2} v+r^{-b+\alpha(2-q)}v^{q-1}-r^{-b+\alpha(2-p)}v^{p-1}.
\end{align}
Utilizing a similar way as before, we can also prove that  $v(r) \to l$ as $ r \to \infty$ for some $l>0$ when $\alpha=1$ or $\alpha>1$. Hence we have the desired result, i.e. $u(x) \sim |x|^{-\alpha}$ as $|x| \to \infty$.

Let us now turn to demonstrate that $u(x) \sim |x|^{2-N} \left(\ln |x| \right)^{\frac{2-N}{2-b}}$ as $|x| \to \infty$ for $q = (2N-2-b)/(N-2)$. To do this, we shall follow some ideas from \cite{DSW}. Let us first define $v(r):=r^{N-2}u(r)$ for any $r>0$. In virtue of \eqref{zequ2}, we immediately know that $v$ satisfies the following equation,
\begin{align*}
v_{rr}-\frac{N-3}{r} v_r=r^{-2}v^{q-1}-r^{-b+(N-2)(2-p)}v^{p-1}.
\end{align*}
Define $w(t):=v(r)$ and $r=e^t$ for any $t>0$. Then we find that
\begin{align*}
w''-(N-2)w'=w^{q-1}-e^{(2-b)t+(N-2)(2-p)t}w^{p-1}.
\end{align*}
It then follows that
\begin{align} \label{zequ11}
\left(w' e^{-(N-2)t}\right)'=e^{-(N-2)t}\left(w^{q-1}-e^{(2-b)t+(N-2)(2-p)t}w^{p-1}\right).
\end{align}
By integrating \eqref{zequ11} on $[t, \infty)$, we then have that 
$$
-w'(t)=e^{(N-2)t}\int_{t}^{\infty}e^{-(N-2)s}\left(w^{q-1}-e^{(2-b)s+(N-2)(2-p)s}w^{p-1}\right) \, ds.
$$
This then yields that $w'(t)<0$ for any $t>0$ large enough. Therefore, we get that $-w'(t) \leq C w^{q-1}(t)$ for any $t>0$ large enough. As a consequence, we have that $\left(w^{2-q}(t)\right)' \leq C(q-2)$ for any $t>0$ large enough, from which we obtain that 
$$
w(t) \geq \left(C(q-2) t +w^{2-q}(t_0)\right)^{\frac {1}{2-q}} \geq \widetilde{C} t^{\frac{2-N}{2-b}}, \quad t>t_0>0,
$$
because of $q = (2N-2-b)/(N-2)$. This yields that, for any $r>0$ large enough,
$$
u(r) \geq \widetilde{C} r^{2-N} \left(\ln r\right)^{\frac{2-N}{2-b}}.
$$
In the following, we are going to show the upper bound of $u$. To this end, we first set 
$$
S(r):=kr^{2-N} \left(\ln r\right)^{\frac{2-N}{2-b}}, \quad r>0,
$$ 
where $k>0$ is a constant defined by
$$
k:=\left(\frac{2-b}{(N-2)^2}\right)^{\frac{N-2}{2-b}}.
$$
From direct computations, we see that
$$
-\Delta S + |x|^{-b} S^{q-1}=-\frac{(N-b) |x|^{-b}}{(2-b)(N-2)} \frac{S^{q-1}}{\ln |x|}.
$$
Let $\eta>0$ be a constant to be determined later. It yields that
$$
-\Delta (\eta S) +  |x|^{-b} (\eta S)^{q-1}= |x|^{-b}(\eta^{q-1}-\eta)S^{q-1} - \frac{\eta (N-b) |x|^{-b}}{(2-b)(N-2)}\frac{S^{q-1}}{\ln |x|}.
$$
Then we obtain that
\begin{align*}
&-\Delta(u-\eta S) + \frac{ |x|^{-b}\left(u^{q-1}-(\eta S)^{q-1}\right)}{u-\eta S} \left(u-\eta S\right)=-\Delta u+ |x|^{-b} u^{p-1} + \left(\Delta (\eta S)- |x|^{-b}(\eta S)^{q-1}\right) \\
& = |x|^{-b}u^{p-1} - |x|^{-b}(\eta^{q-1}-\eta)S^{q-1} + \frac{\eta (N-b) |x|^{-b}}{(2-b)(N-2)}\frac{S^{q-1}}{\ln |x|} \\
&= |x|^{-b}S^{q-1} \left(\frac{u^{p-1}}{S^{q-1}}-(\eta^{q-1}-\eta)+ \frac{\eta (N-b)}{(2-b)(N-2)\ln |x|}\right).
\end{align*}
Note that $u(x) \leq C |x|^{-\alpha}$ for $x \in \R^N$ and $q<p$,  then
$$
\frac{u^{p-1}(x)}{S^{q-1}(x)} \to 0 \quad \mbox{as} \,\, |x| \to \infty.
$$
As a consequence, there exists a constant $R>0$ large enough such that
$$
-\Delta(u-\eta S) +c(x) \left(u-\eta S\right) <0,
$$
where $\eta>0$ is a constant such that $\eta<\eta^{q-1}$ and 
$$
c(x):=\frac{|x|^{-b}\left(u^{q-1}(x)-(\eta S)^{q-1}(x)\right)}{u(x)-\eta S(x)}, \quad x \in \R^N
$$
Using the maximum principle, we then have that $u(x) \leq \widehat{C} S(x)$ for any $|x| \geq R$. This completes the proof.
\end{proof}

\begin{prop} \label{limit11}
Let $p>2+2(2-b)/N$ and $\mu=-1$. If $q \geq 2+2(2-b)/N$ and $ N=3$ or $q>2+2(2-b)/N$ and $N=4$, then $\sigma(c) \to \gamma_0$ as $c \to \infty$ and $\sigma(c) > \sigma_0$ for any $c>0$.
\end{prop}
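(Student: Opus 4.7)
The assertion has two parts — the limit $\sigma(c)\to\sigma_0$ as $c\to\infty$ and the strict inequality $\sigma(c)>\sigma_0$ — and I will treat them separately. The lower bound $\sigma(c)\ge\sigma_0$ is free: every $u\in P(c)\subset H^1(\R^N)$ belongs to $X$ by Gagliardo--Nirenberg (so $\int|x|^{-b}|u|^q<\infty$), and the Pohozaev functional $Q$ on $H^1$ is identical to the one used in the definition of $\sigma_0$, so any such $u$ is admissible in \eqref{zmin} and $E(u)\ge\sigma_0$.

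\textbf{Upper bound and limit.} I would take the radial decreasing ground state $w\in X$ of \eqref{zequ} furnished by Lemma \ref{existzero}. By Proposition \ref{zestimate}, $w(x)\sim|x|^{-\alpha}$ with $\alpha=\max\{(2-b)/(q-2),N-2\}$; a direct check under the hypotheses ($N=3$, $q\ge 2+2(2-b)/N$, or $N=4$, $q>2+2(2-b)/N$) gives $\alpha\le N/2$, so $w\notin L^2(\R^N)$. Fix a cutoff $\chi\in C_0^\infty(\R^N,[0,1])$ with $\chi\equiv 1$ on $B_1$, $\chi\equiv 0$ off $B_2$, and set $w_R(x):=\chi(x/R)w(x)$. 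The tail estimate $R^{-2}\int_{R\le|x|\le 2R}w^2\,dx\lesssim R^{N-2-2\alpha}\to 0$ (valid since $\alpha\ge N-2>(N-2)/2$), together with the integrability of $|\nabla w|^2$ and $|x|^{-b}|w|^q$ outside large balls, shows that $w_R\to w$ in the $X$-norm; hence $E(w_R)\to\sigma_0$ and $Q(w_R)\to 0$. At the same time $R\mapsto\|w_R\|_2^2$ is continuous and grows monotonically to $+\infty$, so for every sufficiently large $c$ one can pick $R=R(c)$ with $\|w_R\|_2^2=c$. By Lemma \ref{unique4} there is a unique $t_R>0$ with $(w_R)_{t_R}\in P(c)$, and $t_R\to 1$ because $Q(w_R)\to 0$. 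Consequently $\sigma(c)\le E((w_R)_{t_R})\to\sigma_0$, and together with the lower bound one obtains $\sigma(c)\to\sigma_0$.

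\textbf{Strict inequality.} Assume, for contradiction, $\sigma(c_\ast)=\sigma_0$ for some $c_\ast>0$. By Lemma \ref{pss4} extract a Palais--Smale sequence $\{v_n\}\subset P(c_\ast)$ for $E|_{S(c_\ast)}$ at level $\sigma_0$ with $(v_n)^-\to 0$; it is bounded in $H^1$ by Lemma \ref{coercive3}. Pass to a subsequence so $v_n\rightharpoonup v$ weakly in $H^1$; Lemma \ref{cembedding} upgrades this to strong convergence in $L^p(\R^N,|x|^{-b}dx)$ and $L^q(\R^N,|x|^{-b}dx)$. Since $\sigma_0>0$ one has $v\neq 0$, and $v$ solves $-\Delta v+\lambda v=-|x|^{-b}|v|^{q-2}v+|x|^{-b}|v|^{p-2}v$ for some $\lambda\in\R$, with $Q(v)=0$ by Pohozaev (Lemma \ref{ph} adapted to $\mu=-1$). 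Passing to the limit in $Q(v_n)=0$ together with the weighted strong convergences forces $\lim\|\nabla v_n\|_2=\|\nabla v\|_2$, so $E(v)=\sigma_0$ and $v$ attains $\sigma_0$ in \eqref{zmin}. But every such minimizer solves the pure zero-mass equation \eqref{zequ00} (this is exactly how the ground state is identified in Lemma \ref{existzero}), so subtracting it from the $\lambda$-equation forces $\lambda v\equiv 0$, i.e.\ $\lambda=0$. Then Proposition \ref{zestimate} gives $v(x)\sim|x|^{-\alpha}$ with $\alpha\le N/2$, so $v\notin L^2(\R^N)$, contradicting $v\in H^1\subset L^2(\R^N)$. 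Therefore $\sigma(c)>\sigma_0$ for every $c>0$.

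\textbf{Main obstacle.} The delicate point is the strict inequality. Lemma \ref{la1} guarantees $\lambda>0$ only for $c<c_3$, so for the possibly large values of $c_\ast$ considered here no sign information on the Lagrange multiplier is available a priori; the trick is to couple the $S(c_\ast)$-Lagrange equation with the identification of $v$ as a $\sigma_0$-minimizer (and hence a solution of \eqref{zequ00}) to pin down $\lambda=0$. The borderline $\alpha=N/2$ in the stipulated parameter ranges is what makes this argument sharp, and explains why the logarithmic case $N=4$, $q=2+2(2-b)/N$, in which the zero-mass ground state actually belongs to $L^2(\R^N)$, has to be excluded from the hypotheses.
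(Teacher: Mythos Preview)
Your proof is correct and follows the same strategy as the paper: for the limit you truncate the zero-mass ground state $w$ and project the cutoff onto $P(c)$ via Lemma~\ref{unique4}, and for the strict inequality you argue by contradiction that the weak limit $v\in H^1$ of a Palais--Smale sequence at level $\sigma_0$ would itself realize $\sigma_0$ and hence (by Proposition~\ref{zestimate}) fail to lie in $L^2$. The paper's write-up is terser---it jumps straight from ``$E(u)=\sigma_0$, $Q(u)=0$'' to ``$u\notin L^2$ by Proposition~\ref{zestimate}'' without spelling out the intermediate step $\lambda=0$ that you make explicit---but the underlying argument is the same.
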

\begin{proof}
Let us first prove that $\sigma(c) \to \sigma_0$ as $c \to \infty$. From the definition of $\sigma_0$, we first have that $\sigma(c) \geq \sigma_0$ for any $c>0$. Let $u \in X$ be a solution to \eqref{zequ} with $E(u)=\sigma_0$ and $Q(u)=0$. It follows from Proposition \ref{zestimate} that $u \not\in L^2(\R^N)$. Let $\chi \in C_0^{\infty}(\R^N, [0,1])$ be a cut-off function such that $\chi(x)=1$ for $|x| \leq 1$ and $\chi(x)=0$ for $|x| \geq 2$. For $R>0$, we define $u_R(x):=u(x) \chi_R(x)$, where $\chi_R(x):=\chi(x/R)$ for $x \in \R^N$. It is simple to verify that $\|u_R\|_2 \to \infty$ as $R \to \infty$. Moreover, by Proposition \ref{zestimate}, we are able to check that $\|\nabla u_R\|_2=\|\nabla u\|_2 +o_R(1)$ and
$$
\int_{\R^N}|x|^{-b}|u_R|^q \, dx=\int_{\R^N}|x|^{-b}|u|^q \, dx+o_R(1), \quad \int_{\R^N}|x|^{-b}|u_R|^p \, dx=\int_{\R^N}|x|^{-b}|u|^p \, dx +o_R(1).
$$
where $o_R(1) \to 0$ as $R \to \infty$. Since $Q(u)=0$, then there exists a unique constant $t_R>0$ satisfying $t_R \to 1$ as $R \to \infty$ such that $Q((u_R)_{t_R})=0$. Observe that
\begin{align*}
\sigma(\|u_R\|_2^2) &\leq E((u_R)_{t_R})=E((u_R)_{t_R})-\frac{2}{N(p-2)+2b} Q_0((u_R)_{t_R})\\ 
&=\frac{N(p-2)-2(2-b)}{2\left(N(p-2)+2b\right)} \int_{\R^N} |\nabla (u_R)_{t_R}|^2 \,dx +\frac{N(p-q)}{q\left(N(p-2)+2b\right)}\int_{\R^N} |x|^{-b}|(u_R)_{t_R}|^q\,dx\\
&=\frac{N(p-2)-2(2-b)}{2\left(N(p-2)+2b\right)} \int_{\R^N} |\nabla u|^2 \,dx +\frac{N(p-q)}{q\left(N(p-2)+2b\right)}\int_{\R^N} |x|^{-b}|u|^q\,dx +o_R(1) \\
&=E(u)-\frac{2}{N(p-2)+2b} Q(u) +o_R(1) \\
&=\sigma_0 +o_R(1).
\end{align*}
This shows that $\sigma(c) \to \sigma_0$ as $c \to \infty$. We next assert that $\sigma(c) > \sigma_0$ for any $c>0$. Let us argue by contradiction that there exists a constant $c>0$ such that $\sigma(c)=\sigma_0>0$. At this point, following the proof of Theorem \ref{thm5}, we are able to infer that there exists a Palais-Smale sequence $\{u_n\} \subset P(c)$ for $E$ restricted on $S(c)$ at the level $\sigma(c)>0$. This further leads to $\{u_n\} \subset H^1(\R^N)$ is bounded and there exists a nontrivial $u \in H^1(\R^N)$ such that $u_n \wto u$ in $H^1(\R^N)$ as $n \to \infty$, $Q(u)=0$ and $E(u)=\sigma(c)$. It readily suggests that $E(u)=\sigma_0$. However, in view of Proposition \ref{zestimate}, we have that $u \not\in L^2(\R^N)$. We then reach a contradiction. Thus the desired conclusion holds true and the proof is completed.
\end{proof}


\begin{prop} \label{limit12}
Let $p>2+2(2-b)/N$ and $\mu=-1$. If $2<q<2+2(2-b)/N$ and $ N=3$ or $2<q \leq 2+2(2-b)/N$ and $N=4$ or $N \geq 5$, then there exists a constant $c_{\infty}>0$ such that $\sigma(c) =\sigma_0$ for any $c \geq c_{\infty}$.
\end{prop}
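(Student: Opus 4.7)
The plan is to use the positive radial ground state $u_0\in X$ to \eqref{zequ00} produced by Proposition \ref{zestimate} directly as a test function for the constrained problem, once one verifies that in the regimes listed in the statement $u_0$ actually belongs to $L^2(\R^N)$.

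First I would combine the pointwise decay of Proposition \ref{zestimate} with a tail integration to show $u_0\in L^2(\R^N)$. Because $u_0\in C(\R^N)$ it suffices to control the behaviour at infinity, and with $\alpha=\max\{(2-b)/(q-2),N-2\}$ the estimate $u_0(x)\sim|x|^{-\alpha}$ yields $\int|u_0|^2\,dx<\infty$ as soon as $2\alpha>N$. For $2<q<2+2(2-b)/N$ with $N=3$ this amounts to $(2-b)/(q-2)>3/2$, which is exactly the mass subcritical assumption on $q$. For $N\geq 5$ one has $\alpha\geq N-2>N/2$ automatically. The delicate case is $N=4$ with $q=2+2(2-b)/N$: this is precisely the threshold $q=(2N-2-b)/(N-2)$ at which $\alpha=N/2$ becomes borderline, and for which Proposition \ref{zestimate} supplies the refined decay $u_0(x)\sim|x|^{-2}(\ln|x|)^{-2/(2-b)}$. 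Passing to polar coordinates and substituting $t=\ln r$ turns the tail integral into $\int_{\ln 2}^{\infty}t^{-4/(2-b)}\,dt$, which converges because $4/(2-b)>1$ for $0<b<2$. The strict inequality cases $q<2+2(2-b)/N$ with $N=4$ are handled by the same polynomial computation without the logarithm.

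Next I would set $c_\infty:=\|u_0\|_2^2>0$. Since $u_0$ solves \eqref{zequ00}, the Pohozaev argument of Lemma \ref{ph} with $\lambda=0$ gives $Q(u_0)=0$, so $u_0\in H^1(\R^N)\cap S(c_\infty)$ lies in $P(c_\infty)$ and hence $\sigma(c_\infty)\leq E(u_0)=\sigma_0$. Conversely, the continuous embedding $H^1(\R^N)\subset X$, provided by \eqref{GN} together with Lemma \ref{cembedding}, ensures that every element of $P(c)$ is an admissible competitor for the zero mass minimization \eqref{zmin}, whence $\sigma(c)\geq \sigma_0$ for all $c>0$. These two bounds together force $\sigma(c_\infty)=\sigma_0$.

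Finally, Proposition \ref{nonincreasing} promotes this equality to every $c\geq c_\infty$: the monotonicity of $c\mapsto\sigma(c)$ yields $\sigma_0\leq\sigma(c)\leq\sigma(c_\infty)=\sigma_0$, which is the desired identity. The main technical obstacle is the borderline integrability at $N=4$, $q=2+2(2-b)/N$, where the logarithmic refinement of Proposition \ref{zestimate} is indispensable; once $u_0\in L^2(\R^N)$ is in hand the rest of the argument is a short assembly of results already proved in the paper.
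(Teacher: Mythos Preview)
Your proposal is correct and follows essentially the same approach as the paper: take the zero-mass ground state $u_0$, verify $u_0\in L^2(\R^N)$ via the decay in Proposition~\ref{zestimate}, set $c_\infty=\|u_0\|_2^2$, combine the trivial lower bound $\sigma(c)\geq\sigma_0$ with $\sigma(c_\infty)\leq E(u_0)=\sigma_0$, and use the monotonicity of Proposition~\ref{nonincreasing}. The paper's version is terser---it simply asserts $u_0\in L^2(\R^N)$ by appeal to Proposition~\ref{zestimate} without your explicit case-by-case integrability check (including the logarithmic borderline at $N=4$, $q=(2N-2-b)/(N-2)$)---but the skeleton of the argument is identical.
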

\begin{proof}
Let $u \in X$ be a solution to \eqref{zequ} with $E(u)=\sigma_0$ and $Q(u)=0$. From Proposition \ref{zestimate}, we have that $u \in L^2(\R^N)$. Using Proposition \ref{nonincreasing}, we then derive that $\sigma(c) \leq \sigma(c_{\infty}) \leq E(u)=\sigma_0$ for any $c \geq c_{\infty}$, where $c_{\infty}:=\|u\|_2^2>0$. On the other hand, by the definitions of $\sigma(c)$ and $\sigma_0$, we know that $\sigma(c) \geq \sigma_0$ for any $c>0$. As a result, we get the conclusion and the proof is completed.
\end{proof}

\begin{prop} \label{limit13}
Let $N \geq 3$, $p=2+2(2-b)/N$ and $\mu=-1$, then there exists a constant $c_{\infty}>c_1$ such that $\sigma(c) =\sigma_0$ for any $c \geq c_{\infty}$.
\end{prop}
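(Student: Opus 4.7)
The plan is to follow the same strategy as Proposition \ref{limit12}, with the extra ingredient being the verification that the resulting threshold mass exceeds $c_1$, so that the monotonicity of $\sigma$ on $(c_1,\infty)$ can be applied. First, Lemma \ref{existzero} together with Proposition \ref{zestimate} (using $N\ge 3$) supplies a positive, radially symmetric and decreasing ground state $u\in X$ of the zero mass equation \eqref{zequ00} with $E(u)=\sigma_0$ and $Q(u)=0$.

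Second, I would show $u\in L^2(\R^N)$ from the decay estimate in Proposition \ref{zestimate}, namely $u(x)\sim|x|^{-\alpha}$ with $\alpha=\max\{(2-b)/(q-2),N-2\}$ (or the logarithmic case $u(x)\sim |x|^{2-N}(\ln|x|)^{(2-N)/(2-b)}$ when $q=(2N-2-b)/(N-2)$, which in the mass critical regime $q<p=2+2(2-b)/N$ can occur only for $N\ge 5$). If $\alpha=(2-b)/(q-2)$, the strict inequality $q<p$ forces $\alpha>N/2$ automatically; if $\alpha=N-2$, the mass critical bound on $q$ requires $N\ge 5$, so $N-2>N/2$; and in the borderline logarithmic case (again $N\ge 5$ only), the algebraic part $|x|^{3-N}$ with exponent $\le -2$ already secures integrability. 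In every case $u\in L^2(\R^N)$, and I set
\[
c_\infty:=\|u\|_2^2>0.
\]

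Third, I would verify $u\in\mathcal{P}(c_\infty)$ and deduce $\sigma(c_\infty)=\sigma_0$. Since $Q(u)=0$ and $p=2+2(2-b)/N$ forces $(N(p-2)+2b)/(2p)=2/p$, the Pohozaev identity reads
\[
\int_{\R^N}|\nabla u|^2\,dx+\frac{N(q-2)+2b}{2q}\int_{\R^N}|x|^{-b}|u|^q\,dx=\frac{2}{p}\int_{\R^N}|x|^{-b}|u|^p\,dx,
\]
and the strictly positive $q$-term yields $\|\nabla u\|_2^2<(2/p)\int|x|^{-b}|u|^p\,dx$, so $u\in\mathcal{S}(c_\infty)\cap P(c_\infty)=\mathcal{P}(c_\infty)$. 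Thus $\sigma(c_\infty)\le E(u)=\sigma_0$. The reverse inequality $\sigma(c)\ge\sigma_0$ holds for every $c>c_1$ directly from the definition of $\sigma_0$ as the infimum over $X\setminus\{0\}$ with $Q=0$, using that $H^1(\R^N)\hookrightarrow X$, so $\sigma(c_\infty)=\sigma_0$.

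The main obstacle is the strict inequality $c_\infty>c_1$, which is needed both for the statement and for invoking Proposition \ref{nonincreasing} on $(c_1,\infty)$. The key observation is that $u$ solves \eqref{equ} with $\mu=-1$ and Lagrange multiplier $\lambda=0$, hence $u$ would be a nontrivial solution of \eqref{equ}-\eqref{mass} with mass $c_\infty$. If $c_\infty\le c_1$, this would directly contradict the non-existence statement in Theorem \ref{thm2}(ii), whose proof relies only on the Pohozaev identity (Lemma \ref{ph}) together with Gagliardo-Nirenberg's inequality \eqref{GN} and is insensitive to the value of $\lambda$ (in particular it covers $\lambda=0$). Therefore $c_\infty>c_1$. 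Applying Proposition \ref{nonincreasing} on $(c_1,\infty)$ (where $c_0=c_1$ in the present case) then gives $\sigma(c)\le\sigma(c_\infty)=\sigma_0$ for every $c\ge c_\infty$, and combined with $\sigma(c)\ge\sigma_0$ this yields $\sigma(c)=\sigma_0$, completing the proof.
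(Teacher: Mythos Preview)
Your proof is correct and follows the same strategy as the paper: take the zero-mass ground state $u\in X$ from Lemma~\ref{existzero}, use the decay from Proposition~\ref{zestimate} to place $u$ in $L^2(\R^N)$, set $c_\infty=\|u\|_2^2$, and combine $\sigma(c_\infty)\le E(u)=\sigma_0$ with the lower bound $\sigma(c)\ge\sigma_0$ and the monotonicity of Proposition~\ref{nonincreasing}. The paper's own argument is a one-line reference to Proposition~\ref{limit12}; you have written out the details it suppresses, and in two places you supply justifications that the paper omits entirely. First, because in the mass-critical case $\sigma(c)$ is defined as an infimum over $\mathcal{P}(c)=P(c)\cap\mathcal{S}(c)$ rather than $P(c)$, you correctly check that $Q(u)=0$ together with $p=2+2(2-b)/N$ forces $\|\nabla u\|_2^2<\tfrac{2}{p}\int|x|^{-b}|u|^p\,dx$, so that $u\in\mathcal{P}(c_\infty)$. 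Second, you give an actual argument for $c_\infty>c_1$ via the nonexistence part of Theorem~\ref{thm2}(ii), observing that $u\in H^1$ solves \eqref{equ} with $\lambda=0$ and that the Pohozaev-based nonexistence proof there is independent of $\lambda$; the paper simply asserts $c_\infty>c_1$ in the statement without proof. Your case analysis for $u\in L^2(\R^N)$ (including the observation that the logarithmic and the $\alpha=N-2$ branches can only arise when $N\ge 5$ under the constraint $q<p=2+2(2-b)/N$) is also correct and more explicit than the paper's blanket appeal to Proposition~\ref{zestimate}.
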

\begin{proof}
Let $u \in X$ be a solution to \eqref{zequ} with $E(u)=\sigma_0$ and $Q(u)=0$. In this case, by Proposition \ref{zestimate}, there also holds that $u \in L^2(\R^N)$. Then, by a similar way as the proof of Proposition \ref{limit12}, we can complete the proof.
\end{proof}

\begin{proof}[Proof of Proposition \ref{prop2}]
The proof of of Proposition \ref{prop2} follows directly from Propositions \ref{nonincreasing}, \ref{continuous}, \ref{limit1}, \ref{limit2}, \ref{limit3}, \ref{limit11}, \ref{limit12} and \ref{limit13}.
\end{proof}

\section{Dynamical behaviors of solutions to the dispersive equation} \label{dynamics}

In this section, we are going to study dynamical behaviors of solutions to the Cauchy problem for \eqref{equt}. First of all, we present the local well-posedness of solutions to the problem.

\begin{lem} \label{localwp}
Let $N \geq 1$, $2<q<p<{2(N-b)}/{(N-2)^+}$, $0<b<\min\{2, N\}$ and $\mu =\pm 1$, then, for any $\psi_0 \in H^1(\R^N)$, there exist a constant $T>0$ and a unique solution $\psi(t) \in C([0, T); H^1(\R^N))$ to \eqref{equt} with initial datum $\psi_0$, satisfying the conservation of mass and energy, i.e. for any $ t \in [0, T)$,
$$
\|\psi(t, \cdot)\|_2=\|\psi(0, \cdot)\|_2, \quad E(\psi(t, \cdot))=E(\psi(0, \cdot)).
$$
In addition, the solution map $\psi_0 \mapsto \psi$ is continuous from $H^1(\R^N)$ to $C([0, T); H^1(\R^N))$. There also holds that either $T< \infty$ or $lim_{t \to T^-} \|\nabla \psi(t)\|_2=\infty.$
\end{lem}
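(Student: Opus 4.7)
\bigskip

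\textbf{Proof proposal for Lemma \ref{localwp}.}

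The plan is to run the standard Kato-type fixed point argument in a Strichartz space, which is the usual approach for NLS with $H^1$ data. I will rewrite the equation in Duhamel form
\[
\psi(t) = e^{\text{i}t\Delta}\psi_0 + \text{i}\int_0^t e^{\text{i}(t-s)\Delta}\bigl(\mu|x|^{-b}|\psi|^{q-2}\psi + |x|^{-b}|\psi|^{p-2}\psi\bigr)(s)\,ds,
\]
and view the right-hand side as a map $\Phi$ on a suitable closed ball
\[
B_{T,M} := \bigl\{\psi \in C([0,T];H^1(\R^N)) \cap L^a([0,T];W^{1,r}(\R^N)) : \|\psi\|_{L^\infty_tH^1} + \|\psi\|_{L^a_tW^{1,r}_x} \leq M\bigr\},
\]
where $(a,r)$ is an admissible Strichartz pair chosen in the sequel. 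The goal is to show $\Phi$ is a contraction on $B_{T,M}$ for some $M$ depending on $\|\psi_0\|_{H^1}$ and $T>0$ small.

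The key step, and the only genuine novelty compared to the $b=0$ case, is controlling the two inhomogeneous nonlinearities in Strichartz norms. First I would decompose $\R^N = B_1(0) \cup (\R^N \setminus B_1(0))$ and handle the singularity $|x|^{-b}$ separately from the tail. On the interior ball, H\"older's inequality combined with the inclusion $|x|^{-b} \in L^{\sigma}(B_1)$ for $\sigma < N/b$ together with Sobolev embedding $H^1(\R^N) \hookrightarrow L^s(\R^N)$ for $s$ admissible allows me to estimate
\[
\bigl\| |x|^{-b}|u|^{q-2}u \bigr\|_{L^{r'}(B_1)} \lesssim \||x|^{-b}\|_{L^{\sigma}(B_1)} \|u\|^{q-1}_{L^{(q-1)r'\sigma/(\sigma-r')}}
\]
with analogous bounds for the $p$-term and the $L^2$ derivative; on the exterior region one simply has $|x|^{-b} \le 1$ and the usual Strichartz nonlinear estimates apply. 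The exponents $r$ and $\sigma$ must be tuned so that all the $L^s$-norms arising fall into the Sobolev range $s \in [2, 2^*)$ and the resulting $t$-exponents give a positive power of $T$ as a prefactor, which is what makes the fixed point argument shrink. Analogous estimates for differences $F(u) - F(v)$ yield the contraction property. This part is the main obstacle; it is technical but follows the scheme developed by Genoud--Stuart and subsequently refined by Guzm\'an and others for inhomogeneous NLS.

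Once $\Phi$ is shown to be a contraction on $B_{T,M}$, the Banach fixed point theorem produces the unique solution $\psi \in C([0,T);H^1)$. Conservation of mass is obtained by pairing the equation with $\bar\psi$ and taking imaginary parts, after first justifying the computation on smooth approximating data and passing to the limit using the continuous dependence. Conservation of energy follows from pairing with $\partial_t\bar\psi$, again verified on smooth data and then extended by density. The blow-up alternative is standard: if $T_{\max} < \infty$ and $\|\nabla\psi(t)\|_2$ stayed bounded up to $T_{\max}$, then by mass conservation $\|\psi(t)\|_{H^1}$ would be bounded, and one could iterate the local existence starting from time $T_{\max} - \delta$ to extend the solution past $T_{\max}$, contradicting maximality. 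Continuous dependence of the flow map on the initial datum is an immediate byproduct of the contraction estimate applied to differences of solutions with different data.
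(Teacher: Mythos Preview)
Your proposal is correct in spirit and would yield the lemma, but it takes a different route from the paper. The paper does \emph{not} use Strichartz estimates at all: it invokes Cazenave's abstract local well-posedness theorem \cite[Theorem 4.3.1]{Ca} (Kato's energy method), which requires only that the nonlinearity be locally Lipschitz from $H^1$ to $H^{-1}$. To verify those hypotheses the paper performs exactly the same decomposition you propose, splitting each potential as
\[
|x|^{-b}|\psi|^{r-2}=\chi_{B_1(0)}|x|^{-b}|\psi|^{r-2}+\chi_{\R^N\setminus B_1(0)}|x|^{-b}|\psi|^{r-2},\qquad r\in\{q,p\},
\]
so that the first piece has a potential in $L^{\sigma}(\R^N)$ for $\sigma<N/b$ and the second is bounded; after that the assumptions of \cite[Theorem 4.3.1]{Ca} are immediate and the conservation laws, blow-up alternative, and continuous dependence come for free from that theorem. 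Your Strichartz-based contraction argument (the Guzm\'an/Farah scheme) is a legitimate alternative and is closer to what is done in the recent inhomogeneous NLS literature; it has the advantage of producing Strichartz bounds on the solution that are useful for later scattering arguments, at the cost of the exponent bookkeeping you allude to. The paper's choice buys brevity: one citation replaces the entire fixed-point computation.
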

\begin{proof} 
To prove the local well-posedness of solutions to the problem, we shall take advantage of \cite[Theorem 4.3.1]{Ca} instead of Strichartz estimates, {see also Appendix K in \cite{GS}.} We write
$$
|x|^{-b}|\psi|^{q-2} =\chi_{B_1(0)}(x)|x|^{-b}|\psi|^{q-2}  +\chi_{\R^N \backslash B_1(0)}(x)|x|^{-b}|\psi|^{q-2}
$$ 
and
$$
|x|^{-b}|\psi|^{p-2} =\chi_{B_1(0)}(x)|x|^{-b}|\psi|^{p-2}  +\chi_{\R^N \backslash B_1(0)}(x)|x|^{-b}|\psi|^{p-2}.
$$
Then it is trivial to check that the nonlinearities satisfy the assumptions of \cite[Theorem 4.3.1]{Ca}, then the proof is completed.
\end{proof}

\begin{proof}[Proof of Theorem \ref{thmstable}]
For simplicity, we only show the proof of orbital stability of the set of minimizers under the assumptions Theorem \ref{thm1}. Define
$$
\mathcal{G}(c):=\{u \in S(c) : E(u) =m(c)\}.
$$
In view of Theorem \ref{thm1}, then $\mathcal{G} \neq \emptyset$. We shall argue by contradiction that $\mathcal{G}(c)$ is unstable for some $c>0$. In view of Definition \ref{def1}, then there exist a constant $\eps_0>0$ and  a sequence $\{t_n\} \subset \R^+$ such that $\inf_{u \in \mathcal{G}(c)}\|\psi_n(t_n)-u\| \geq \eps_0$, where $\psi_n(t) \in C([0, T); H^1(\R^N))$ is the solution to the Cauchy problem for \eqref{equt} with initial datum $\psi_{0,n} \in S(c_n)$ satisfying $E(\psi_{0,n})=m(c)+o_n(1) $ for $c_n=c+o_n(1)$. Invoking the conservation laws, we then have that there exists a sequence $\{u_n\} \subset S(c)$ satisfying $\|u_n-\psi_n(t_n)\|=o_n(1)$ such that $E(u_n)=m(c)+o_n(1)$. As a consequence of Theorem \ref{thm1}, we know that $\{u_n\}$ is compact in $H^1(\R^N)$ up to translations. It then follows that $\inf_{u \in \mathcal{G}(c)}\|\psi_n(t_n)-u\| =o_n(1)$. Thus we reach a contradiction and the proof is completed.
\end{proof}

\begin{proof}[Proof of Theorem \ref{globalexistence}]
Let $\psi \in C([0, T); H^1(\R^N))$ be the solution to the Cauchy problem for \eqref{equt} with initial datum $\psi_0 \in \mathcal{K}^+(c)$. We shall argue by contradiction that $\psi$ does not exist globally in time, i.e. $T<\infty$. From Lemma \ref{localwp}, then $\|\nabla \psi(t)\|_2 \to \infty$ as $t \to T^-$. Arguing as the discussions of coerciveness of $E$ restricted on $P(c)$ and using the conservation of energy, we can show that $Q(\psi(t)) \to -\infty$ as $t \to T^-$. Note that $Q(\psi_0)>0$, then there exists a constant $t_0 \in (0, T)$ such that $Q(\psi(t_0))=0$, This leads to $E(\psi(t_0)) \geq \gamma(c)$. We then reach a contradiction, because of $E(\psi(t_0))=E(\psi_0)<\gamma(c)$. Thus the proof is completed.
\end{proof}

To discuss blowup of radially symmetric solutions to the Cauchy problem for \eqref{equt}, we need to introduce a localized virial quantity. Let $\chi : \R^N \to \R$ be a radial smooth function with $\nabla^k \chi \in L^{\infty}(\R^N)$ for $1 \leq k \leq 4$ and such that
\begin{align*}
\chi(r):=
\left\{
\begin{aligned}
&\frac{r^2}{2}, &\quad 0 \leq r \leq 1,\\
&\textnormal{const.}, &\quad r \geq 2,
\end{aligned}
\right.
\end{align*}
and $ \chi''(r) \leq 1 $ for $ r \geq 0$. For $R>0$, we define
$$
\chi_R(r):=R^2 \chi\left( \frac r R\right), \quad r \geq 0.
$$
It is simple to verify that
\begin{align} \label{chiprop}
1-\chi_R''(r) \geq 0, \quad 1-\frac{\chi_R'(r)}{r} \geq 0, \quad N-\Delta \chi_R(r) \geq 0, \quad  r \geq 0.
\end{align}
For $\psi \in H^1(\R^N)$, we now define the localized virial quantity by
$$
V_{\chi_R}[\psi]:=2 \textnormal{Im} \int_{\R^N} \overline{\psi} \nabla \chi_R \cdot \nabla \psi \, dx.
$$
Clearly, $V_{\chi_R}[\psi]$ is well-defined for any $\psi \in H^1(\R^N)$.

\begin{lem}\label{virial}
Let $N \geq 2$ and $\psi \in C([0, T); H^1(\R^N))$ be a radially symmetric solution to \eqref{equt} with radially symmetric initial datum $\psi_0 \in H^1(\R^N)$, then, for any $t \in [0, T)$,
\begin{align*}
\frac{d}{dt} V_{\chi_R}[\psi(t)]=4 Q(\psi(t)) + O(R^{-2}) +O\left(R^{-b-\frac{(N-1)(q-2)}{2}}\|\nabla \psi(t)\|_2^{\frac{q-2}{2}}\right)+O\left(R^{-b-\frac{(N-1)(p-2)}{2}}\|\nabla \psi(t)\|_2^{\frac{p-2}{2}}\right).
\end{align*}
\end{lem}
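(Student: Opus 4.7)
The plan is the standard virial derivation, adapted to the inhomogeneous factor $|x|^{-b}$ and the truncation $\chi_R$. I would begin by deriving, for a generic smooth radial weight $a$, the identity
\begin{align*}
\frac{d}{dt}V_a[\psi]&=4\int_{\R^N}\partial_j\partial_k a\,\textnormal{Re}(\partial_j\bar\psi\,\partial_k\psi)\,dx-\int_{\R^N}(\Delta^2 a)|\psi|^2\,dx\\
&\quad-\frac{2\mu(q-2)}{q}\int_{\R^N}(\Delta a)|x|^{-b}|\psi|^q\,dx-\frac{4\mu b}{q}\int_{\R^N}|x|^{-b-2}(x\cdot\nabla a)|\psi|^q\,dx\\
&\quad-\frac{2(p-2)}{p}\int_{\R^N}(\Delta a)|x|^{-b}|\psi|^p\,dx-\frac{4b}{p}\int_{\R^N}|x|^{-b-2}(x\cdot\nabla a)|\psi|^p\,dx.
\end{align*}
This is obtained by substituting $\partial_t\psi=\textnormal{i}\Delta\psi+\textnormal{i}\mu|x|^{-b}|\psi|^{q-2}\psi+\textnormal{i}|x|^{-b}|\psi|^{p-2}\psi$ into $V_a[\psi]=2\textnormal{Im}\int\bar\psi\nabla a\cdot\nabla\psi\,dx$, differentiating in $t$, and integrating by parts, repeatedly using the gauge-invariance relation $|\psi|^{r-2}\textnormal{Re}(\bar\psi\,\partial_j\psi)=\tfrac{1}{r}\partial_j|\psi|^r$. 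The $x\cdot\nabla a$-type terms originate from $\nabla|x|^{-b}=-b|x|^{-b-2}x$ when the derivative lands on the weight inside the nonlinearity.

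Next I would specialize to $a=\chi_R$. Since $\chi_R(r)=r^2/2$ on $\{|x|\leq R\}$, on that set $\partial_j\partial_k\chi_R=\delta_{jk}$, $\Delta\chi_R=N$, $x\cdot\nabla\chi_R=|x|^2$ and $\Delta^2\chi_R=0$. Extending the relevant integrals to all of $\R^N$ and combining coefficients via $\frac{2(r-2)}{r}\cdot N+\frac{4b}{r}=\frac{2(N(r-2)+2b)}{r}$, the main part reconstitutes exactly $4Q(\psi(t))$. The remaining contributions are: the biharmonic term, bounded by $\|\Delta^2\chi_R\|_\infty\|\psi(t)\|_2^2\leq CR^{-2}\|\psi_0\|_2^2=O(R^{-2})$ by the mass conservation in Lemma \ref{localwp}; the kinetic remainder, which written in radial form equals $-4\int(1-\chi_R''(|x|))|\partial_r\psi|^2\,dx-4\int(1-\chi_R'(|x|)/|x|)(|\nabla\psi|^2-|\partial_r\psi|^2)\,dx$ and is non-positive by \eqref{chiprop}, hence absorbable into the remainder symbol; and two nonlinear remainders, each supported on $\{R<|x|<2R\}$ (since $|x\cdot\nabla\chi_R|\leq CR^2$ and $|N-\Delta\chi_R|$, $|1-\chi_R'(|x|)/|x||$ are bounded there) and each controlled in absolute value by $C\int_{|x|>R}|x|^{-b}|\psi|^r\,dx$ for $r\in\{q,p\}$.

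For the nonlinear remainders I would invoke the Strauss inequality \eqref{sti}, valid for radial functions in dimension $N\geq 2$, yielding $\|\psi\|_{L^\infty(|x|>R)}\leq CR^{-(N-1)/2}\|\nabla\psi\|_2^{1/2}\|\psi\|_2^{1/2}$. Then for $r\in\{q,p\}$,
\[
\int_{|x|>R}|x|^{-b}|\psi|^r\,dx\leq\|\psi\|_{L^\infty(|x|>R)}^{r-2}\cdot R^{-b}\|\psi\|_2^2\leq CR^{-b-(N-1)(r-2)/2}\|\nabla\psi(t)\|_2^{(r-2)/2},
\]
where the fixed mass $\|\psi(t)\|_2=\|\psi_0\|_2$ is absorbed into $C$. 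Adding this to the $O(R^{-2})$ biharmonic contribution produces precisely the claimed identity. The main technical obstacle lies in the first step: one must carefully track both the $\Delta a$- and $x\cdot\nabla a$-type nonlinear contributions (the latter existing only because $b>0$) so that their coefficients combine cleanly into the Pohozaev coefficient $N(r-2)+2b$; the hypothesis $N\geq 2$ enters the argument only through the Strauss estimate in the final step.
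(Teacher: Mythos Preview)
Your proposal is correct and follows essentially the same route as the paper: derive the generic virial identity with the two types of nonlinear contributions ($\Delta a$ and $x\cdot\nabla a$), specialize to $a=\chi_R$ to extract $4Q(\psi)$, bound the biharmonic term by $O(R^{-2})$, discard the kinetic remainder by its sign via \eqref{chiprop}, and control the nonlinear remainders supported on $\{|x|>R\}$ by the Strauss inequality \eqref{sti}. The only cosmetic difference is that the paper, using radiality of $\psi$, collapses the Hessian term directly to $4\int|\nabla\psi|^2\partial_r^2\chi_R\,dx$ rather than splitting into radial and angular parts as you do; for radial $\psi$ your angular piece vanishes anyway, so the two computations coincide.
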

\begin{proof}
For simplicity, we shall denote $\psi(t)$ by $\psi$ in what follows. Observe first that
\begin{align*}
\frac{d}{dt} V_{\chi_R}[\psi]&=2 \textnormal{Im} \int_{\R^N} \overline{\partial_t \psi} \nabla \chi_R \cdot \nabla \psi \, dx +2 \textnormal{Im} \int_{\R^N} \overline{\psi} \nabla \chi_R \cdot \nabla \partial_t \psi \, dx \\
&=2 \textnormal{Im} \int_{\R^N} \overline{\partial_t \psi} \nabla \chi_R \cdot \nabla \psi \, dx -2 \textnormal{Im} \int_{\R^N} \partial_t \psi  \nabla \cdot(\overline{\psi} \nabla \chi_R)\, dx \\
&= 2\textnormal{Im} \int_{\R^N} \overline{\partial_t \psi} \left(\nabla \chi_R \cdot \nabla \psi + \nabla \cdot(\nabla \chi_R \psi )\right)\, dx\\
&=2\textnormal{Re} \int_{\R^N} \overline{\textnormal{i}\partial_t \psi} \left(\nabla \chi_R \cdot \nabla \psi + \nabla \cdot(\nabla \chi_R \psi )\right)\, dx.
\end{align*}
Since $\psi(t) \in C([0, T); H^1(\R^N))$ is a solution to \eqref{equt}, then
\begin{align*}
\frac{d}{dt} V_{\chi_R}[\psi(t)]&= -2 \textnormal{Re} \int_{\R^N}\Delta \overline{\psi}\left(\nabla \chi_R \cdot \nabla \psi + \nabla \cdot\left(\nabla \chi_R \psi \right)\right)\, dx  \\
&\quad -2 \textnormal{Re} \int_{\R^N}\left(\mu |x|^{-b}|\psi|^{q-2}\overline{\psi} + |x|^{-b}|\psi|^{p-2} \overline{\psi}\right)\left(\nabla \chi_R \cdot \nabla \psi + \nabla \cdot(\nabla \chi_R \psi \right)\, dx \\
&:=I_1+I_2.
\end{align*}
From the divergence theorem and the identity
$$
{\partial_{kl}^2} \chi_R=\left(\delta_{kl} -\frac{x_k x_l}{r^2}\right) \frac{\partial_r \chi_R}{r} +\frac{x_k x_l}{r^2} \partial^2_r\chi_R,
$$
{where $\delta_{kl}=1$ if $k=l$ and $\delta_{kl}=0$ if $k \neq l$}, we get that
\begin{align} \label{i1}
\begin{split}
I_1&=4\sum_{k,l=1}^N \int_{\R^N}\partial_k \overline{\psi} \partial_l \psi (\partial_{kl}^2 \chi_R) \, dx-\int_{\R^N}\left(\Delta^2 \chi_R\right) |\psi|^2 \,dx \\
&=4 \int_{\R^N} |\nabla \psi|^2 (\partial_r^2 \chi_R) \, dx-\int_{\R^N} \left(\Delta^2 \chi_R\right) |\psi|^2 \,dx \\
&=4 \int_{\R^N} |\nabla \psi|^2 \,dx -4\int_{\R^N} |\nabla \psi|^2 (1-\partial_r^2 \chi_R) \, dx-\int_{\R^N} \left(\Delta^2 \chi_R\right) |\psi|^2 \,dx \\
&=4 \int_{\R^N} |\nabla \psi|^2 \,dx -4\int_{\R^N} |\nabla \psi|^2 (1-\partial_r^2 \chi_R) \, dx + O(R^{-2}).
\end{split}
\end{align}
In addition, we have that
\begin{align} \label{i2}
\begin{split}
I_2=&-\frac{2 \mu (q-2)}{q} \int_{\R^N} |x|^{-b} |\psi|^{q} \Delta \chi_R  \, dx - \frac{4\mu b}{q} \int_{\R^N} |x|^{-b-2} \left(x \cdot \nabla \chi_R \right) |\psi|^q\, dx \\
& \quad -\frac{2(p-2)}{p} \int_{\R^N} |x|^{-b} |\psi|^{p} \Delta \chi_R  \, dx - \frac{4b}{p} \int_{\R^N} |x|^{-b-2} \left(x \cdot \nabla \chi_R\right) |\psi|^p\, dx \\
&=-\frac{2 \mu (N(q-2)+2b)}{q}\int_{\R^N} |x|^{-b} |\psi|^{q} \, dx-\frac{2 \mu (q-2)}{q}\int_{\R^N} \left(\Delta \chi_R-N\right) |x|^{-b} |\psi|^{q}  \, dx \\
&\quad -\frac{2(N(p-2)+2b)}{p}\int_{\R^N} |x|^{-b} |\psi|^{p} \, dx-\frac{2(p-2)}{p}\int_{\R^N} \left(\Delta \chi_R-N\right) |x|^{-b} |\psi|^{p}  \, dx \\
&\quad - \frac{4\mu b}{q} \int_{\R^N} |x|^{-b-2} \left(\left(x \cdot \nabla \chi_R \right)-|x|^2\right) |\psi|^q\, dx- \frac{4 b}{p} \int_{\R^N} |x|^{-b-2} \left(\left(x \cdot \nabla \chi_R \right)-|x|^2\right) |\psi|^p\, dx \\
&=-\frac{2 \mu (N(q-2)+2b)}{q}\int_{\R^N} |x|^{-b} |\psi|^{q} \, dx -\frac{2(N(p-2)+2b)}{p}\int_{\R^N} |x|^{-b} |\psi|^{p} \, dx \\
& \quad + O\left(R^{-b-\frac{(N-1)(q-2)}{2}}\|\nabla \psi\|_2^{\frac{q-2}{2}}\right)+O\left(R^{-b-\frac{(N-1)(p-2)}{2}}\|\nabla \psi\|_2^{\frac{p-2}{2}}\right),
\end{split}
\end{align}
where we used the definition of $\chi_R$ and the following Strauss inequality for the last equality,
\begin{align} \label{sti}
|x|^{\frac{N-1}{2}} |\psi(x)| \leq 2 \|\psi\|_2^{\frac 12} \|\nabla \psi\|_2^{\frac 12}, \quad x \neq 0.
\end{align}
Note that $1-\partial_r^2 \chi_R \geq 0$ by \eqref{chiprop}. As a consequence, combining \eqref{i1} and \eqref{i2}, we now conclude that
\begin{align*}
\frac{d}{dt} V_{\chi_R}[\psi(t)] & \leq 4 \int_{\R^N} |\nabla \psi|^2 \,dx-\frac{2 \mu (N(q-2)+2b)}{q}\int_{\R^N} |x|^{-b} |\psi|^{q} \, dx -\frac{2(N(p-2)+2b)}{p}\int_{\R^N} |x|^{-b} |\psi|^{p} \, dx \\
& \quad + O(R^{-2}) +O\left(R^{-b-\frac{(N-1)(q-2)}{2}}\|\nabla \psi\|_2^{\frac{q-2}{2}}\right)+O\left(R^{-b-\frac{(N-1)(p-2)}{2}}\|\nabla \psi\|_2^{\frac{p-2}{2}}\right) \\
&=4 Q(\psi) + O(R^{-2}) +O\left(R^{-b-\frac{(N-1)(q-2)}{2}}\|\nabla \psi\|_2^{\frac{q-2}{2}}\right)+O\left(R^{-b-\frac{(N-1)(p-2)}{2}}\|\nabla \psi\|_2^{\frac{p-2}{2}}\right).
\end{align*}
This completes the proof.
\end{proof}

\begin{lem} \label{viriall2}
Let $N \geq 1$ and $\psi \in C([0, T); H^1(\R^N))$ be the solution to \eqref{equt} with initial datum $\psi_0 \in H^1(\R^N)$ satisfying $|x| \psi_0 \in L^2(\R^N)$, then, for any $t \in [0, T)$,
\begin{align*}
\frac{d}{dt} V_{\infty}[\psi(t)]=4 Q(\psi(t)),
\end{align*}
where
$$
V_{\infty}[\psi]:=2\textnormal{Im} \int_{\R^N} \overline{\psi} x \cdot \nabla \psi \, dx.
$$
\end{lem}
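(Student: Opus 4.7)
The plan is to derive the identity by choosing the multiplier $\chi(x)=|x|^2/2$ in the computation already carried out in the proof of Lemma \ref{virial}, so that \emph{every} error term is exactly zero. The two issues to be addressed are (a) that $V_\infty[\psi]$ is actually defined and differentiable along the flow, which needs propagation of the weight $|x|\psi\in L^2$, and (b) that one may pass to the limit $R\to\infty$ in the localized virial law without invoking the Strauss inequality \eqref{sti} (so that no radial assumption is needed).

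First I would establish persistence of the weighted bound: if $|x|\psi_0\in L^2$, then $|x|\psi(t)\in L^2$ for every $t\in[0,T)$, with $t\mapsto\||x|\psi(t)\|_2$ continuous. Writing $A_R(t):=\int_{\R^N}\chi_R|\psi(t)|^2\,dx$ with the cutoff $\chi_R$ from Section~\ref{dynamics}, the continuity equation $\partial_t|\psi|^2=-2\nabla\cdot\textnormal{Im}(\bar\psi\nabla\psi)$ gives $\frac{d}{dt}A_R(t)=V_{\chi_R}[\psi(t)]$, which is uniformly controlled by $\||x|\psi(t)\|_2\|\nabla\psi(t)\|_2$ on $\{|x|\le R\}$ plus a term that is $O(R)\cdot\|\nabla\psi\|_2\|\psi\|_2$ on the annulus $\{R\le|x|\le 2R\}$. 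A standard Gronwall argument on $A_R$, uniform in $R$, followed by monotone convergence as $R\to\infty$, yields $|x|\psi(t)\in L^2$ with continuous dependence on $t$.

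Next I would redo the computation of Lemma \ref{virial} with $\chi_R$ in place of $\chi_R$ but \emph{retain} all error terms in integral form rather than estimating via Strauss. Because $\chi_R(x)=|x|^2/2$ on $|x|\le R$, the identities $\partial_{kl}^2\chi_R=\delta_{kl}$, $\Delta\chi_R=N$, $\Delta^2\chi_R=0$, $x\cdot\nabla\chi_R=|x|^2$ hold pointwise on $\{|x|\le R\}$, so every remainder is supported in $\{|x|\ge R\}$. There
$$
|\Delta^2\chi_R|\le CR^{-2},\qquad |1-\partial_r^2\chi_R|+|\Delta\chi_R-N|\le C,\qquad \bigl||x|^{-2}(x\cdot\nabla\chi_R)-1\bigr|\le C,
$$
uniformly in $R$. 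Consequently the remainder in $I_1$ is bounded by $CR^{-2}\|\psi\|_2^2+C\int_{|x|\ge R}|\nabla\psi|^2\,dx$, while the nonlinear remainders in $I_2$ are bounded by $C\int_{|x|\ge R}|x|^{-b}(|\psi|^q+|\psi|^p)\,dx$. Since $\psi(t)\in H^1(\R^N)$, the Gagliardo--Nirenberg inequality \eqref{GN} ensures $\int_{\R^N}|x|^{-b}(|\psi|^q+|\psi|^p)\,dx<\infty$, and dominated convergence forces all these integrals to $0$ as $R\to\infty$. Thus for every $t\in[0,T)$ we obtain $\tfrac{d}{dt}V_{\chi_R}[\psi(t)]\to 4Q(\psi(t))$, while dominated convergence (using $|x|\psi(t)\in L^2$) gives $V_{\chi_R}[\psi(t)]\to V_\infty[\psi(t)]$. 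Integrating in time and passing to the limit in the identity $V_{\chi_R}[\psi(t)]-V_{\chi_R}[\psi_0]=\int_0^t\tfrac{d}{ds}V_{\chi_R}[\psi(s)]\,ds$ yields $V_\infty[\psi(t)]-V_\infty[\psi_0]=4\int_0^tQ(\psi(s))\,ds$, whence $\tfrac{d}{dt}V_\infty[\psi(t)]=4Q(\psi(t))$.

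The main technical obstacle is producing a dominating function for the time-integrated statement so that the limit $R\to\infty$ can be taken inside $\int_0^t(\cdot)\,ds$; this requires a locally uniform (in $t$) bound on $\|\nabla\psi(t)\|_2$ and on $\int|x|^{-b}|\psi(t)|^{p,q}\,dx$ on every compact subinterval of $[0,T)$, which is available since $\psi\in C([0,T);H^1(\R^N))$ and the nonlinear terms are controlled through \eqref{GN}. Everything else is a transcription of the computation already displayed in Lemma \ref{virial}, with the multiplier made exact rather than approximate.
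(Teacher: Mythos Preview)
Your proposal is correct and follows the same idea as the paper: take the multiplier $\chi(x)=|x|^2/2$ so that all remainder terms in the computation of Lemma~\ref{virial} vanish identically. The paper's proof is a single sentence to this effect, simply declaring $\chi_R(x)=|x|^2/2$ and referring back to the computation in Lemma~\ref{virial}; you supply the rigorous justification (persistence of the weight $|x|\psi(t)\in L^2$, passage to the limit $R\to\infty$ via dominated convergence) that the paper leaves implicit.
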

\begin{proof}
Here we have that $\chi_R(x)=|x|^2/2$. Following the computations presented in the proof of Lemma \ref{virial}, we then derive the desired result. Thus the proof is completed. 
\end{proof}

\begin{proof}[Proof of Theorem \ref{blowup}]
Let $u \in S(c)$ be a ground state to \eqref{equ}-\eqref{mass} at the level $\gamma(c)>0$, then $u_{\tau} \in \mathcal{K}^-(c)$ for any $\tau >1$. It follows that $\mathcal{K}^-(c) \neq \emptyset$. Let $\psi_0 \in \mathcal{K}^-(c)$, then $\psi(t) \in \mathcal{K}^-(c)$ for any $t\in [0, T)$. Indeed, if not, then there would exist a constant $t_0 \in (0, T)$ such that $\psi(t_0) \not\in \mathcal{K}^-(c)$. Since $\psi_0 \in \mathcal{K}^-(c)$, by the conservation laws, then $Q(\psi(t_0)) \geq 0$. Due to $Q(\psi_0)<0$, then there exists a constant $t_* \in (0, t_0]$ such that $Q(\psi(t_*))=0$. This then leads to $E(\psi(t_*)) \geq \gamma(c)$. However, from the conservation laws, there holds that $E(\psi(t))=E(\psi_0)<\gamma(c)$ for any $t \in [0, T)$. We then reach a contradiction. This in turns indicates that $\mathcal{K}^-(c)$ is invariant under the flow of the Cauchy problem for \eqref{equt}.

For simplicity, we shall write $\psi=\psi(t)$ in the following. Since $\psi_0 \in \mathcal{K}^-(c)$, then $Q(\psi)<0$ by the fact that $\mathcal{K}^-(c)$ is invariant under the flow of the Cauchy problem for \eqref{equt}. Hence we find that there exists a constant {$0<\tau_{\psi}<1$ such that $Q(\psi_{\tau_{\psi}})=0$}. Moreover, the function {$\tau \mapsto E(\psi_{\tau})$ is concave on$[\tau_{\psi}, 1]$}. As a result, we see that
$$
{E(\psi_{\tau_{\psi}})-E(\psi) \leq (\tau_{\psi}-1) \frac{d}{dt} E(\psi_{\tau})\mid_{\tau=1}=(\tau _{\psi}-1) Q(\psi)}.
$$
It then follows that
\begin{align}\label{negative}
{Q(\psi) \leq (1-\tau_{\psi}) Q(\psi) \leq E(\psi)-E(\psi_{\tau_{\psi}}) \leq E(\psi)-\gamma(c).}
\end{align}
If $|x| \psi_0 \in L^2(\R^N)$, by Lemma \ref{viriall2} and the conservation of energy, then
$$
\frac{d}{dt}V_{\infty}[\psi] \leq 4(E(\psi)-\gamma(c))=4(E(\psi_0)-\gamma(c))<0.
$$
Therefore, we have that $\psi$ blows up in finite time. Next we consider the case that $\psi_0$ is radially symmetric. 
We shall assume that $\psi$ exists globally in time. In light of \eqref{negative}, we first get that
\begin{align} \label{bddg1}
\|\nabla \psi\|_2^2 \leq \frac{\mu(N(q-2)+2b)}{2q}\int_{\R^N} |x|^{-b} |\psi|^{q} \, dx +\frac{N(p-2)+2b}{2p}\int_{\R^N} |x|^{-b} |\psi|^{p} \, dx + E(\psi)-\gamma(c).
\end{align}
Since $2<q < p \leq 6$, from Lemma \ref{virial} and Young's inequality, we then infer that, for any $\eps>0$, there exists a constant $R>0$ large enough such that
\begin{align*}
\frac{d}{dt} V_{\chi_R}[\psi] &\leq 4Q(\psi) + \eps \|\nabla \psi\|_2^2 + \eps \\
&= 8 E(\psi)-\frac{2\mu(N(q-2)+2b-4)}{q}\int_{\R^N} |x|^{-b} |\psi|^{q} \, dx-\frac{2(N(p-2)+2b-4)}{q}\int_{\R^N} |x|^{-b} |\psi|^{p} \, dx \\
& \quad + \eps \|\nabla \psi\|_2^2 +\eps.
\end{align*}
Combining \eqref{bddg1}, we further derive that
\begin{align}  \label{bp}
\begin{split}
\frac{d}{dt} V_{\chi_R}[\psi] &\leq 8 E(\psi) -\frac{4 \mu(N(q-2)+2b-4)-\eps\mu(N(q-2)+2b)}{2q}\int_{\R^N} |x|^{-b} |\psi|^{q} \, dx \\
& \quad -\frac{4(N(p-2)+2b-4)-\eps(N(p-2)+2b)}{2q}\int_{\R^N} |x|^{-b} |\psi|^{p} \, dx + \eps(E(\psi)-\gamma(c)) +\eps.
\end{split}
\end{align}
Note that $Q(\psi_{t_{\psi}})=0$ and $0<t_{\psi} <1$, then
\begin{align*} 
E(\psi) <\gamma(c) \leq E(\psi_{t_{\eps}})&=E(\psi_{t_{\psi}})-\frac 12 Q(\psi_{t_{\psi}})\\
& <\frac{\mu(N(q-2)+2b-4)}{4q}\int_{\R^N} |x|^{-b} |\psi|^{q} \, dx +\frac{N(p-2)+2b-4}{4p}\int_{\R^N} |x|^{-b} |\psi|^{p} \, dx \\
&=\frac{4\mu(N(q-2)+2b-4)-\eps \mu (N(q-2)+2b)}{16 q}\int_{\R^N} |x|^{-b} |\psi|^{q} \, dx \\
& \quad +  \frac{4(N(p-2)+2b-4)-\eps(N(p-2)+2b)}{16 p}\int_{\R^N} |x|^{-b} |\psi|^{p} \, dx \\
& \quad + \frac{\eps \mu (N(q-2)+2b)}{16 q}\int_{\R^N} |x|^{-b} |\psi|^{q} \, dx  +\frac{\eps(N(p-2)+2b)}{16 p}\int_{\R^N} |x|^{-b} |\psi|^{q} \, dx.
\end{align*}
In addition, by the conservation of energy, there holds that $E(\psi)=E(\psi_0)<(1-\delta) \gamma(c)$ for some $\delta_0>0$. If there exists a constant $C>0$ such that, for any $t>0$,
\begin{align} \label{blowup1}
\frac{\mu(N(q-2)+2b)}{16 q}\int_{\R^N} |x|^{-b} |\psi|^{q} \, dx  +\frac{N(p-2)+2b}{16 p}\int_{\R^N} |x|^{-b} |\psi|^{q} \, dx \leq C,
\end{align}
then, from \eqref{bp}, we can obtain that, for any $\eps>0$ small enough,
$$
\frac{d}{dt} V_{\chi_R}[\psi] \leq -\frac{\gamma(c)}{2} \delta.
$$
If \eqref{blowup1} does not hold, then there exist a sequence $\{t_n\} \subset \R^+$ such that 
\begin{align*}
\frac{\mu(N(p-2)+2b)}{16 q}\int_{\R^N} |x|^{-b} |\psi(t_n)|^{q} \, dx  +\frac{N(p-2)+2b}{16 p}\int_{\R^N} |x|^{-b} |\psi(t_n)|^{q} \, dx \to \infty \,\, \mbox{as} \,\, n \to \infty.
\end{align*}
In this case, from \eqref{bp}, then there holds that, for any $\eps>0$ small enough,
$$
\frac{d}{dt} V_{\chi_R}[\psi(t_n)] \to -\infty \,\, \mbox{as} \,\, n \to \infty.
$$
Consequently, in both cases, we can conclude that $\psi$ cannot exist globally in time, i.e. $T<\infty$. Thus the proof is completed.
\end{proof}

\begin{proof}[Proof of Corollary \ref{instability}]
Since $u \in S(c)$ is a ground state to \eqref{equ}-\eqref{mass} at the level $\gamma(c)$, then $u_{\tau} \in \mathcal{K}^-(c)$ for any $\tau>1$ close to 1. Note that $\|u_{\tau} -u\| \to 0$ in $H^1(\R^N)$ as $\tau \to 1$. Therefore, from Theorem \ref{blowup}, we have the desired result. Thus the proof is completed. 
\end{proof}

\end{document}